\newcommand{\basta}{finitary-diagonalization\xspace}
\providecommand{\I}{\mathcal{I}}
\providecommand{\nrel}[1]{\cancel{\rel{#1}}}
\DeclareMathOperator{\im}{im}
\DeclareMathOperator{\ran}{ran}
\DeclareMathOperator{\Id}{Id}
\newenvironment{q}{\begin{question} \rm}{ \end{question}}
\newtheorem{question}{Question}
\newtheorem{thm}{Theorem}[section]
\newtheorem{definition}[thm]{Definition}
\newtheorem{lem}[thm]{Lemma}
\newtheorem{remark}[thm]{Remark}
\newtheorem{lemma}[thm]{Lemma}
\newtheorem{obs}[thm]{Observation}
\newtheorem{cory}[thm]{Corollary}
\newtheorem{fact}[thm]{Fact}
\newenvironment{defn}{\begin{definition} \rm}{ \end{definition}}
\renewcommand{\phi}{\varphi}
\DeclareMathOperator{\Dark}{\mathbf{Dark}}
\DeclareMathOperator{\Light}{\mathbf{Light}}
\DeclareMathOperator{\range}{range}
\DeclareMathOperator{\Ceers}{\mathbf{Ceers}}
\newcommand{\rel}[1]{\mathrel{#1}}
\title{Joins and Meets in the Structure of Ceers}%
\author[Andrews]{Uri Andrews}
\email{\href{mailto:andrews@math.wisc.edu}{andrews@math.wisc.edu}}
\urladdr{\url{http://www.math.wisc.edu/~andrews/}}
\address{Department of Mathematics\\
University of Wisconsin\\
Madison, WI 53706-1388\\
USA}
\thanks{Andrews was partially supported
by NSF Grant 1600228 and Grant 3952/GF4 of the
Science Committee of the Republic of Kazakhstan}
\author[Sorbi]{Andrea Sorbi}
\email{\href{mailto:andrea.sorbi@unisi.it}{andrea.sorbi@unisi.it}}
\urladdr{\url{http://www3.diism.unisi.it/~sorbi/}}
\address{Department of Information Engineering and Mathematics\\
University of Siena\\
53100 Siena\\
Italy}
\thanks{Sorbi is a member of INDAM-GNSAGA; he was partially supported
by Grant 3952/GF4 of the
Science Committee of the Republic of Kazakhstan, and by PRIN 2012 ``Logica
Modelli e Insiemi ''.
\\
Most of the material of this paper was presented at the \emph{Workshop on
Computability Theory} held in Ghent, July 4-6 2016}
\begin{document}
	
\begin{abstract}
We study computably enumerable equivalence relations (abbreviated as
\emph{ceers}) under computable reducibility, and we investigate the
resulting degree structure $\Ceers$, which is a poset with a smallest and a
greatest element. We point out a partition of the ceers into three classes:
the finite ceers, the light ceers, and the dark ceers. These classes yield
a partition of the degree structure as well, and in the language of posets
the corresponding classes of degrees are first order definable within
$\Ceers$. There is no least, no maximal, no greatest dark degree, but there
are infinitely many minimal dark degrees. We study joins and meets in
$\Ceers$, addressing the cases when two incomparable degrees of ceers $X,Y$
have or do not have a join or a meet according to where $X,Y$ are located
in the classes of the aforementioned partition: in particular no pair of
dark ceers has a join, and no pair in which at least one ceer is dark has a
meet. We also exhibit examples of ceers $X,Y$ having as a join their
uniform join $X \oplus Y$, but also examples with a join which is strictly
less than $X \oplus Y$. We study join-irreducibility and
meet-irreducibility: every dark ceer is both join-, and meet-irreducible.
In particular we characterize the property of being meet-irreducible for a
ceer $E$, by showing that it coincides with the property of $E$ being
self-full, meaning that every reducibility from $E$ to itself is in fact
surjective on its equivalence classes (this property properly extends
darkness). We then study the quotient structure obtained by dividing the
poset $\Ceers$ by the degrees of the finite ceers, and study joins and
meets in this quotient structure: interestingly, contrary to what happens
in the structure of ceers, here there are pairs of incomparable equivalence
classes of dark ceers having a join, and every element different from the
greatest one is meet-reducible. In fact in this quotient structure, every
degree different from the greatest one has infinitely many strong minimal
covers, whereas in $\Ceers$ every degree different from the greatest one
has either infinitely many strong minimal covers, or the cone strictly
above it has a least element: this latter property characterizes the
self-full degrees. We look at automorphisms  of $\Ceers$, and show that
there are continuum many automorphisms fixing the dark ceers, and continuum
many automorphisms fixing the light ceers. Finally, we compute the
complexity of the index sets of the classes of ceers studied in the paper.
\end{abstract}

\maketitle
\section{Introduction}\label{sct:introduction}
Given equivalence relations $E,R$ on the set $\omega$ of natural numbers we
say that $E$ is \emph{computably reducible} (or, simply, \emph{reducible}) to
$R$ (notation: $E \leq R$) if there exists a computable function $f$ such
that $x \rel{E} y$ if and only if $f(x) \rel{R} f(y)$, for all $x,y \in
\omega$. This reducibility (which can be viewed as a natural computable
version of Borel reducibility on equivalence relations, widely studied in
descriptive set theory, see for instance~\cite{Becker-Kechris:descriptive,
Gao:Book}) has recently been investigated by several authors, both as a
suitable tool for measuring the relative complexity of familiar equivalence
relations in computable mathematics (see for instance
\cite{Fokina-et-al-several, Fokina-Friedman-Nies, Ianovski-et-al}), and as an
interesting object in itself which is worthy of being studied from the
computability theoretic point of view (see for instance \cite{Gao-Gerdes,
Coskey-Hamkins-Miller, ceers, Andrews-Sorbi:index-sets, jumpsofceers}).

Since $\leq$ is a pre-ordering relation, it originates an equivalence
relation $\equiv$ by letting $E \equiv R$ if $E \leq R$ and $R\leq E$. The
equivalence class of an equivalence relation $E$ is called the \emph{degree}
of $E$, denoted by $\deg(E)$; on degrees one defines the partial ordering
relation $\deg(E) \leq \deg(R)$ if $E\leq R$.

\subsection{The computably enumerable equivalence relations}\label{sct:ceers}
As is often the case in computability theory when studying degree structures,
also for degrees of equivalence relations it seems natural to restrict
attention to local structures of degrees, for instance confining oneself  to
classes of equivalence relations in the arithmetical hierarchy or in
other hierarchies,
and in particular to computably enumerable equivalence relations (i.e.
equivalence relations $E$ on $\omega$ such that the set $\{(x,y) \mid x
\rel{E} y\}$ is computably enumerable, or, simply, c.e.), hereinafter called
\emph{ceers}, which play an important role in mathematical logic: they appear
for instance as relations of provable equivalence of well formed formulas in
formal systems; or as word problems of finitely presented structures; or as
equality in computably enumerable structures (also called positive structures
in the Russian literature, where ceers are more often called \emph{positive
equivalence relations}), a topic which dates back at least to
Mal'tsev~\cite{Mal'cev:towards} (translated in \cite{Mal'tsev:book}): recent
papers relating ceers and computable reducibility to various algebraic and
relational structures are \cite{Gavryushkin-Jain-Khoussainov-Stephan,
Franketal:structures, FoKhSeTu}.

Although there is already a nontrivial literature on applications of
computable reducibility to ceers (pioneering papers in this regard are
\cite{Ershov:positive, Gao-Gerdes}), emphasis so far has been mostly on the
so-called universal ceers, i.e. those ceers to which every other ceer is
reducible (for a recent survey on universal ceers see \cite{ABS}): for
instance, the relation of provable equivalence of strong enough formal
systems, such as Peano Arithmetic, is universal (\cite{Visser:Numerations,
Bernardi-Sorbi:Classifying, Montagna:ufp, Lachlan:note}); the relation of
isomorphism of finite presentations of groups is universal
(\cite{MillerIII:Book}); there are finitely presented groups whose word
problem is universal (\cite{MillerIII:Book}; see also \cite{Nies-Sorbi});
sufficient conditions for ceers guaranteeing universality have been pointed
out, including precompleteness (\cite{Bernardi-Sorbi:Classifying}), uniform
finite precompleteness (\cite{Montagna:ufp}), and uniform effective
inseparability of distinct pairs of equivalence classes  (\cite{ceers}).
Finally, the universal ceers can be nicely characterized as the ceers which
coincide up to equivalence with their jumps (\cite{ceers}).

Restriction of $\leq$ to ceers gives rise to
a degree structure $(\Ceers, \leq)$ which is a poset with a least
element $\mathbf{0}$ (the degree of the ceers with only one equivalence
class) and a greatest element $\mathbf{1}$, consisting of the universal ceers.
It is shown in \cite{ceers}
that the first order theory (in the language of posets) of the poset
$(\Ceers, \leq)$ is undecidable.
Apart from this, not much is known about the structure of $(\Ceers, \leq)$,
and this paper aims to fill in this gap.

Following Ershov~\cite{Ershov:NumberingsI}, ceers together with the
reducibility $\leq$ can be structured as a category: if $E,R$ are ceers then
a \emph{morphism} $\mu: E \longrightarrow R$ is a function $\mu: \omega_{/E}
\longrightarrow \omega_{/R}$, between the respective quotient sets for which
there is a computable function $f$ such that $\mu([x]_E)=[f(x)]_R$: we say in
this case that $f$ \emph{induces} $\mu$. Since monomorphisms in this category
are easily seen to coincide with the injective morphisms, then clearly $E\le
R$ if and only if there exists a monomorphism $\mu: E \longrightarrow R$. We
say that two ceers $E, R$ are \emph{isomorphic} (in symbols: $E \simeq R$) if
they are isomorphic in the sense of category theory, which, in this case,
amounts to saying that there exists a computable function (not necessarily a
bijection) which induces a $1$-$1$ and onto morphism. The symbol $\Id$
denotes the equality relation, whereas $\Id_{n}$ for $n \ge 1$ denotes
equivalence $\textrm{mod}_{n}$. Every computable ceer with infinitely many
classes is isomorphic to $\Id$; and every ceer with $n$ classes is isomorphic
to $\Id_{n}$.

Clearly $\simeq$ implies $\equiv$, but it is easy to see that the converse is
not always true: in fact it is known (see \cite{Badaev-Sorbi}) that the
universal degree $\mathbf{1}$ contains infinitely many different computable
isomorphism types. Moreover, $\simeq$ does not imply in general computable
isomorphism, i.e. the existence of a computable permutation inducing the
isomorphism, but it is easy to see (see e.g. \cite{ceers}) that it does so if
all classes in both equivalence relations are infinite.

\subsection{Notations and some background material}\label{ssct:background}
This paper is essentially self-contained. Computability theoretic notations
and terminology can be found in any standard textbook such as
\cite{Rogers:Book} or \cite{Soare:Book}: in particular $\{\phi_e\mid e \in
\omega\}$ is a standard listing of all partial computable functions, and
$\{W_e\mid e \in \omega\}$ is a standard listing of all c.e. sets. In the
rest of this section we review some basic facts concerning ceers: for more on
the topic see also the papers \cite{Ershov:positive,Gao-Gerdes, ceers}.

We will refer to some acceptable indexing $\{R_z \mid z \in \omega\}$ of the
ceers (such as the one defined in \cite{ceers}), and to approximations
$\{R_{z,s} \mid z,s \in \omega\}$ such that $R_{z,0}=\Id$, $R_{z,s} \subseteq
R_{z,s+1}$, $R_z=\bigcup_s R_{z,s}$, and $R_{z,s} \smallsetminus \Id$ is a
finite set uniformly given in $z,s$ by its canonical index.

Given an equivalence relation $E$, the $E$-equivalence class of a number $x$
will be denoted by $[x]_{E}$; if $U \subseteq \omega$ then $[U]_{E}$ denotes
the $E$-closure of $U$, i.e. the set of all numbers that are $E$-equivalent
to some $x \in U$; a set $U$ is said to be \emph{$E$-closed} if $U=[U]_{E}$.

The next lemma shows that every onto monomorphism is an isomorphism in the
category of ceers.

\begin{lemma}\label{ontoReductionsFlip}
If $f$ is a reduction of $E$ to $R$ with the property that the range of $f$
intersects every class in $R$, i.e., for every $y$, there is some $x$ so that
$f(x)\rel{R} y$, then $R \leq E$.
\end{lemma}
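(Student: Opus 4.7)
The plan is to build a computable reduction $g : R \to E$ by effectively inverting $f$ on $R$-classes. Since $R$ is c.e. (with a standard approximation $R_s$), and by hypothesis every $R$-class meets $\ran(f)$, the following procedure halts on every input $y$: search for the least pair $\langle x, s\rangle$ such that $f(x) \rel{R_s} y$, and output that $x$. Call the resulting total computable function $g$.

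Next I would verify that $g$ reduces $R$ to $E$. By construction, $f(g(y)) \rel{R} y$ for every $y$. So if $y_1 \rel{R} y_2$, then $f(g(y_1)) \rel{R} y_1 \rel{R} y_2 \rel{R} f(g(y_2))$, and because $f$ is a reduction from $E$ to $R$, this forces $g(y_1) \rel{E} g(y_2)$. Conversely, if $g(y_1) \rel{E} g(y_2)$, then $f(g(y_1)) \rel{R} f(g(y_2))$, and combined with $y_i \rel{R} f(g(y_i))$ for $i=1,2$, transitivity of $R$ gives $y_1 \rel{R} y_2$.

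There is no real obstacle: the only thing to be careful about is that the search terminates, which is exactly what the surjectivity-on-classes hypothesis guarantees (together with the c.e.\ enumeration of $R$). No uniformity or priority argument is needed; the whole proof is a single effective search plus two symmetric applications of the fact that $f$ is a reduction.
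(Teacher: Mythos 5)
Your proof is correct and is essentially the paper's own argument: define $g(y)$ as the first $x$ found with $f(x) \rel{R} y$ (the search halts by the surjectivity-on-classes hypothesis and the c.e.\ approximation of $R$), and the verification that $g$ reduces $R$ to $E$ follows from transitivity together with $f$ being a reduction. No further comment needed.
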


\begin{proof}
Given a reduction $f$ of $E$ to $R$, let $g(y)$ be the first $x$ so that we
see that $f(x) \rel{R} y$. This gives a reduction of $R$ to $E$.
\end{proof}

If $U \subseteq \omega$ then $R_{U}$ denotes the equivalence relation $x
\rel{R_{U}} y$ if and only if $x, y \in U$, or $x=y$. Clearly, if $U$ is c.e.
then $R_{U}$ is a ceer.

\begin{fact}\cite{ceers}\label{fact:fund-ce-sets}
Let $U,V$ be c.e.\ sets. The following hold:
\begin{enumerate}
  \item If $V$ is infinite then $U \le_1 V$ if an only if $R_U \leq R_V$.
  \item If $U$ is c.e. and $R\leq R_U$ then there exists a c.e. set $V$
      such that $R\equiv R_V$.
\end{enumerate}
\end{fact}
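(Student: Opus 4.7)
The plan is to prove the two parts separately, using that a ceer $R_W$ has at most one nontrivial equivalence class, namely $W$.

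For part (1), the forward direction is immediate: a witness $h$ of $U \le_1 V$ is a $1$-$1$ computable function with $x \in U \iff h(x) \in V$, and the same $h$ reduces $R_U$ to $R_V$ since ``$x=y$ or both in $U$'' transports through the injection to ``$h(x) = h(y)$ or both in $V$''. For the reverse direction, given $f \colon R_U \leq R_V$, I dispose of the cases $|U|\leq 1$ directly (the infinity of $V$ suffices) and for $|U|\geq 2$ observe that $U$ is forced to map into $V$ (the unique nontrivial $R_V$-class), while distinct singleton $R_U$-classes must go to distinct singleton $R_V$-classes, so $f$ is already $1$-$1$ on $\bar U$ with image in $\bar V$. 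To upgrade $f$ to full $1$-$1$-ness on $U$, I perform a stage-by-stage substitution: whenever $f(x)$ for $x\in U$ duplicates a previously assigned output, I replace it by a fresh element of $V$, always available because $V$ is infinite.

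For part (2), given $R \leq R_U$ via $f$, the $R$-classes split into the ``big'' class $A := f^{-1}(U)$ together with fiber classes $f^{-1}(\{y\})$ for $y \in \range(f) \cap \bar U$; in particular, $R$ has at most one class on which $f$ is not effectively a $1$-$1$ labeling of classes. My plan is to set $V := A$ (or another distinguished nontrivial $R$-class when $A$ is trivial) and to build reductions $R \leq R_V$ and $R_V \leq R$ in stages, using the enumerations of $R$, $U$, and $f$. The forward reduction sends the chosen class into $V$ and collapses every other nontrivial $R$-class to a distinct singleton of $R_V$ in $\bar V$; the reverse reduction sends $V$ back to the chosen class and each element of $\bar V$ to a distinct $R$-class representative. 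This balancing act is feasible because $\bar V$ always has enough room for the spread of fiber-class representatives.

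The hard part of (2) is ensuring that the reductions are computable, since membership in $V$, in the individual fibers of $f$, and in $\bar V$ is at best c.e., not decidable. I plan to handle this by a uniform stage-by-stage construction analogous to the one used in part (1): commitments are made at each stage from the finite information then available in the enumerations, with $f$ itself providing computable ``labels'' for $R$-classes, and padding into the infinite sets $V$ and $\bar V$ used to avoid conflicts when commitments need to be revised. While technical, this is a routine pattern in the theory of ceers.
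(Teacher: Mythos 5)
Your part (1) is essentially the paper's argument: the forward direction is immediate, and the reverse direction repairs $f$ into an injection by substituting fresh elements of the infinite set $V$ whenever an output repeats (a repetition $f(x)=f(y)$ with $x\neq y$ certifies $x\rel{R_U}y$, hence $x,y\in U$, so rerouting into $V$ is legitimate). One small caveat: ``$U$ is forced to map into $V$'' can fail if $f$ is constant on $U$ with value outside $V$, but that case forces $U=f^{-1}(c)$ to be computable and is easily dispatched; the paper's sketch glosses over the same degeneracies.

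Part (2) has a genuine gap: the choice $V:=A=f^{-1}[U]$ is wrong, and no stage-by-stage construction can rescue the reverse reduction. The $R$-classes outside $A$ are the fibers $f^{-1}(c)$ for $c\in\im(f)\smallsetminus U$, and these fibers may be large, whereas $R_A$ has a separate singleton class for \emph{every element} of $\overline{A}$; thus $R_A$ can have strictly more classes than $R$, in which case $R_A\not\leq R$ outright. Concretely, let $U$ be an infinite, co-infinite c.e.\ set, fix $u_0\in U$ and $c\notin U$, and set $f(2x)=u_0$ and $f(2x+1)=c$: then $f$ reduces $R:=\,$``same parity''$\,\equiv\Id_2$ to $R_U$, while $A=f^{-1}[U]$ is the set of even numbers, so $R_A$ has infinitely many pairwise inequivalent elements and cannot reduce to $\Id_2$. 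The step ``the reverse reduction sends each element of $\overline{V}$ to a distinct $R$-class representative'' is exactly what cannot be done: you verified that $\overline{V}$ has room for the fiber classes, but not that $R$ has room for $\overline{V}$. The repair is to pull $U$ back along an index set for the \emph{classes} rather than along $f$: disposing of the case $\im(f)$ finite separately, let $h$ be a computable injective enumeration of $\im(f)$ and put $V=h^{-1}[U]$; then $x\mapsto h^{-1}(f(x))$ and $n\mapsto \mu x.\,[f(x)=h(n)]$ witness $R\equiv R_V$. This is (up to the injectivity of $h$) the paper's one-line proof.
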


\begin{proof} We sketch the proofs:

\begin{enumerate}
\item If $U \le_{1} V$ via $f$ then clearly $f$ is also a reduction from
    $R_{U}$ to $R_{V}$ as it maps distinct equivalence classes to distinct
    equivalence classes. If $f$ is a reduction witnessing that $R_{U} \le
    R_{V}$ and $V$ is infinite, then consider the function $g$ which maps
    $x$ to $f(x)$ if $f(x) \notin \{g(i)\mid i< x\}$; otherwise map $x$ to
    the first $y\in V$ such that $y \notin \{g(i)\mid i< x\}$. The function
    $g$ provides the desired reduction $U\leq_{1}V$.

\item Suppose that $R\leq R_U$ via a reduction $f$. Pick any computable
    surjection $h: \omega \rightarrow \im(f)$ and let $V=h^{-1}[U]$: it is
    easy to check that $R\equiv R_{V}$.
\end{enumerate}
\end{proof}

We recall the definition of the jump operation on ceers, due to Gao and
Gerdes~\cite{Gao-Gerdes}. If $E$ is a ceer, then we define the \emph{jump} of
$E$ to be the ceer $E'$, where $x \rel{E'} y$ if and only if $x=y$ or both
$\phi_x(x), \phi_y(y)$ converge and $\phi_x(x) \rel{E} \phi_y(y)$. For more
information about this jump see ~\cite{Gao-Gerdes,ceers,jumpsofceers}.

Given equivalence relations $E,R$ we denote by $E \oplus R$ the equivalence
relation (called the \emph{uniform join}, or \emph{uniform upper bound}, of
$E,R$) so that $x$ and $y$ are equivalent if and only if $x$ and $y$ are both
even, say $x=2u$ and $y=2v$ and $u \rel{E} v$; or $x$ and $y$ are both odd,
say $x=2u+1$ and $y=2v+1$ and $u \rel{R} v$. Clearly, $E \oplus R$ is an
upper bound of $E$ and $R$ with respect to $\leq$. A ceer $E$ is called
\emph{uniform join-irreducible} (\cite{jumpsofceers}) if $E\leq R$ or $E\leq  S$
whenever $E\leq R\oplus S$.

\begin{fact}\cite{jumpsofceers}\label{JumpsUniformJoinIrreducible}
For every $E$, the jump $E'$ is uniform join-irreducible.
\end{fact}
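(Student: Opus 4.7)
The plan is to show that any reduction $f$ of $E'$ to $R \oplus S$ gives rise to either $E' \leq R$ or $E' \leq S$. Let $K = \{x : \phi_x(x) \downarrow\}$ denote the halting set: the $E'$-classes are the singletons $\{x\}$ for $x \notin K$ together with the non-trivial classes $[x]_{E'}$ for $x \in K$. Set $A = \{x : f(x) \text{ is even}\}$ and $B = \omega \setminus A$; both are computable, and because $f$ is a reduction, every non-trivial $E'$-class lies entirely in $A$ or entirely in $B$. By $s$-$m$-$n$ fix $i(v)$ with $\phi_{i(v)}(z) = v$; each non-trivial $E'$-class has the form $[i(v)]_{E'}$, and its location is recorded by the computable partition $A^* = \{v : i(v) \in A\}$, $B^* = \{v : i(v) \in B\}$.

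The first key step uses the recursion theorem to deduce $A^* = \omega$ or $B^* = \omega$. If both were nonempty, pick $v_0 \in A^*$ and $v_1 \in B^*$ and take $e$ satisfying $\phi_e(z) = v_1$ if $e \in A$ and $\phi_e(z) = v_0$ if $e \in B$. Then $\phi_e(e)$ converges in either case, so $e \in K$ and $[e]_{E'} = [i(\phi_e(e))]_{E'}$; but $e \in A$ forces $[e]_{E'} \subseteq B$ (since $v_1 \in B^*$), while $e \in B$ forces $[e]_{E'} \subseteq A$, either way a contradiction. So, without loss of generality, $A^* = \omega$, meaning every non-trivial $E'$-class is contained in $A$, and in particular $A \supseteq K$.

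With this arranged, I construct a computable injective $q$ with $q(x) \in A$ for every $x$, via the parametric recursion theorem applied to the specification $\phi_{q(x)}(z) = \phi_x(x)$ if $q(x) \in A$, and $\phi_{q(x)}(z) = 0$ if $q(x) \in B$. If $q(x)$ were in $B$, then $\phi_{q(x)}(q(x)) = 0\downarrow$ would put $[q(x)]_{E'} = [i(0)]_{E'} \subseteq A$ (using $0 \in A^* = \omega$), contradicting $q(x) \in B$. Hence $q(x) \in A$ for all $x$, and $\phi_{q(x)}(z) = \phi_x(x)$ unconditionally; injectivity of $q$ is arranged by using an injective underlying $s$-$m$-$n$ function in the recursion-theoretic construction.

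Finally I set $g(x) = f(q(x))/2$; this is total computable since $q(x) \in A$. A case analysis on the $K$-statuses of $x, y$ shows $g(x) \rel{R} g(y) \Leftrightarrow q(x) \rel{E'} q(y) \Leftrightarrow x \rel{E'} y$: when $x, y \in K$ the equivalence transports via $\phi_{q(x)}(q(x)) = \phi_x(x)$ and $\phi_{q(y)}(q(y)) = \phi_y(y)$; when exactly one of $x, y$ is in $K$, the $q$-images differ in $K$-status and hence lie in different $E'$-classes, matching $x \not\rel{E'} y$; when $x, y \notin K$, injectivity of $q$ matches $q(x) = q(y)$ with $x = y$. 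The symmetric case $B^* = \omega$ gives $E' \leq S$. The main obstacle is setting up the diagonal construction of $q$ so that a would-be $B$-valued output immediately contradicts itself; this is secured precisely by choosing the ``else'' branch to return $0 \in A^* = \omega$, whose $E'$-class is guaranteed to sit in $A$.
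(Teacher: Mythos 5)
Your proof is correct. It follows the same two-stage outline as the paper's sketch --- first show that, without loss of generality, all of $K$ is mapped by $f$ into the even side, then repair the reduction on $\omega\smallsetminus K$ --- but it implements both stages differently. For the first stage the paper invokes effective inseparability of the sets $K_i=\{x \mid \phi_x(x)\downarrow=i\}$, whereas you run a direct recursion-theorem diagonalization against the computable set $A=f^{-1}[2\omega]$: if two nontrivial $E'$-classes landed on opposite sides, a self-referential index $e$ whose output is chosen according to the side $e$ itself lands on yields an immediate contradiction. (These are really the same idea, since effective inseparability of the $K_i$ is itself proved this way.) The more substantive divergence is in the second stage: the paper uses productivity of $\omega\smallsetminus K$ to extract an infinite decidable set $X\subseteq \omega\smallsetminus(K\cup Y)$ and then injectively redirects the bad set $Y=f^{-1}[2\omega+1]$ (together with $X$) into $X$, while you instead build, by the recursion theorem with parameters, an injective computable self-embedding $q$ of $E'$ whose image provably lies in $A$, because any $q(x)$ falling in $B$ would converge on itself and hence belong to a nontrivial class already known to sit in $A$. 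Your version is more uniform and avoids the productivity argument entirely, at the cost of needing the (standard but worth stating) fact that the fixed points of the parametrized recursion theorem can be chosen injectively in the parameter; the paper's version is more elementary in that it only manipulates c.e.\ sets. Both yield the same case analysis at the end, and your verification of the three cases ($x,y\in K$; exactly one in $K$; neither in $K$) is complete.
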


\begin{proof}
This is Theorem~2.4 in \cite{jumpsofceers}: for the convenience of the
reader, we sketch the proof therein given. Suppose $f$ witnesses that $E'\leq
R\oplus S$.  By effective inseparability of the sets $K_i=\{x \mid
\phi_x(x)\downarrow=i\}$ and the definition of $E'$, it is not difficult to
see that for $i,j \in K$, $f(i)$ and $f(j)$ have the same parity. So suppose
$f(i)$ is even for every $i\in K$ (a similar argument will apply if $f(i)$ is
odd for every $i\in K$): this gives that the set $Y=f^{-1}[2 \omega+1]$ is a
decidable set in $\omega\smallsetminus K$. By productivity of $\omega
\smallsetminus K$ it is easy to see that there exists an infinite decidable
set $X$ contained in $\omega\smallsetminus (K\cup Y)$. We are now able to
show that $E'\leq R$. Fix a computable injection $h$ from $X\cup Y$ to $X$
(notice that $f(h(z))$ is even for every $z \in X\cup Y$), and define
\[
g(z)=
\begin{cases}
\frac{f(z)}{2}, &\text{if $z\in \omega\smallsetminus (X\cup Y)$},\medskip\\
\frac{f(h(z))}{2}, &\text{if $z\in X\cup Y$}.
\end{cases}
\]
A straightforward case-by-case inspection shows that $g$ reduces $E'\leq R$.
\end{proof}

\begin{cory}[\cite{ceers}]\label{cor:universal-join-irreducible}
If $E$ is universal then $E$ is uniform join-irreducible.
\end{cory}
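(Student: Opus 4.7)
The plan is to bootstrap this corollary off of the preceding Fact~\ref{JumpsUniformJoinIrreducible} together with the characterization of universal ceers mentioned in the introduction, namely that a ceer is universal if and only if it is equivalent to its own jump (this is the result from \cite{ceers} quoted just before Restriction is discussed). So the strategy is: replace $E$ by $E'$, invoke uniform join-irreducibility of the jump, and then transfer the property back across the equivalence.

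More concretely, suppose $E$ is universal, so that $E \equiv E'$. I would check first the easy observation that uniform join-irreducibility is invariant under $\equiv$. For this, assume $E \equiv F$ and $F$ is uniform join-irreducible. If $E \leq R \oplus S$, then composing with $F \leq E$ gives $F \leq R \oplus S$, hence $F \leq R$ or $F \leq S$ by assumption; composing with $E \leq F$ then yields $E \leq R$ or $E \leq S$.

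Applying this to $F = E'$, which is uniform join-irreducible by Fact~\ref{JumpsUniformJoinIrreducible}, and using $E \equiv E'$ from universality, we conclude that $E$ itself is uniform join-irreducible.

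There is no real obstacle here; the only thing one has to be careful about is not to conflate uniform join-irreducibility with a stronger degree-theoretic notion. Since the definition only quantifies over reductions $E \leq R \oplus S$ and concludes $E \leq R$ or $E \leq S$, the property manifestly respects $\equiv$, and the argument reduces to two lines once Fact~\ref{JumpsUniformJoinIrreducible} and the jump-characterization of universality are in hand.
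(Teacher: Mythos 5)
Your proposal is correct and is essentially the paper's own argument: the paper proves the corollary by noting that universality gives $E \equiv E'$ and then appealing to Fact~\ref{JumpsUniformJoinIrreducible}, with the $\equiv$-invariance of uniform join-irreducibility left implicit (you have simply spelled out that easy verification). No differences of substance.
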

\begin{proof}
If $E$ is universal then $E\equiv E'$, see e.g. \cite{Gao-Gerdes}.
\end{proof}

\subsection{A warning about terminology}\label{ssct:terminology}
To simplify notations and terminology, throughout the paper we will often
identify the degree $\deg(X)$ of a given ceer $X$ with the ceer $X$ itself.
In this vein (although nothing prevents one from talking about joins and
meets in a pre-ordered structure, though they need not be unique, whereas
they are unique in posets) when talking about least upper bounds or
greatest lower bounds of ceers we will mean least upper bounds or greatest lower bounds of their degrees.
Moreover we will often identify computable functions with the morphisms they
induce: for instance, when we say that an equivalence class is in the range
of a computable function, we will in fact mean by this that the equivalence
class is in the range of the morphism induced by the function.

\subsection{The main results of the paper}
We show that the ceers can be partitioned into three classes: the finite
ceers ($\I)$, the light ceers ($\Light$, those for which there exists some
computable listing $(y_i)_{i\in \omega}$ of infinitely many pairwise
non-equivalent numbers), and the dark ceers ($\Dark$, the remaining ones).
These notions are closed under equivalence of ceers, so they partition the
degrees of ceers as well. In the language of posets, the corresponding
classes of degrees (also denoted by $\I$, $\Light$ and $\Dark$) are first
order definable in $\Ceers$. There is no least, no maximal, no greatest dark
degree, but there are infinitely many minimal dark degrees. We carry out a
thorough investigation on when incomparable degrees of ceers $X,Y$ have a
join or a meet, as they vary in the classes $\Dark$ and $\Light$: basically
all possibilities may happen, i.e. one can find pairs of ceers $X,Y$ with or
without a join or a meet, except for the case when $X,Y$ are both dark (when
they have no join), and the case of a pair in which at least one ceer is dark
(when they have no meet). Dark degrees are both join-irreducible and
meet-irreducible. In fact, meet-irreducibility characterizes a useful class
of ceers, called self-full, namely ceers $E$ such that the range of every
reduction from $E$ to $E$ intersects all equivalence classes. Self-fullness
properly extends darkness. It is also interesting to observe that there are
ceers $X,Y$ with a join $X\lor Y$ such that $X\lor Y \equiv X \oplus Y$, and
examples when $X\lor Y < X \oplus Y$. One of the main tools to prove results
about meets is the Exact Pair Theorem~\ref{thm:EPT}. We then analyze what
happens in the quotient structure $\Ceers_{\I}$ obtained by dividing $\Ceers$
modulo the degrees of finite ceers. Basically the main differences are that
in the quotient structure there are incomparable dark degrees with a join,
incomparable dark-light pairs with a meet, and all non-universal degrees are
meet-reducible. Particular attention in the above investigation has been
given to the ceers of the form $R_X$, where $X$ is a c.e. set. The
aforementioned results show in many cases simple elementary differences
between the structures $\Ceers$, $\Dark$, $\Light$, $\Ceers_{\I}$,
$\Dark_{\I}$ and $\Light_{\I}$. We show that in $\Ceers_{\I}$ every degree
different from the greatest one has infinitely many strong minimal covers,
whereas in $\Ceers$ every degree different from the greatest one has either
infinitely many strong minimal covers, or the cone strictly above it has a
least element: this latter property characterizes the self-full degrees.
Tables~\ref{table:1}~through~\ref{table:6} summarize the behavior of joins
and meets in the structures $\Ceers$ and $\Ceers_\I$. We look at
automorphisms of $\Ceers$, and show that there are continuum many
automorphisms fixing the dark ceers, and continuum many automorphisms fixing
the light ceers. Any automorphism fixing the light ceers send every ceer to
a ceer in the same $\I$-degree. Since there are automorphisms
of $\Ceers$ which do not preserve the jump
operation it follows that the jump is not first order definable in the structure
of $\Ceers$. Finally, we compute the complexity of the
index sets of the classes of ceers studied in the paper.

\section{Preliminary observations: uniform joins, quotients of
ceers, and restrictions}\label{sct:preliminary}

We begin with a few notions and easy preliminary observations
which will be repeatedly used  in the rest of the paper.

\subsection{Uniform joins}\label{ssct:uniform-joins}
The operation $\oplus$ on equivalence relations is associative modulo
$\equiv$ (even $\simeq$): whatever way one decides to associate, it is easy
to see that $E_0 \oplus E_1 \oplus \cdots \oplus E_{n-1}$, with $n\ge 2$, is
equivalent to the equivalence relation $x \rel{E} y$ if and only if there is
$i<n$ such that $x, y= i \textrm{ mod}_{n}$ and $\frac{x-i}{n} \rel{E_i}
\frac{x-i}{n}$: this will therefore be taken as the definition of $E_0 \oplus
E_1 \oplus \cdots \oplus E_{n-1}$ throughout the paper. Given a countable
collection $(E_i)_{i \in \omega}$ of ceers we define $\bigoplus_i E_i$ to be
the ceer so that $ \langle j,x\rangle \rel{\bigoplus_i E_i} \langle k,y
\rangle$ if and only if $j=k$ and $x \rel{E_j}y$: if the family $(E_i)_{i \in
\omega}$ is uniformly c.e. then $\bigoplus_i E_i$ is a ceer.

The mapping $X \mapsto X \oplus \Id_{1}$ is an order embedding:

\begin{lemma}\label{SuccessorIsInjective}
$E \oplus \Id_1\leq R\oplus \Id_1$ if and only if $E\leq R$.
\end{lemma}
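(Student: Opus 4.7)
\medskip

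\noindent \textbf{Proof plan.} The forward implication $(\Leftarrow)$ is routine: given a reduction $h$ witnessing $E \le R$, define $g(2x) = 2h(x)$ and $g(2x+1) = 1$. Since all odd numbers form a single class in both $E \oplus \Id_1$ and $R \oplus \Id_1$, and $g$ sends even-indexed $E$-classes to the corresponding $R$-classes (kept disjoint from the odd class), this $g$ reduces $E \oplus \Id_1$ to $R \oplus \Id_1$.

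The substantive direction is $(\Rightarrow)$. Suppose $f$ witnesses $E \oplus \Id_1 \le R \oplus \Id_1$. The classes of $E \oplus \Id_1$ are the $E$-classes (each represented by some $2y$) together with one extra class (the odd numbers), and similarly for $R \oplus \Id_1$. The induced injection on class sets must send the odd class of $E \oplus \Id_1$ somewhere. The plan is to define a reduction $g: E \to R$ uniformly by
\[
g(y)=
\begin{cases}
f(2y)/2, & \text{if } f(2y) \text{ is even},\medskip\\
f(1)/2, & \text{if } f(2y) \text{ is odd}.
\end{cases}
\]

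The key observation making this well-defined is that if $f(2y)$ is odd for some $y$, then the $E$-class $[y]_E$ is being sent (in $E \oplus \Id_1$) to the odd class of $R \oplus \Id_1$; since $f$ is a reduction (hence injective on classes), the odd class of $E \oplus \Id_1$ cannot also be sent there, so $f(1)$ must be even and $f(1)/2$ makes sense. Checking that $g$ is a reduction splits into cases according to the parities of $f(2y_1), f(2y_2)$: when $y_1 \rel{E} y_2$ the two values of $f$ have a common parity (as $2y_1 \rel{E \oplus \Id_1} 2y_2$), and in each parity the definition of $g$ yields $R$-equivalent outputs; conversely if $y_1 \not\rel{E} y_2$ then $f(2y_1) \not\rel{R \oplus \Id_1} f(2y_2)$, which rules out the both-odd case, handles the both-even case directly, and in the mixed case one observes that $g(y_1) \rel{R} g(y_2)$ would entail $f(2y_1) \rel{R \oplus \Id_1} f(1)$ and thus $2y_1 \rel{E \oplus \Id_1} 1$, an impossibility since $2y_1$ is even.

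The only conceptual obstacle is recognizing that one must carry the odd class of $R \oplus \Id_1$ explicitly back into $R$ when an $E$-class happens to land there, and the parity-based dichotomy above is the clean way to do this without needing to decide which ``case'' one is in.
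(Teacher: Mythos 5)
Your proof is correct and follows essentially the same route as the paper: in the forward direction you recover a reduction $E\leq R$ by halving $f(2y)$ when it is even and falling back on $f(1)/2$ when $[y]_E$ lands in the odd class, which is exactly the paper's construction (the paper phrases it via the element $a$ with $f(2a)$ odd and the computability of $[a]_E$, but the resulting function is the same, and your parity-based case split together with the observation that $f(1)$ must then be even is the same justification). The case analysis you give for verifying that $g$ is a reduction is complete and correct.
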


\begin{proof}
The right-to-left direction is immediate. For the other direction, let $f$ be
a reduction of $E\oplus \Id_1$ to $R\oplus \Id_1$. If the images of the
$E$-classes all land in $R$-classes, then the claim is obvious. Otherwise,
let $f(2a)$ be odd. Then the $E$-class of $a$ is computable, so we can define
$g(x)=\frac{f(1)}{2}$ if $x\rel{E} a$ and $g(x)=\frac{f(2x)}{2}$ otherwise.
This gives a reduction of $E$ to $R$.
\end{proof}

\begin{defn}\label{def:effectively-discrete}
We call an \emph{effective transversal} of a ceer $E$  any c.e. set $U$ whose
elements are pairwise non-$E$-equivalent.
A \emph{strong effective transversal} of a ceer $E$ is a decidable effective
transversal $U$ such that $[U]_{E}=U$.
\end{defn}

\begin{lemma}\label{obs:coproduct2}
Suppose $E,R$ are ceers such that there are a reduction $f$ showing $E \le
R$, and an infinite effective transversal $U$ of $R$ such that the predicate
$f(x) \in [U]_R$ is decidable. Then $E\oplus \Id \leq R$. In particular (by
considering the identity reduction $E \leq E$) if $E$ has an infinite
effective transversal $U$ such that $[U]_E$  is decidable then $E \oplus \Id
\leq E$. (Notice that the assumption that the predicate $f(x) \in [U]_R$ is
decidable is trivially fulfilled if $U$ is a strong effective transversal).
\end{lemma}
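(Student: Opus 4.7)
The plan is to build an explicit reduction $g: E\oplus \Id \to R$ that uses $f$ on the even side and parks the odd side into a reserved slice of the transversal $U$. Since $U$ is an infinite c.e.\ set, fix a computable injective enumeration $u_0,u_1,u_2,\ldots$ of $U$. The idea is to reserve the even-indexed elements $u_{2i}$ to receive those $E$-images whose $R$-class already meets $U$, and to reserve the odd-indexed elements $u_{2a+1}$ for the $\Id$-summand, so that the two sides never collide.

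The key computability input is this: since $\{a : f(a)\in [U]_R\}$ is decidable and $U$ is an effective transversal, whenever $f(a)\in [U]_R$ there is a unique index $i(a)$ with $f(a)\rel{R} u_{i(a)}$, and this $i(a)$ is computable uniformly in $a$ (search in parallel for some $i$ with $f(a)\rel{R} u_i$; the search terminates because at least one such $i$ exists and uniqueness follows from $U$ being a transversal). Then define
\[
g(2a) = \begin{cases} f(a), & \text{if } f(a)\notin [U]_R,\\ u_{2\,i(a)}, & \text{if } f(a)\in [U]_R, \end{cases} \qquad g(2a+1) = u_{2a+1}.
\]

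To check that $g$ is a reduction: the predicate ``$f(a)\in [U]_R$'' is constant on $E$-classes (since $[U]_R$ is $R$-closed and $f$ is a reduction), so on the even side $g$ preserves and reflects $E$-equivalence, splitting into the obvious subcases according to whether $f(a),f(b)\in [U]_R$ and using that distinct elements of the transversal $U$ are $R$-inequivalent. On the odd side, $g$ maps $\Id$ injectively into pairwise $R$-inequivalent elements of $U$. Even and odd inputs are kept $R$-apart: if $f(a)\notin [U]_R$ then $g(2a)\notin [U]_R$ whereas $g(2b+1)\in U\subseteq [U]_R$; if $f(a)\in [U]_R$ then $g(2a)=u_{2i(a)}$ and $g(2b+1)=u_{2b+1}$ have $U$-indices of opposite parity, hence are distinct elements of $U$ and so $R$-inequivalent.

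There is no real obstacle; the construction is direct. The sole role of the decidability hypothesis on $f(x)\in [U]_R$ is to make the function $a\mapsto i(a)$ computable on a decidable domain, which is exactly what allows the reindexing of the ``colliding'' part of $\operatorname{ran}(f)$ into the even slice of $U$ and frees the odd slice to absorb the $\Id$-summand.
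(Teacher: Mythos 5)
Your proof is correct and follows essentially the same route as the paper's: the paper splits the transversal $U$ into two infinite disjoint c.e.\ pieces $V$ and $W$ (via computable bijections) to receive, respectively, the $f$-images landing in $[U]_R$ and the $\Id$-summand, which is exactly your even-index/odd-index partition of the enumeration of $U$. The only cosmetic difference is that you make the re-indexing map $a\mapsto i(a)$ and the parity bookkeeping explicit, while the paper packages them as the bijections $h$ and $k$.
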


\begin{proof}
Let $f$ be a reduction from $E$ to $R$, and let $U$ be an infinite effective
transversal of $R$, effectively listed without repetitions by $(y_i)_{i\in
\omega}$. Let $V,W$ be infinite disjoint c.e. sets so that $U = V \cup W$,
with $h,k$ computable functions such that $h$ is a bijection between $\{y_i
\mid i\in \omega\}$ and $V$, and $k$ is a bijection between $\omega$ and $W$.
Define a reduction from $E \oplus \Id$ to $R$ as follows: map an even number
$2x$ to $f(x)$ if $f(x) \notin [U]_{R}$, and otherwise map $2x$ to $h(y_i)$
if $f(x)\rel{R} y_i$; map $2x+1$ to $k(x)$.
\end{proof}

\begin{defn}\label{def:effectively-for-reduction}
If $E,R,U,f$ are as in Lemma~\ref{obs:coproduct2} we call $U$ an
\emph{effective transversal
(\emph{or a} strong effective transversal \emph{according to the case}) for
the reduction $f$ from $E$ to $R$}.
\end{defn}

\begin{lemma}\label{obs:plus-span}
Suppose that $f$ is a reduction from $E$ to $R$, and there is  an effective
transversal $U$ of $R$ such that $[U]_{R}$ does not intersect $\im(f)$ and
$n=|U|$ (where $|U|$ denotes the cardinality of $U$), with $1 \leq n\leq
\omega$. Then $E \oplus \Id_n \leq R$. Moreover, if $[U]_R$ contains all the
equivalence classes not in the range of $f$ then $E\oplus \Id_n \equiv R$.
\end{lemma}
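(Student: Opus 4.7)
The plan is to construct the reduction from $E \oplus \Id_n$ to $R$ explicitly by using $f$ on the $E$-side and the transversal $U$ on the $\Id_n$-side, exploiting the disjointness hypothesis $[U]_R \cap \im(f) = \emptyset$ to ensure that the two pieces do not collide.

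First, since $U$ is an effective transversal with $|U|=n$ (for $1\le n\le\omega$), list $U$ without repetitions by a computable sequence $(y_i)_{i<n}$. Recalling the coding of $E \oplus \Id_n$ from Subsection~\ref{ssct:uniform-joins}, define a computable $g$ by sending the elements encoding the $E$-side to their $f$-images, and sending the elements encoding the $i$-th $\Id_n$-class to $y_i$. To verify that $g$ reduces $E \oplus \Id_n$ to $R$ I would argue by cases. Within the $E$-side this is immediate from $f$ being a reduction; within the $\Id_n$-side the $y_i$ are pairwise non-$R$-equivalent since $U$ is an effective transversal; and for the cross case, an element of the $E$-side is sent into $\im(f)$ while an element of the $\Id_n$-side is sent into $[U]_R$, and these two sets are disjoint by hypothesis, matching the fact that such elements are never $(E \oplus \Id_n)$-equivalent.

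For the ``moreover'' part, under the stronger hypothesis that $[U]_R$ contains every $R$-class not already intersected by $\im(f)$, the range of the reduction $g$ just built intersects every $R$-class: the classes in $\im(f)$ are covered by the $E$-side piece, and the classes in $[U]_R$ are covered by the $y_i$'s. Lemma~\ref{ontoReductionsFlip} then yields $R \le E \oplus \Id_n$, so $E \oplus \Id_n \equiv R$.

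There is no real obstacle here; the only subtlety worth spelling out carefully is the cross-case of the reduction, which is exactly what the disjointness assumption $[U]_R \cap \im(f) = \emptyset$ is there to handle, and the ``in particular'' observation of Lemma~\ref{obs:coproduct2} is essentially a special case of the present construction where $U$ was instead allowed to meet $\im(f)$ and had to be rerouted via $h$.
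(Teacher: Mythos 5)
Your proof is correct and follows essentially the same route as the paper's: list $U$ without repetitions as $(y_i)_{i<n}$, send the $E$-side via $f$ and the $i$-th $\Id_n$-class to $y_i$, use the disjointness of $[U]_R$ and $\im(f)$ for the cross case, and obtain the ``moreover'' clause from Lemma~\ref{ontoReductionsFlip}. No gaps.
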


\begin{proof}
Let  $(y_i)_{i < n}$ effectively list $U$  without repetitions. Map $2x
\mapsto f(x)$ and $2x+1 \mapsto y_i$ where $x=i \textrm{ mod}_n$ ($x=i
\textrm{ mod}_\omega$ means $x=i$). The latter claim about the equivalence
$E\oplus \Id_n \equiv R$ follows from Lemma~\ref{ontoReductionsFlip}.
\end{proof}

\subsection{Quotients}
If $E$ is an equivalence relation and $W\subseteq \omega^2$ then by $E_{/W}$
we denote the equivalence relation generated by the set of pairs $E\cup W$.
If $W$ is a singleton, say $W=\{(x,y)\}$, then we simply write $E_{/(x,y)}$
instead of $E_{/\{(x,y)\}}$. Clearly, if $E$ is a ceer and $W$ is c.e. then
$E_{/W}$ is a ceer, called a \emph{quotient} of $E$: notation and terminology
are motivated by the obvious facts that $E \subseteq E_{/W}$ and, given a
ceer $R$, we have that $R$ is a quotient of $E$ in the sense of category
theory (i.e. there is an onto morphism from $E$ to $R$) if and only if there
is a c.e. set $W\subseteq \omega^2$ such that $R \simeq E_{/W}$.

\begin{lem}\label{ComputableCollapses}
Let $E$ be any ceer with a computable class $[x]_E$, and let $y$ be
non-$E$-equivalent to $x$. Then $E_{/(x,y)}\oplus \Id_1\equiv E$. More
generally if $(x_1,y_1), \ldots, (x_n, y_n)$, $n \ge 1$, are pairs so that
each $[x_i]_E$ is computable and at least one pair consists of
$E$-inequivalent numbers, then there is $1\le k\le n$ such that
$E_{/\{(x_i,y_i)\mid 1\le i\le n\}} \oplus \Id_k \equiv E$.
\end{lem}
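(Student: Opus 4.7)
The plan is to handle the singleton case by explicit reductions in both directions exploiting the decidability of $[x]_E$, and then extend to the general case by induction on $n$.

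For the singleton forward direction $E \leq E_{/(x,y)} \oplus \Id_1$, I would define $f(z) = 1$ when $z \rel{E} x$ (using decidability of $[x]_E$) and $f(z) = 2z$ otherwise. This routes the class $[x]_E$ to the single odd $\Id_1$-class while placing every other element on the even side in its natural $E_{/(x,y)}$-class. The only nontrivial case in the verification is $z_1, z_2$ both non-$E$-equivalent to $x$: here $2z_1 \rel{E_{/(x,y)} \oplus \Id_1} 2z_2$ reduces to $z_1 \rel{E_{/(x,y)}} z_2$, and since neither element lies in $[x]_E$, the only way this holds via the new $(x,y)$-merger is if both lie in $[y]_E$, which already forces $z_1 \rel{E} z_2$. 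For the backward direction $E_{/(x,y)} \oplus \Id_1 \leq E$, I would define $h(2u) = y$ if $u \rel{E} x$, $h(2u) = u$ otherwise, and $h(2u+1) = x$. Thus the odd $\Id_1$-class maps to $[x]_E$, and the merged class $[x]_E \cup [y]_E$ on the even side lands in $[y]_E$ (elements $E$-equivalent to $x$ are explicitly redirected to $y$, while elements $E$-equivalent to $y$ already sit in $[y]_E$). A case analysis confirms $h$ is a reduction; the crucial check is when $u_1 \rel{E} x$ and $u_2 \not\rel{E} x$, where both source and target agree that the pair is related iff $u_2 \rel{E} y$.

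For the general statement I proceed by induction on $n$. The base case $n = 1$ is the singleton case with $k = 1$. For the inductive step, since at least one pair is nontrivial I may assume (after reordering) that $x_1 \not\rel{E} y_1$, and apply the singleton case to obtain $E \equiv E_{/(x_1, y_1)} \oplus \Id_1$. Write $E' = E_{/(x_1, y_1)}$ and $W' = \{(x_2, y_2), \ldots, (x_n, y_n)\}$. If every pair in $W'$ becomes trivial in $E'$, then $E_{/W} = E'$ and $k = 1$ suffices. Otherwise I invoke the inductive hypothesis on $(E', W')$ to obtain $1 \leq k' \leq n-1$ with $E_{/W} \oplus \Id_{k'} \equiv E'$, and then conclude $E_{/W} \oplus \Id_{k'+1} \equiv E' \oplus \Id_1 \equiv E$, setting $k = k' + 1 \leq n$.

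The hard part will be verifying that the inductive hypothesis is actually applicable to $(E', W')$: each $[x_i]_{E'}$ with $i \geq 2$ must remain computable. The trouble is that if some $x_i$ is $E$-equivalent to $x_1$ or $y_1$, then $[x_i]_{E'}$ absorbs $[y_1]_E$, which need not be decidable. To address this I would choose the first pair with care, preferring one whose merger does not contaminate any other $[x_i]_E$, and preprocess $W$: whenever $x_i \rel{E} x_j$ (decidable), replace $(x_i, y_i)$ by the equivalent pair $(x_j, y_i)$, consolidating pairs that share a common computable $x$-class; whenever $y_i \rel{E} x_j$ the class $[y_i]_E = [x_j]_E$ is already decidable, so such pairs can be processed first without jeopardizing decidability. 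With this ordering, each merger either takes place entirely within decidable classes or is the last merger inside its connected component, and the induction carries through to produce some $k$ with $1 \leq k \leq n$ and $E_{/W} \oplus \Id_k \equiv E$.
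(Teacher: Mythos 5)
Your $n=1$ argument is correct and is essentially the paper's: the paper exhibits only the reduction of $E_{/(x,y)}$ to $E$ that sends $[x]_E$ to $y$ and is the identity elsewhere, and then quotes Lemma~\ref{obs:plus-span} to upgrade a reduction missing exactly one class to the equivalence $E_{/(x,y)}\oplus \Id_1\equiv E$; your two explicit maps $f$ and $h$ amount to the same argument with that appeal unwound, and both directions check out.

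The general case is where your proposal has a genuine gap, and it sits exactly at the point you flag. The dichotomy you assert --- that each merger either takes place entirely within decidable classes or is the last merger inside its connected component --- is false. Take $n=2$ with $x_1 \rel{E} x_2$, $[x_1]_E$ computable, and $y_1 \nrel{E} y_2$ with both $[y_1]_E$ and $[y_2]_E$ non-computable and disjoint from $[x_1]_E$. The collapse graph then has a single component $[y_1]_E$---$[x_1]_E$---$[y_2]_E$ containing two genuine mergers, neither of which is confined to decidable classes, and only one of which can be performed last. Whichever pair you process first, say $(x_1,y_1)$, the intermediate quotient $E'=E_{/(x_1,y_1)}$ has $[x_2]_{E'}=[x_1]_E\cup[y_1]_E$, which need not be computable, so the inductive hypothesis is simply unavailable for the remaining pair; your preprocessing step of replacing $(x_i,y_i)$ by $(x_j,y_i)$ when $x_i\rel{E}x_j$ leaves the quotient unchanged and does not remove the obstruction. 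For comparison, the paper treats the general case not by induction but by one simultaneous reduction of $E_{/W}$ to $E$: in each connected component of merged classes one class is kept and every other class of the component --- these must be decidable for the redirection to be computable --- is sent to a fixed representative of the kept class, after which Lemma~\ref{obs:plus-span} applies with $k$ the total number of vacated classes. That one-shot argument avoids your additional problem of computability degrading through intermediate quotients, but it too silently requires that each component contain at most one non-decidable class (so that the non-decidable one can be the one kept); this holds in every application made of the lemma in the paper (either all classes involved are computable, or the pairs are pairwise inequivalent), and it is the hypothesis your induction would also need to isolate and verify before it can ``carry through.''
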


\begin{proof}
We prove the claim for $n=1$. By Lemma~\ref{obs:plus-span}, it is enough to
see that one can reduce $E_{/(x,y)}$ to $E$ by a reduction that misses
exactly one class. For this reduction simply send every $z\in [x]_E$ to $y$,
every other element to itself. The range of this reduction misses exactly
$[x]_E$.

The general case is similar. Just notice that $k$ might be $<n$ even if all
$x_i$ are non-$E$-equivalent and all $y_i$ are non-$E$-equivalent: this is
the case for instance if $x_{1} \rel{E} y_{2}$ and $y_{1} \rel{E} x_{2}$,
in which case we are creating less than $n$ $E$-collapses.
\end{proof}

\begin{lemma}\label{obs:coproduct}
Suppose we have reductions of ceers $E_1 \le R$, $E_2 \leq R$ witnessed
respectively by computable functions $f_1, f_2$, and let $W=\{(x,y)\mid
f_1(x) \rel{R} f_2(y\}$. Then $(E_1 \oplus E_2)_{/W} \leq R$, where for
simplicity we denote  $(E_1 \oplus E_2)_{/W}=(E_1 \oplus E_2)_{/\{(2x, 2y+1)
\mid (x,y) \in W\}}$.
\end{lemma}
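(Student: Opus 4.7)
The plan is to exhibit the obvious reduction and verify it works by a routine case analysis on the parity of inputs.

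Define $g\colon \omega \to \omega$ by
\[
g(2x) = f_1(x), \qquad g(2y+1) = f_2(y).
\]
I would first observe that $W$ is c.e. (since $R$ is c.e. and $f_1,f_2$ are computable), so the quotient $(E_1 \oplus E_2)_{/W}$ is indeed a ceer. The main claim is that $g$ reduces $(E_1 \oplus E_2)_{/W}$ to $R$.

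For the forward direction, since $(E_1 \oplus E_2)_{/W}$ is by definition the smallest equivalence relation containing $E_1\oplus E_2$ together with the pairs $\{(2x,2y+1)\mid (x,y)\in W\}$, it suffices to check that $g$ sends each generator to an $R$-equivalent pair. If $2x \rel{E_1 \oplus E_2} 2x'$ then $x \rel{E_1} x'$ and so $f_1(x) \rel{R} f_1(x')$ because $f_1$ is a reduction; the case of two odd numbers is analogous using $f_2$. If $(x,y)\in W$ then by definition $f_1(x) \rel{R} f_2(y)$, i.e.\ $g(2x) \rel{R} g(2y+1)$. Transitivity of $R$ then propagates this through arbitrary chains.

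For the reverse direction, suppose $g(a) \rel{R} g(b)$, and split into cases on the parities. If $a=2x$ and $b=2x'$, then $f_1(x)\rel{R} f_1(x')$ forces $x\rel{E_1} x'$ (again because $f_1$ is a reduction), hence $a \rel{E_1\oplus E_2} b$ and a fortiori $a \rel{(E_1\oplus E_2)_{/W}} b$. The all-odd case is symmetric via $f_2$. In the mixed case $a=2x$, $b=2y+1$, the hypothesis $f_1(x) \rel{R} f_2(y)$ is exactly the definition of $(x,y)\in W$, so $(2x,2y+1)$ is one of the pairs explicitly added to the quotient, giving $a \rel{(E_1\oplus E_2)_{/W}} b$ at once; the remaining mixed case is symmetric.

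There is no serious obstacle here; the whole argument is forced once one writes down the evident piecewise-defined $g$. The only point worth flagging is that the mixed-parity case of the reverse direction is precisely where the quotient by $W$ (rather than plain $E_1 \oplus E_2$) is needed, explaining why the lemma is stated in this form.
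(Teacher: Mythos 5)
Your proof is correct and uses exactly the reduction the paper gives ($h(2x)=f_1(x)$, $h(2x+1)=f_2(x)$); the paper simply states this map and omits the verification, which you carry out correctly in both directions. Nothing to add.
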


\begin{proof}
A reduction $h$ from $(E_1 \oplus E_2)_{/W}$ to $R$ is simply
$h(2x)=f_{1}(x)$ and $h(2x+1)=f_{2}(x)$.
\end{proof}

\subsection{Restrictions}
We conclude this preliminary section by defining the notion of restriction of
a ceer with respect to a given c.e. set.

If $E$ is a ceer and $W$ is a c.e. set then pick a computable surjection $h:
\omega \rightarrow  [W]_E$ and define $E\restriction{W}$ (called the
\emph{restriction of $E$ to $W$}) to be the ceer $x \rel{E\restriction{W}} y$
if and only if $h(x) \rel{E} h(y)$. It is easy to see that up to $\equiv$ the
definition does not depend on the chosen $h$. If $[W]_{E}$ is infinite we may
assume that $h$ is a computable bijection $h: \omega \rightarrow [W]_E$.
Clearly $E\restriction{W} \leq E$.

The next lemma summarizes some properties of restrictions which will be
repeatedly used throughout the paper.

\begin{lemma}\label{coproduct3}
The following hold:
\begin{enumerate}
\item Suppose $f$ gives a reduction $E \le R$, $W$ is a c.e. set, and $U$
    is an effective transversal of $R$ such that $[U]_R\cap [W]_R=
    \emptyset$, and $\im(f) \subseteq [W]_R\cup [U]_R$. Then there exists a
    ceer $E_0$ such that $E \leq E_{0} \oplus \Id_n$ where $n \leq \omega$
    is the number of classes in $[U]_{R}$ (we agree that $E \leq E_{0}
    \oplus \Id_0$ must be understood as $E\leq E_0$), and $E_0 \leq R$ via
    a reduction whose image coincides exactly with the $R$-equivalence
    classes of elements in $\ran(f)\cap [W]_R$ (thus if $R=X \oplus Y$ and
    $W$ is the set of even numbers then $E_0 \leq X$). Moreover all classes
    in the $E_0$-part of $E_{0} \oplus \Id_n$ are in the range of the
    reduction $E \leq E_{0} \oplus \Id_n$; and if the range of $f$
    intersects all classes in $[U]_{R}$ then $E \equiv E_{0} \oplus \Id_n$.

\item Suppose $f$ gives a reduction $E \le R \oplus \Id_{n}$, for some $1
    \leq n\in \omega$. Then there exists a ceer $E_{0}$ such that $E_{0}
    \le R$ and $E \equiv E_{0} \oplus \Id_{k}$, for some $k \le n$.

\item  Suppose $E \le X \oplus \Id_{n}$ and $E \le Y \oplus \Id_{n}$, for
    some $1 \leq n\in \omega$. Then there exists a ceer $E_{0}$ such that
    $E_{0} \le X,Y$ and $E \equiv E_{0} \oplus \Id_{k}$, for some $k \le
    2n$.

\item Suppose $f$ gives a reduction $E \le R \oplus \Id$, where $\im(f)$
    intersects infinitely many classes in the $\Id$-part. Then there exists
    a ceer $E_{0}$ such that $E_{0} \le R$ and $E \equiv E_{0} \oplus \Id$.
\end{enumerate}
\end{lemma}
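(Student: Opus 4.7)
The plan is to prove (1) as the main structural statement; parts (2)--(4) then follow by applying (1) inside $R\oplus\Id_n$ or $R\oplus\Id$, with the ``$R$-half'' playing the role of $W$ and the ``identity-half'' playing the role of $U$.

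For (1), the key observation is that $[W]_R$ and $[U]_R$ are both c.e.\ sets and by hypothesis disjoint, with $\im(f)\subseteq [W]_R\cup [U]_R$. So on input $x$ we may simultaneously enumerate both closures until $f(x)$ appears in one of them; this makes the predicate ``$f(x)\in [W]_R$'' decidable, with complementary predicate ``$f(x)\in [U]_R$''. Let $A=f^{-1}([W]_R)$ and $B=f^{-1}([U]_R)$: both are decidable, $E$-closed, and partition $\omega$. Define $E_0:=E\restriction A$. Since $A$ is decidable and $E$-closed, composing $f$ with the computable bijection from $\omega$ onto $A$ is a reduction witnessing $E_0\leq R$ whose image is exactly the $R$-classes of $\ran(f)\cap [W]_R$; in particular, when $R=X\oplus Y$ and $W=2\omega$, this image lies in the $X$-half, yielding $E_0\leq X$. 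For $x\in B$, the effective listing $(y_i)_{i<n}$ of $U$ lets us compute the unique $i<n$ with $f(x)\rel{R} y_i$ by enumerating $R$. Gluing gives the reduction $E\leq E_0\oplus\Id_n$: send $x\in A$ to the even code of its $E_0$-class, and $x\in B$ to $2i+1$. Every $E_0$-class is hit by construction; if in addition $\im(f)$ meets every $R$-class in $[U]_R$, then all $n$ odd codes are hit, and Lemma~\ref{ontoReductionsFlip} upgrades the reduction to the equivalence $E\equiv E_0\oplus\Id_n$.

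For (2), apply (1) to the given reduction $f\colon E\leq R\oplus\Id_n$ with $W=2\omega$ and $U=\{1,3,\dots,2n-1\}$; then $[W]_{R\oplus\Id_n}\cup [U]_{R\oplus\Id_n}=\omega$, so the hypotheses hold. Part~(1) delivers $E\leq E_0\oplus\Id_n$ with $E_0\leq R$; letting $k\leq n$ be the number of odd codes actually hit by $f$ and relabelling them as $\{1,\dots,2k-1\}$, the modified reduction hits every class of $E_0\oplus\Id_k$, whence $E\equiv E_0\oplus\Id_k$ by Lemma~\ref{ontoReductionsFlip}. For (3), apply (2) to $E\leq X\oplus\Id_n$ to obtain $E\equiv E_1\oplus\Id_{k_1}$ with $E_1\leq X$ and $k_1\leq n$; since $E_1\leq E\leq Y\oplus\Id_n$, apply (2) again to $E_1$ to get $E_1\equiv E_0\oplus\Id_{k_2}$ with $E_0\leq Y$ and $k_2\leq n$. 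Since $E_0\leq E_1\leq X$, also $E_0\leq X$; and $E\equiv E_1\oplus\Id_{k_1}\equiv E_0\oplus\Id_{k_1+k_2}$ with $k_1+k_2\leq 2n$. Part~(4) is the $n=\omega$ analogue of (2): take $W=2\omega$ and $U$ the set of all odd numbers inside $R\oplus\Id$; the hypothesis that $\im(f)$ intersects infinitely many $\Id$-classes means that, after reindexing the hit indices to exhaust $\omega$, the $\Id$-part of the resulting reduction hits every class, and Lemma~\ref{ontoReductionsFlip} yields $E\equiv E_0\oplus\Id$.

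The main obstacle is the careful bookkeeping inside (1): tracking which $R$-classes of $\ran(f)$ land in $[W]_R$ versus $[U]_R$, verifying that $A$ and $B$ are $E$-closed (which uses that $[W]_R$ and $[U]_R$ are each $R$-closed), and treating the degenerate cases where $A$ is finite (so that $E_0$ collapses to a finite ceer and one must check the statement still parses via the restriction definition), $U$ is empty (so $n=0$ and the $\Id_n$-part vanishes), or $f$ hits only finitely many classes in $[U]_R$ (where one must truncate $n$ to $k$ to obtain equivalence rather than mere reducibility). Once (1) is handled with this attention, (2)--(4) reduce to plugging in the right partition of the target.
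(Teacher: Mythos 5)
Your proposal is correct and follows essentially the same route as the paper: part (1) is proved by restricting $E$ to the ($E$-closed, decidable) preimage $f^{-1}([W]_R)$ and gluing that restriction to the $\Id_n$-part read off from the transversal $U$, with Lemma~\ref{ontoReductionsFlip} supplying the final equivalences, and parts (2) and (4) are the same even/odd specializations the paper uses. The only divergence is in (3), where you iterate part (2) twice (first through $X$, then through $Y$) instead of restricting once to $f^{-1}[W]\cap g^{-1}[W]$ as the paper does; both arguments are valid and yield the same bound $k\le 2n$.
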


\begin{proof}
We prove the various items one by one:
\begin{enumerate}

\item Let $E,R,f,W, U$ be as in the statement of the lemma. Let $E_{0}=
    E\restriction{V}$ where $V= f^{-1}[[W]_R]$, and let $h$ be the
    computable surjection used to define the restriction. Then $f \circ h$
    gives a reduction of $E_0$ to $R$ whose image coincides exactly with
    the $R$-equivalence classes of elements in $\ran(f)\cap [W]_R$.
    For a reduction $E
    \leq E_{0} \oplus \Id_n$, choose a computable listing $(y_i)_{i < n}$
    without repetitions of $U$, and consider the computable function $g$
    defined as follows: on input $x$ search for the first $y \in W \cup U$
    so that $f(x) \rel{R} y$ (exactly one of the two cases among $y \in W$
    or $y \in U$ holds): if $y \in U$, and $y \rel{R} y_i$, then map
    $x\mapsto 2i+1$; otherwise map $x \mapsto 2h^{-1}(x)$, where $h^{-1}$
    is the partial computable function so that $h^{-1}(z)$ is the first
    seen $u$ such that $h(u)=z$. It is clear that all classes in the
    $E_0$-part are in the range of the reduction. The last claim (about $E
    \equiv E_{0} \oplus \Id_n$ if the range of $f$ intersects all classes
    in $[U]_R$) follows from Lemma~\ref{ontoReductionsFlip}.

\item In item (1) take $W$ to be the even numbers and $U$ its complement.
    Then by item (1) there exists a ceer $E_0$ such that $E_{0} \leq R$ and
    $E \le E_{0} \oplus \Id_{n}$. If $k\le n$ is such that $f$ hits exactly
    $k$ classes in the $\Id_n$-part then, by a slight modification, $f$ can
    be viewed as a reduction $E \le E_{0} \oplus \Id_{k}$, which can be
    inverted by Lemma~\ref{ontoReductionsFlip}, as all classes in the
    $E_0$-part and in the $\Id_k$-part are all in the range of the
    reduction.

\item Let $f,g$ provide reductions $E \le X \oplus \Id_{n}$ and $E \le Y
    \oplus \Id_{n}$, respectively. Take $W$ to be the even numbers, and let
    $E_{0}=E\restriction{V}$, where $V=f^{-1}[W]\cap g^{-1}[W]$. Then
    arguing as in item (1) it is not difficult to show that $E_{0} \le X,
    Y$, and $E \le E_{0} \oplus \Id_{2n}$: in fact, as in the previous
    item, $E \equiv E_{0} \oplus \Id_{k}$ for some $k \le 2n$, as all
    classes in the $E_0$-part are in the range of the reduction.

\item Let $Y=\{y \mid 2y+1 \in \range(f)\}$. Since $Y$ is an infinite c.e.
    set let $g$ be a computable bijection from $Y$ to $\omega$. Then the
    function $h(x)=f(x)$ if $f(x)$ is even, and $h(x)=g(f(x))$ if $f(x)$ is
    odd is a reduction, having all odd numbers in its range. By item (1)
    there is a ceer $E_0$ such that $E \le E_{0} \oplus \Id$ and all
    classes of $E_0 \oplus \Id$ are in the range of the reduction, so that
    $E \equiv E_{0} \oplus \Id$ by Lemma~\ref{ontoReductionsFlip}.
\end{enumerate}
\end{proof}

\section{The collection of dark ceers}
It is known that there are ceers $R$ with infinitely many classes such that
$\Id \nleq R$: for instance, take $R_U$ where $U$ is any simple set. This
observation originates the next definition, which singles out the class of
dark ceers.

\begin{defn}
A ceer $R$ is \emph{dark} if it has infinitely many classes and $\Id\not\leq
R$, i.e. there is no infinite c.e. set $W$ so that $x \nrel{R} y$ for each
pair of distinct $x,y\in W$.
If $\Id\leq E$, then we say that $E$ is \emph{light}.
\end{defn}

Equivalently $E$ is light if and only if there is an infinite effective
transversal of $E$.

Notice that every universal ceer is light. The next observation shows that
the dark ceers are downward closed among those with infinitely many classes,
and they are closed under $\oplus$. In particular, a degree contains a dark
ceer if and only if it is comprised of only dark ceers; similarly a degree
contains a light ceer if and only if it is comprised of only light ceers.

\begin{obs}\label{obs:dark-closure}
If $E\leq R$, $E$ has infinitely many classes, and $R$ is dark, then $E$ is
dark. If $E_1, E_2$ are dark ceers then $E_1\oplus E_2$ is a dark ceer.
\end{obs}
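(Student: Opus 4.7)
For the first claim, the plan is to contrapose: assume $E$ is light and $E \leq R$ via a computable function $f$, and produce an infinite effective transversal of $R$. If $W$ is an infinite c.e. set witnessing that $E$ is light, then for distinct $x,y \in W$ we have $x \nrel{E} y$, so $f(x) \nrel{R} f(y)$; in particular $f(x) \neq f(y)$, so $f$ is injective on $W$ and $f[W]$ is an infinite c.e. set of pairwise non-$R$-equivalent numbers, contradicting darkness of $R$. Note that the hypothesis that $E$ has infinitely many classes is used implicitly only to make ``dark'' meaningful for $E$ (it is part of the definition); the argument itself needs no further use of it.

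For the second claim, first note that $E_1 \oplus E_2$ has infinitely many classes since $E_1$ does. Again we contrapose: suppose $E_1 \oplus E_2$ is light, so there is an infinite c.e. set $W$ of pairwise non-$(E_1 \oplus E_2)$-equivalent numbers. Partition $W = W_{\text{even}} \cup W_{\text{odd}}$; at least one of these is infinite, say $W_{\text{even}}$ (the other case is symmetric). Then $V = \{ u : 2u \in W_{\text{even}}\}$ is an infinite c.e. set, and for distinct $u,v \in V$ we have $2u \nrel{E_1 \oplus E_2} 2v$, which by definition of $\oplus$ means $u \nrel{E_1} v$. Hence $V$ witnesses that $E_1$ is light, contradicting darkness of $E_1$.

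The two arguments are essentially immediate once the right c.e. set is isolated. The only mild subtlety is the injectivity observation in the first part, which is what lets one pass from a reduction witness to an honest effective transversal downstairs; this will be the ``main step'' to highlight, though it is hardly an obstacle.
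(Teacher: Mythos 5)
Your proof is correct and follows essentially the same route as the paper: the first claim is the paper's one-line observation that $\Id\leq E\leq R$ gives $\Id\leq R$ (you just unwind the composition explicitly via the image of a transversal), and the second claim is verbatim the paper's parity-splitting argument.
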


\begin{proof}
If $E\leq R$, $E$ has infinitely many classes, and $\Id\leq E$, then $\Id
\leq R$.

Suppose $E_1\oplus E_2$ is light, i.e., let $X$ be an infinite effective
transversal of $E_1\oplus E_2$ (see
Definition~\ref{def:effectively-discrete}). Then either $X$ contains an
infinite set of even elements or an infinite set of odd elements. Thus either
$E_1$ or $E_2$ is light.
\end{proof}

The following theorem shows that there is no least dark ceer, unlike the
light ceers (where $\Id$ is the least light ceer), and there are infinitely
many minimal dark ceers. It will be useful later to have this result combined
with lower cone avoidance, so we do that here.

\begin{thm}\label{MinimalDark}
Let $R$ be a given non-universal ceer. Then there are infinitely many
pairwise incomparable dark ceers $(E_l)_{l \in \omega}$ such that, for every
$l$ and ceer $X$, $E_l \not\leq R$ and
\[
X<E_l \Rightarrow (\exists n)[X\leq \Id_n].
\]
\end{thm}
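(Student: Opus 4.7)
The plan is to construct $(E_l)_{l \in \omega}$ simultaneously by a finite-injury priority construction, meeting the following requirements for each $l \in \omega$:
\begin{itemize}
\item $\mathcal{I}^l_k$: $E_l$ has at least $k+1$ classes, for each $k$;
\item $\mathcal{D}^l_e$: if $W_e$ is infinite, then some two distinct elements of $W_e$ are $E_l$-equivalent;
\item $\mathcal{N}^l_e$: $\phi_e$ is not a reduction from $E_l$ to $R$;
\item $\mathcal{J}^{l,l'}_e$ for $l' \neq l$: $\phi_e$ is not a reduction from $E_l$ to $E_{l'}$;
\item $\mathcal{M}^l_e$: if $\phi_e$ is total, then $\phi_e(\omega)$ meets either only finitely many, or all, $E_l$-classes.
\end{itemize}
The $\mathcal{I}^l_k$'s and $\mathcal{D}^l_e$'s together ensure darkness; the $\mathcal{N}^l_e$'s and $\mathcal{J}^{l,l'}_e$'s give $E_l \not\leq R$ and pairwise incomparability; and the $\mathcal{M}^l_e$'s give minimality. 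Indeed, if $X \leq E_l$ via some $\phi_e$ and $X$ has infinitely many classes, then $\phi_e(\omega)$ meets infinitely many $E_l$-classes, whence by $\mathcal{M}^l_e$ it meets every $E_l$-class, and Lemma~\ref{ontoReductionsFlip} gives $E_l \leq X$, so $X \equiv E_l$. Contrapositively, $X < E_l$ forces $X$ to have only finitely many classes, hence $X \leq \Id_n$ for an appropriate $n$.

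The central technical ingredient is the strategy for $\mathcal{M}^l_e$. Assign to the requirement a sequence of followers $a^{l,e}_0, a^{l,e}_1, \ldots$, chosen fresh each time it is injured and intended to remain pairwise $E_l$-inequivalent. At stage $s$, count the number $k$ of distinct $E_l$-classes currently met by $\phi_{e,s}(\{0,\ldots,s\})$; whenever this count grows to a new value $k$, enumerate the collapse $a^{l,e}_k \rel{E_l} \phi_e(x_k)$, where $x_k$ witnesses the $k$-th new class. Then, if $\phi_e(\omega)$ ultimately meets infinitely many $E_l$-classes, every follower $a^{l,e}_k$ is pulled into its $E_l$-closure. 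Globally we arrange that the followers of all the requirements collectively exhaust the classes of $E_l$; this is what forces $\phi_e(\omega)$ to meet \emph{every} $E_l$-class and not merely infinitely many of them.

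The strategies for the remaining requirements are routine two-follower diagonalizations. For $\mathcal{D}^l_e$: wait for $W_e$ to enumerate two elements outside currently-protected higher-priority followers, then enumerate a collapse between them. For $\mathcal{J}^{l,l'}_e$: pick $c, d$ in distinct $E_l$-classes; if $\phi_e(c) \rel{E_{l'}} \phi_e(d)$ is ever observed, preserve $c \nrel{E_l} d$; otherwise the mirror strategy acting on $E_{l'}$ preserves $\phi_e(c) \nrel{E_{l'}} \phi_e(d)$. For $\mathcal{N}^l_e$ we use the standard lower cone avoidance technique: the non-universality of $R$ implies that the downward cone of $R$ is a proper subclass of $\Ceers$, and we diagonalize against each $\phi_e$ by producing, via a witness pair, an incongruity that $R$'s limited ``computational capacity'' cannot accommodate---were this to fail uniformly, $E_l\leq R$ would force $R$ to enjoy a universality-sufficient condition, contradicting non-universality.

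The requirements are arranged in the standard $\omega$-type priority ordering; the construction is finite-injury, each requirement acting only finitely often (either diagonalizing once or stabilizing its followers). The delicate balance is between the $\mathcal{M}^l_e$-collapses (which want to identify followers with $\phi_e$-outputs) and the separation demands of $\mathcal{I}^l_k$, $\mathcal{D}^l_e$, and $\mathcal{J}^{l,l'}_e$ (which want followers kept apart); this is resolved by the priority mechanism, which protects higher-priority followers from being collapsed by lower-priority minimality actions. The hardest step is verifying that when $\mathcal{M}^l_e$ fires infinitely often, the collapses it enumerates truly make $\phi_e(\omega)$ meet every $E_l$-class; this is the subtle combinatorial core of the argument.
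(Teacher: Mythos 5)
Your global architecture (finite injury, the same five kinds of requirements, and the deduction of the theorem from them via Lemma~\ref{ontoReductionsFlip}) matches the paper, but the two strategies that carry all the difficulty are, respectively, flawed and missing.

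The $\mathcal{M}^l_e$-strategy does not achieve what $\mathcal{M}^l_e$ asserts. Collapsing $a^{l,e}_k$ into $[\phi_e(x_k)]_{E_l}$ only guarantees that $\phi_e(\omega)$ meets the classes \emph{of the followers of $\mathcal{M}^l_e$}; it says nothing about the class of an arbitrary number $j$ that is never appointed as an $\mathcal{M}^l_e$-follower (for instance a diagonalization witness of some $\mathcal{J}^{l,l'}_{e'}$, or a follower of $\mathcal{M}^l_{e'}$ for $e'\neq e$, both of which are restrained from collapsing). Your patch --- ``the followers of all the requirements collectively exhaust the classes'' --- does not close this: a class containing only a follower of a \emph{different} requirement is still not met by $\phi_e(\omega)$. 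And you cannot strengthen it to ``the followers of $\mathcal{M}^l_e$ exhaust all classes for each fixed $e$'': when $\mathcal{M}^l_e$ fires infinitely often its followers land in pairwise distinct classes, so they form an infinite c.e.\ set that your own darkness requirement $\mathcal{D}$ must partially collapse, destroying any such exhaustion. The fix is to run the requirement in the opposite direction, one target class at a time: for every c.e.\ set $W_i$ and every $j$, require ``if $W_i$ meets infinitely many $E_l$-classes then $W_i$ meets $[j]_{E_l}$,'' with the strategy of waiting for $W_i$ to enumerate an unrestrained $x$ and then collapsing $x$ \emph{into} $[j]_{E_l}$. This family directly forces any reduction with infinite-range image to be onto the classes, and it yields darkness for free (apply it to $W\smallsetminus\{x\}$ for $W$ a putative transversal), so no separate $\mathcal{D}$ is needed.

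The cone-avoidance requirement $\mathcal{N}^l_e$ is asserted rather than proved. A ``witness pair'' diagonalization cannot work here: with witnesses $x,y$ you would want to arrange $x\rel{E_l}y \not\Leftrightarrow \phi_e(x)\rel{R}\phi_e(y)$, but you do not control $R$, and after you commit to keeping $x\cancel{\rel{E_l}}y$, $R$ may later collapse $\phi_e(x)$ with $\phi_e(y)$. The actual mechanism needed is the following finitary but unbounded strategy: fix a universal ceer $T$, appoint parameters $a_0,a_1,\dots$ one at a time, and only after $\phi_e$ has converged on all current parameters and currently agrees with the configuration do you copy the next round of $T$-collapses onto the $a_i$ and appoint a new parameter. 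If this process never stalls, $i\mapsto\phi_e(a_i)$ reduces $T$ to $R$, contradicting non-universality of $R$; if it stalls, $\phi_e$ fails to be a reduction on finitely many parameters. Your sentence about $R$ ``enjoying a universality-sufficient condition'' gestures at this but supplies no construction, and as written the proposal cannot be checked or completed without it.
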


\begin{proof}
Let $R$ be a given non-universal ceer, with computable approximations $\{R_s
\mid s \in \omega\}$ as in Section~\ref{ssct:background}. We construct a
family $(E_l)_{l\in \omega}$ of ceers with the following requirements, for
all $i,j,l,l',n,o \in \omega$:
\begin{itemize}
\item[$P^l_{i,j}$:] If $W_i$ intersects infinitely many $E_l$-classes, then
    it intersects $[j]_{E_l}$.
\item[$Q^{l,l'}_n$:] if $l\ne l'$ then $\phi_n$ is not a reduction of $E_l$
    to $E_{l'}$.
\item[$T^l_o$:] $\phi_o$ is not a reduction of $E_l$ to $R$.
\end{itemize}

The $Q$-requirements ensure that the ceers $E_l$ are pairwise incomparable,
and also they have infinitely many classes as each finite ceer $\Id_{n}$ is
comparable with any ceer. Satisfaction of the $P$-requirements will ensure
that the $E_l$ are minimal and dark (see Lemma~\ref{lemPdark} and
Lemma~\ref{lemSdark} below). The $T$-requirements ensure that the ceers $E_l$
avoid the lower cone below $R$.

\begin{lem}\label{lemPdark}
Let $E$ be any ceer satisfying each $P$-requirement (where $E_l$ is taken to
be $E$), and suppose $X< E$. Then $X\leq \Id_n$ for some $n$.
\end{lem}

\begin{proof}
Suppose $f$ gives a reduction of $X$ to $E$. Then either the image $W=\im
(f)$ of $f$ intersects only finitely many classes, so $X\leq \Id_n$, or by
satisfaction of all $P_{i,j}^l$ (where $E_l$ is taken to be $E$ and $W_i$ is
the range of $f$) it intersects every class; but if $f$ is a reduction of $X$
to $E$ whose image intersects every class, then $E \leq X$ by
Lemma~\ref{ontoReductionsFlip}.
\end{proof}

\begin{lem}\label{lemSdark}
Let $E$ be any ceer with infinitely many classes satisfying every
$P$-requirement (where $E_l$ is taken to be $E$). Then $E$ is dark.
\end{lem}

\begin{proof}
Suppose, for a contradiction, that $W$ is an infinite effective transversal
of $E$. Let $x\in W$ and let $V=W\smallsetminus \{x\}$. Then $V$ intersects
infinitely many classes, thus by the $P_{i,j}^l$-requirements (where $W_i$ is
taken to be $V$ and $E_l$ is taken to be $E$), it must also intersect the
class of $x$. Thus $W$ contains two elements which are $E$-equivalent,
contradicting the hypothesis.
\end{proof}

We fix a computable order of the requirements of order type $\omega$. A
requirement $R$ has \emph{higher priority} than a requirement $R'$ if $R$
strictly precedes $R'$ in this order (we also say in this case that $R'$ has
\emph{lower priority} than $R$). Each requirement will be allowed to
\emph{restrain} finitely many pairs of equivalence classes: if a requirement
imposes a restraint so that the equivalence classes $([a]_{E_l},[b]_{E_l})$
can not be $E_l$-collapsed by lower priority requirements, then we say that
it imposes the \emph{restraint} $(a,b,l)$. In turn, each strategy must be
able to satisfy its requirement given that it inherits such a finite
restraint from higher priority requirements. Then the standard finite injury
machinery (in particular, whenever any strategy acts in any way, it
re-initializes all lower priority requirements (or at least those which may
be injured by the actions of higher priority strategies and thus need to be
re-initialized when this happens) which must therefore start anew to pursue
their strategies) completes the construction. We describe the strategy for
each requirement, analyzing its outcomes.

\medskip

\emph{Strategy for $P^l_{i,j}$:} Suppose $W_i$ and $[j]_{E_l}$ are still
disjoint, and $P^l_{i,j}$ inherits finitely many pairs of classes which are
restrained. $P^l_{i,j}$ waits for $W_i$ to enumerate an element $x$ so that
there is no higher priority restraint $(j,x,l)$. Since higher priority
requirements only restrain finitely many pairs of classes, if $W_i$
intersects infinitely many $E_l$-classes, then eventually it will enumerate
some element $x$ so that there is no restraint $(j,x,l)$: at this point
$P^l_{i,j}$ becomes \emph{ready to act}, and (\emph{$P^l_{i,j}$-action}) it
$E_l$-collapses $x$ to $j$. The outcomes are clear: either we wait forever
for such a number $x$ (this is the case when $W_i$ intersects only finitely
many $E_l$ equivalence classes); or we $E_l$-collapse some $x\in W_i$ with
$j$ (this outcome includes in fact also the case when $W_i \cap [j]_{E_l}$
becomes nonempty even without our direct action, but only because $W_i$
enumerates or has already enumerated a number already in $[j]_{E_l}$). Both
outcomes fulfill the requirement.

\medskip

\emph{Strategy for $Q^{l,l'}_n$:} Since we control both $E_l$ and $E_{l'}$,
we can diagonalize directly. That is: We fix two distinct new elements $x$
and $y$ for $E_l$ ($x,y$ are the \emph{parameters of $Q^{l,l'}_n$}; being new
they are still non-$E_{l}$-equivalent) and restrain $(x,y,l)$. When
$\phi_n(x),\phi_n(y)$ both converge, then  we diagonalize, that is: either
already $\phi_n(x) \rel{E_{l'}} \phi_n(y)$, in which case we just keep our
restraint $(x,y,l)$, or still $\phi_n(x)\cancel{\rel E_{l'}}\phi_n(y)$, in
which case $Q^{l,l'}_n$ becomes \emph{ready to act}, and
(\emph{$Q^{l,l'}_n$-action}) it $E_l$-collapses $x$ to $y$ and puts the
restraint $(\phi_n(x), \phi_n(y), l')$. The outcomes, both fulfilling the
requirement, are: either we wait forever for $\phi_n(x)$ and $\phi_n(y)$ to
converge; or these computations converge, and when they do so, either we do
nothing but keeping the restraint $(x,y,l)$ if already $\phi_n(x)
\rel{E_{l'}} \phi_n(y)$; or we act if still $\phi_n(x) \nrel{E_{l'}}
\phi_n(y)$. Either way,  $Q^{l,l'}_n$ creates a diagonalization which is then
preserved by the restraints imposed by the strategy.

\medskip

\emph{Strategy for $T^l_o$:} This strategy, which will be employed several
times in this paper, at first sight looks like it places infinite restraint,
but in truth it only places finite restraint. We call it the \emph{\basta}
strategy, as a reminder that, despite its appearance, it is finitary. We fix
a universal ceer $T$, with computable approximations $\{T_s \mid s \in
\omega\}$ as in Section~\ref{ssct:background}. We begin by choosing two new
elements $a_0,a_1$. At this stage we set the restraint $(a_0,a_1, l)$ and we
go into a \emph{waiting state}; at each later stage when we define a new
$a_k$, where $k$ is least so that $a_k$ is currently not defined, we set
restraints $(a_i,a_k,l)$ for every $i<k$ and we go into a \emph{waiting
state}. We emerge from a waiting state at stage $s$, having defined $a_{0},
\ldots, a_{k}$, if for every $i\le k$ we have (at $s$) $\phi_{o}(a_i)$
converges, and for every $i,j\le k $ we have $a_i \rel{E_{l}} a_j$ if and
only if $\phi_o(a_i) \rel{R} \phi_o(a_j)$. In this case
(\emph{$T^l_o$-action}), we $E_l$-collapse every pair $a_i,a_j$ such that, at
$s$, $i \rel{T} j$ and we go on defining a new $a_{k+1}$. (Notice that this
is the only way some of the $a_{i}$ can be $E_{l}$-collapsed, as
lower-priority requirements are not allowed to do so due to the restraints
imposed by $T^l_o$ every time we appoint a new $a_{k}$.) The available
numbers $a_i$ at stage $s$ are \emph{the parameters of $T^l_o$ at $s$}. The
strategy can be looked at as having possible sub-outcomes $1,2, \ldots$: when
we appoint $a_0, a_1$ we have current sub-outcome $1$; from sub-outcome $k$
we take sub-outcome $k+1$ when the strategy acts,  i.e. we emerge (due to new
$R$-collapses) from the waiting state relative to $a_0, \ldots, a_k$, and we
appoint $a_{k+1}$. If we never abandon sub-outcome $k$ (and this is the case
if $\phi_o(a_0)$ or $\phi_o(a_k)$ does not converge, or otherwise $\phi_o$ is
defined on each $a_0, \ldots, a_{k}$ but fails anyway to be a reduction $E_l
\leq R$, as witnessed by some pair some $a_i, a_j$, with $i,j \le k$) and
thus we never move to sub-outcome $k+1$, then we have that sub-outcome $k$ is
the (final) outcome of the strategy and the requirement is fulfilled as
$\phi_o$ is not a reduction.

The next lemma shows that in isolation the $T$-strategy eventually hits its
final winning outcome.

\begin{lem}\label{lemTdark}
The
\basta strategy is finitary. That is: it places finite restraint, acts only
finitely often, and the corresponding requirement is satisfied.
\end{lem}

\begin{proof}
Suppose that the requirement acts infinitely often. Then for every $i$ a
parameter $a_i$ is eventually appointed, and for each $i,j$, we must have $i
\rel{T} j$ if and only if $\phi_o(a_i) \rel{R} \phi_o(a_j)$. But this gives a
reduction of $T$ to $R$ contradicting the assumption that $R$ is not
universal.
\end{proof}

\medskip
\emph{The construction:} The construction is by stages. At stage $s$ we
define the current values  $E_{l,s}$ of $E_{l}$ and of the parameters
relative to the various requirements: only finitely many parameters are
defined at each stage. A requirement $R=Q^{l,l'}_n$ has parameters $x^R(s),
y^R(s)$; and $R=T^l_o$ has parameters $a^R_{0,s}, \ldots, a^R_{k,s}$, for a
certain $k\ge 1$, which can also be regarded as a parameter of the
requirement. Moreover each requirement $R$ deals with a finite set $\rho^R_s$
consisting of all triples $(a,b,l)$ imposed by a higher priority requirement:
of course $R$ cares only for those restraints $(a,b,l)$ where $l$ is such
that $R$-action could entail $E_l$-collapsing. In accordance with the
informal description of the strategies, a requirement $R$ is \emph{ready to
act} when its strategy, for the relevant index $l$, may $E_l$-collapse two
numbers $u, v$ (as in the description of the strategies) so that there is no
$(a,b,l)\in \rho^R$, with currently $u\rel{E_l} a$ and $v \rel{E_l} b$. In
fact only $P$-requirements $R$ wait for an action which avoids $\rho^{R}$:
the other requirements simply deal with $\rho^{R}$ by choosing parameters
which are new and thus none of them is equivalent with any $a$ which is a
first or second coordinate of a triple in $\rho^{R}$.

In the rest of the proof, in reference to the various parameters, we will
omit to specify the superscript $R$ which will be clearly understood from the
context. At the beginning of a stage $s$ a number is \emph{new} if it is
bigger than any number $E_l$-equivalent, for some $l$, to any number
mentioned so far by the construction. A requirement $R$ which is not a
$P$-requirement is \emph{initialized at stage $s$} if the parameters of $R$
are set to be undefined at $s$: we also stipulate that when $R$ is
initialized the restraint imposed by $R$ is cancelled. We say that a
requirement $R$ \emph{requires attention at stage $s$}, if either
$R=P^l_{i,j}$ (for some $l,i,j$), and $W_i$ and $[j]_l$ are still disjoint,
but now $R$ is ready to act as described; or  $R\in \{Q^{l,l'}_n, T^l_o\mid
k, l,l', o \in \omega\}$ and $R$ is initialized, or otherwise $R$ has not as
yet acted after its last initialization but now is ready to take action as
described. (The asymmetry in the previous definitions between requirements of
the form $P^l_{i,j}$ and the other ones, is that once it has acted,
$P^l_{i,j}$ is satisfied once for all, it will never be injured, and thus it
does not need to be re-initialized. On the other hand, the other requirements
must choose new parameters to avoid the restraint imposed by higher priority
requirements when these act.)

\medskip
\emph{Stage $0$.} Initialize all requirements $R$ which are not
$P$-requirements. Define $E_{l,0}=\Id$, for each $l$.

\medskip
\emph{Stage $s+1$.} (All parameters, computations and approximations are
understood to be evaluated at stage $s$.) Consider the least $R$ that
requires attention at $s+1$. (Since infinitely many strategies are
initialized, such a least strategy $R$ exists.)

If $R=P^l_{i,j}$, for some $l,i,j$, then  take $P^l_{i,j}$-action as
described. Otherwise
\begin{itemize}
  \item if $R$ is initialized then choose new parameters for $R$, as
      described in the strategy for $R$. In detail:
      if $R=Q^{l,l'}_n$ then $R$
      chooses new $x,y$; if $R=T^l_o$, then $R$ chooses new $a_0, a_1$.

  \item if $R$ is not initialized and $R$ is a $Q$- or $T$-requirement then
      take $R$-action. (If $R$ is a $T$-requirement, this means also to
      appoint a new parameter $a_{k+1}$ on top of the already existing
      $a_0, \ldots, a_k$.) Moreover, $R$ may put new restraints (as
      described earlier), thus updating the restraint sets $\rho^{R'}$ for
      lower-priority requirements $R'$.
\end{itemize}
Whatever case holds, we initialize all requirements $R'$ of lower priority
than $R$ which are not $P$-requirements, and go to next stage. Initialization
is a mechanism that will guarantee the desired restraints against lower
priority requirements: since after initialization a requirement $R$ chooses
new parameters (thus not $E_l$-equivalent to numbers appearing in $\rho^R$
for any $l$) its strategy is automatically respectful of the restraints
imposed by higher priority requirements, and for every $l$ the new parameters
are also pairwise $E_l$-inequivalent, so that the requirement may in turn
restrain them if needed.

Finally define $E_{l,s+1}$ to be the equivalence relation generated by
$E_{l,s}$ plus the pairs which have been $E_l$-collapsed at stage $s+1$.

\medskip
\emph{Verification:} Each strategy is finitary and the corresponding
requirement is eventually satisfied. Indeed, assume inductively on the
priority ordering of requirements that $R$ is such that every higher priority
requirement stops acting at some stage and is satisfied. Then there is a
least stage after which $R$ is not re-initialized any more and its current
parameters, if any and once defined, are never cancelled; since the restraint
set $\rho^R$ built up by the higher-priority requirements stops changing at
this stage, $R$ can pursue its strategy without any more interferences due to
higher-priority requirements. If $R$ is a $T$-requirement then
Lemma~\ref{lemTdark} shows that eventually $R$ stops acting, and is
satisfied. As to the other strategies, after last initialization they never
act, or act at most once in the cases of $P$- or $Q$-requirements, and are
satisfied, as is clear by Lemmata~\ref{lemPdark}, \ref{lemSdark}, and the
above analysis of the outcomes.
\end{proof}

\begin{cory}
There is no greatest dark ceer.
\end{cory}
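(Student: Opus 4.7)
The plan is to deduce this immediately from Theorem~\ref{MinimalDark}. To show no dark ceer $R$ is greatest among the dark ceers, I would need to exhibit a dark ceer $E$ with $E \not\leq R$. The key preliminary observation is that every dark ceer is non-universal: if $R$ were universal then $\Id \leq R$, contradicting darkness (this was noted right after the definition of light/dark ceers, since every universal ceer is light).

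Hence Theorem~\ref{MinimalDark} applies with the given $R$ as the parameter, producing infinitely many pairwise incomparable dark ceers $(E_l)_{l \in \omega}$ with $E_l \not\leq R$ for every $l$. Choosing any one of them, say $E_0$, witnesses that $R$ is not an upper bound for the class of dark ceers in $\Ceers$. Since $R$ was an arbitrary dark ceer, no dark ceer is greatest.

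There is essentially no obstacle here; the content of the corollary lies entirely in Theorem~\ref{MinimalDark} (both the darkness of the $E_l$ and the lower-cone avoidance $E_l \not\leq R$), and the corollary is just the observation that lower-cone avoidance against an arbitrary non-universal $R$, combined with the fact that dark ceers are non-universal, rules out a greatest element in $\Dark$.
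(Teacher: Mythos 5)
Your proposal is correct and matches the paper's own proof, which likewise applies Theorem~\ref{MinimalDark} to the given dark (hence non-universal) ceer $R$ to obtain a dark ceer not below $R$. Your explicit justification that dark ceers are non-universal (because universal ceers are light) is exactly the parenthetical observation the paper relies on.
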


\begin{proof}
Given a dark (hence non-universal) ceer $R$, the above theorem produces dark
ceers not below $R$.
\end{proof}

\begin{cory}
There is no maximal dark ceer.
\end{cory}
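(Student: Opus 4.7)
The plan is to take an arbitrary dark ceer $R$ and produce a strictly larger dark ceer, thereby showing that no dark ceer is maximal among the dark ceers.

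First I would observe that every dark ceer is non-universal, since every universal ceer is light (the identity reduces to any universal ceer because $\Id$ is a ceer). So given a dark $R$, Theorem \ref{MinimalDark} applies and yields a dark ceer $E$ with $E \not\leq R$ (in fact it produces infinitely many such, but one is enough here).

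Next I would form the uniform join $R \oplus E$. By Observation \ref{obs:dark-closure}, the class of dark ceers is closed under $\oplus$, so $R \oplus E$ is dark. Clearly $R \leq R \oplus E$. To see the inequality is strict, note that if we had $R \oplus E \leq R$, then composing with the canonical reduction $E \leq R \oplus E$ would yield $E \leq R$, contradicting the choice of $E$. Hence $R < R \oplus E$ with $R \oplus E$ dark, so $R$ is not maximal among the dark ceers.

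There is no real obstacle here: the entire argument is a one-line consequence of Theorem \ref{MinimalDark} together with the closure of the dark ceers under $\oplus$ from Observation \ref{obs:dark-closure}. The only point to double-check is that Theorem \ref{MinimalDark} can indeed be applied, which requires $R$ to be non-universal; this is immediate since dark ceers cannot be universal.
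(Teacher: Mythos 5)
Your proof is correct and uses exactly the same ingredients as the paper: Theorem \ref{MinimalDark} to produce a dark ceer not below $R$, and Observation \ref{obs:dark-closure} to see that the uniform join with it is a strictly larger dark ceer. The paper merely packages this as ``a maximal dark ceer would be a greatest dark ceer,'' deferring to the preceding corollary; your version just inlines that step.
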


\begin{proof}
We argue that any maximal dark ceer would also be a greatest dark ceer. Let
$E$ be a maximal dark ceer and let $R$ be any dark ceer. Then $E\oplus R$,
being dark and  $\geq E$, must also be $\leq E$. Thus $R\leq E$.
\end{proof}

Figure~\ref{fig:poset} illustrates the partition of ceers into the three
classes $\Dark=\textrm{dark ceers}$, $\Light=\textrm{light ceers}$,
$\I=\textrm{finite ceers}$: the partition of course induces a partition of
the degrees of ceers as well. Every dark ceer $E$ is bounded by $E \oplus
\Id$ which is light and non-universal by
Corollary~\ref{cor:universal-join-irreducible}. For a complete understanding
of the picture notice also the following corollary.

\begin{cory}\label{cor:belowId}
If $X \leq \Id$ then $X \in \I$ or $X \equiv \Id$.
\end{cory}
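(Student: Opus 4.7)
The plan is to reduce the corollary to a straightforward case analysis on whether the reduction $X \leq \Id$ has finite or infinite image. Let $f$ be a computable function witnessing $X \leq \Id$. Since $\Id$-classes are singletons, $x \rel{X} y$ iff $f(x) = f(y)$, so $f$ induces a bijection from $\omega_{/X}$ to $\im(f)$, and in particular the number of $X$-classes equals $|\im(f)|$.

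If $\im(f)$ is finite, then $X$ has finitely many classes, so $X \in \I$. This is the easy case.

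If $\im(f)$ is infinite, then $\im(f)$ is an infinite c.e.\ set, so we can effectively list it without repetitions as $(y_i)_{i\in\omega}$. For each $i$, uniformly search for the first $x_i$ with $f(x_i)=y_i$, and define $g(i) = x_i$. For $i \neq j$, we have $f(g(i)) = y_i \neq y_j = f(g(j))$, so $g(i) \nrel{X} g(j)$. Hence $g$ is a reduction $\Id \leq X$, and combined with the hypothesis $X \leq \Id$ we obtain $X \equiv \Id$.

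No obstacle arises — the corollary is immediate from the fact that $\Id$-equivalence is equality, so a reduction $f\colon X \to \Id$ is essentially a listing (possibly with repetitions) of representatives of the $X$-classes, which becomes an infinite effective transversal of $X$ as soon as there are infinitely many classes. (Alternatively one can invoke Lemma~\ref{ontoReductionsFlip} by replacing $\Id$ with $\Id\restriction{\im(f)}$, but the direct argument above is cleaner.)
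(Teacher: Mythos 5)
Your proof is correct, but it takes a more elementary route than the paper. The paper observes that $X \leq \Id$ gives $X \leq \Id_1 \oplus \Id$ and, when $X$ has infinitely many classes, invokes Lemma~\ref{coproduct3}(4) to conclude $X \equiv \Id_1 \oplus \Id \equiv \Id$ — a one-line appeal to the restriction machinery set up in Section~2. You instead argue from first principles: since $\Id$-classes are singletons, a reduction $f$ of $X$ to $\Id$ identifies the $X$-classes with the points of $\im(f)$, and when $\im(f)$ is infinite you explectly invert $f$ by enumerating $\im(f)$ without repetition and searching for preimages, obtaining $\Id \leq X$ directly. Both arguments are sound; all your steps check out (in particular $g$ is total because each $y_i$ lies in $\im(f)$, and $i \neq j$ forces $g(i) \nrel{X} g(j)$ since $f$ is a reduction). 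What the paper's route buys is brevity given the lemmas already proved; what yours buys is transparency — it makes visible that the whole content of the corollary is just that a reduction into $\Id$ is a repetition-free listing of class representatives once there are infinitely many classes, which is essentially the parenthetical remark you close with. Your aside that one could alternatively apply Lemma~\ref{ontoReductionsFlip} to $X \leq \Id\restriction{\im(f)}$ is also accurate and is in spirit what Lemma~\ref{coproduct3} does internally.
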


\begin{proof}
Suppose that $X\leq \Id$ and $X$ has infinitely many classes. Then
$X \leq \Id_{1} \oplus \Id$ and the hypotheses of Lemma~\ref{coproduct3}(4),
are satisfied: thus $X \equiv \Id_{1} \oplus \Id$, which is $\equiv \Id$.
\end{proof}

\begin{figure}[h!]
  \centering
  \includegraphics[scale=.5]{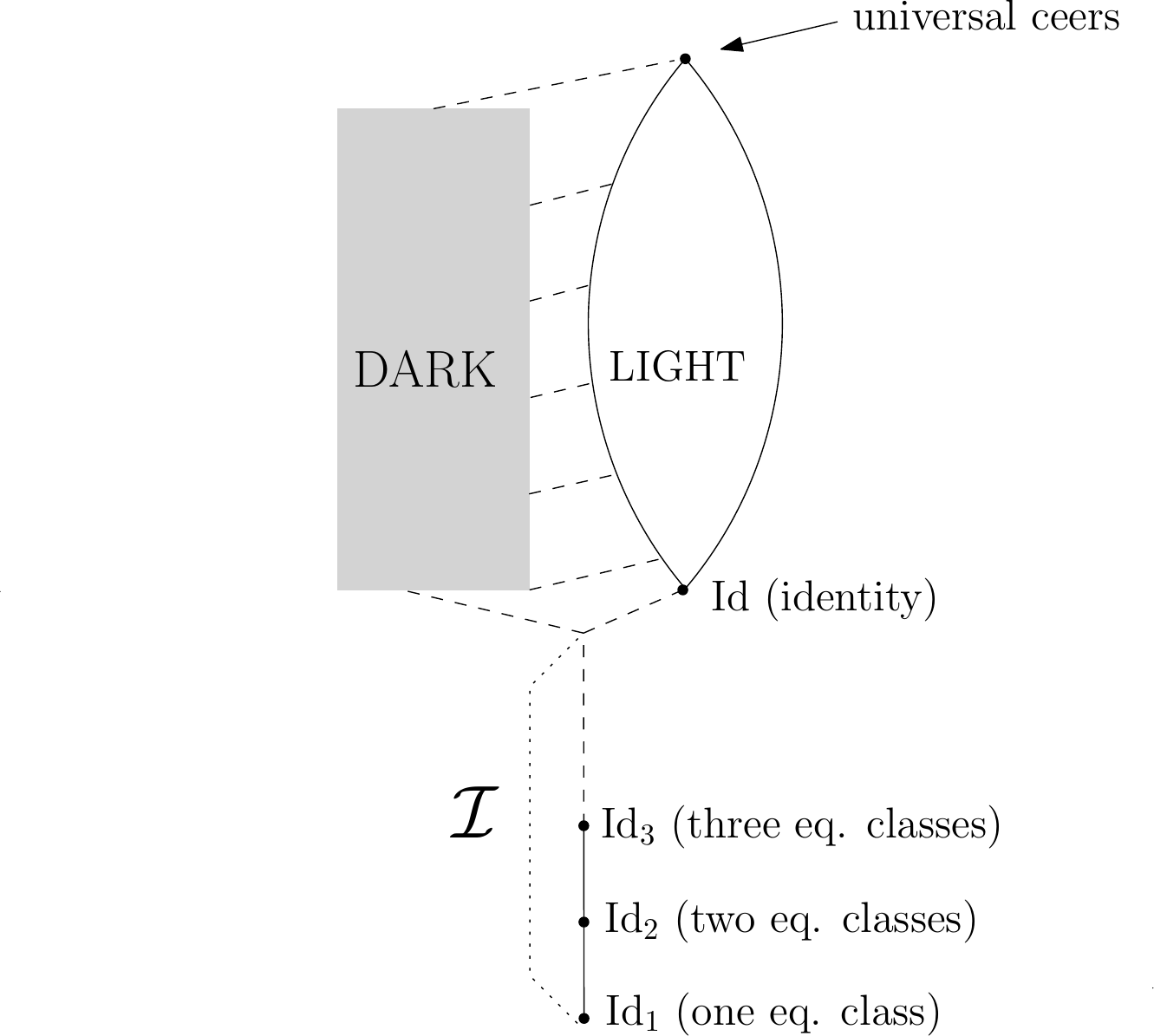}
  \caption{The poset of degree of ceers: some of the degrees are identified
  through representatives. The three classes $\mathcal{I}$, $\Dark$, and
  $\Light$ are pairwise disjoint.}\label{fig:poset}
\end{figure}

\section{Self-Fullness}
The next definition singles out the ceers $E$ for which, in a category
theoretic terminology, every monomorphism from $E$ to $E$ is an isomorphism.

\begin{defn}
We say that $E$ is \emph{self-full} if for every reduction $g$ of $E$ to $E$
and every $j\in \omega$, there is a $k$ so that $g(k)\in [j]_E$.
\end{defn}

\begin{obs}\label{obs:nonselffull}
$E$ is non-self-full if and only if $E\oplus \Id_1\leq E$.
\end{obs}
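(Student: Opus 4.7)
The plan is to prove both implications directly by constructing explicit reductions, since the observation is really just a translation of the definitions through the uniform join operation.

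For the forward direction, suppose $E$ is non-self-full, witnessed by a reduction $g \colon E \to E$ whose range misses some class $[j]_E$. I would define $h \colon E \oplus \Id_1 \to E$ by $h(2x) = g(x)$ and $h(2x+1) = j$. Checking that $h$ is a reduction splits into three cases: two even inputs (handled because $g$ is a reduction), two odd inputs (both map to $j$, and all odd numbers are already $\Id_1$-equivalent), and one even with one odd (we need $g(x) \not\rel{E} j$, which holds by the choice of $j$ as a class missed by $\im(g)$). This gives $E \oplus \Id_1 \leq E$.

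For the reverse direction, suppose $h$ reduces $E \oplus \Id_1$ to $E$. I would set $g(x) = h(2x)$; this is a reduction of $E$ to $E$ because multiplication by $2$ is a reduction of $E$ to $E \oplus \Id_1$. The element $h(1)$ cannot be $E$-equivalent to any $h(2x) = g(x)$, since $1$ and $2x$ lie in different $(E \oplus \Id_1)$-classes and $h$ is a reduction. Hence the class $[h(1)]_E$ is missed by the range of $g$, so $E$ is non-self-full.

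There is no real obstacle here; the only thing to be careful about is the asymmetric case in the verification of the forward direction, namely ensuring the $\Id_1$-element is sent to a class genuinely outside $\im(g)$ — which is exactly what non-self-fullness hands us.
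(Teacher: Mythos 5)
Your proof is correct and follows exactly the same route as the paper: the forward direction sends evens through the witnessing reduction and all odds to the missed class $j$, and the reverse direction restricts to the even part and observes that $[h(1)]_E$ is missed. The extra case-checking you include is just a more explicit verification of the same construction.
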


\begin{proof}
Suppose $E$ is non-self-full: then there is a reduction $f$ of $E$ to itself
missing the class of $j$ for some element $j$. Then we can provide a
reduction of $E\oplus \Id_1$ to $E$ by $g(2x)=f(x)$ and $g(2x+1)=j$.

In the other direction, suppose that $E\oplus \Id_1\leq E$ and let $g$ be a
reduction witnessing this. Then $n\mapsto g(2n)$ gives a reduction of $E$ to
itself missing the class of $g(1)$.
\end{proof}

\begin{cory}\label{cor:from-sf-to-sf}
If $E$ is self-full then for every $h \in \omega$, $E\oplus \Id_h$ is
self-full.
\end{cory}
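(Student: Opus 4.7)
The plan is to argue by contradiction, using Observation~\ref{obs:nonselffull} to translate self-fullness into a reducibility statement, and then exploiting the decomposition provided by Lemma~\ref{coproduct3}(2) together with the cancellation Lemma~\ref{SuccessorIsInjective}. Specifically, I would assume that $E$ is self-full but that $E\oplus \Id_h$ fails to be self-full for some $h\geq 1$ (the case $h=0$ is immediate since $E\oplus \Id_0$ is $E$ itself).

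By Observation~\ref{obs:nonselffull} applied to $E\oplus \Id_h$, the failure of self-fullness is equivalent to
\[
E\oplus \Id_{h+1} \;=\; (E\oplus\Id_h)\oplus\Id_1 \;\leq\; E\oplus \Id_h.
\]
Now I would apply Lemma~\ref{coproduct3}(2) to this reduction, with $R=E$ and $n=h$. This produces a ceer $E_0$ with $E_0 \leq E$ and an equivalence $E\oplus \Id_{h+1} \equiv E_0 \oplus \Id_k$ for some $k\leq h$. The key content here is that the reduction, although not \emph{a priori} respectful of the bipartition between the $E$-part and the $\Id_h$-part on the right, can nevertheless be rearranged so as to produce a genuine reducibility $E_0\leq E$ of the ``infinite part,'' with at most $h$ classes left over as a finite summand.

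Next I would cancel $\Id_1$ summands. Since $k\leq h<h+1$, I can iterate Lemma~\ref{SuccessorIsInjective} exactly $k$ times on both sides of the equivalence $E\oplus \Id_{h+1} \equiv E_0 \oplus \Id_k$ to obtain $E\oplus \Id_{h+1-k} \equiv E_0$. Combining $E_0\leq E$ with the trivial $E \leq E\oplus \Id_{h+1-k} \equiv E_0$ yields $E\equiv E_0$, and hence $E\oplus \Id_{h+1-k}\equiv E$. Since $h+1-k\geq 1$ we deduce $E\oplus \Id_1 \leq E$, which by Observation~\ref{obs:nonselffull} contradicts self-fullness of $E$.

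The only delicate point is the application of Lemma~\ref{coproduct3}(2); once that lemma produces the decomposition $E_0\oplus \Id_k$ with $k\leq h$, the rest is routine cancellation. In particular, no induction on $h$ is needed: the statement falls out in one shot because the ``overflow'' $h+1-k$ is always at least $1$, forcing the dangerous reducibility $E\oplus \Id_1 \leq E$.
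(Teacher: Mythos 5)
Your argument is correct. It takes a somewhat different route from the paper: the paper simply notes (via Observation~\ref{obs:nonselffull}) that if $E\oplus\Id_1$ were non-self-full then $E\oplus\Id_2\le E\oplus\Id_1$, whence $E\oplus\Id_1\le E$ by one application of Lemma~\ref{SuccessorIsInjective}, and the general case is the obvious induction on $h$. You instead handle all $h$ at once by passing the reduction $E\oplus\Id_{h+1}\le E\oplus\Id_h$ through Lemma~\ref{coproduct3}(2) to extract $E_0\le E$ with $E\oplus\Id_{h+1}\equiv E_0\oplus\Id_k$, $k\le h$, and then cancelling; the bookkeeping with $h+1-k\ge 1$ is right and every step checks out. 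The one remark worth making is that the detour through Lemma~\ref{coproduct3}(2) buys you nothing here: from $(E\oplus\Id_h)\oplus\Id_1\le(E\oplus\Id_{h-1})\oplus\Id_1$ you can cancel $\Id_1$ directly and iterate $h$ times, landing on $E\oplus\Id_1\le E$ without ever decomposing the reduction. So your proof is valid but uses heavier machinery (the restriction lemma) where iterated cancellation via Lemma~\ref{SuccessorIsInjective} alone suffices; what it buys in exchange is the absence of an explicit induction.
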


\begin{proof}
By the previous observation it is easy to see that if $E$ is self-full then
so is $E \oplus \Id_1$.
\end{proof}

The next corollary shows that every self-full degree is comprised of only
self-full ceers.

\begin{cory}
If $E\equiv R$, then $E$ is self-full if and only if $R$ is self-full.
\end{cory}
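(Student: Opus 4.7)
The plan is to reduce the claim immediately to Observation~\ref{obs:nonselffull}, which characterizes non-self-fullness by the inequality $E\oplus\Id_1\leq E$. Since self-fullness is the negation of this property, it suffices to show that the relation ``$X\oplus\Id_1\leq X$'' is invariant under $\equiv$.

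For the transfer, I would argue as follows. Suppose $E\equiv R$ and that $E$ is non-self-full; then by Observation~\ref{obs:nonselffull}, $E\oplus\Id_1\leq E$. Using the trivial monotonicity of $\oplus$ with respect to $\leq$ (already implicit in the proof of Lemma~\ref{SuccessorIsInjective}), from $R\leq E$ we get $R\oplus\Id_1\leq E\oplus\Id_1$; chaining with $E\oplus\Id_1\leq E\leq R$ yields $R\oplus\Id_1\leq R$, so $R$ is non-self-full by Observation~\ref{obs:nonselffull} again. The converse direction is identical by symmetry.

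There is essentially no obstacle here: the entire content sits in Observation~\ref{obs:nonselffull} together with the fact that $\oplus$ is monotone with respect to $\leq$, the latter being immediate by combining two reductions coordinate-wise on the even and odd parts. The write-up can therefore be a two- or three-line direct argument and does not require any new construction.
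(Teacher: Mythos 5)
Your proposal is correct and is essentially identical to the paper's own proof: both reduce the claim to Observation~\ref{obs:nonselffull} and use the chain $R\oplus\Id_1\leq E\oplus\Id_1\leq E\leq R$ (with monotonicity of $\oplus$ supplying the first inequality), followed by symmetry. No issues.
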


\begin{proof}
Assume $E\equiv R$ and suppose $E$ is non-self-full. Then by
Observation~\ref{obs:nonselffull} $R\oplus \Id_1\leq E\oplus \Id_1\leq E\leq
R$. Thus, again by Observation~\ref{obs:nonselffull}, it follows that $R$ is
non-self-full. Symmetrically, if $R$ is non-self-full, then so is $E$.
\end{proof}

In a pre-order $\langle P, \le\rangle$, we say that $y$ is a \emph{strong minimal cover
of}  $x$, if $x<y$ and for all $z$, $z<y$ implies $z \le x$.

\begin{lem}\label{StrongMinimalCoversOfSF}
For any self-full degree $E$, $E\oplus \Id_1$ is a strong minimal cover of
$E$. Further if $X > E$, then $X\geq E\oplus \Id_1$, i.e., in the dual
pre-order, $E$ is a strong minimal cover of $E\oplus \Id_1$.
\end{lem}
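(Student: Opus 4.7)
The plan is to split this into two steps: establishing the strict inequality $E < E\oplus \Id_1$, which is precisely where self-fullness is used, and then deriving both the minimality clause and the second assertion from a single general fact about the reducibility ordering.

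First, $E < E\oplus \Id_1$ follows immediately from Observation~\ref{obs:nonselffull}: since $E$ is self-full, $E\oplus \Id_1 \not\leq E$, while $E\leq E\oplus \Id_1$ is trivial.

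Next I would isolate the following observation, which holds for arbitrary ceers and does not require self-fullness: if $B < A$, then $B\oplus \Id_1 \leq A$. To see this, fix a reduction $g$ of $B$ to $A$. If $\im(g)$ met every $A$-class, then Lemma~\ref{ontoReductionsFlip} would give $A\leq B$, contradicting $B<A$; so pick any class $[y]_A$ disjoint from $\im(g)$ and apply Lemma~\ref{obs:plus-span} with $U=\{y\}$ (vacuously an effective transversal of $A$, with $[U]_A$ disjoint from $\im(g)$) to conclude $B\oplus \Id_1 \leq A$. Taking $B=E$ yields the second assertion of the lemma directly.

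For the strong minimal cover clause, suppose $Z < E\oplus \Id_1$ and fix a witnessing reduction. Lemma~\ref{coproduct3}(2) (applied with $n=1$) produces a ceer $E_0\leq E$ and $k\leq 1$ with $Z\equiv E_0\oplus \Id_k$. If $k=0$ then $Z\equiv E_0\leq E$, and we are done. If $k=1$ and $E_0\equiv E$, then $Z\equiv E\oplus \Id_1$, contradicting $Z<E\oplus \Id_1$; so we may assume $E_0<E$, and the general observation above gives $E_0\oplus \Id_1\leq E$, whence $Z\leq E$.

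The main obstacle, such as it is, is organizing the case analysis produced by Lemma~\ref{coproduct3}(2) and noticing that the only nontrivial subcase reduces to the generic reducibility fact proved above. Self-fullness of $E$ enters the argument only in separating $E$ from $E\oplus \Id_1$; the strong-minimal-cover behavior beyond that point is a generic feature of the ordering on ceers.
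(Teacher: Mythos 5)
Your proof is correct. The second assertion ($X>E$ implies $X\geq E\oplus\Id_1$) is proved exactly as in the paper: a reduction witnessing $E<X$ must miss some $X$-class (else Lemma~\ref{ontoReductionsFlip} would flip it), and appending that class gives $E\oplus\Id_1\leq X$; you package this as a general fact about any strict inequality $B<A$, which is a clean way to state it. Where you diverge is the strong-minimal-cover clause. The paper argues directly on a reduction $f$ of $Z$ into $E\oplus\Id_1$ by cases on which classes $f$ misses: if the $\Id_1$-class is missed then $Z\leq E$ outright; if an $E$-class is missed then $Z\oplus\Id_1\leq E\oplus\Id_1$ and the cancellation Lemma~\ref{SuccessorIsInjective} yields $Z\leq E$; if nothing is missed then $E\oplus\Id_1\leq Z$, which is excluded by strictness. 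You instead invoke Lemma~\ref{coproduct3}(2) to put $Z$ in the normal form $E_0\oplus\Id_k$ with $E_0\leq E$ and $k\leq 1$, and then dispatch the cases using your generic observation in place of the cancellation lemma. Both routes are sound; the paper's is more self-contained at this point in the text (Lemma~\ref{SuccessorIsInjective} is proved earlier and is very short), while yours makes explicit the pleasant fact that the implication $Z<E\oplus\Id_1\Rightarrow Z\leq E$ needs no self-fullness at all -- self-fullness is used only to guarantee $E<E\oplus\Id_1$, i.e., that there is a cover to speak of.
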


\begin{proof}
Let $X\leq E\oplus \Id_1$ be given by a reduction $f$, and let $E$ be
self-full. If the $\Id_1$-class is not in the range of $f$, then $X\leq E$.
If some class $[j]_{E\oplus \Id_1}$ in the $E$-part is missed by $f$, then
$X\oplus \Id_1 \leq E\oplus \Id_1$ by sending $2x$ to $f(x)$, and the odd
numbers to $j$. Thus by Lemma~\ref{SuccessorIsInjective}, $X\leq E$.
Otherwise, $f$ intersects all classes of $E\oplus \Id_1$, so we get $E\oplus
\Id_1\leq X$ by Lemma~\ref{ontoReductionsFlip}.

Suppose $E<X$ given by the reduction $f$. Since $X\not\leq E$, $f$ does not
intersect all the classes of $X$ by Lemma~\ref{ontoReductionsFlip}. Let
$x$ be so that $[x]_X$ is not in the range of $f$. Then $g(2y)=f(y)$ and
$g(2y+1)=x$ is a reduction of $E\oplus \Id_1$ to $X$.
\end{proof}

In next lemma, and in other parts of the paper, we employ to notation
$f^{(k)}$ to denote the $k$-th iterate of a given (possibly partial) function
$f$ (i.e., $f^{(0)}(x)=x$ and $f^{(k+1)}(x)=f(f^{(k)}(x))$.

\begin{lem}\label{darksAreSelfFull}
Every dark ceer is self-full.
\end{lem}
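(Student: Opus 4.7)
The plan is to argue by contrapositive: assume $E$ is not self-full, and then exhibit an infinite computable list of pairwise $E$-inequivalent elements, which witnesses $\Id \leq E$ and thus shows that $E$ is not dark.

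Suppose $E$ is not self-full. Then there is a reduction $g: E \to E$ whose range misses some class $[j]_E$. The key observation is that the orbit of $j$ under iteration of $g$, namely the sequence
\[
j,\ g(j),\ g^{(2)}(j),\ g^{(3)}(j),\ \ldots,
\]
consists of pairwise $E$-inequivalent elements. The main step is to verify this: suppose for contradiction that $g^{(m)}(j) \rel{E} g^{(n)}(j)$ for some $m < n$. Using the reduction property of $g$ (which gives $g(x) \rel{E} g(y) \implies x \rel{E} y$) $m$ times, we peel off the outer copies of $g$ and conclude $j \rel{E} g^{(n-m)}(j)$. But $g^{(n-m)}(j) = g(g^{(n-m-1)}(j))$ lies in the range of $g$, so $j$ is $E$-equivalent to an element in the range of $g$, contradicting the choice of $j$.

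Since $k \mapsto g^{(k)}(j)$ is a total computable function whose range is an infinite set of pairwise $E$-inequivalent elements, this computable function witnesses $\Id \leq E$, so $E$ is light, i.e., not dark. This proves the contrapositive. The ``hard'' part is really just the cancellation argument above; otherwise the proof is a one-line construction. No injury or priority machinery is needed.
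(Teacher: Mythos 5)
Your proof is correct and follows essentially the same route as the paper: take a self-reduction of $E$ missing a class, iterate it on an element of that class, and observe that the orbit is an infinite effective transversal, so $E$ is light. The only cosmetic difference is that the paper obtains the missed class via the characterization $E\oplus\Id_1\leq E$ and runs the inequivalence argument in the forward direction ($x\cancel{\rel{E}}y\Rightarrow f(x)\cancel{\rel{E}}f(y)$), while you peel off iterates using the converse implication; both are valid.
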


\begin{proof}
Suppose $E$ is not self-full witnessed by a reduction $g$ of $E\oplus \Id_1$
to $E$ (see Observation~\ref{obs:nonselffull}). Let $f$ be the map $f(x)=
g(2x)$. Let $a$ be $g(1)$. Then since the $E$-class of $a$ is not in the
range of $f$, for any $k \ge 1$ we have that $a\cancel{\rel E} f^{(k)}(a)$.
But $f$ is a reduction of $E$ to $E$, so $f^{(n)}(a) \cancel{\rel E}
f^{(n+k)}(a)$ for every $n$ and for every $k\ge 1$. Thus the infinite c.e.
set $\{f^{(m)}(a)\mid m\in \omega\}$ witnesses that $E$ is light.
\end{proof}

An interesting consequence of the previous lemma is the following result.

\begin{lem}\label{darkQuotients}
Let $E$ be a dark ceer, and let $R$ be any proper quotient of $E$. Then
$E\not\leq R$.
\end{lem}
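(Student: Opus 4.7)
I would argue by contradiction, fixing $f\colon\omega\to\omega$ witnessing $E\leq R$ for $R\simeq E_{/W}$ proper---which I take to mean $R\not\simeq E$, the only reading making the lemma nontrivial, since $E\leq R$ whenever $R\simeq E$. The argument then splits into two cases, according to whether the image of $f$ meets every $E$-class.

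\emph{Case 1: $f$ meets every $E$-class}, i.e., for every $u$ there is $x$ with $f(x)\rel{E} u$. Then $[x]_E\mapsto[f(x)]_R$ is well-defined (as $f$ reduces $E$ to $R$), injective (since $[f(x)]_R=[f(y)]_R$ gives $f(x)\rel{R}f(y)$, hence $x\rel{E} y$), and onto (given any $R$-class $C$, pick $u\in C$, find $x$ with $f(x)\rel{E} u$, so that $[f(x)]_R=[u]_R=C$). Hence $f$ induces a 1-1 and onto morphism $\omega/E\to\omega/R$, yielding $E\simeq R$ and contradicting properness. Darkness of $E$ is not used in this case.

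\emph{Case 2: Some $E$-class $[a]_E$ is missed by $f$}, so $a\nrel{E} f(x)$ for all $x$. I would verify that the orbit $\{a, f(a), f^{(2)}(a), \ldots\}$ is an infinite c.e.\ transversal of $E$, contradicting darkness. The essential observation is that $f$ preserves $E$-inequivalence: if $u\nrel{E} v$ then $f(u)\nrel{R} f(v)$ by the reduction condition, and hence $f(u)\nrel{E} f(v)$ since $E\subseteq R$. Because $a\nrel{E} f^{(\ell)}(a)$ for every $\ell\geq 1$ (as $f^{(\ell)}(a)$ lies in the range of $f$), iterating this preservation yields $f^{(k)}(a)\nrel{E} f^{(k+\ell)}(a)$ for all $k\geq 0$ and $\ell\geq 1$, so the orbit consists of pairwise $E$-inequivalent elements. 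This is the orbit-iteration trick from Lemma~\ref{darksAreSelfFull}, now applied to the reduction $f\colon E\to R$ in place of a non-self-full witness $g\colon E\oplus\Id_1\to E$. The main obstacle is thus the interpretive one in Case 1; once that is settled, both cases are straightforward.
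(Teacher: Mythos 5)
There is a genuine gap, and it lies exactly where you located the ``interpretive obstacle.'' In this paper a \emph{proper quotient} of $E$ means a quotient $R=E_{/W}$ with $R\supsetneq E$ as sets of pairs, i.e.\ there exist $a,b$ with $a\cancel{\rel{E}}b$ but $a\rel{R}b$; this is how the paper's own proof begins (``Let $a,b$ be such that $a \cancel{\rel{E}} b$ but $a \rel{R} b$''), and it is the reading needed in the applications (e.g.\ in Theorem~\ref{NoJoinOfDark}, where the proper quotient is $R_{/(f_1(x),f_2(y))}$ for a single genuinely collapsed pair, with no a priori information about whether this quotient is isomorphic to $R$). Your justification for the alternative reading --- that otherwise the lemma would be trivially false because $E\leq R$ whenever $R\simeq E$ --- presupposes that a dark ceer can have a quotient $R\supsetneq E$ with $R\simeq E$. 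But that is precisely what the lemma denies; you have assumed away its main content rather than refuted the standard reading.

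Concretely, under the intended reading your Case~2 is fine (it is the paper's first claim, proved by the same forward-orbit trick), but it shows that any reduction $f\colon E\leq R$ must meet every $E$-class, so Case~1 is the case that actually occurs, and there your conclusion ``$E\simeq R$, contradicting properness'' is not a contradiction: $E\simeq R$ is perfectly compatible with $R\supsetneq E$ until proven otherwise. What is missing is the argument that rules this out. The paper does it by fixing $a\cancel{\rel{E}}b$ with $a\rel{R}b$ and using surjectivity of $f$ onto the $E$-classes to build a backward orbit $x_0,x_1,\dots$ with $f(x_0)\rel{E}a$ and $f(x_{i+1})\rel{E}x_i$ (well-defined on classes since $f$ preserves $E$-inequivalence); if this orbit never returns to $[a]_E$ or never returns to $[b]_E$ one extracts an infinite effective transversal (contradicting darkness), while if it returns to both one gets a periodicity contradiction with the minimality of the return times, since $[a]_E$ and $[b]_E$ have the same unique $f$-preimage class ($f(x)\rel{R}a\rel{R}b$ forces $x\rel{E}x_0$). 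Without some version of this argument your proof establishes only the weaker statement ``no quotient non-isomorphic to $E$ bounds $E$ from above,'' which does not suffice for the paper's uses of the lemma.
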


\begin{proof}
Suppose $f$ is a reduction from $E$ to $R$, with $R$ a proper quotient of
$E$, and $E$ dark. Firstly, we note that $f$ preserves non-$E$-equivalence:
if $ x\cancel{\rel E} y$ then $ f(x)\cancel{\rel R}f(y)$, but since $R$ is a
quotient of $E$, we see that this implies $ f(x)\cancel{\rel E}f(y)$.

We now claim that for every $c$ there exists $x$ such that $f(x) \rel{E} c$.
Suppose not, and let $c$ be such that $f(x) \cancel{\rel{E}} c$ for every
$x$. But then $f^{(n)}(c) \cancel{\rel{E}} c$ for every $n \ge 1$. Since $f$
preserves non-$E$-equivalence we have that $f^{(n+k)}(c) \cancel{\rel{E}}
f^{(k)}(c)$ for every $n \ge 1$, and $k$. This gives an infinite c.e. set
$\{f^{(m)}(c): m \in \omega\}$ of non-$E$-equivalent elements, contradicting
that $E$ is dark.

Let $a,b$ be such that $a \cancel{\rel{E}} b$ but $a \rel{R} b$. Using the
above claim, define by induction the following sequence $(x_i)_{i \in
\omega}$ of numbers: let $x_0$ be such that $f(x_0) \rel{E} a$, and $x_{i+1}$
be such that $f(x_{i+1}) \rel{E} x_i$. Note that, as $f$ preserves
non-$E$-equivalence, the sequence of the $E$-equivalence classes of these
numbers is uniquely determined, although the choice of $x_i$ itself is not.
We now distinguish two cases for the sequence $(x_i)_{i \in \omega}$, and see
that either case leads to a contradiction, so no computable function $f$
reducing $E$ to $R$ exists.

\medskip

Case 1:  $(\forall i)[x_i \cancel{\rel{E}} a]$ or $(\forall i)[x_i
\cancel{\rel{E}} b]$.  We show that this case implies that $E$ is light, a
contradiction. Assume first that $x_i \cancel{\rel{E}} a$ for all $i$. By
induction on $n$ we show the following claim: $f^{(n)}(a) \cancel{\rel{E}}
x_{i}$ for all $i$, and if $n>0$ then $f^{(n)}(a) \cancel{\rel{E}} a$. For
$n=0$ this is trivial, as we already assume that $f^{(n)}(a)=a
\cancel{\rel{E}} x_{i}$ for all $i$. If $n=1$ then $f(a) \cancel{\rel{E}} a$
as otherwise $f(a) \rel{E} a \rel{E} f(x_0)$ giving that $a \rel{E}x_0$ (by
uniqueness of the $E$-class of $x_0$), contrary to the assumptions; if $f(a)
\rel{E} x_i$ then, by uniqueness of the $E$-class of $x_{i+1}$ we have
$x_{i+1} \rel{E} a$,contrary to the assumptions. So assume that the claim is
true of $n\geq 1$: if $f^{(n+1)}(a)= f(f^{(n)}(a)) \rel{E} a$, then again
$f^{(n)}(a) \rel{E} x_{0}$, contrary to the inductive hypothesis; if
$f^{(n+1)}(a)= f(f^{(n)}(a)) \rel{E} x_i$ then again $f^{(n)}(a) \rel{E}
x_{i+1}$, contrary to the inductive hypothesis. So, in particular, for every
$n>0$, $f^{(n)}(a) \cancel{\rel{E}} a$, and as $f$ preserves
non-$E$-equivalence, for every $k$ and $n \ge 1$, $f^{(n+k)}(a)
\cancel{\rel{E}} f^{(k)}(a)$, showing that the c.e. set $\{f^{(m)}(a) \mid
m\in \omega\}$ is comprised of $E$-inequivalent numbers, so that $E$ would be
light, a contradiction.

Now, we can run the same argument for $b$: so assume that $x_i
\cancel{\rel{E}} b$ for all $i$. Note that if we used $b$ to define the
sequence $(x_i)_{i \in \omega}$ (i.e. we start with $x_0$ be such that
$f(x_0) \rel{E} b$) then we get precisely the same sequence of $E$-classes
$[x_{i}]_{E}$. This is because $f$ is a reduction of $E$ to $R$ and $a
\rel{R} b$ so the only $E$-class that can be sent to $[b]_{E}$ (thus sent to
$[b]_{R}$) is the same $[x_0]_{E}$ as before. Therefore precisely the same
argument as above again shows that $E$ is light assuming no $x_i$ is
$E$-equivalent to $b$.
\medskip

Case 2: Otherwise. Since Case 1) does not hold, let $i,j$ be least so that
$x_i \rel{E} a$ and $x_j \rel{E} b$, respectively. Obviously $i\ne j$ because
$a \cancel{\rel{E}} b$. So suppose $i<j$. From $x_i \rel{E} a$ we have
$f(x_{i+1}) \rel{E} x_i \rel{E} a$, thus $x_{i+1}\rel{E} x_0$, and for all
$k$, $x_{i+k+1} \rel{E} {x_k}$ (again by uniqueness of the $E$-classes of the
$x$-sequence). Now, let $k$ be such that $j=i+k+1$: it then follows that
$x_j=x_{i+k+1} \rel{E} x_k$, thus $b\rel{E} x_k$ with $k<j$ contradicting
minimality of $j$.

A similar argument applies if we assume that $j<i$.
\end{proof}

In fact, Lemma~\ref{darkQuotients} leads to another characterization of the
dark ceers (and consequently of the light ceers):

\begin{thm}
A ceer $E$ is light if and only if there is a non-trivial quotient $R$ of $E$
so that $R$ is universal.
\end{thm}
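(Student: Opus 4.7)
The plan is to prove the two directions separately. For the $(\Leftarrow)$ direction, I will argue that if $E$ has a non-trivial (that is, proper) quotient $R=E_{/W}$ with $R$ universal, then $E$ must be light. The argument runs as follows: $E$ must have infinitely many classes, since any quotient of a finite ceer is finite and hence not universal; so $E \in \Light \cup \Dark$. Since $R$ is universal, $E\leq R$. But Lemma~\ref{darkQuotients} says that a dark ceer cannot reduce to any of its proper quotients, so $E$ is not dark, and hence $E$ is light.

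For the $(\Rightarrow)$ direction, I will fix an effective listing $(y_i)_{i \in \omega}$ of pairwise non-$E$-equivalent numbers witnessing that $E$ is light, fix any universal ceer $T$, and put
\[
W=\{(y_i,y_j)\mid i\rel{T} j\},
\]
which is c.e. Setting $R=E_{/W}$, the main claim to verify is that $i \mapsto y_i$ is a reduction of $T$ to $R$; since $T$ is universal this will immediately force $R$ to be universal. The direction $i\rel{T}j \Rightarrow y_i \rel{R} y_j$ is immediate from $W \subseteq R$. The converse will require a structural understanding of $R$.

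The heart of the argument is showing that the $R$-class of $y_i$ is exactly $\bigcup_{k \in [i]_T}[y_k]_E$. Since $R$ is the equivalence relation generated by $E \cup W$, and every pair in $W$ has both coordinates among the pairwise $E$-inequivalent $y_k$'s, any chain in the symmetric-transitive closure of $E \cup W$ connecting two $y$'s should reduce, on the level of indices, to a $T$-chain, thereby forcing $y_i\rel{R}y_j \Rightarrow i\rel{T} j$. To confirm that $R$ is indeed a non-trivial quotient, I will note that $T$ is not $\Id$ (since $\Id$ is not universal --- for instance no dark ceer reduces to $\Id$), so $T$ has some pair $a\ne b$ with $a\rel{T}b$, giving $(y_a,y_b)\in W\smallsetminus E$ and hence $R\ne E$ as relations.

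The only place needing genuine care is the structural claim identifying the $R$-classes. The key fact making it work is that the $y_i$'s live in distinct $E$-classes, so that $E$-equivalences cannot introduce shortcuts merging indices that are not already $T$-connected by $W$-steps. Everything else --- the trivial direction of the reduction, the non-triviality of the quotient, and the appeal to Lemma~\ref{darkQuotients} in the reverse direction --- is essentially immediate.
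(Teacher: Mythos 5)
Your proposal is correct and follows essentially the same route as the paper: the forward direction uses the identical construction (collapse the pairwise $E$-inequivalent witnesses $y_i$ according to a universal ceer $T$ via $W=\{(y_i,y_j)\mid i\rel{T}j\}$, with $i\mapsto y_i$ the reduction), and the backward direction is the same appeal to Lemma~\ref{darkQuotients} together with the trivial finite case. Your extra verification that $[y_i]_R=\bigcup_{k\in[i]_T}[y_k]_E$ (using pairwise $E$-inequivalence of the $y_i$ to rule out shortcuts) is exactly the detail the paper leaves implicit, and it is sound.
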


\begin{proof}
Suppose $E$ is light. Let $Y=\{y_i\mid i\in\omega\}$ be the range of the
reduction $\Id\leq E$. Let $A$ be a universal ceer. Then let $X$ be the set
of pairs $\{(y_i,y_j)\mid i \rel{A} j\}$. Then $A\leq E_{/X}$ via the reduction
$i\mapsto y_i$.

Lemma \ref{darkQuotients} gives the reverse direction, as the case of an $E$
with finitely many classes is trivial.
\end{proof}

One approximation to Lemma \ref{darkQuotients} for self-full ceers is the
following:

\begin{lem}\label{lem:quotionts-self-fulls}
If $[x_i]_E$ for $1\leq i\leq n$ are computable $E$-classes and $E$ is
self-full, then $E\not\leq E_{/\{(x_i,y_i)\mid 1\leq i\leq n\}}$ for any
tuple of $y_i$, unless $y_i \rel{E} x_i$ for all $1 \leq i\leq n$.
\end{lem}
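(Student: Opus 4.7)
The plan is to argue by contradiction, bundling together Lemma~\ref{ComputableCollapses}, Observation~\ref{obs:nonselffull}, and the fact (the corollary right after Corollary~\ref{cor:from-sf-to-sf}) that self-fullness is invariant on $\equiv$-degrees. The key observation is that the three facts fit together so tightly that no new combinatorial work is needed.

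Assume, toward contradiction, that $R := E_{/\{(x_i,y_i)\mid 1\leq i\leq n\}}$ satisfies $E \leq R$, while nevertheless $y_j \nrel{E} x_j$ for some $1\leq j\leq n$. Since every $[x_i]_E$ is computable and at least one pair $(x_j,y_j)$ consists of $E$-inequivalent elements, Lemma~\ref{ComputableCollapses} supplies some $k$ with $1\leq k\leq n$ such that
\[
R \oplus \Id_k \equiv E.
\]

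From $E \leq R$ together with the trivial $R \leq R\oplus \Id_k$ and the above equivalence we get $E\leq R\leq R\oplus \Id_k\equiv E$, hence $E \equiv R$. Substituting back in the equivalence gives $R \equiv R\oplus \Id_k$. Because $k\geq 1$, we then have $R\oplus \Id_1 \leq R\oplus \Id_k \equiv R$, and Observation~\ref{obs:nonselffull} tells us that $R$ is not self-full.

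On the other hand, $E$ is self-full by assumption, and $E\equiv R$, so by the $\equiv$-invariance of self-fullness $R$ must be self-full, contradicting the previous paragraph. Hence $y_i\rel{E} x_i$ for every $i$, as required.

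Essentially no step is an obstacle: the only point at which care is needed is to record that the $k$ produced by Lemma~\ref{ComputableCollapses} is strictly positive, which is exactly what allows the equivalence $R\oplus\Id_k\equiv R$ to be converted, via Observation~\ref{obs:nonselffull}, into the non-self-fullness of $R$. That positivity uses precisely the contradiction hypothesis that some pair $(x_j,y_j)$ fails to be $E$-equivalent.
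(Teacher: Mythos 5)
Your proof is correct and follows essentially the same route as the paper: both hinge on Lemma~\ref{ComputableCollapses} producing $R\oplus\Id_k\equiv E$ with $k\geq 1$ and then contradicting self-fullness via Observation~\ref{obs:nonselffull}. The only cosmetic difference is that the paper concludes directly from $E\oplus\Id_1\leq R\oplus\Id_1\leq R\oplus\Id_k\equiv E$ that $E$ is not self-full, whereas you first derive $E\equiv R$ and transfer the contradiction to $R$ using degree-invariance of self-fullness; both are equally valid.
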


\begin{proof}
If $y_i \cancel{\rel{E}} x_i$ for some $1 \leq i\leq n$ then by Lemma
\ref{ComputableCollapses} $E_{/\{(x_i,y_i)\mid 1 \leq i\leq n\}} \oplus
\Id_k\equiv E$ for some $1\le k\leq n$. Thus, if $E\leq E_{/\{(x_i,y_i)\mid 1
\leq i\leq n\}}$, we would see that $E\oplus \Id_1\leq E_{/\{(x_i,y_i)\mid 1
\leq i\leq n\}}\oplus \Id_1\leq E$, contradicting self-fullness of $E$.
\end{proof}

In view of Lemma~\ref{darksAreSelfFull} one might hope that darkness and
self-fullness coincide. The following theorem shows that this is very much
not the case. In fact, it shows that every non-universal ceer is bounded by a
self-full ceer, and even there are self-full strong minimal covers of $\Id$,
as easily follows from the theorem by taking $A=\Id$.

\begin{thm}\label{SelfFullMinimalCoversInI}
Let $A$ be any non-universal ceer. Then there are infinitely many
incomparable self-full ceers $(E_l)_{l \in \omega}$ so that for every $n,l\in
\omega$ and ceer $X$, $A\oplus \Id_n \leq E_l$  and
\[
X< E_l \Rightarrow (\exists k)[X\leq A\oplus \Id_k].
\]
Moreover, each $E_l$ yields a partition in computably inseparable equivalence
classes, and thus each equivalence class is non-computable.
\end{thm}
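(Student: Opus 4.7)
The plan is to adapt the finite-injury priority construction of Theorem~\ref{MinimalDark}, with the previous ``$\im(\phi)$ hits only finitely many classes'' now replaced by ``$\im(\phi)$ hits only finitely many classes disjoint from a pre-installed $A$-copy''. I would pre-assign pairwise disjoint infinite c.e. sets $(C_l)_{l\in\omega}$ with $\omega\smallsetminus \bigcup_l C_l$ also infinite, and fix effective enumerations $(c_{l,i})_{i\in\omega}$ of $C_l$ without repetition. During the construction we enforce $c_{l,i}\rel{E_l} c_{l,j}$ whenever $i \rel{A} j$ is discovered, and no strategy is ever allowed to $E_l$-collapse two distinct elements of $C_l$; this preserves the invariant $E_l\restriction C_l\equiv A$. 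The requirements are $P^l_{i,j}$ (if $W_i$ meets infinitely many $E_l$-classes disjoint from $[C_l]_{E_l}$, then $W_i\cap [j]_{E_l}\ne\emptyset$), $Q^{l,l'}_n$ for $l\ne l'$ ($\phi_n$ does not reduce $E_l$ to $E_{l'}$), $M^l_{e,k}$ ($\phi_e$ does not reduce $E_l$ to $A\oplus \Id_k$), and $I^l_{e,i,j}$ (for $i\cancel{\rel{E_l}} j$, $\phi_e$ is not the characteristic function of a computable set separating $[i]_{E_l}$ from $[j]_{E_l}$).

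The $P$- and $Q$-strategies are the natural analogues of those in Theorem~\ref{MinimalDark}, with fresh parameters drawn from $\omega\smallsetminus \bigcup_l C_l$; the $P$-strategy is allowed to collapse its fresh $x\in W_i$ with $j$ even when $j\in C_l$, since $x$ is fresh and so this only extends an existing $A$-class and leaves the invariant intact. The $M^l_{e,k}$-strategy is the \basta strategy, now applied with $A\oplus \Id_k$ in place of $R$: the ceer $A\oplus \Id$ is non-universal because otherwise Corollary~\ref{cor:universal-join-irreducible} would give $A\oplus \Id\leq A$ or $A\oplus \Id\leq \Id$, both non-universal; hence $A\oplus \Id_k\leq A\oplus \Id$ is non-universal and Lemma~\ref{lemTdark} applies verbatim. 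The $I^l_{e,i,j}$-strategy waits for $\phi_e(i)$ and $\phi_e(j)$ to converge to opposite booleans (in any other case the requirement is automatic), then picks a fresh $z\in\omega\smallsetminus \bigcup_l C_l$, waits for $\phi_e(z)\downarrow$, and $E_l$-collapses $z$ with $i$ if $\phi_e(z)=0$ or with $j$ if $\phi_e(z)=1$, permanently refuting the separation.

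The $Q$-requirements give pairwise incomparability. If $f$ reduces a ceer $X$ to $E_l$, then either $\im(f)$ meets infinitely many $E_l$-classes disjoint from $[C_l]_{E_l}$ --- in which case the $P$-requirements force $\im(f)$ to meet every $E_l$-class, and Lemma~\ref{ontoReductionsFlip} gives $E_l\leq X$ --- or $\im(f)$ meets only finitely many, say $n$, such classes, and Lemma~\ref{coproduct3}(1) applied with $W=C_l$ and $U$ a transversal of those $n$ classes yields $X\leq E_0\oplus \Id_n$ with $E_0\leq E_l\restriction [C_l]_{E_l}\equiv A$, hence $X\leq A\oplus \Id_n$. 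Specializing to $X=E_l$ in the second case contradicts the $M$-requirements, so every self-reduction of $E_l$ hits every $E_l$-class; this is exactly self-fullness. Specializing to the identity reduction shows $E_l$ has infinitely many $E_l$-classes disjoint from $[C_l]_{E_l}$ (otherwise $E_l\leq A\oplus \Id_m$ for some $m$), and then $A\oplus \Id_n\leq E_l$ is witnessed by $2i\mapsto c_{l,i}$ together with $2i+1\mapsto a_{i\bmod n}$ for any $n$ representatives $a_0,\ldots,a_{n-1}$ of distinct such classes. The $I$-requirements deliver computable inseparability, and since a computable class would separate itself from any other class, no $E_l$-class can be computable.

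The main obstacle is bookkeeping: one must verify that every collapse performed by a strategy preserves both the $A$-invariant and all higher-priority restraints. Because each strategy chooses its parameters fresh in $\omega\smallsetminus \bigcup_l C_l$ and in previously-untouched $E_l$-classes, every such collapse involves at least one fresh element and therefore can neither merge two pre-restrained classes nor two distinct $A$-classes; the $A$-driven collapses stay entirely inside $C_l$ and so are invisible to the restraints, which live in the extras. The standard finite-injury verification from Theorem~\ref{MinimalDark}, combined with the finitariness of the $M$-strategy (Lemma~\ref{lemTdark}) and the one-shot behavior of the $P$- and $I$-strategies after their last initialization, then shows that every requirement is eventually satisfied.
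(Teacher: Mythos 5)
Your overall architecture is sound and in one respect genuinely different from the paper's: you install explicit requirements $M^l_{e,k}$ forcing $E_l\not\leq A\oplus\Id_k$ via the \basta strategy (legitimate, since $A\oplus\Id_k$ is non-universal by uniform join-irreducibility of universal ceers), and then read off self-fullness and the minimality clause directly from these. The paper has no such requirements; it instead derives $E_l\not\leq A\oplus\Id_n$ from pairwise incomparability together with $A\oplus\Id_n\leq E_{l'}$ for $l'\neq l$. Either route is viable --- but both need incomparability, and that is where your proof has a genuine gap.

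Your $Q^{l,l'}_n$-strategy is described as ``the natural analogue'' of the direct diagonalization of Theorem~\ref{MinimalDark}, and that strategy fails here. The witnesses $x,y$ live in the source $E_l$ and can be kept fresh, but their images $\phi_n(x),\phi_n(y)$ land wherever $\phi_n$ sends them in the target $E_{l'}$ --- possibly both inside $[C_{l'}]_{E_{l'}}$, the region where $E_{l'}$-collapse is dictated by the enumeration of $A$ and not by your construction. If at the moment of action $\phi_n(x)\cancel{\rel{E_{l'}}}\phi_n(y)$ and you collapse $x\rel{E_l}y$ while ``restraining'' the pair $(\phi_n(x),\phi_n(y))$ in $E_{l'}$, that restraint is unenforceable: a later $A$-collapse merges the two $C_{l'}$-classes and the diagonalization evaporates; if instead you keep $x\cancel{\rel{E_l}}y$, $A$ may never collapse the images and you again fail. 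Whether two $A$-classes eventually merge is a $\Sigma^0_1$ event you cannot decide in advance, so a single witness pair does not suffice, and retrying indefinitely breaks the finite-injury framework with no guarantee of eventual success. This is exactly the obstacle the paper flags: its $Q$-strategy first runs the \basta strategy coding a universal ceer $T$ on witnesses $a_0,a_1,\dots$, observing that if every image $\phi_n(a_m)$ fell into the $A$-coded region and the strategy never got stuck, the composite map would give $T\leq A$, contradicting non-universality of $A$; hence either the \basta strategy diagonalizes outright or one eventually finds a pair with at least one image outside the $A$-region, where direct diagonalization is safe. You need to import that combined strategy. (A smaller imprecision: your claim that ``every collapse involves at least one fresh element'' is false for the $P$-strategy, whose witness from $W_i$ need not be fresh --- it suffices that its class be disjoint from $[C_l]_{E_l}$ and unrestrained.) The rest of the proposal --- the $P$-, $M$-, and $I$-strategies and the deduction of the theorem from the requirements --- is essentially correct.
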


\begin{proof}
We build an infinite sequence $(E_l)_{l \in \omega}$ of ceers so that in each
$E_l$ the elements of $W=\omega^{[0]}(=\{\langle 0, x \rangle\mid x \in
\omega\}$) are set aside for coding $A$. That is, we collapse $\langle
0,x\rangle$ and $\langle 0,y\rangle$ in $E_l$ if and only if $x \rel{A} y$.
This guarantees that $A\leq E_l$. We have the further requirements:
\begin{itemize}
  \item[$P^k_{i,j}$:] If $W_i$ intersects infinitely many $E_k$-classes
      disjoint from $W$, then $W_i$ intersects the class $[j]_{E_k}$.
  \item[$Q^{k,l}_i$:] if $k \ne l$ then $\phi_i$ is not a reduction from
      $E_k$ to $E_l$.
 \item[$S^k_{i,j,l,l'}$:] If $[i]_{E_k}\neq [j]_{E_k}$ and $W_l$ and
     $W_{l'}$ are complements, then the classes of $i$ and $j$ are not
     separated by $W_l$ and $W_{l'}$.
\end{itemize}

We first note that satisfying these requirements will build the ceers $E_l$
as needed. These $E_l$ are clearly incomparable by the $Q$-requirements.
Pairs of distinct $E_l$-equivalence classes are computably inseparable by the
$S$-requirements. It is clear that $A\leq E_l$ for each $l$ and since the
classes are computably inseparable we see that there are infinitely many
$E_l$-equivalence classes disjoint from $W$: indeed let $V$ be the complement
of $[W]_{E_{l}}$. Now, if there were only finitely many $E_l$-equivalence
classes disjoint from $W$ then $V$ would be c.e. as well; in this case if $V
\ne \emptyset$ then we would have a computable separation of some pair of
equivalence classes, whereas if $V =\emptyset$ then we would have by
Lemma~\ref{ontoReductionsFlip} that $E_l\le A$, giving $E_l \le E_{l'}$ for
every $l'$ contradicting incomparability of the ceers $E_k$. Hence  (by
Lemma~\ref{obs:plus-span}, as for every $n\in \omega$ the reduction $A\leq
E_l$ misses at least $n$ $E_{l}$-equivalence classes, and it is enough to
take as $U$ a set consisting of $n$ representatives from $n$ distinct
equivalence classes not in the range of $f$) every $A\oplus \Id_n$ reduces to
each $E_l$. If $X\leq E_l$ via $f$, then either the range of $f$ intersects
every class, thus $E_l\leq X$ by Lemma~\ref{ontoReductionsFlip}, or it
intersects only finitely many ($n$, say) classes disjoint from $W$ (this
follows by satisfaction of all requirements $P^l_{i,j}$, $j \in \omega$,
where $W_i=\im(f)$). In this latter case we have $X\leq A\oplus \Id_n$ for
some $n$: this follows from Lemma~\ref{coproduct3}(1) (by taking as $W$ our
$W=\omega^{[0]}$ and as $U$ a set of $n$ representatives for the $n$ classes
not intersecting $W$: in fact we have $X\leq E\oplus \Id_n$ for some
restriction $E$ which is reducible to $E_l$ via a reduction whose image takes
only $E_l$-equivalence classes of $W$, in which we code $A$, but our coding
is onto the $E_l$-equivalence classes of $W$, thus $E \leq A$ by an argument
similar to that of Lemma~\ref{ontoReductionsFlip}: given a number $x$ use the
reduction $E \leq E_l$ to get an image $y$, then search for the first $z$
such that $\langle 0,z\rangle \rel{E_l} y$; it is clear that the assignment
$x \mapsto z$ gives a reduction $E \leq A$). It remains to see that each
$E_l$ is self-full. Suppose $f$ is a reduction of $E_l$ to itself. We first
note that $f$ must hit infinitely many classes not containing an element of
$W$. Otherwise, by an argument similar to the previous one, we would have
$E_l\leq A\oplus \Id_n$ for some $n$, but this contradicts incomparability of
the $E_l$'s, since $A\oplus \Id_n\leq E_{l'}$ for any $l'\neq l$. Thus the
range of $f$ intersects infinitely many $E_l$-classes not equivalent to any
element of $W$. Thus satisfying the requirements $P^l_{i,j}$, $j \in \omega$,
guarantees that the range of $f$ intersects every class, showing that $E_l$
is self-full.

\medskip

We allow strategies to place restraints of the form $(a,b,l)$ declaring that
the pair $a$ and $b$ cannot be $E_l$-collapsed by a lower priority
requirement, or restraints of the form $(a,l)$ which declare that $a$ cannot
be $E_l$-collapsed to any element of $W$ by a lower priority requirement.
Each strategy will place only finitely many restraints and must be able to
succeed despite inheriting finitely much such restraint from higher priority
requirements.

\medskip

\emph{Strategy for $P^k_{i,j}$:} If $W_i$ is still disjoint from $[j]_{E_k}$,
then given finite restraint, $P^k_{i,j}$ waits for $W_i$ to enumerate an
element not currently equivalent to any element of $W$ and is not restrained
from $E_k$-collapsing with $j$ by any higher priority requirement. Then
\emph{$P^k_{i,j}$ is ready to act} and (\emph{$P^k_{i,j}$-action}) it causes
this element to $E_k$-collapse to $j$. Note that if $W_i$ intersects
infinitely many classes disjoint from $W$, then it will eventually list some
$x$ which is not restrained from being $E_k$-collapsed to $j$. We thus have
two possible outcomes: a waiting one, and another one provided by the action
of the strategy.

\medskip

\emph{Strategies for $Q^{k,l}_i$:} This strategy is essentially a combination
of the direct diagonalization strategy with the \basta strategy. We would
like to employ the direct diagonalization strategy on parameters $x,y$ as in
Theorem~\ref{MinimalDark}, but $\phi_i(x),\phi_i(y)$ may both already be
equivalent to numbers in $W$. In this case, we cannot know whether or not
they will collapse in the future, as this is controlled by $A$ and not by us,
thus we cannot count on diagonalization by collapsing $x$ with $y$. Thus we
first employ the \basta strategy coding a universal ceer $T$ into $E_k$ to
get either diagonalization (i.e. at some stage we are stuck with $\phi_i$
failing to be a reduction from $E_k$ to $E_l$ on finitely many $a_0, \ldots,
a_m$), or some $\phi_i(x),\phi_i(y)$ not both in $[W]_{E_l}$, in which case
we are free to employ the usual direct diagonalization strategy. In
particular, as in the proof of Theorem~\ref{MinimalDark}, we use the \basta
strategy choosing \emph{parameters} $a_0,a_1,\ldots $ (when we choose $a_m$
we must choose it fresh, and not in the current $[W]_{E_l}$ and in addition
to the restraints $(a_i,a_m,k)$ we also place restraint $(a_m,k)$) until
either we have our explicit diagonalization produced by the \basta strategy,
i.e. we finally take a final winning sub-outcome $m$ saying that $\phi_i$
fails to converge or to be a reduction on all $a_0, a_1, \ldots, a_m$ (notice
that $Q^{k,l}_i$ may need to act several time as demanded by the \basta
strategy: these actions, for which the requirement has previously become
\emph{ready to act} will be called \emph{$Q^{k,l}_i$-actions of type $1$});
or there are $j_1,j_2$ so that not both $\phi_i(a_{j_1}),\phi_i(a_{j_2})$ are
in $[W]_{E_l}$. This must happen if the \basta strategy does not give a
diagonalization outcome, otherwise $i\mapsto a_i$ gives a reduction of the
universal ceer $T$ to $A$, which contradicts non-universality of $A$. In this
case, if already $\phi_i(a_{j_1}) \rel{E_{l}} \phi_i(a_{j_2})$ then we keep
the restraint $(a_{j_1}, a_{j_2}, k)$ (notice that $a_{j_1}, a_{j_2}$ are
still $E_k$-inequivalent as the \basta strategy may $E_k$-collapse them only
after $\phi_i$ converges on both of them); otherwise, if still
$\phi_i(a_{j_1})\cancel{E_{l}} \phi_i(a_{j_2})$, then it is \emph{ready to
act} and (\emph{$Q^{k,l}_i$-action of type $2$}) we $E_k$-collapse $a_{j_1}$
and $a_{j_2}$, and we place restraint $(\phi_i(a_{j_1}), \phi_i(a_{j_2}),l)$,
and also $(\phi_i(a_{j}),l)$ for each $j\in \{j_1, j_2\}$ such that
$\phi_i(a_{j}) \notin [W]_{E_l}$ (thus restraining them from unintentionally
collapsing with each other for the sake of coding $A$). The possible outcomes
of the strategy are the possible outcomes of the \basta strategy (including
infinite waits when we hit divergent computations) with a final winning
sub-outcome $m$ where $a_{m}$ is the last defined $a_{i}$; or diagonalization
as produced by the $Q^{k,l}_i$-action of type $2$.

\medskip

\emph{Strategy for $S^k_{i,j,l,l'}$:} Restrain a new element (the
\emph{parameter of $S^k_{i,j,l,l'}$}) $x$ from $E_k$-collapsing with $i$ or
$j$ (i.e. place restraints of the form $(i,x,k)$ and $(j,x,k)$) or with any
element of $W$ (i.e. we place a restraint of the form $(x,k)$). If $x$ is
enumerated into $W_l$, then $S^k_{i,j,l,l'}$ becomes \emph{ready to act} and
(\emph{$S^k_{i,j,l,l'}$-action}) it $E_k$-collapses $x$ with $j$. If it is
enumerated into $W_{l'}$ it becomes \emph{ready to act} and
(\emph{$S^k_{i,j,l,l'}$-action}) it $E_k$-collapses it with $i$.
In either
case, $W_l$ and $W_l'$ do not separate the $E_k$ classes of $i$ and $j$.
There are two possible outcomes: a waiting one (we wait for the parameter to
be enumerated), and another one produced by the $E_k$-collapse given by the
$S^k_{i,j,l,l'}$-action.

\medskip

By the previous discussion, all outcomes  clearly fulfill the respective
requirements. It is straightforward to see, via the finite injury machinery
that we can construct a sequence of ceers $(E_l)_{l \in \omega}$ satisfying
all requirements. We hint at the formal construction.

\medskip

\emph{The construction:} The construction is by stages. At each stage $s$ we
define the current values for the parameters relative the various
requirements, and we define $E_{l,s}$ for every $l$. If $R=Q^{k,l}_{i}$ then
$R$ has parameters $a^{R}_{0,s}, \ldots, a^{R}_{m-1,s}$ suitable to the
\basta strategy as in the proof of Theorem~\ref{MinimalDark}; if
$R=S^{k}_{i,j,l,l'}$ then $R$ has a parameter $x^{R}_{s}$. Moreover, each
requirement deals with the current restraint set $\rho^{R}$ which consists of
all triples $(a,b,l)$ imposed by higher priority requirements, plus the pairs
$(a,l)$ imposed by higher priority requirements: $R$ is not allowed to
$E_{l}$-collapse pairs $a$ and $b$ for which there is in $\rho^R$ a restraint
of the form $(a,b,l)$, nor is allowed to $E_{l}$-collapse to an element of
$W$ any number $a$ for which there is in $\rho^{R}$ a restraint $(a,l)$.

At odd stages we code $A$ into $W$ for every $E_l$: we assume to have fixed a
computable approximation $\{A_s: s \in \omega\}$ to $A$ as in
Section~\ref{ssct:background}. At even stage we perform the actions demanded
by the strategies relative to the various requirements. We fix as usual a
computable order of the requirements of order type $\omega$.
\emph{Initialization} of requirements different from $P$-requirements is as
in the proof of Theorem~\ref{MinimalDark}, of which we follow terminology and
notations unless otherwise specified. We say that a requirement $R$
\emph{requires attention at stage $s$}, if either $R=P^k_{i,j}$ for some
$k,i,j$, and $R$ has never acted so far, but now $R$ is ready to take
$R$-action as described; or $R \in \{Q_i^{k,l}, S^k_{i,j,l,l'}\mid i,j,k,l,l'
\in \omega\}$ and $R$ is initialized, or otherwise $R$ has not taken action
as yet after last initialization, but it is now ready to take $R$-action.

\medskip
\emph{Stage $0$.} Initialize all requirements $R$, except the
$P$-requirements. Define $E_{l,0}=\Id$.

\emph{Stage $2s+1$.} Code $A_s$ in every $E_l$, with $l\le s$, as described,
i.e. $E_l$-collapse all pairs $\langle 0,x\rangle$, $\langle 0,y\rangle$ if
and only if $x \rel{A_s} y$.

\medskip
\emph{Stage $2s$ with $s>0$.} Consider the least $R$ that requires attention
at $s+1$. (Since infinitely many strategies are initialized, such a least
strategy exists.)

If $R$ is a $P$-requirement, then simply take $R$-action. Otherwise,
\begin{itemize}
  \item if $R$ is initialized then choose new parameters for it as
      described in the strategy for $R$;
  \item if $R$ is not initialized then take $R$-action as described.
\end{itemize}
After $R$ has acted, initialize all requirements $R'$ of lower priority than
$R$, except the $P$-requirements.

At the end of the stage (whether odd or even), define $E_{l,s+1}$ to be the
equivalence relation generated by $E_{l,s}$ plus the pairs which have been
$E_{l}$-collapsed at stage $s+1$. Go to next stage.

\medskip \emph{Verification:}
Restraints against lower priority requirements are automatically preserved by
initializations, and after initialization when a strategy chooses new
parameters it chooses so that for every $l$ the parameters are neither
pairwise $E_l$-equivalent nor are they $E_l$-equivalent to any number so far
mentioned in the construction, so that they may be restrained as needed.

Since after re-initialization each requirement acts only finitely many times,
a straightforward inductive argument on the priority rank of the requirements
shows that each requirement is eventually not re-initialized, and so after
last re-initialization the relative strategy can act, if needed, to satisfy
the requirement as in the above description of strategies.
\end{proof}

The following is a very similar theorem to Theorem
\ref{SelfFullMinimalCoversInI}, where we request that the ceers $E_l$ all
have finite classes. Having finite classes in $E_l$ is incompatible with both
the computable inseparability requirement, and the requirement that $X< E_l$
implies that $X\leq A\oplus \Id_n$ for some $n$. To see that having finite
classes is incompatible with the latter requirement, notice that  if the
classes are computable then by Lemma~\ref{ComputableCollapses},
$X={E_l}_{/(x,y)}$ where $x$ and $y$ are any two non-$E_l$-equivalent
numbers, has the property that $X\oplus \Id_1\equiv E_l$. On the other hand,
if $E_l$ is self-full then by Lemma~\ref{lem:quotionts-self-fulls} $E_l \nleq
X$. Thus $X< E_l$ but we cannot have $X\leq A\oplus \Id_n$ unless also $E_l
\leq A\oplus \Id_{n+1}$. But we also have $A\oplus \Id_{n+2}\leq E_l$ (as
$A\oplus \Id_{k}\leq E_{l'}$ for all $k,l'$), hence $E_l \leq A\oplus
\Id_{n+1} \leq A \oplus \Id_{n+2} \leq E_l$, which gives a reduction of $E_l$
to itself whose target misses at least one equivalence class, contradicting
self-fullness.

\begin{thm}\label{FiniteClassesSelfFulMinimalCoversInI}
Let $A$ be any non-universal ceer with only finite classes. Then there are
infinitely many incomparable self-full ceers $(E_l)_{l \in \omega}$ so that
$A\oplus \Id_n\leq E_l$ for each $n,l$, and for every ceer $X$,
\[
X<E_l \Rightarrow (\exists n)[X\leq A\oplus \Id_n \;\lor\;
E_l\equiv X\oplus \Id_n].
\]
Further, each $E_l$ has finite classes.
\end{thm}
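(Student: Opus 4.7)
The approach is to adapt the proof of Theorem~\ref{SelfFullMinimalCoversInI} to the finite-class setting. I keep the coding of $A$ into $W = \omega^{[0]}$ (the $W$-classes are finite because $A$ has finite classes) and the incomparability requirements $Q^{k,l}_i$ with their combined direct-diagonalization-plus-\basta strategy. The $S$-requirements of that proof are dropped, since computable inseparability forces infinite classes.

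The essential modification is in the $P$-requirements. In the original construction, for each fixed class $[j]_{E_k}$, the infinitely many strategies $P^k_{i,j}$ could $E_k$-collapse elements into $[j]_{E_k}$, making it infinite. To prevent this while still obtaining the weakened conclusion of the present theorem, I would activate $P^k_{i,j}$ only when $i \leq j$. This bounds by $j+1$ the number of $P$-strategies allowed to collapse into any single class $[j]_{E_k}$, keeping every class finite once combined with the finitely-many collapses contributed by the other strategies and by the coding of $A$. The trade-off is that for each $i$, the $P$-strategies now force $W_i$ to intersect $[j]_{E_k}$ only for $j \geq i$, so $W_i$ may miss $[j]$ for at most $i$ values of $j$.

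Applied to $W_i = \im(f)$ for $X \leq E_l$ via $f$ with $X < E_l$, this yields the desired dichotomy. If $W_i$ intersects only finitely many classes disjoint from $W$, then $X \leq A \oplus \Id_n$ by Lemma~\ref{coproduct3}(1). Otherwise, $W_i$ hits all but at most $i$ classes, and taking $U$ to be a finite transversal of the missed classes in Lemma~\ref{obs:plus-span} gives $E_l \equiv X \oplus \Id_n$. For self-fullness, I add strategies $SF^l_e$ handling those reductions $\phi_e \colon E_l \to E_l$ whose image might miss some class $[j]$ with $j < e$ (the case $j \geq e$ being already covered by $P^l_{e,j}$). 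Each $SF^l_e$ picks a fresh non-$E_l$-equivalent pair $(x,y)$ and, when $\phi_e(x), \phi_e(y)$ converge to non-$E_l$-equivalent values, $E_l$-collapses $\phi_e(x)$ and $\phi_e(y)$ under restraint on $x,y$, thereby breaking $\phi_e$ as a reduction; using fresh parameters across distinct $SF^l_e$'s ensures only bounded growth of any single class.

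The main obstacle is the standard finite-injury orchestration: verifying that every strategy eventually settles after its last injury, that each class receives only finitely many collapses from the combined action of all the $Q$-, $P$-, and $SF$-strategies together with the coding of $A$, and that all requirements are satisfied in the limit. The jurisdiction restriction $i \leq j$ on $P^k_{i,j}$ is the key device that reconciles the finiteness of classes with the weakened dichotomy in the conclusion; the rest of the construction then proceeds along the template of Theorem~\ref{SelfFullMinimalCoversInI}.
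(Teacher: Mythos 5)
Your restriction of the $P$-requirements to the case $i\leq j$ is exactly the device the paper uses (the paper's $P^l_{i,j}$ additionally requires $j$ to be least in its class, a harmless normalization), and your derivation of the dichotomy $X\leq A\oplus\Id_n$ versus $E_l\equiv X\oplus\Id_n$ from it is right. But your self-fullness mechanism does not work. Your $SF^l_e$ tries to \emph{destroy} $\phi_e$ as a reduction of $E_l$ to itself, by picking fresh $x,y$, restraining them apart, and collapsing $\phi_e(x)$ with $\phi_e(y)$. Self-fullness is not the statement that $E_l$ has no reductions to itself --- the identity is always one --- but that every such reduction is onto the classes. For $\phi_e$ the identity (or any $\phi_e$ with $\phi_e(x)\rel{E_l}x$ and $\phi_e(y)\rel{E_l}y$ at the time of action), the collapse you propose \emph{is} the collapse of $x$ with $y$, which contradicts your own restraint; the strategy is incoherent there, and nothing in it detects or exploits the hypothesis that $\phi_e$ actually misses a class. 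The paper's $SF^l_{i,j}$ does something genuinely different: assuming $[j]_{E_l}\cap\im(\phi_i)=\emptyset$, it uses the fact (as in the proof that dark ceers are self-full) that the iterates $\phi_i^{(n)}(j)$ are then pairwise inequivalent, chooses $n$ so that $[\phi_i^{(n)}(j)]_{E_l}$ is free of higher-priority finiteness restraint, and collapses $\phi_i^{(n)}(j)$ with $\phi_i^{(n+1)}(x)$ for a suitable $x$. If $\phi_i$ really is a reduction this pulls back to $j\rel{E_l}\phi_i(x)$, i.e.\ it \emph{forces the missed class into the image} rather than killing the reduction; if the wait never ends, one argues (via the incomparability with the other $E_{l'}$, or via the $P$-requirements) that $\phi_i$ was not a reduction after all.

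A second, smaller gap is the finiteness of classes. You assert that fresh parameters across the $SF^l_e$'s bound the growth of each class, but the elements your $SF$-strategies collapse are $\phi_e$-values, not fresh parameters, so infinitely many of these strategies could each merge something into a fixed class $[j]_{E_l}$. The paper resolves this with explicit requirements $F^l_j$ (``the $E_l$-class of $j$ is finite'') that impose a permanent restraint which all but the finitely many higher-priority strategies must respect, together with the priority arrangement placing each $P^l_{i,j}$ with $i\leq j$ above $F^l_j$; the $SF$-strategies then respect the $F$-restraints by choosing $n$ as above. Without some such mechanism your construction does not guarantee finite classes.
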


\begin{proof}
Once again, we construct the sequence of ceers $(E_l)_{l \in \omega}$ so that
each $E_l$ codes $A$ on the set $W=\omega^{[0]}$. We have further
requirements:

\begin{itemize}
 \item[$P^l_{i,j}$:] If $i\leq j$, $j$ is the least number in its
     $E_l$-class, and $W_i$ intersects infinitely many $E_l$-classes
     disjoint from $W$, then it intersects $[j]_{E_l}$.
 \item[$SF^l_{i,j}$:] If $\phi_i$ is a reduction of $E_l$ to itself, then
     $[j]_{E_l}\cap \im(\phi_i)\neq \emptyset$.
  \item[$Q^{k,l}_i$:] if $k \ne l$ then $\phi_i$ is not a reduction from
      $E_k$ to $E_l$.
 \item[$F^l_i$:] The $E_l$-class of $i$ is finite.
\end{itemize}

We computably order the requirements with order type $\omega$, so that each
$P^l_{i,j}$ with $i\leq j$ has higher priority than $F^l_j$.

\begin{lem}
If $(E_l)_{l\in \omega}$ is a sequence of ceers satisfying all of these
requirements, then each $E_l$ is self-full and $X<E_l$ implies that for some
$n$, either $X\leq A\oplus \Id_n$ or $E_l \equiv X\oplus \Id_n$.
\end{lem}

\begin{proof}
By the requirements $SF^l_{i,j}$ for every $j$, we see that if $\phi_i$ is a
reduction of $E_l$ to itself, then it is onto the classes of $E_l$, so each
$E_l$ is self-full. If $X\leq E_l$ via $f$, then let $V=\im(f)$. By the
requirements $P^l_{i,j}$, with $V=W_i$, either $V$ is contained in
$[W]_{E_l}$ along with at most finitely many other $E_l$-classes or it
intersects co-finitely many $E_l$-classes.
In the former
case, exactly as we have argued in the proof of
Theorem~\ref{SelfFullMinimalCoversInI}, it follows by
Lemma~\ref{coproduct3}(1) that $X\leq A\oplus \Id_n$ for some $n$ while in
the latter case, it follows from Lemma~\ref{obs:plus-span} that $X\oplus
\Id_n\equiv E_l$ where $n$ is the number of missed classes.
\end{proof}

In this construction, in addition to the usual restrains $(i,j,l)$ saying
that no lower priority requirement can $E_l$-collapse $i$ and $j$, and
$(i,l)$ saying that no lower priority requirement can $E_l$-collapse $i$ to
an element of $W$, we also allow restraints $(i,l)^F$, placed by an
$F$-requirement $F^l_i$, of the form: No lower priority requirement can
$E_l$-collapse anything with $i$.

\medskip

\emph{Strategy for $P^l_{i,j}$:} If $W_{i}$ is still disjoint from
$[j]_{E_{l}}$, we wait for some number $x$ to be enumerated into $W_i$ which
is not currently in $[W]_{E_l}$ and is not restrained from $E_l$-collapsing
with $j$ by any higher priority requirement. At this point $P^l_{i,j}$ is
\emph{ready to act} and we then (\emph{$P^l_{i,j}$-action}) $E_l$-collapse
$x$ with $j$. Notice that as $i \le j$ the strategy is not allowed to grow
equivalence classes of numbers $j' <i$ and thus does not conflict in this
case with the finiteness requirements $F^l_{j'}$ of higher priority. The
waiting outcome, and the outcome given by the action fulfill the
requirement.

\medskip

\emph{Strategy for $SF^l_{i,j}$:} We work under the assumption that still
$[j]_{E_l}\cap \im(\phi_i)= \emptyset$,  and $j$ is least in its $E_{l}$
equivalent class (otherwise the requirement is already satisfied: we say in
this case that it is \emph{inactive}). The strategy is as follows: We wait
for an $n$ to appear so that $[\phi_i^{(n)}(j)]_{E_l}$ is not restrained from
growing by a higher priority $F$-requirement. Then we wait for any $x$ so
that $\phi_i^{(n+1)}(x)$ converges, and $\phi_i^{(n+1)}(x)\notin [W]_{E_l}$
and is not restrained from $E_l$-collapsing with $\phi_i^{(n)}(j)$ by a
higher priority requirement. At this point $SF^l_{i,j}$ is \emph{ready to
act} and (\emph{$SF^l_{i,j}$-action}) we $E_l$-collapse $\phi_i^{(n)}(j)$
with $\phi_i^{(n+1)}(x)$. The outcomes can be described as follows: either
the requirement becomes eventually inactive and thus satisfied (this is the
case if we act and $\phi_i$ is a reduction, as the action guarantees that $j
\rel{E_l} \phi_i(x)$), or we get stuck in a waiting outcome, but then in this
case we will argue in Lemma~\ref{lem:SF-satisfied} that $\phi_i$ is not a
reduction.

\medskip

\emph{Strategy for $Q^{k,l}_i$:} This is exactly as in Theorem
\ref{SelfFullMinimalCoversInI}.

\medskip

\emph{Strategy for $F^l_i$:} This strategy simply restrains the $E_l$-class
of $i$ from being collapsed to any other class by a lower priority
requirement. Higher priority requirements will ignore this, but it will
succeed in guaranteeing that the $E_l$ class of $i$ is finite.

\medskip

\emph{The construction:} The $Q$-requirements have parameters as in the proof
of Theorem~\ref{SelfFullMinimalCoversInI}; the other requirements do not have
parameters, except for the  restraint sets $\rho^R$, which now contain also
pairs in the form $(i,l)^F$ imposed by the higher priority $F$-requirements.
Again, we employ the finite injury machinery to build a sequence of ceers
according to the strategies which we have described. A formal construction is
similar to that in the proof of Theorem~\ref{SelfFullMinimalCoversInI}, using
the initialization mechanism (which automatically guarantees restraints
against lower priority requirements: note that only $Q$-requirements need to
be re-initialized as they need to choose new parameters after action of
higher priority requirements, whereas the other requirements are immediately
satisfied once they act and never injured after then), and by specifying as
in that proof what it means for a requirement to require attention: namely,
not to have ever acted for $P$- and $SF$-requirements but being now ready to
act, or to be initialized or being ready to act (with the obvious meaning as
in Theorem~\ref{MinimalDark} and Theorem~\ref{SelfFullMinimalCoversInI})
after last initialization for $Q$-requirements. The $F$-requirements set once
and for all restraints, never act and never require attention. Initialization
is as in the proof of Theorem~\ref{SelfFullMinimalCoversInI}. At stage $0$ we
define $E_{l,0}=\Id$, we initialize all $Q$-requirements; if $R=F_i^l$ then
$R$ sets up the once for all restraint $(i,l)^F$  which will never be
cancelled so that at all stages, $(i,l)^F \in \rho^{R'}$ for all lower
priority $R'$.

The rest of the construction follows the pattern of the construction in the
proof of Theorem~\ref{SelfFullMinimalCoversInI}.

\medskip

\emph{The verification:} The verification is based on the following lemma:

\begin{lem}\label{lem:SF-satisfied}
Every requirement acts only finitely often. Thus it is re-initialized only
finitely often and is eventually satisfied.
\end{lem}

\begin{proof}
Suppose not, and let $R$ be the highest priority requirement where this
fails. So there is a last stage at which higher priority strategies stop
re-initializing $R$. Note that if $R$ is a $P$- or an $SF$-requirement then
it acts at most once, and $F$-requirements never act. Also, a $Q$-requirement
cannot act infinitely often (recall, the \basta strategy acts only finitely
often: see proof of Theorem~\ref{SelfFullMinimalCoversInI}). Thus $R$  acts
only finitely often as well, after last re-initialization.

As to satisfaction,  we note that every $F^l_i$-requirement is satisfied
(regardless of the satisfaction of other requirements): this is because every
higher priority requirement acts only finitely often, thus there can only be
finitely many times when $[i]_{E_l}$ grows. Similarly, every $Q$-requirement
and $P$-requirement is satisfied (again regardless of the satisfaction of
even higher priority $SF$-requirements): indeed, $Q$-requirements are
initialized only finitely often by action of higher priority requirements,
and after then they are free to pursue their strategies towards satisfaction;
as to a requirement $P_{i,j}^{l}$, if $W_{i}$ intersects infinitely many
$E_{l}$-classes disjoint from $W$, then it eventually enumerates a number
free of higher priority restraint (since this restraint is finite) so that it
can act to satisfy itself. Thus, we must consider the case that $R$ is a
self-fullness requirement $R=SF^l_{i,j}$. We suppose that in fact $j$ is the
least member of its $E_l$-equivalence class, $[j]_{E_l}\cap \im(\phi_i)=
\emptyset$, and $\phi_i$ is a reduction of $E_l$ to itself (so for every
$n\ge 1$, $\phi_i^{(n)}$ is a reduction as well). We will show that this
leads to a contradiction. By the argument from the proof of Lemma
\ref{darksAreSelfFull}, we know that $\{\phi^{(n)}(j)\mid n \in \omega\}$ is
comprised of non-$E_l$-equivalent elements. Thus, one of these
$\phi^{(n)}(j)$ is not equivalent to any $i$ whose class is restrained from
growing by a higher priority $F$-requirement. We now have two cases: If
$\im(\phi_i^{(n+1)})$ is contained in $[W]_{E_l}$ along with finitely many
other classes, then by Lemma~\ref{coproduct3}(1), as argued in the proof of
Theorem~\ref{SelfFullMinimalCoversInI}, we have that $\phi_i^{(n+1)}$ gives a
reduction of $E_l$ to $A\oplus \Id_m$, for some $m$. But we know that
$A\oplus \Id_m \leq E_{l'}$ for every $l'\neq l$ and that $E_l$ is
incomparable with $E_{l'}$. This yields a contradiction. On the other hand,
if $\im(\phi_i^{(n+1)})$ is not contained in $[W]_{E_l}$ along with finitely
many other classes, then there will eventually appear some $y\in
\im(\phi_i^{(n+1)})$ which is not restrained from collapsing with
$\phi_i^{(n)}(j)$, contradicting the assumption that $[j]_{E_l}\cap
\im(\phi_i)= \emptyset$.

Thus every requirement succeeds.
\end{proof}
This completes the proof of the theorem.
\end{proof}

\begin{cory}\label{LightSelfFulls}
There are infinitely many incomparable light self-full ceers $E_l$ with
finite classes so that $X<E_l$ implies $X\leq \Id$ or $X\oplus \Id_n\equiv
E_l$ for some $n$.
\end{cory}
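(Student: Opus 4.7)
The plan is simply to invoke Theorem~\ref{FiniteClassesSelfFulMinimalCoversInI} with the parameter $A=\Id$. First I would verify that $\Id$ meets the hypotheses of that theorem, namely that $\Id$ is a non-universal ceer all of whose equivalence classes are finite: each class is a singleton, and $\Id$ is certainly not universal (for instance, it is the least light ceer, strictly below $\Id \oplus \Id \equiv \Id$ in no nontrivial way, and it sits far below $\mathbf{1}$).

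Applying the theorem then yields infinitely many pairwise incomparable self-full ceers $(E_l)_{l\in\omega}$ with finite classes satisfying $\Id \oplus \Id_n \leq E_l$ for every $n,l$, and for every ceer $X$,
\[
X<E_l \Rightarrow (\exists n)[\,X\leq \Id \oplus \Id_n \;\lor\; E_l\equiv X\oplus \Id_n\,].
\]
In particular, taking $n=0$ (or simply noting that $\Id \leq E_l$) shows that each $E_l$ is light, so the $E_l$ are the required light self-full ceers with finite classes.

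It remains only to simplify the conclusion. For each $n$ we have $\Id \oplus \Id_n \equiv \Id$: indeed $\Id \leq \Id \oplus \Id_n$ is immediate, and conversely a computable bijection $\omega \sqcup [n] \to \omega$ gives $\Id \oplus \Id_n \leq \Id$. Hence $X \leq \Id \oplus \Id_n$ is equivalent to $X \leq \Id$, and the disjunction becomes $X \leq \Id$ or $X \oplus \Id_n \equiv E_l$, as required. There is no genuine obstacle here; the only thing to check is that $\Id$ legitimately plays the role of $A$, which it does trivially.
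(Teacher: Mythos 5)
Your proposal is correct and is essentially the paper's own proof: the paper likewise applies Theorem~\ref{FiniteClassesSelfFulMinimalCoversInI} with $A=\Id$ and then observes that $\Id\oplus \Id_n\equiv \Id$ for every $n\ge 1$, so the conclusion $X\leq \Id\oplus\Id_n$ simplifies to $X\leq\Id$. Your extra checks (that $\Id$ is non-universal with finite classes, and that $\Id\leq E_l$ gives lightness) are exactly the routine verifications the paper leaves implicit.
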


\begin{proof}
In Theorem~\ref{FiniteClassesSelfFulMinimalCoversInI} let $A=\Id$. Note that
$\Id\oplus \Id_n\equiv \Id$ for every $n\ge 1$.
\end{proof}

\begin{cory}\label{cory:selfdarkminimalcovers}
If in Theorem~\ref{SelfFullMinimalCoversInI} or in
Theorem~\ref{FiniteClassesSelfFulMinimalCoversInI} we start with $A$ dark,
then the ceers $E_{l}$ are dark as well.
\end{cory}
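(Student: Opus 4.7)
The plan is to argue by contradiction, assuming that some $E_l$ is light and using the strong minimal cover property established in the respective theorems, combined with darkness of $A$, to derive a contradiction.

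First I would observe that $E_l$ has infinitely many equivalence classes: this is immediate since $A \leq E_l$ and $A$, being dark, has infinitely many classes. Now suppose toward a contradiction that $\Id \leq E_l$. If $\Id \equiv E_l$, then $A \leq E_l \equiv \Id$, and Corollary~\ref{cor:belowId} forces $A$ to be finite or equivalent to $\Id$; either conclusion contradicts the darkness of $A$. Hence $\Id < E_l$.

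Next I would apply the conclusion of Theorem~\ref{SelfFullMinimalCoversInI} (respectively Theorem~\ref{FiniteClassesSelfFulMinimalCoversInI}) with $X = \Id$. In the first setting this directly yields $\Id \leq A \oplus \Id_n$ for some $n$. In the second setting we additionally get the alternative $E_l \equiv \Id \oplus \Id_n$, but $\Id \oplus \Id_n \equiv \Id$, so this alternative collapses to $E_l \equiv \Id$, which has already been ruled out in the previous paragraph. Thus in both cases we are reduced to showing that $\Id \not\leq A \oplus \Id_n$, i.e., that $A \oplus \Id_n$ is dark.

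The last step is a routine adaptation of Observation~\ref{obs:dark-closure}: the ceer $A \oplus \Id_n$ plainly has infinitely many classes, and if $W$ were an infinite effective transversal of $A \oplus \Id_n$, then at most $n$ elements of $W$ could be odd (they would represent distinct $\Id_n$-classes), so $W$ would contain infinitely many even numbers whose halves form an infinite c.e. set of pairwise $A$-inequivalent elements, contradicting the darkness of $A$. This yields the required contradiction. I do not expect any real obstacle here; the proof is essentially a one-line deduction from the minimality clause of the two theorems together with the elementary fact that adjoining finitely many new classes preserves darkness.
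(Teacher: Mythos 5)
Your proof is correct and follows essentially the same route as the paper's: take $X=\Id$, invoke the minimality clause to get $\Id\leq A\oplus\Id_n$, and extract an infinite effective transversal of $A$ from the even part of the reduction, with the extra case in Theorem~\ref{FiniteClassesSelfFulMinimalCoversInI} killed by Corollary~\ref{cor:belowId}. Your explicit check that $\Id\equiv E_l$ is impossible (so that the strict inequality $\Id<E_l$ needed for the minimality clause actually holds) is a small point the paper glosses over, but otherwise the arguments coincide.
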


\begin{proof}
Consider the case of Theorem~\ref{SelfFullMinimalCoversInI}, and
suppose $E_{l}$ is light. Then $X=\Id \leq E_{l}$ but then $\Id \leq A\oplus
\Id_{n}$ for some $n$ and some reducing function $f$. It follows that
$\{\frac{f(x)}{2} \mid f(x) \textrm{ even}\}$ is an infinite effective
transversal of $A$, contradicting $A$ dark.

In the case of Theorem~\ref{FiniteClassesSelfFulMinimalCoversInI}, we have an
additional case to consider, which is impossible by
Corollary~\ref{cor:belowId}.
\end{proof}

Note that Theorem \ref{SelfFullMinimalCoversInI} shows that the collection of
self-full ceers is upward-dense. Not to be out-done in their upward-density,
the non-self-full ceers are also upward dense:

\begin{obs}
Let $E$ be any non-universal ceer. Then there is a non-universal ceer $X\geq
E$ so that $X$ is non-self-full.
\end{obs}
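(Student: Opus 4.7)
The plan is to take $X=E\oplus\Id$ and verify the three required properties in turn. Clearly $E\leq X$, so $X\geq E$. For non-self-fullness, observe that $\oplus$ is associative modulo $\equiv$ and $\Id\oplus\Id_1\equiv\Id$, so
\[
X\oplus\Id_1=(E\oplus\Id)\oplus\Id_1\equiv E\oplus(\Id\oplus\Id_1)\equiv E\oplus\Id=X.
\]
In particular $X\oplus\Id_1\leq X$, and hence $X$ is non-self-full by Observation~\ref{obs:nonselffull}.

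It remains to see that $X$ is non-universal. Suppose for contradiction that $X=E\oplus\Id$ is universal. Then by Corollary~\ref{cor:universal-join-irreducible}, $X$ is uniform join-irreducible, so either $X\leq E$ or $X\leq\Id$. The second alternative is impossible, since a universal ceer is not reducible to $\Id$ (for instance because $\Id$ is itself not universal). Thus $X\leq E$, which combined with $E\leq X$ yields $E\equiv X$, making $E$ universal and contradicting the hypothesis. Hence $X$ is non-universal.

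There is no genuine obstacle here; the whole proof is a two-line calculation. The only thing that requires a moment's thought is the non-universality of $X$, where the key point is that $\Id$ must be absorbed somewhere, and uniform join-irreducibility of the universal degree forces that absorption to land in $E$.
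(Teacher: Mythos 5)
Your proof is correct and follows exactly the paper's approach: take $X=E\oplus\Id$, note $X\oplus\Id_1\leq X$ for non-self-fullness via Observation~\ref{obs:nonselffull}, and derive non-universality from the uniform join-irreducibility of universal ceers (Corollary~\ref{cor:universal-join-irreducible}). You merely spell out the absorption calculation and the case analysis that the paper leaves implicit.
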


\begin{proof}
Let $X=E\oplus \Id$. It is immediate that $X\oplus \Id_1\leq X$, so $X$ is
not self-full. Since the universal degree is uniform join-irreducible (see
Corollary~\ref{cor:universal-join-irreducible}), we have that $X$ is not
universal.
\end{proof}

It follows that, unlike darkness, neither self-fullness nor non-self-fullness
is closed downwards or upwards. On the other hand, neither self-fullness nor
non-self-fullness (for non finite ceers) ceers are downward dense, as is
shown by the following theorem.

\begin{thm}
For $X$ any dark ceer, every $E\leq X$ is self-full. Also, there are light
ceers $Y$ so that $E\leq Y$ implies that either $E$ has only finitely many
classes, or $E$ is non-self-full.
\end{thm}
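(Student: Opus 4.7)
The plan is to treat the two assertions separately, both of which follow quickly from results already established.

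For the first assertion, suppose $X$ is dark and $E\leq X$. If $E$ has only finitely many classes, then any reduction of $E$ to $E$ sends the finitely many classes of $E$ injectively to themselves and so must be surjective on classes; thus $E$ is self-full. If $E$ has infinitely many classes, then $E$ is dark by Observation~\ref{obs:dark-closure}, and hence self-full by Lemma~\ref{darksAreSelfFull}.

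For the second assertion, I would take $Y=\Id$, which is light since it possesses itself as an infinite effective transversal. If $E\leq \Id$ then by Corollary~\ref{cor:belowId} either $E\in\I$ (so $E$ has only finitely many classes, and the conclusion holds), or $E\equiv\Id$. In the latter case, $\Id\oplus\Id_1$ is a computable ceer with infinitely many classes and so is isomorphic to (hence equivalent to) $\Id$, giving $\Id\oplus\Id_1\leq\Id$; by Observation~\ref{obs:nonselffull}, $\Id$ is non-self-full, and since non-self-fullness is invariant under $\equiv$ (by the corollary immediately following Corollary~\ref{cor:from-sf-to-sf}), so is $E$.

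No step is really an obstacle: the first assertion reduces to the downward-closure of $\Dark$ among ceers with infinitely many classes combined with the fact that dark ceers are self-full, and the second reduces to the very tight description of the cone below $\Id$ provided by Corollary~\ref{cor:belowId} together with the transparent non-self-fullness of $\Id$ itself.
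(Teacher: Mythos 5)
Your proof is correct and follows essentially the same route as the paper: downward closure of darkness (Observation~\ref{obs:dark-closure}) plus Lemma~\ref{darksAreSelfFull} for the first claim, and the witness $Y=\Id$ --- which is one of the two examples the paper itself offers --- together with Corollary~\ref{cor:belowId} and the non-self-fullness of $\Id$ for the second. The only differences are that the paper additionally exhibits the less trivial witness $Y=R_B$ for $B$ a non-decidable, non-simple c.e.\ set of minimal $1$-degree (a single witness suffices for the statement as written), and that your explicit treatment of the finitely-many-classes case in the first claim is, if anything, more careful than the paper's.
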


\begin{proof}
If $X$ is dark then every $E\leq X$ is dark, thus self-full, so the former
claim follows immediately. As to the latter claim, consider $Y$ to be the
ceer $\Id$, or $Y=R_B$, where $B$ is a non-decidable non-simple c.e. set
having minimal $1$-degree (i.e. such that, for any non-decidable c.e. set
$A$, if $A\le_1 B$ then $B\le_1 A)$. For the existence of such a c.e. set
$B$, due to Lachlan, see~\cite{Lachlan2} or Odifreddi's survey
paper~\cite[Theorem~5.6]{Odifreddi:survey}. Indeed, if $Y=R_B$ is as above
then let $U\subseteq \overline{B}$ be an infinite computable set; as $U$ is
a strong effective transversal of $Y$,
then by
Lemma~\ref{obs:coproduct2} we have that $Y\oplus \Id \le Y$ so that $Y\oplus
\Id_1 \le Y$, which implies that $Y$ is non-self-full by
Observation~\ref{obs:nonselffull}. Finally, if $E \le Y$, then by
Fact~\ref{fact:fund-ce-sets} $E\equiv R_A$ for some c.e. set $A$: then by
minimality of the $1$-degree of $B$, either $B \equiv_1 A$ so that $E\equiv
Y$ and thus $E$ is non-self-full, or $A$ is decidable, giving $E\equiv \Id_n$
for some $n \le \omega$. Lastly, $\Id$ is non-self-full.
\end{proof}

We know that the self-full degrees are not closed downwards, and we ask:

\begin{q}
If $E_1$ and $E_2$ are both self-full, must $E_1\oplus E_2$ be self-full?
\end{q}

\section{Least Upper Bounds}\label{sct:lubs}
We now examine the general question of when pairs of ceers have a least upper
bound. This examination will be continued in Section~\ref{overI}: in fact
some of the results proved now are consequences of more general results in
next section: we keep them here for the sake of clearer exposition and easier
reference.

Important instances of existing least upper bounds are provided by the
following observation, which will also be employed in a later section to
prove definability results.

\begin{obs}\label{joinIdDark}
$\Id$ has a least upper bound with any dark degree, in fact for $X$ dark, the
degree of $\Id \oplus X$ is the least light degree bounding $X$.
\end{obs}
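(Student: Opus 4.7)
The plan is to show that $\Id\oplus X$ is in fact a least upper bound of $\Id$ and $X$; the ``least light degree bounding $X$'' statement then follows immediately, since any light $Y$ with $X\leq Y$ satisfies $\Id\leq Y$ too, making $Y$ an upper bound of the pair $\Id,X$.

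The upper bound direction is immediate: the maps $i\mapsto 2i$ and $x\mapsto 2x+1$ witness $\Id\leq\Id\oplus X$ and $X\leq\Id\oplus X$. For the least upper bound part, let $Y$ be any ceer with $\Id\leq Y$ and $X\leq Y$, witnessed by computable reductions $f$ and $g$ respectively. Then $\{f(i)\mid i\in\omega\}$ is an infinite effective transversal of $Y$, and we wish to produce a reduction $h$ of $\Id\oplus X$ to $Y$. The natural choice $h(2x+1)=g(x)$ handles the odd part, and we must select the even-part values from among the $f(i)$ in such a way that their $Y$-classes are disjoint from those in the image of $g$.

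The main step, and the only place where darkness of $X$ is used, is to argue that only finitely many $f(i)$ are $Y$-equivalent to some $g(x)$. Consider the c.e.\ set
\[
A=\{i\mid (\exists x)\,f(i)\rel{Y}g(x)\}.
\]
If $A$ were infinite, we could enumerate it as $i_0<i_1<\cdots$ and compute in parallel, for each $k$, a witness $x_k$ with $g(x_k)\rel{Y}f(i_k)$. Since the $f(i_k)$ are pairwise $Y$-inequivalent and $g$ is a reduction, the $x_k$ would be pairwise $X$-inequivalent, producing an infinite c.e.\ transversal of $X$ and contradicting darkness of $X$. Hence $A$ is finite, so there exists $N$ with $f(i)\notin[\im(g)]_Y$ for all $i\geq N$.

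With such an $N$ fixed (its existence suffices — we need not compute it), define $h(2i)=f(i+N)$ and $h(2x+1)=g(x)$. A routine check verifies $h$ is a reduction: $Y$-inequivalence of the $f(i+N)$ handles the even-even case, $g$ being a reduction handles the odd-odd case, and the choice of $N$ ensures no $f(i+N)$ is $Y$-equivalent to any $g(x)$, handling the even-odd case. The only mildly subtle point (and the reason this works at all) is that darkness of $X$ converts a \emph{semi-decidable} bad set $A$ into a \emph{finite} one, which is all one needs in order to build a computable reduction without being able to decide membership in $[\im(g)]_Y$.
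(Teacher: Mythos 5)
Your proof is correct and follows essentially the same route as the paper's: both argue that darkness of $X$ forces the $Y$-closures of $\im(f)$ and $\im(g)$ to share only finitely many classes, and then build the reduction of $\Id\oplus X$ to $Y$ by shifting the $\Id$-part past that finite overlap (the paper packages the final assembly via Lemma~\ref{obs:coproduct}, you write the reduction out directly). No gaps.
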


\begin{proof}
Let $X$ be dark and $Y$ such that $\Id\leq Y$ and $X\leq Y$. Fix $f$ and $g$
to be reductions witnessing this. Then the intersection of the $Y$-closures
of the images of $f$ and $g$ is contained in finitely many $Y$ classes, as
otherwise $X$ would be light. Let $Z$ be the finite set of elements $z$ so
that $f(z)\in \im(g)$. Then let $h$ be a reduction of $\Id$ to $\Id$ with no
element of $Z$ in the range. It follows by Lemma~\ref{obs:coproduct}, by
taking $f_1=f\circ h$, $f_2=g$ and $W=\emptyset$, that $\Id \oplus X \le Y$.
\end{proof}

\subsection{The ceers of the form $R_X$}\label{ssct:RX}
We begin by considering the ceers of the form $R_X$ for a c.e. set $X$, then
consider the more general case.

\begin{thm}\label{NoJoinOf1Degrees}
Let $X$ and $Y$ be infinite c.e. sets so that $X\not\leq_1 Y\oplus \emptyset$
and $Y\not\leq_1 X\oplus \emptyset$. Then $R_X$ and $R_Y$ do not have any
least upper bound in the ceers.
\end{thm}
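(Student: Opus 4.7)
The plan is to suppose for contradiction that $Z$ is a least upper bound of $R_X$ and $R_Y$, and to derive one of the forbidden $1$-reductions. I proceed in two stages: first, recast $Z$ in a convenient normal form; second, use the leastness together with a clever choice of upper bound to produce a $1$-reduction.

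For the first stage, observe that the c.e.\ set $X\oplus Y:=\{2x:x\in X\}\cup\{2y+1:y\in Y\}$ yields an upper bound $R_{X\oplus Y}$ of $R_X$ and $R_Y$ via the natural reductions $x\mapsto 2x$ and $y\mapsto 2y+1$. Hence $Z\leq R_{X\oplus Y}$, and by Fact~\ref{fact:fund-ce-sets}(2), $Z\equiv R_V$ for some c.e.\ set $V$; replace $Z$ by $R_V$. Applying Fact~\ref{fact:fund-ce-sets}(1) in both directions now translates the lub property into $1$-reducibility statements: $X,Y\leq_1 V$, and moreover $V\leq_1 U$ for every infinite c.e.\ set $U$ with $X,Y\leq_1 U$ (since such a $U$ gives another upper bound $R_U$ of $R_X$ and $R_Y$, which the lub $R_V$ must reduce to). So $V$ is a $\leq_1$-least element among the infinite c.e.\ sets that $1$-bound both $X$ and $Y$.

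For the second stage, fix $f\colon X\leq_1 V$ and $g\colon Y\leq_1 V$, and apply leastness with $U=X\oplus Y$ to get $p\colon V\leq_1 X\oplus Y$. The composition $p\circ f$ partitions $\omega$ computably into $A_e\cup A_o$ according to the parity of $p(f(x))$. On $A_o$ the map $x\mapsto (p(f(x))-1)/2$ is a $1$-reduction of $X\cap A_o$ into $Y$; postcomposing with $y\mapsto 2y$ and extending, Myhill-style, to a total $1$-reduction of $X$ (using that $A_o$ is c.e.\ infinite and $X\oplus\emptyset=\{2y:y\in Y\}$ has an infinite computable complement) yields $X\leq_1 Y\oplus\emptyset$, contradicting the hypothesis---provided $A_o$ is infinite. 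Symmetrically, analysing $p\circ g$ handles the case when the ``even'' part of $p\circ g$ is infinite, giving $Y\leq_1 X\oplus\emptyset$.

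In the remaining ``balanced'' case, where $p\circ f$ is almost entirely even and $p\circ g$ is almost entirely odd, one applies the leastness of $V$ to a second tailored upper bound $U'$---for instance the c.e.\ set obtained from $X\oplus Y$ by swapping the $X$- and $Y$-sides, or by injecting controlled elements across the parity split---and compares the two resulting $1$-reductions $p\colon V\leq_1 X\oplus Y$ and $p'\colon V\leq_1 U'$ on a suitable infinite c.e.\ piece; the incompatibility of their routings forces one of the two forbidden $1$-reductions. The main obstacle is precisely this balanced case: crafting a second $U'$ whose $V$-routing is incompatible with the first, and performing the combinatorial bookkeeping to convert the partial $1$-reductions defined on c.e.\ subsets into total $1$-reductions with image inside $Y\oplus\emptyset$ or $X\oplus\emptyset$.
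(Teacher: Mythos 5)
There is a genuine gap here, and it is structural rather than a matter of unfinished bookkeeping. Your first stage is fine and agrees with the paper: any least upper bound reduces to $R_{X\oplus Y}$, hence by Fact~\ref{fact:fund-ce-sets}(2) is $\equiv R_V$ for some c.e.\ set $V$. But from then on you only ever test the leastness of $R_V$ against upper bounds of the form $R_U$ for c.e.\ sets $U$, which converts the problem into the pure $1$-degree statement ``$V$ is $\leq_1$-least among the infinite c.e.\ sets $1$-above both $X$ and $Y$.'' That statement is \emph{consistent} with the hypotheses of the theorem, so it cannot yield a contradiction: take $X,Y$ to be c.e.\ cylinders of incomparable Turing degree. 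Then $X\not\leq_1 Y\oplus\emptyset$ and $Y\not\leq_1 X\oplus\emptyset$ (since $Y\oplus\emptyset\leq_T Y$), yet $X\oplus Y$ is again a cylinder, so for any c.e.\ $U$ with $X,Y\leq_1 U$ we get $X\oplus Y\leq_m U$ by combining the two reductions along the parity split, hence $X\oplus Y\leq_1 U$. So $X\oplus Y$ really is $\leq_1$-least among such $U$, and no ``second tailored upper bound $U'$'' of the form $R_{U'}$ can close your balanced case; the information you discarded in stage one is exactly what is needed. The paper's proof instead tests $R_V$ against the upper bound $R_X\oplus R_Y$, which is \emph{not} of the form $R_U$: it has two non-singleton classes, $2X$ and $2Y+1$. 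Since $V$ is undecidable, a reduction $R_V\leq R_X\oplus R_Y$ must send the single non-singleton class $V$ entirely into one of these two classes (landing in a singleton class would make $V$ the preimage of a point, hence decidable), and then the \emph{same} reducing function witnesses $R_V\leq R_X\oplus\Id=R_{X\oplus\emptyset}$ or $R_V\leq \Id\oplus R_Y\equiv R_{Y\oplus\emptyset}$. Composing with $R_Y\leq R_V$ (resp.\ $R_X\leq R_V$) and applying Fact~\ref{fact:fund-ce-sets}(1) gives $Y\leq_1 X\oplus\emptyset$ (resp.\ $X\leq_1 Y\oplus\emptyset$), the desired contradiction.

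A secondary problem: even your ``unbalanced'' case does not go through as written. Knowing that $x\mapsto (p(f(x))-1)/2$ reduces $X\cap A_o$ to $Y$ only lets you conclude $X\leq_1 Y\oplus\emptyset$ if $A_e$ is \emph{finite} (so that its elements can be hardcoded); if $A_e$ is infinite, then for $x\in A_e$ you learn only that $x\in X$ iff $p(f(x))/2\in X$, which gives no way to route $x$ correctly into or out of $2Y$. Myhill-style padding can repair injectivity, but it cannot supply missing membership information.
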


\begin{proof}
Note that $R_{X\oplus Y}$ is an upper bound for both $R_X$ and $R_Y$. Since
the degrees of ceers of the form $R_Z$ form an interval in $(\Ceers, \leq)$
(Fact~\ref{fact:fund-ce-sets}), we know that any least upper bound must be
equivalent to a ceer of this form. Suppose $R_Z$ (with $Z$ c.e. and
undecidable) was such a least upper bound. But then $R_X\oplus R_Y$ is also
an upper bound. So, $R_Z\leq R_X\oplus R_Y$. But then, by undecidability,
either the infinite class in $R_Z$ is sent to the infinite even class, in
which case we have that $R_Z\leq R_X\oplus \Id$ by the same reducing
function, or the infinite odd class, in which case (with a similar argument)
$R_Z \leq \Id\oplus R_Y \equiv R_Y \oplus \Id$. Without loss of generality,
we suppose we have $R_Y\leq R_Z\leq R_X\oplus \Id$. But $R_X\oplus \Id=
R_{X\oplus \emptyset}$, so we get that $Y\leq_1 X\oplus \emptyset$, which is
a contradiction.
\end{proof}

\begin{cory}\label{cor:lightNoJoin}
If $R_X$ and $R_Y$ are light and incomparable, then they do not have any
least upper bound.
\end{cory}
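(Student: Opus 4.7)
The plan is to reduce the corollary to Theorem~\ref{NoJoinOf1Degrees}. Since $R_X \oplus R_Y$ is an upper bound for both $R_X$ and $R_Y$, some upper bound exists, so the nontrivial content is to preclude a \emph{least} one. Theorem~\ref{NoJoinOf1Degrees} does exactly this, provided we can verify its two hypotheses: that $X,Y$ are infinite, and that $X\not\leq_1 Y\oplus\emptyset$ and $Y\not\leq_1 X\oplus\emptyset$.

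First I would dispatch the infinitude of $X$ and $Y$. If $X$ were finite and c.e., then $X$ is decidable and every $R_X$-class is computable, whence $R_X\equiv \Id$ (using, e.g., the observation that every computable ceer with infinitely many classes is isomorphic to $\Id$). Then lightness of $R_Y$, i.e.\ $\Id\leq R_Y$, would give $R_X\leq R_Y$, contradicting incomparability. Symmetrically, $Y$ is infinite.

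The substantive step is the identification $R_{Y\oplus\emptyset}\equiv R_Y$ (for $R_Y$ light), together with its symmetric version. Note that $R_{Y\oplus\emptyset}$ has a single nontrivial equivalence class $\{2y : y\in Y\}$ with every other element a singleton; in particular $R_{Y\oplus\emptyset}\simeq R_Y\oplus\Id$. To see $R_Y\oplus\Id\leq R_Y$, use that $R_Y$ light means $\overline{Y}$ contains an infinite c.e. set, and hence (by the standard trick of enumerating a strictly increasing subsequence) an infinite computable subset $U\subseteq\overline{Y}$. Each element of $U$ is an $R_Y$-singleton class, so $[U]_{R_Y}=U$ is decidable, i.e.\ $U$ is a strong effective transversal of $R_Y$. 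Lemma~\ref{obs:coproduct2} (with $f$ the identity) then yields $R_Y\oplus\Id\leq R_Y$, and the reverse reduction is trivial.

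With these pieces in hand, the argument closes quickly. Since $Y\oplus\emptyset$ is infinite, Fact~\ref{fact:fund-ce-sets}(1) gives $X\leq_1 Y\oplus\emptyset$ iff $R_X\leq R_{Y\oplus\emptyset}$, i.e.\ iff $R_X\leq R_Y$. Incomparability of $R_X$ and $R_Y$ therefore forces $X\not\leq_1 Y\oplus\emptyset$, and symmetrically $Y\not\leq_1 X\oplus\emptyset$. Theorem~\ref{NoJoinOf1Degrees} now applies and produces the conclusion. No step is really a serious obstacle; the only point requiring care is that, although Lemma~\ref{obs:coproduct2} is stated with the hypothesis that $f(x)\in[U]_R$ be decidable, lightness of $R_Y$ allows us to thin to a computable transversal and trivially satisfy it.
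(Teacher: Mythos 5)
Your proposal is correct and follows essentially the same route as the paper: reduce to Theorem~\ref{NoJoinOf1Degrees} by using lightness to extract an infinite computable (strong effective) transversal in the complement, conclude $R_{X\oplus\emptyset}\equiv R_X$ (and likewise for $Y$) via Lemma~\ref{obs:coproduct2}, and translate incomparability into the $\not\leq_1$ hypotheses through Fact~\ref{fact:fund-ce-sets}. Your explicit check that $X$ and $Y$ are infinite is a small point the paper leaves implicit, but otherwise the arguments coincide.
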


\begin{proof}
Suppose that $R_X$ and $R_Y$ are light and incomparable. Since $X$ is not
simple there exists an infinite computable set in the complement, which
consists of pairwise $R_X$-inequivalent elements, and thus forms an infinite
strong effective transversal for $R_X$.
Thus by Lemma~\ref{obs:coproduct2}, $R_X \oplus \Id \equiv R_X$. But
$R_X\oplus \Id= R_{X\oplus \emptyset}$, thus $R_{X\oplus \emptyset}\equiv
R_X$; similarly for $R_Y$. Thus, if $R_X$ and $R_Y$ are incomparable, it
follows that $X\not\leq_1 Y\oplus \emptyset$ and $Y\not\leq_1 X\oplus
\emptyset$. The claim then follows from the previous theorem.
\end{proof}

\begin{cory}\label{darkRNoJoin}
If $R_X$ and $R_Y$ are dark and incomparable, then they do not have any least
upper bound.
\end{cory}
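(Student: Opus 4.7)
The plan is to reduce the statement to Theorem~\ref{NoJoinOf1Degrees} by verifying its hypotheses: given $R_X$ and $R_Y$ both dark and incomparable, I aim to show $X \not\leq_{1} Y \oplus \emptyset$ (the reverse inequality being symmetric). The route followed in the light case (Corollary~\ref{cor:lightNoJoin}) is blocked here, since for dark $R_X$ one has $R_X < R_X \oplus \Id$ strictly, so $R_{X \oplus \emptyset}$ is not equivalent to $R_X$. Instead, I would argue by contradiction: assuming $X \leq_{1} Y \oplus \emptyset$, Fact~\ref{fact:fund-ce-sets}(1) produces a reduction $g : R_X \to R_Y \oplus \Id$, and I will combine darkness of $R_X$ with self-fullness of $R_Y$ to force $R_X$ and $R_Y$ to be comparable.

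First, I would use darkness of $R_X$ to control the image of $g$ on the $\Id$-side of $R_Y \oplus \Id$. For each odd $o$ in the range of $g$ one may c.e.-select a witness $x_{o}$ with $g(x_{o}) = o$; distinct hit odd values give $R_X$-inequivalent witnesses, so an infinite family of hit odd values would yield an infinite effective transversal of $R_X$, contradicting darkness. Hence $g$ factors through $R_Y \oplus \Id_{n}$ for some $n \in \omega$, so Lemma~\ref{coproduct3}(2) yields a ceer $E_{0} \leq R_Y$ and an integer $k \leq n$ with $R_X \equiv E_{0} \oplus \Id_{k}$. Since $E_{0} \leq R_X$ and $R_X$ has infinitely many classes, $E_{0}$ must have infinitely many classes as well; Observation~\ref{obs:dark-closure} then makes $E_{0}$ dark, and Lemma~\ref{darksAreSelfFull} makes $E_{0}$ self-full. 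The case $k = 0$ gives $R_X \equiv E_{0} \leq R_Y$ immediately, so I would henceforth assume $k \geq 1$.

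The main step will be to propagate $E_{0} \leq R_Y$ up the chain $E_{0} < E_{0} \oplus \Id_{1} < E_{0} \oplus \Id_{2} < \cdots$ using Lemma~\ref{StrongMinimalCoversOfSF}. Each $E_{0} \oplus \Id_{j}$ is self-full by Corollary~\ref{cor:from-sf-to-sf}, so the ``dual'' clause of Lemma~\ref{StrongMinimalCoversOfSF} gives the induction step: whenever $R_Y > E_{0} \oplus \Id_{j}$, also $R_Y \geq E_{0} \oplus \Id_{j+1}$. Starting from $R_Y \geq E_{0}$, either the chain stalls at some $j \in \{0, 1, \ldots, k-1\}$ with $R_Y \equiv E_{0} \oplus \Id_{j} < E_{0} \oplus \Id_{k} \equiv R_X$, or it survives through $j = k-1$ to give $R_Y \geq E_{0} \oplus \Id_{k} \equiv R_X$. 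In either case $R_X$ and $R_Y$ are comparable, contradicting incomparability.

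The step I expect to be the main obstacle is resisting the temptation to build a reduction $R_X \leq R_Y$ by hand from $g$: this would require hard-coding finitely many elements of $\overline{Y}$ that avoid the $R_Y$-closure of the even-part image of $g$, and verifying that such elements exist would rely on delicate facts about the simple set $Y$ (for instance, that the relevant c.e.\ image is not cofinite in $\overline{Y}$). The iterative argument via Lemma~\ref{StrongMinimalCoversOfSF} sidesteps this by working entirely at the degree-theoretic level, and this appears to be the cleanest route.
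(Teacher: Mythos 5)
Your proof is correct, and the core mathematical content coincides with the paper's: both arguments rest on (i) darkness forcing the image of a reduction to hit only finitely many classes of an $\Id$-part, and (ii) repeated application of Lemma~\ref{StrongMinimalCoversOfSF} (via self-fullness of dark ceers) to turn a reduction into comparability. The packaging differs, though. The paper argues directly: any least upper bound must be some $R_Z$ with $R_Z\leq R_X\oplus R_Y$; darkness of $R_Y$ cuts the odd part down to $\Id_n$, giving $R_Y\leq R_Z\leq R_X\oplus \Id_n$, and then Lemma~\ref{StrongMinimalCoversOfSF} is iterated \emph{downward} to conclude $R_Y\equiv R_X\oplus\Id_k$ or $R_Y\leq R_X$. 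You instead verify the hypotheses of Theorem~\ref{NoJoinOf1Degrees} outright, showing $R_X\not\leq R_Y\oplus\Id$ by extracting $E_0\leq R_Y$ with $R_X\equiv E_0\oplus\Id_k$ via Lemma~\ref{coproduct3}(2) and iterating Lemma~\ref{StrongMinimalCoversOfSF} \emph{upward} from $E_0$. Your route has the mild advantage of unifying this corollary with Corollary~\ref{cor:lightNoJoin} (both become instances of Theorem~\ref{NoJoinOf1Degrees}, and you in fact prove the slightly stronger statement that incomparable dark $R_X,R_Y$ satisfy $X\not\leq_1 Y\oplus\emptyset$ and $Y\not\leq_1 X\oplus\emptyset$), at the cost of one extra lemma invocation. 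One point you leave implicit and should state: Theorem~\ref{NoJoinOf1Degrees} requires $X$ and $Y$ to be \emph{infinite} c.e.\ sets, which does hold here because a finite or decidable $X$ would make $R_X$ finite or $\equiv\Id$, hence not dark; this is a one-line check but it is a genuine hypothesis of the theorem you are invoking.
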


\begin{proof}
Suppose that $R_X$ and $R_Y$ have a least upper bound: by
Fact~\ref{fact:fund-ce-sets} it has to be of the form $R_Z$ for some c.e. set
$Z$. Then let $f$ be a reduction witnessing $R_Z\leq R_X\oplus R_Y$. Without
loss of generality, the infinite class of $Z$ gets sent to the infinite class
corresponding to $X$. Then the odd images of $f$ give a set of pairwise
non-$R_Y$-equivalent elements and thus from the darkness of $R_Y$ the image
of $f$ contains only finitely many odd numbers. It follows that $R_Z\leq
R_X\oplus \Id_n$ for some $n$, by Lemma~\ref{coproduct3}(1) (taking $W$ to be
the even numbers and $U$ the finitely many odd numbers in the image of $f$).
Thus $R_Y\leq R_X\oplus \Id_n$. It follows from repeated applications of
Lemma \ref{StrongMinimalCoversOfSF} that either $R_Y\equiv R_X\oplus \Id_k$
for some $k$, or $R_Y\leq R_X$, either way contradicting the assumption of
incomparability of $R_X$ and $R_Y$.
\end{proof}

\begin{lemma}\label{lem:Turing-simple}
There are infinitely many simple sets $(A_{i})_{i \in \omega}$ so that for
distinct $i,j$, $A_{i} \nleq_{T} A_{j}$ and $A_{j} \nleq_{T} A_{i}$, where
$\leq_{T}$ denotes Turing reducibility.
\end{lemma}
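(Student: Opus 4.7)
The statement combines two classical facts: the existence of simple sets (Post) and the existence of incomparable Turing degrees among the c.e.\ sets (Friedberg--Muchnik). The plan is to run both constructions simultaneously for countably many sets via a standard finite-injury priority argument, producing a uniformly c.e.\ family $(A_i)_{i\in\omega}$ with the desired properties.

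For each $i\in\omega$ we build a c.e.\ set $A_i$ subject to the requirements $P_{i,e}$: \emph{if $W_e$ is infinite, then $W_e\cap A_i\ne\emptyset$}, and, for every $j\ne i$, $N_{i,j,e}$: $\Phi_e^{A_j}\ne\chi_{A_i}$, where $\Phi_e^{X}$ denotes the $e$-th Turing functional with oracle $X$ and $\chi_{A_i}$ the characteristic function of $A_i$. These are computably ordered in order type $\omega$. The strategy for $P_{i,e}$ is Post's: at each stage, look for some $x\in W_{e,s}$ with $x>2\langle i,e\rangle$ and free of higher-priority $N$-restraint; when one is found, enumerate it into $A_i$. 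Post's bound $x>2\langle i,e\rangle$ ensures that at most $n$ elements of $\{0,\dots,2n\}$ are ever enumerated into $A_i$ on behalf of $P$-requirements, so $\overline{A_i}$ is infinite. The strategy for $N_{i,j,e}$ is Friedberg--Muchnik: pick a fresh witness $x$ beyond all higher-priority restraint and every number mentioned so far, restraining $x$ from entering $A_i$; if ever $\Phi_e^{A_j}(x)\downarrow=0$ with use $u$, enumerate $x$ into $A_i$ and impose the restraint that no number $\le u$ may enter $A_j$ by a lower-priority requirement.

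The main (and essentially only) potential obstacle is the familiar conflict in which a $P$-strategy wishes to enumerate a number currently restrained by some $N$-strategy; the standard finite-injury verification resolves this inductively on priority. Each $P$-requirement acts at most once, and each $N$-requirement imposes restraint at most once per witness, so any given $N$-requirement is injured only finitely often; after its final injury it picks a permanent witness $x$, and either $\Phi_e^{A_j}(x)$ never converges to $0$ (in which case the requirement is trivially met), or it eventually converges, the requirement acts, and the resulting restraint on $A_j$ is the finite use $u$. Symmetrically, at every stage the cumulative restraint from higher-priority requirements is finite, so whenever the hypothesis of $P_{i,e}$ holds, some admissible $x\in W_e$ above Post's threshold and above all current restraint will eventually appear, and $P_{i,e}$ acts. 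Running this construction uniformly in $i$ yields the required family $(A_i)_{i\in\omega}$.
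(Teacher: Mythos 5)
The paper offers no argument here at all --- its ``proof'' is the single word ``Folklore'' --- so there is nothing to compare your construction against except the standard one, and yours is indeed the standard one: Post's simplicity requirements interleaved with Friedberg--Muchnik incomparability requirements for a uniformly c.e.\ family, resolved by finite injury. The architecture and the verification sketch are correct.

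One step is stated too quickly: your argument that $\overline{A_i}$ is infinite accounts only for the numbers enumerated into $A_i$ by the $P$-requirements (where Post's bound $x>2\langle i,e\rangle$ gives the count of at most $n$ elements of $\{0,\dots,2n\}$), but the $N$-strategies for requirements $N_{i,j,e}$ also enumerate their witnesses into $A_i$, and since a requirement may act once per initialization cycle, the naive count of $N$-contributions below $2n$ is not obviously at most $n+1$. This is easily repaired without touching the construction: the satisfaction of the $N$-requirements is verified independently of simplicity, and if $\overline{A_i}$ were finite then $A_i$ would be computable, hence equal to $\Phi_e^{A_j}$ for an index $e$ of a functional that ignores its oracle, contradicting $N_{i,j,e}$. (Alternatively one can reserve the witnesses targeted for $A_i$ from a sufficiently sparse computable set.) You might also note a construction-free folklore route to the same lemma: take any infinite antichain of c.e.\ Turing degrees and use the fact that every noncomputable c.e.\ degree contains a simple set (Post's deficiency set of a one-one enumeration of $W$ is simple and Turing equivalent to $W$); this trades the combined priority argument for two separately quotable classical facts.
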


\begin{proof}
Folklore.
\end{proof}

\begin{cory}\label{cor:there-are-pairs}
There are pairs of light degrees, pairs of dark degrees, and pairs consisting
of one light and one dark degree (all containing ceers of the form $R_{X}$)
which have no least upper bound.
\end{cory}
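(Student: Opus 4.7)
The plan is to extract three appropriate pairs of witnesses from the infinitely many pairwise Turing-incomparable simple sets $(A_i)_{i\in\omega}$ supplied by Lemma~\ref{lem:Turing-simple}, and in each of the three cases to invoke exactly one of the three preceding no-join results.

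First, for the dark-dark case I would take $X=A_0$ and $Y=A_1$. Each $R_{A_i}$ is dark since an infinite effective transversal of $R_{A_i}$ would, after discarding at most one element of $A_i$, yield an infinite c.e.\ subset of $\overline{A_i}$, contradicting simplicity. The Turing-incomparability $A_0\not\leq_T A_1$ and $A_1\not\leq_T A_0$ a fortiori yields $1$-incomparability, so by Fact~\ref{fact:fund-ce-sets}(1) the ceers $R_{A_0}$ and $R_{A_1}$ are incomparable, and Corollary~\ref{darkRNoJoin} concludes that they have no least upper bound.

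For the light-light case I would instead set $X=A_0\oplus\emptyset$ and $Y=A_1\oplus\emptyset$. The odd numbers form an infinite computable subset of each complement, so both $R_X$ and $R_Y$ are light. Since $X\equiv_T A_0$ and $Y\equiv_T A_1$, Turing-incomparability again lifts to $1$-incomparability of $X,Y$, and Corollary~\ref{cor:lightNoJoin} delivers no join.

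Finally, for the mixed case I would take $X=A_0$ (dark) and $Y=A_1\oplus\emptyset$ (light). Incomparability of $R_X,R_Y$ follows just as above. To apply Theorem~\ref{NoJoinOf1Degrees} I need to verify $A_0\not\leq_1 Y\oplus\emptyset$ and $Y\not\leq_1 A_0\oplus\emptyset$; since $Y\oplus\emptyset\equiv_T A_1$ and $A_0\oplus\emptyset\equiv_T A_0$, both failures follow by passing to Turing reducibility and quoting $A_0\not\leq_T A_1$ and $A_1\not\leq_T A_0$. There is no real obstacle here: all of the substantive work has been done in Theorem~\ref{NoJoinOf1Degrees}, Corollary~\ref{cor:lightNoJoin} and Corollary~\ref{darkRNoJoin}, and the corollary is essentially a bookkeeping exercise of choosing the right c.e.\ sets.
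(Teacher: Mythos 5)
Your proposal is correct and follows essentially the same route as the paper: both extract Turing-incomparable simple sets from Lemma~\ref{lem:Turing-simple} and feed the pairs $R_{A_0},R_{A_1}$ (dark--dark), $R_{A_0\oplus\emptyset},R_{A_1\oplus\emptyset}$ (light--light, the paper writes these as $R_{2U},R_{2V}$) and the mixed pair into Corollary~\ref{darkRNoJoin}, Corollary~\ref{cor:lightNoJoin} and Theorem~\ref{NoJoinOf1Degrees} respectively, with Turing incomparability supplying the required $1$-incomparability hypotheses. No gaps.
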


\begin{proof}
The first two claims follow easily from Corollary~\ref{cor:lightNoJoin} and
Corollary~\ref{darkRNoJoin} respectively. All three claims however follow
also from Theorem~\ref{NoJoinOf1Degrees}: indeed by
Lemma~\ref{lem:Turing-simple} one can find pairs of simple c.e. sets $U,V$
such that $U |_T V$. Then: $R_U, R_V$ are dark; $R_{2U}, R_{2V}$ are light,
and $R_U$ is dark and $R_{2V}$ is light. In each case the assumptions of
Theorem~\ref{NoJoinOf1Degrees} are fulfilled by Turing incomparability of $U$
and $V$: thus for instance $U \not\leq_1 2V \oplus \emptyset$ and $2V
\not\leq_1 U \oplus \emptyset$, etc.
\end{proof}

If we take $X$ simple then $R_X$ (which is dark) and $\Id$ are incomparable,
and by Observation~\ref{joinIdDark} $R_X \oplus \Id$ is a least upper bound
of $R_X$ and $\Id$, but $\Id=R_\emptyset$ and $R_X \oplus \Id=R_{X\oplus
\emptyset}$. This gives examples of cases when a least upper bound for pairs
of ceers of the form $R_{Z}$ exists, and is of this form. More generally we
can prove the following observation.

\begin{obs}\label{obs:sometimes-R-yes}
If $R_X$ is dark and $R_Y$ is light and $Y\leq_1 X\oplus \emptyset$, then
$R_{X\oplus \emptyset}$ is a least upper bound of $R_X$ and $R_Y$.
\end{obs}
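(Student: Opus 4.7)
The plan is to verify the two properties of a least upper bound: first that $R_{X\oplus \emptyset}$ is an upper bound of $R_X$ and $R_Y$, and second that it is below any other upper bound. Both pieces reduce to previously established facts, so the argument will be short.

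For the upper bound part, I would apply Fact~\ref{fact:fund-ce-sets}(1). Since $R_X$ is dark it has infinitely many classes, so $X$ is infinite (otherwise $R_X$ would be essentially $\Id$ with finitely many identifications, hence light), and therefore $X\oplus \emptyset$ is infinite. Then $X \leq_1 X\oplus \emptyset$ via $x \mapsto 2x$ yields $R_X \leq R_{X\oplus \emptyset}$, while the hypothesis $Y \leq_1 X\oplus \emptyset$ yields $R_Y \leq R_{X\oplus \emptyset}$.

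For the minimality part, let $E$ be any ceer bounding both $R_X$ and $R_Y$. Since $R_Y$ is light we have $\Id \leq R_Y \leq E$, so $E$ is light. Thus $E$ is a light ceer bounding the dark ceer $R_X$. Observation~\ref{joinIdDark} states that the least light degree bounding any dark $R_X$ is $\Id \oplus R_X$, hence $\Id \oplus R_X \leq E$. It only remains to observe that $R_X \oplus \Id \equiv R_{X\oplus \emptyset}$: both have one infinite equivalence class (the copy of $X$, respectively $X\oplus \emptyset$) together with singletons elsewhere, and the obvious identifications between their universes give the equivalence. Therefore $R_{X\oplus \emptyset} \leq E$, completing the proof.

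There is no real obstacle here: Observation~\ref{joinIdDark} is doing all the heavy lifting, and the rest is a bookkeeping check that the hypotheses of Fact~\ref{fact:fund-ce-sets}(1) and of Observation~\ref{joinIdDark} are met. The only mild point worth flagging is the implicit use that darkness of $R_X$ forces $X$ to be infinite, so that the reducibility $Y \leq_1 X\oplus \emptyset$ actually translates, via Fact~\ref{fact:fund-ce-sets}(1), into $R_Y \leq R_{X\oplus \emptyset}$.
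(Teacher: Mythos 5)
Your proposal is correct and follows essentially the same route as the paper: the paper also identifies $R_{X\oplus\emptyset}$ with $R_X\oplus\Id$, invokes Observation~\ref{joinIdDark} to see that this is the least upper bound of $R_X$ and $\Id$, and then uses $\Id\leq R_Y$ to transfer minimality to the pair $R_X, R_Y$. You merely spell out the ``immediate'' upper-bound step via Fact~\ref{fact:fund-ce-sets}(1) and the (harmless, and correctly justified) point that darkness of $R_X$ forces $X$ to be infinite.
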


\begin{proof}
It is immediate that $R_{X\oplus \emptyset}$ is an upper bound for $R_X$ and
$R_Y$. But $R_{X\oplus \emptyset}= R_X\oplus \Id$, which is a least upper
bound for $R_X$ and $\Id$ by Observation \ref{joinIdDark}. Since $R_Y\geq
\Id$, it follows that $R_X\oplus \Id$ is also a least upper bound of $R_X$
and $R_Y$.
\end{proof}

\subsection{No least upper bounds for dark pairs in $\Ceers$}
We now extend the result in Corollary \ref{darkRNoJoin} to the general case
of two dark ceers.

\begin{thm}\label{NoJoinOfDark}
If $E_1$ and $E_2$ are incomparable dark ceers then they have no least upper
bound.
\end{thm}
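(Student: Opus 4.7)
The plan is to assume that a least upper bound $L$ of $E_1, E_2$ exists and exhibit a strictly smaller upper bound, giving a contradiction.

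First I would observe that $L$ is dark: $L \geq E_1$ forces $L$ to have infinitely many classes, and $L \leq E_1 \oplus E_2$, which is dark by Observation~\ref{obs:dark-closure}, so the same observation gives $L$ dark. Next, fix reductions $f_i \colon E_i \to L$ and set $W = \{(x,y) \mid f_1(x) \rel{L} f_2(y)\}$, a c.e.\ set. Lemma~\ref{obs:coproduct} gives $(E_1 \oplus E_2)_{/W} \leq L$ via the reduction $h$ with $h(2x) = f_1(x)$ and $h(2x{+}1) = f_2(x)$. Factoring $f_i$ through $h$ shows that the natural maps $x \mapsto 2x$ and $x \mapsto 2x{+}1$ remain reductions $E_i \leq (E_1 \oplus E_2)_{/W}$, so $(E_1 \oplus E_2)_{/W}$ is itself an upper bound of $E_1, E_2$; the lub property forces $L \equiv (E_1 \oplus E_2)_{/W}$.

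The central step, and the main obstacle, is to find an $E_1$-class whose $f_1$-image lies outside $[\im(f_2)]_L$, and symmetrically an $E_2$-class. Let $T_1 = \{x \mid \exists y\ f_1(x) \rel{L} f_2(y)\}$, a c.e.\ and $E_1$-closed subset of $\omega$. If $T_1 = \omega$, then for each $x$ we can computably find the first $y$ witnessing $f_1(x) \rel{L} f_2(y)$, and the resulting map $x \mapsto y$ would be a reduction from $E_1$ to $E_2$, contradicting incomparability. Hence $T_1 \subsetneq \omega$, and $E_1$-closedness gives an entire $E_1$-class $[a]_{E_1}$ disjoint from $T_1$. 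Produce $b$ analogously from the symmetric c.e.\ $E_2$-closed set $T_2 \subsetneq \omega$; this is where incomparability of $E_1, E_2$ is crucially used.

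Finally, put $W' = W \cup \{(2a, 2b{+}1)\}$ and $U := (E_1 \oplus E_2)_{/W'}$. By the choice of $a, b$, in $(E_1 \oplus E_2)_{/W}$ the class of $2a$ is exactly $\{2x \mid x \rel{E_1} a\}$ and the class of $2b{+}1$ is exactly $\{2y{+}1 \mid y \rel{E_2} b\}$, and these classes are distinct. Forming $U$ merges them into one class whose even part is still a single $E_1$-class and whose odd part is still a single $E_2$-class, so the natural maps $E_i \to U$ remain reductions and $U$ is an upper bound of $E_1, E_2$. But $U$ is a proper quotient of $(E_1 \oplus E_2)_{/W} \equiv L$, so Lemma~\ref{darkQuotients} applied to the dark ceer $L$ yields $L \not\leq U$, contradicting the assumption that $L$ is a least upper bound.
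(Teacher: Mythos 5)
Your proposal is correct and follows essentially the same route as the paper: show any least upper bound $L$ is dark, use incomparability to find an $E_1$-class whose $f_1$-image misses $[\im(f_2)]_L$ (your $T_1$ is exactly the paper's $F_1$) and symmetrically for $E_2$, then collapse the two resulting classes to get a proper quotient that is still an upper bound, contradicting Lemma~\ref{darkQuotients}. The only difference is organizational: you first replace $L$ by $(E_1\oplus E_2)_{/W}\equiv L$ and quotient that, whereas the paper quotients $L$ directly via $L_{/(f_1(a),f_2(b))}$; this detour is harmless but unnecessary.
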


\begin{proof}
Suppose $R$ is a least upper bound of $E_1$ and $E_2$, with $E_1, E_2$ dark.
It follows that $R$ is dark as well, since $R\leq E_1\oplus E_2$ and $E_1
\oplus E_2$ is dark by Observation~\ref{obs:dark-closure}. Let $f_i$ be a
reduction of $E_i$ to $R$.

Let $F_1\subseteq \omega_{/E_1}$ be the collection of classes $[x]_{E_1}$ so
that $f_1(x)Rf_2(y)$ for some $y$. Similarly, let $F_2$ be the collection of
classes $[y]_{E_2}$ so that $f_2(y) R f_1(x)$ for some $x$.

Firstly, suppose $F_1$ contains every class of $E_1$. Then consider the map
$
x\mapsto \text{ the first seen }y \text{ such that }f_1(x)R f_2(y).
$
This gives a
reduction of $E_1$ to $E_2$. Similarly, we see that $F_2$ cannot contain
every class of $E_2$.

Now, let $[x]_{E_1}$ be a class not in $F_1$ and $[y]_{E_2}$ be a class not
in $F_2$. Then consider the proper quotient $R_{/(f_{1}(x),f_{2}(y))}$. Then
$E_1$ and $E_2$ are both reducible to this ceer, but by Lemma
\ref{darkQuotients} $R$ is not.
\end{proof}

It may seem from the proof of this theorem that two incomparable dark ceers
may yet be fairly close to having a least upper bound. In fact, the
obstruction to having a least upper bound which is used in this proof is the
only obstruction. This is shown below in Corollary \ref{cor:dark-with-supI}.

We see below in Corollary \ref{DarkLightJoin} that more pairs (in addition to
those in which one component is $\Id$) with exactly one dark ceer have least
upper bounds, and we see below in Corollary \ref{SometimesJoinOfLight} that
some pairs of light ceers have a least upper bound. Working towards these
results, we find it convenient to first introduce  in Section \ref{overI} the
idea of working over $\I$.

\subsection{Join-irreducibility in $\Ceers$}
Dark ceers of the form $R_{X}$ are join-irreducible:

\begin{cory}\label{JoinIrreducible1Degrees}
If $R$ is a dark ceer of the form $R_X$, then $R$ is join-irreducible.
\end{cory}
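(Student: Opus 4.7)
The plan is to argue by contradiction and reduce the problem to Corollary~\ref{darkRNoJoin}, which already handles the specific case of two incomparable dark ceers of the form $R_Y, R_Z$. Suppose $R = R_X$ is dark and that $R \equiv E_1 \vee E_2$ with both $E_i < R$. I would first observe that $E_1$ and $E_2$ must be incomparable, since otherwise the join would coincide with the larger of the two, forcing one $E_i$ to be equivalent to $R$.

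Next, I would apply Fact~\ref{fact:fund-ce-sets}(2) to each $E_i$: since $E_i \leq R_X$, there exist c.e. sets $Y_i$ with $E_i \equiv R_{Y_i}$. I would then rule out the possibility that either $R_{Y_i}$ is light, because a light ceer below $R$ would give $\Id \leq R$, contradicting the darkness of $R$. Hence each $R_{Y_i}$ is either dark or finite. If one of them, say $R_{Y_1}$, were finite, then being equivalent to some $\Id_n$ it would reduce to $R_{Y_2}$ (which has infinitely many classes, being either dark or equivalent to $R$), contradicting incomparability. Thus both $R_{Y_1}$ and $R_{Y_2}$ are dark and incomparable, and Corollary~\ref{darkRNoJoin} yields that they admit no least upper bound, contradicting the assumption that $R$ is their join.

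Conceptually there is no real obstacle: the argument is a one-step reduction to Corollary~\ref{darkRNoJoin}, with the only genuine work being the brief case analysis that ensures both summands may be taken to be dark ceers of the form $R_Y$. The elimination of the light case uses the downward closure of darkness among ceers with infinitely many classes (Observation~\ref{obs:dark-closure}), and the elimination of the finite case is immediate from the fact that $\Id_n$ reduces to any ceer with at least $n$ classes.
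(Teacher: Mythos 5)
Your proposal is correct and follows essentially the same route as the paper: reduce everything below $R_X$ to the form $R_Y$ via Fact~\ref{fact:fund-ce-sets}(2), note that such a ceer must be dark or finite, discard the finite case via comparability, and invoke Corollary~\ref{darkRNoJoin}. (The parenthetical claim that $R_{Y_2}$ is ``either dark or equivalent to $R$'' is a slip --- it is either dark or finite --- but this does not affect the argument.)
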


\begin{proof}
Any ceer $\leq R$ must also have up to equivalence the form $R_Y$ and must be
dark, or with finitely many classes. So if $R_X$ is a proper join it must be
a join of two incomparable dark ceers of the form $R_Y$ and $R_Z$ but by
Corollary~\ref{darkRNoJoin} no two incomparable dark ceers of this form have
a join, which implies that $R$ cannot be a join of two ceers $<R$
\end{proof}

As the infinite ceers below a dark one are dark, the above argument can be
generalized to show:

\begin{cory}\label{JoinIrreducible1Degreesbis}
Every dark ceer  is join-irreducible.
\end{cory}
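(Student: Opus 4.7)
The plan is to mimic the argument given for Corollary~\ref{JoinIrreducible1Degrees}, but invoke the general no-join theorem for dark ceers, namely Theorem~\ref{NoJoinOfDark}, in place of the restricted version (Corollary~\ref{darkRNoJoin}) tailored to ceers of the form $R_{X}$. So I let $E$ be a dark ceer and, seeking a contradiction, suppose that $E\equiv A\vee B$ for some ceers $A,B$ with $A,B<E$.

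Since $E$ is dark it has infinitely many classes, so at least one of $A,B$ must have infinitely many classes; say $B$ does. By Observation~\ref{obs:dark-closure}, any ceer with infinitely many classes that lies below a dark ceer is itself dark, so $B$ is dark. If $A$ happens to be finite, say $A\equiv\Id_n$, then because $B$ has at least $n$ (in fact infinitely many) classes we get $A\leq B$, so $A\vee B\equiv B$, whence $E\equiv B$, contradicting $B<E$.

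Hence $A$ too has infinitely many classes, so by Observation~\ref{obs:dark-closure} it is dark as well. The ceers $A$ and $B$ must be $\leq$-incomparable: if they were comparable, $A\vee B$ would be $\equiv$-equivalent to the larger of the two, contradicting $A,B<E$. But then Theorem~\ref{NoJoinOfDark} asserts that incomparable dark ceers have no least upper bound, contradicting the existence of $A\vee B\equiv E$.

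There is no real obstacle: once Theorem~\ref{NoJoinOfDark} has been established, the only modification relative to the proof of Corollary~\ref{JoinIrreducible1Degrees} is that we no longer need Fact~\ref{fact:fund-ce-sets} to reduce to ceers of the form $R_Y$, and we appeal directly to the general statement.
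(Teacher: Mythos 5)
Your proof is correct and follows exactly the route the paper intends: the paper's proof is recorded as ``Immediate,'' with the preceding remark indicating precisely your argument, namely that every infinite ceer below a dark ceer is dark (Observation~\ref{obs:dark-closure}), so a proper join decomposition would exhibit two incomparable dark ceers with a least upper bound, contradicting Theorem~\ref{NoJoinOfDark}. Your extra care with the finite-component case and with incomparability is a sound filling-in of details the paper leaves implicit.
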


\begin{proof}
Immediate.
\end{proof}

Other examples of join-irreducible ceers (pointed out in \cite{jumpsofceers})
are ceers which are equivalent to a jump (so including also the universal
ceers: see also Theorem~\ref{thm:join-irreducible-in-I} below).

\section{Working over $\I$}\label{overI}
We define the relation $A\leq_\I B$ if there is some $n$ so that $A\leq
B\oplus \Id_n$. It is easily seen that the relation is a pre-ordering
relation, i.e. reflexive and transitive. The equivalence relation induced by
$\leq_\I$ will be denoted by $\equiv_\I$. In the usual way we get a degree
structure, the $\I$-degrees: we denote by $(\Ceers_{\I}, \leq_{I})$ the
poset of $\I$-degrees.

\begin{obs}\label{jumpUpInI}
For any non-universal ceer $X$, $X<_\I X'$.
\end{obs}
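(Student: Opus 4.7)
The plan is to prove $X \leq_\I X'$ by the standard fact that any ceer reduces to its jump, and to prove $X' \not\leq_\I X$ by exploiting the uniform join-irreducibility of jumps together with the characterization of universal ceers as those equivalent to their own jump.

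For $X \leq X'$ (which gives $X \leq_\I X'$ immediately), I would use the s-m-n theorem to produce a computable function $x \mapsto e_x$ such that $\phi_{e_x}(y) = x$ for every $y$; in particular $\phi_{e_x}(e_x) \downarrow = x$. The definition of the jump then gives $x \rel{X} y$ iff $\phi_{e_x}(e_x) \rel{X} \phi_{e_y}(e_y)$ iff $e_x \rel{X'} e_y$, so $x \mapsto e_x$ is a reduction $X \leq X'$.

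For $X' \not\leq_\I X$, I would argue by contradiction. Suppose $X' \leq X \oplus \Id_n$ for some $n \in \omega$. By Fact~\ref{JumpsUniformJoinIrreducible}, $X'$ is uniform join-irreducible, so either $X' \leq X$ or $X' \leq \Id_n$. In the first case, combined with $X \leq X'$, we get $X \equiv X'$; but by the characterization of universal ceers as exactly those equivalent to their jumps (\cite{ceers}, quoted in the introduction), this forces $X$ to be universal, contradicting our hypothesis. In the second case, $X'$ would have only finitely many equivalence classes, which is impossible: there are infinitely many indices $x$ with $\phi_x(x) \uparrow$, and for each such $x$ the class $[x]_{X'}$ is the singleton $\{x\}$ by definition of the jump, so $X'$ always has infinitely many classes.

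There is really no main obstacle here; the content is essentially a bookkeeping exercise combining uniform join-irreducibility of $X'$, the trivial upper reduction $X \leq X'$, and the already-quoted characterization of universality via the jump. The only point to double-check when writing up is that $X'$ genuinely has infinitely many classes whenever $X$ is a ceer, which is immediate from the $\{x \mid \phi_x(x) \uparrow\}$-singletons observation above and is independent of $X$.
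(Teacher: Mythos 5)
Your proof is correct and follows essentially the same route as the paper: the key step in both is to apply uniform join-irreducibility of $X'$ (Fact~\ref{JumpsUniformJoinIrreducible}) to a hypothetical reduction $X'\leq X\oplus\Id_n$ and rule out both alternatives, the first via the jump characterization of universality and the second because $X'$ always has infinitely many classes. The only difference is that you spell out the standard facts $X\leq X'$ and the infinitude of $X'$'s classes, which the paper takes as known; both of your verifications are correct.
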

\begin{proof}
Suppose that $X'\leq_{\I} X$. Then $X'\leq X\oplus\Id_n$ for some $n$. But since every jump is uniform join-irreducible by Fact \ref{JumpsUniformJoinIrreducible}, we then get that $X'\leq X$ or $X'\leq \Id_n$. The former is only possible if $X$ is universal \cite{ceers}, while the latter is impossible since every jump has infinitely many classes.
\end{proof}

\subsection{Darkness, and working over $\mathcal{I}$}
On dark ceers the equivalence relation $\equiv_\I$ can be characterized as
follows.

\begin{thm}\label{thm:characterization-of-equiI}
Let $A$ and $B$ be dark ceers. Then the following conditions are equivalent:

\begin{enumerate}
\item $A\equiv_\I B$
\item $A\oplus \Id\equiv B\oplus \Id$
\item There is an $n$ so that either $A\equiv B\oplus \Id_n$ or $B\equiv
    A\oplus \Id_n$.
\end{enumerate}
Furthermore, (1) and (3) are equivalent for any ceers.
\end{thm}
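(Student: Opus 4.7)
My plan is to reduce the six implications among (1), (2), (3) to two substantive ones, namely (2) $\Rightarrow$ (1) for dark $A, B$ and (1) $\Rightarrow$ (3) for arbitrary ceers. The other implications are routine and hold without darkness: (3) $\Rightarrow$ (1) is immediate, while (1) $\Rightarrow$ (2) and (3) $\Rightarrow$ (2) both follow from associativity of $\oplus$ together with $\Id_n \oplus \Id \equiv \Id$, which upgrades $A \leq B \oplus \Id_n$ to $A \oplus \Id \leq B \oplus \Id$ and the analogous statement for $\equiv$.

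For (2) $\Rightarrow$ (1) assuming $A, B$ dark, I would start from a reduction $g \colon A \to B \oplus \Id$ coming from $A \leq A \oplus \Id \equiv B \oplus \Id$ and argue that $g$ can intersect only finitely many classes in the $\Id$-part. Indeed, selecting one preimage from each distinct such class would yield an infinite effective transversal of $A$, contradicting darkness. Lemma~\ref{coproduct3}(1), with $W$ the even numbers in $B \oplus \Id$ and $U$ a finite set of representatives of the hit $\Id$-classes, then produces $A \leq A_0 \oplus \Id_k$ for some finite $k$ and some $A_0 \leq B$, so $A \leq B \oplus \Id_k$. Symmetrically $B \leq A \oplus \Id_j$, giving (1).

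For (1) $\Rightarrow$ (3) in general, start from $A \leq B \oplus \Id_n$ and $B \leq A \oplus \Id_m$ and split on self-fullness. If $A$ is non-self-full, Observation~\ref{obs:nonselffull} combined with induction on $m$ yields $A \oplus \Id_m \leq A$, so $B \leq A$, whence $B \oplus \Id_n \leq A \oplus \Id_n \leq A \leq B \oplus \Id_n$, and we conclude $A \equiv B \oplus \Id_n$ outright; the case of $B$ non-self-full is symmetric. In the remaining case, both $A$ and $B$ are self-full. Apply Lemma~\ref{coproduct3}(2) to $A \leq B \oplus \Id_n$ to obtain $A_0 \leq B$ and $k \leq n$ with $A \equiv A_0 \oplus \Id_k$. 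The critical step is that $A_0$ is also self-full: if $A_0 \oplus \Id_1 \leq A_0$, then tacking on $\Id_k$ would yield $A \oplus \Id_1 \equiv A_0 \oplus \Id_{k+1} \leq A_0 \oplus \Id_k \equiv A$, contradicting self-fullness of $A$. Now $A_0 \leq B \leq A \oplus \Id_m \equiv A_0 \oplus \Id_{k+m}$. Since each $A_0 \oplus \Id_i$ is self-full (Corollary~\ref{cor:from-sf-to-sf}) and $A_0 \oplus \Id_{i+1}$ is a strong minimal cover of $A_0 \oplus \Id_i$ (Lemma~\ref{StrongMinimalCoversOfSF}), a finite descending induction starting at $j = k+m$ pins $B$ to $A_0 \oplus \Id_i$ for some $0 \leq i \leq k+m$. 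Comparing $k$ and $i$ then yields $A \equiv B \oplus \Id_{k-i}$ when $k \geq i$, and $B \equiv A \oplus \Id_{i-k}$ when $i \geq k$, establishing (3).

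The main obstacle is the self-full branch above: the two delicate points are (i) transferring self-fullness from $A$ to the component $A_0$ extracted by Lemma~\ref{coproduct3}(2), and (ii) iterating the strong minimal cover property to trap $B$ inside the finite chain $\{A_0 \oplus \Id_i : 0 \leq i \leq k+m\}$. The non-self-full and dark branches are easier by comparison because they collapse $\leq_{\I}$ statements directly to genuine $\leq$ statements (via $A \oplus \Id_m \leq A$ for non-self-full $A$, or via avoidance of $\Id$-part intersections for dark $A$).
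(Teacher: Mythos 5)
Your proposal is correct, and for (2) $\Rightarrow$ (1) under darkness, for (1) $\Rightarrow$ (2), and for (3) $\Rightarrow$ (1) it matches the paper's argument essentially verbatim (darkness forces the reduction into $B\oplus\Id$ to hit only finitely many $\Id$-classes, then Lemma~\ref{coproduct3}(1) applies). The one place you take a genuinely different route is (1) $\Rightarrow$ (3). The paper picks $n$ and $m$ \emph{least} with $A\leq B\oplus\Id_n$ and $B\leq A\oplus\Id_m$ and applies Lemma~\ref{StrongMinimalCoversOfSF} once: if $n\geq 1$ then $B$ must be self-full (otherwise $B\oplus\Id_n\leq B$ would give least $n=0$), hence $B\oplus\Id_{n-1}$ is self-full by Corollary~\ref{cor:from-sf-to-sf}, and since $A\not\leq B\oplus\Id_{n-1}$ by minimality, the strong-minimal-cover dichotomy forces $B\oplus\Id_n\leq A$, i.e.\ $A\equiv B\oplus\Id_n$; the case $m\geq 1$ is symmetric and $n=m=0$ gives $A\equiv B$. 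Your route --- case split on self-fullness, extraction of $A_0$ via Lemma~\ref{coproduct3}(2), the transfer of self-fullness from $A$ to $A_0$ (your argument for this is exactly right), and the descending induction trapping $B$ on the chain $A_0\oplus\Id_i$ --- is sound, but it does more work than necessary: the minimality trick absorbs both your case analysis and the decomposition, and invokes the cover lemma only once instead of $k+m$ times. Both proofs ultimately rest on the same two facts ($E\oplus\Id_n\equiv E$ for non-self-full $E$, and $E\oplus\Id_1$ being a strong minimal cover of a self-full $E$), so the difference is organizational rather than conceptual; what your version buys is an explicit normal form $A\equiv A_0\oplus\Id_k$, $B\equiv A_0\oplus\Id_i$ over a common base, which the paper's shorter argument does not exhibit.
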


\begin{proof}
(1) implies (2): If $A\equiv_\I B$, then we have $A\leq B\oplus \Id_n$ for
some $n$. Thus
\[
A\oplus \Id \leq (B\oplus \Id_n )\oplus \Id \equiv B\oplus (\Id_n  \oplus \Id)
\equiv B\oplus \Id.
\]
Symmetrically, $B\oplus \Id\leq A\oplus \Id$.

(1) implies (3): If $A\equiv_\I B$, then $A\leq B\oplus \Id_n$ for some $n$
and $B\leq A\oplus \Id_m$ for some $m$. Let $n$ and $m$ be least so that this
is true. If $n\geq 1$, then Lemma 4.5 shows that $A\equiv B\oplus \Id_n$.
Similarly, if $m\geq 1$, then $B\equiv A\oplus \Id_m$. If $n=m=0$, then
$A\equiv B$.

(2) implies (1): Now, suppose $A\oplus \Id\equiv B\oplus \Id$. Then there is
a reduction $f$ witnessing $A\leq B\oplus \Id$. Consider the set of odd
elements of the range of this reduction. This is a c.e. set $Z$, and for each
$z\in Z$, we let $a_z$ be any element so $f(a_z)=z$. Then $\{a_z\mid z\in
Z\}$ is an effective transversal of $A$. Since $A$ is dark, it follows that
$Z$ is finite, so by Lemma~\ref{coproduct3}(1) (taking $W$ to be the even
numbers, and $U=Z$) $A\leq B\oplus \Id_m$ where $m$ is the number of elements
of $Z$ so that $m$ is least for this reduction: let $g$ witness this
reduction. Similarly one shows that $B\leq A \oplus \Id_k$ for some $k$.

(3) implies (1): Clear from the definition.

Notice that darkness of $A, B$ is used in the above proof only for the
implication (2)$\Rightarrow$(1).
\end{proof}

Notice also that if $(x_1,y_1), \ldots, (x_n, y_n)$, $n \ge 1$, are pairs so
that each $[x_i]_E$ is computable and at least one pair consists of
$E$-inequivalent numbers, then by Lemma~\ref{ComputableCollapses} and
Theorem~\ref{thm:characterization-of-equiI} we have that $E_{/\{(x_i,y_i)\mid
1\le i\le n\}} \equiv_\I E$.

\begin{obs}
The dark ceers are downwards closed and closed under $\oplus$ in the
$\mathcal{I}$-degrees.
\end{obs}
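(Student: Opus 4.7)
The plan is to reduce both claims to the corresponding facts in $\Ceers$ already recorded in Observation~\ref{obs:dark-closure}, after first absorbing the harmless $\Id_n$-perturbations built into the definition of $\leq_\I$.

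For downward closure, suppose $A$ is dark and $B \leq_\I A$, with $B$ having infinitely many classes (if $B$ has only finitely many classes its $\I$-degree is the bottom finite degree, which is not in question). Then $B \leq A \oplus \Id_n$ for some $n$, and the single non-routine step is to observe that $A \oplus \Id_n$ is itself dark. Indeed, if $W$ were an infinite effective transversal of $A \oplus \Id_n$, then at most $n$ elements of $W$ can be odd, so $W$ contains an infinite c.e. set of even numbers, pairwise $A \oplus \Id_n$-inequivalent; halving them produces an infinite effective transversal of $A$, contradicting darkness of $A$. Once $A \oplus \Id_n$ is known to be dark, Observation~\ref{obs:dark-closure} applied to $B \leq A \oplus \Id_n$ yields that $B$ is dark, so $[B]_\I$ is a dark $\I$-degree.

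For closure under $\oplus$, I first need to check that $\oplus$ descends to a well-defined operation on $\I$-degrees: if $A \equiv_\I A'$ and $B \equiv_\I B'$, fix reductions $A \leq A' \oplus \Id_n$ and $B \leq B' \oplus \Id_m$; then
\[
A \oplus B \leq (A' \oplus \Id_n) \oplus (B' \oplus \Id_m) \equiv (A' \oplus B') \oplus \Id_{n+m},
\]
using associativity/commutativity of $\oplus$ modulo $\equiv$ together with $\Id_n \oplus \Id_m \simeq \Id_{n+m}$, so $A \oplus B \leq_\I A' \oplus B'$; symmetrically for the reverse inequality. Now, given dark ceers $A, B$, Observation~\ref{obs:dark-closure} says $A \oplus B$ is dark, hence the $\I$-degree $[A \oplus B]_\I = [A]_\I \oplus [B]_\I$ is a dark $\I$-degree.

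There is no real obstacle: the argument is essentially a one-line verification that grafting a finite $\Id_n$-summand onto a dark ceer cannot produce an infinite effective transversal, combined with the $\Ceers$-level closure properties already established. The only thing that requires any attention is the well-definedness of $\oplus$ on $\I$-degrees, and this is immediate from the associativity of $\oplus$ modulo $\equiv$ already noted in Section~\ref{ssct:uniform-joins}.
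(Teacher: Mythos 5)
Your proof is correct, and it is exactly the intended argument: the paper dismisses this observation with ``Clear from definitions,'' and your write-up simply supplies the routine details (absorbing the $\Id_n$-summand, which cannot create an infinite effective transversal, and then invoking Observation~\ref{obs:dark-closure}). The well-definedness check for $\oplus$ on $\I$-degrees is a harmless extra that the statement implicitly presupposes.
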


\begin{proof}
Clear from definitions.
\end{proof}

\begin{obs}\label{obs:Icontiguous}
If $E$ is non-self-full, then the $\equiv_\I$-class of $E$ is comprised of
exactly the $\equiv$-class of $E$. In particular the universal
$\equiv_\I$-degree consists of the universal ceers.
\end{obs}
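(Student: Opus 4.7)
My plan is to first observe that non-self-fullness of $E$ propagates to give $E\oplus\Id_n\leq E$ for every $n\in\omega$, and then use Lemma~\ref{coproduct3}(2) to peel off the $\Id$-summand from any $\equiv_\I$-equivalent ceer. By Observation~\ref{obs:nonselffull} we have $E\oplus\Id_1\leq E$, and a quick induction using associativity of $\oplus$ modulo $\equiv$ (i.e.\ $E\oplus\Id_{n+1}\equiv(E\oplus\Id_n)\oplus\Id_1\leq E\oplus\Id_1\leq E$) gives $E\oplus\Id_n\leq E$ for every $n\in\omega$.

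Next, suppose $A\equiv_\I E$ and fix $n,m$ with $A\leq E\oplus\Id_n$ and $E\leq A\oplus\Id_m$. Composing the first inequality with $E\oplus\Id_n\leq E$ immediately yields $A\leq E$, so the only real task is to establish the converse $E\leq A$. To do this I would apply Lemma~\ref{coproduct3}(2) to the reduction $E\leq A\oplus\Id_m$ to obtain a ceer $E_0$ and some $k\leq m$ with $E_0\leq A$ and $E\equiv E_0\oplus\Id_k$. If $k=0$ we conclude directly. For $k\geq 1$, since non-self-fullness is invariant under $\equiv$ (the corollary following Observation~\ref{obs:nonselffull}), $E_0\oplus\Id_k$ is non-self-full, and the contrapositive of Corollary~\ref{cor:from-sf-to-sf} then forces $E_0$ itself to be non-self-full. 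Applying the first observation to $E_0$ in place of $E$ gives $E_0\oplus\Id_k\leq E_0$, so $E\equiv E_0\oplus\Id_k\leq E_0\leq A$, finishing the argument.

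For the ``in particular'' clause, any universal ceer $U$ trivially satisfies $U\oplus\Id_1\leq U$ and hence is non-self-full by Observation~\ref{obs:nonselffull}; the first part then pins down its $\equiv_\I$-class as its $\equiv$-class, which is exactly the universal degree. The one mildly subtle step in the plan is the transfer of non-self-fullness from $E_0\oplus\Id_k$ back to $E_0$, which is exactly what the contrapositive of Corollary~\ref{cor:from-sf-to-sf} delivers; everything else is routine bookkeeping with $\oplus$ and $\Id_n$.
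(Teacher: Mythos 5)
Your argument is correct, but the direction $E\leq A$ is handled by a genuinely different mechanism than the paper's. The paper argues: from $E\leq A\oplus\Id_m$ and $E\equiv E\oplus\Id_m$ (non-self-fullness) it gets $E\oplus\Id_m\leq A\oplus\Id_m$, and then cancels the $\Id_m$ on both sides by iterating Lemma~\ref{SuccessorIsInjective}, i.e.\ it exploits the fact that $X\mapsto X\oplus\Id_1$ is an order embedding. You instead invoke Lemma~\ref{coproduct3}(2) to decompose $E\equiv E_0\oplus\Id_k$ with $E_0\leq A$, push non-self-fullness down from $E_0\oplus\Id_k$ to $E_0$ via the contrapositive of Corollary~\ref{cor:from-sf-to-sf}, and then absorb the $\Id_k$ into $E_0$. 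Both routes are sound; the paper's is shorter because the cancellation lemma does all the work in one line, while yours trades that for the structural decomposition of the reduction plus a transfer of non-self-fullness, which is a slightly heavier but perfectly valid combination (and your explicit handling of the $k=0$ case and of the induction giving $E\oplus\Id_n\leq E$ is fine). The first half of your argument ($A\leq E$ via $A\leq E\oplus\Id_n\leq E$) and the ``in particular'' clause coincide with the paper's reasoning.
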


\begin{proof}
If $R\equiv_\I E$, then $R\leq E\oplus \Id_n$ for some $n$, but $E\oplus
\Id_n \equiv E$ by non-self-fullness. So $R\leq E$. Since $E\leq R\oplus
\Id_m$ for some $m$, and $E\equiv E\oplus \Id_m$ by non-self-fullness, we
have $E\oplus \Id_m \le R\oplus \Id_m$. But by
Lemma~\ref{SuccessorIsInjective}, this implies that $E\leq R$. Therefore
$E\equiv R$ as desired.
\end{proof}

\subsection{Least upper bounds and ceers of the form $R_X$ in $\Ceers_\I$}
We now consider the question of the existence of least upper bounds in the
quotient structure $\Ceers_\I$. Some of the results in Section~\ref{sct:lubs}
about non-existence of least upper bounds in $\Ceers$ follow from
corresponding non-existence results in $\Ceers_{\I}$ by virtue of the
following lemma.

\begin{lem}\label{JoinsPreservedByI}
Let $E,X,Y$ be ceers so that $E$ is a least upper bound of $X$ and $Y$.
Then $E$ is also a least upper bound of $X$ and $Y$ in the
$\equiv_\I$-degrees.
\end{lem}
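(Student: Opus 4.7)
The plan is to unpack the definition of $\le_\I$ and reduce the claim to the defining property of a least upper bound in $\Ceers$.

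First, I would verify that $E$ is at least an upper bound of $X$ and $Y$ in $\Ceers_\I$. Since $E$ is a least upper bound in $\Ceers$, we have $X\le E$ and $Y\le E$. But $X\le E$ trivially implies $X\le_\I E$ (take $n=0$ in the definition of $\le_\I$, i.e. $X\le E\oplus \Id_0 = E$), and similarly for $Y$. So $E$ is an upper bound of $X,Y$ in $\Ceers_\I$.

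For the least-upper-bound property, I would take an arbitrary $\le_\I$-upper bound $Z$ of $X$ and $Y$ and show $E\le_\I Z$. By assumption there exist $n,m\in\omega$ with $X\le Z\oplus \Id_n$ and $Y\le Z\oplus \Id_m$. Setting $k=\max(n,m)$, since $\Id_n\le \Id_k$ and $\Id_m\le \Id_k$, the ceer $Z\oplus \Id_k$ is an upper bound of $X$ and $Y$ in $\Ceers$ itself. But $E$ is the least such upper bound, so $E\le Z\oplus \Id_k$, which is exactly the statement $E\le_\I Z$.

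There is no real obstacle here: the content of the lemma is just that the pre-ordering $\le_\I$ only enlarges the class of upper bounds by objects of the form $Z\oplus \Id_k$, and such objects are already accounted for in $\Ceers$ when checking minimality. The only subtlety to flag is that the lemma does \emph{not} say every least upper bound in $\Ceers_\I$ lifts to one in $\Ceers$; the implication is strictly one-directional, which is consistent with the forthcoming observation that some pairs of dark degrees (which have no join in $\Ceers$ by Theorem~\ref{NoJoinOfDark}) may acquire joins once we pass to $\Ceers_\I$.
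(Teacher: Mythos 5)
Your proof is correct and is essentially identical to the paper's: the paper likewise takes an arbitrary $\le_\I$-upper bound $Z$, absorbs the two finite paddings into a single $Z\oplus\Id_n$, invokes the least-upper-bound property of $E$ in $\Ceers$, and concludes $E\le_\I Z$. The extra remarks about upper-boundedness and the one-directionality of the implication are accurate but not needed.
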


\begin{proof}
Let $Z$ be so $X,Y\leq_\I Z$, and assume $E$ is a least upper bound of $X$
and $Y$ in $\Ceers$. Then for some $n$, $X,Y\leq Z\oplus \Id_n$. Thus $E\leq
Z\oplus \Id_n$. Thus $E\leq_\I Z$.
\end{proof}

The next theorem is a generalization of Theorem~\ref{NoJoinOf1Degrees} (and in
fact it implies it using Lemma~\ref{JoinsPreservedByI}).

\begin{thm}\label{thm:nojoinI}
Let $X$ and $Y$ be infinite c.e. sets so that $X\not\leq_1 Y\oplus \emptyset$
and $Y\not\leq_1 X\oplus \emptyset$. Then $R_X$ and $R_Y$ do not have any
least upper bound in the $\equiv_\I$-degrees.
\end{thm}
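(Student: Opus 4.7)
The plan is to mimic the proof of Theorem~\ref{NoJoinOf1Degrees} but work in $\Ceers_\I$ throughout: first reduce to the case of an LUB of the form $R_V$ modulo $\equiv_\I$, then perform a case analysis on where the infinite class of $R_V$ lands in a reduction into $R_X\oplus R_Y\oplus\Id_n$.

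Suppose, for contradiction, that $U$ is a least upper bound of $R_X,R_Y$ in $\Ceers_\I$. Since $R_{X\oplus Y}$ is an upper bound of $R_X,R_Y$ (via $x\mapsto 2x$ and $y\mapsto 2y+1$), we have $U\leq_\I R_{X\oplus Y}$, so $U\leq R_{X\oplus Y}\oplus\Id_m$ for some $m$. By Lemma~\ref{coproduct3}(2), $U\equiv U_0\oplus\Id_k$ with $U_0\leq R_{X\oplus Y}$, and by Fact~\ref{fact:fund-ce-sets}(2), $U_0\equiv R_V$ for some c.e.\ set $V$. Hence $U\equiv_\I R_V$, and since the LUB property is $\equiv_\I$-invariant we may assume $U=R_V$. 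From $R_X,R_Y\leq_\I R_V$, using $\Id_n\leq\Id$ (via $x\mapsto x\bmod n$) and Fact~\ref{fact:fund-ce-sets}(1), one derives $X,Y\leq_1 V\oplus\emptyset$.

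Since $R_X\oplus R_Y$ is also an upper bound, fix a reduction $f\colon R_V \to R_X\oplus R_Y\oplus\Id_n$ for some $n$. If $V$ is finite, $R_V\equiv\Id$, hence $X,Y$ are decidable, and aligning effective enumerations of $X,\bar X,Y$ produces $X\leq_1 Y\oplus\emptyset$, contradicting the hypothesis. Assume $V$ infinite. If $f(V)$ is contained in a singleton class of the target, then $v\in V$ iff $f(v)$ equals the constant image, so $V$ is decidable; then $X\leq_1 V\oplus\emptyset=2V$ (decidable) makes $X$ decidable, and we conclude as before. Otherwise $f(V)$ lies in one of the $n+2$ infinite classes, giving three sub-cases: the $X$-class, the $Y$-class, or an $\Id_n$-class. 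For $f(V)$ in the $X$-class, define a rewiring $\tau\colon\mathbb{N}\to\mathbb{N}$ by $\tau(3k)=2k$, $\tau(3k+1)=4k+1$, $\tau(3k+2)=4k+3$, so that $\tau$ preserves the $R_X$-coordinate of the target while routing the $R_Y$- and $\Id_n$-coordinates to distinct odd singletons of the $\Id$-part of $R_X\oplus\Id=R_{X\oplus\emptyset}$. One verifies that $\tau\circ f$ is a reduction $R_V\leq R_{X\oplus\emptyset}$, giving $V\leq_1 X\oplus\emptyset$ by Fact~\ref{fact:fund-ce-sets}(1), and then $R_Y\leq R_V\oplus\Id\leq R_{X\oplus\emptyset}\oplus\Id\equiv R_{X\oplus\emptyset}$ yields the contradictory $Y\leq_1 X\oplus\emptyset$. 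The case $f(V)$ in the $Y$-class is symmetric and gives $X\leq_1 Y\oplus\emptyset$. For $f(V)$ in an $\Id_n$-class, the analogous rewiring $\tau'(3k)=4k$, $\tau'(3k+1)=4k+2$, $\tau'(3k+2)=2k+1$ yields $R_V\leq\Id\oplus\Id_n\leq\Id$; by Corollary~\ref{cor:belowId}, $V$ is decidable, returning us to the earlier contradiction.

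The main obstacle is verifying that $\tau\circ f$ is indeed a reduction in each case, i.e.\ that distinct $R_V$-classes remain non-equivalent after the re-encoding. The crucial observation is that since $f$ is itself a reduction, it induces an injection on quotients, so every infinite target class other than the one containing $f(V)$ can receive the image of at most one $R_V$-singleton. Hence $\tau$ may safely collapse each such ``overflow'' infinite class into a single distinguished singleton of the simpler target without ever merging two previously distinct $R_V$-classes; the remaining finite bookkeeping to confirm the case split proceeds by matching the parity/residue classes of the outputs of $\tau$ against the class structure of $R_X\oplus\Id$ or $\Id\oplus\Id_n$.
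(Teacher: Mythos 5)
Your proof is correct and follows essentially the same route as the paper's: reduce the putative least upper bound to the form $R_V$, map it into $R_X\oplus R_Y\oplus \Id_n$, track which class receives the infinite class of $R_V$, and conclude $R_V\leq R_{X\oplus\emptyset}$ or $R_V\leq R_{Y\oplus\emptyset}$ to contradict the $1$-reducibility hypotheses. Your explicit rewiring maps merely substitute for the paper's appeal to Lemma~\ref{coproduct3}(1), and your separate treatment of the singleton and $\Id_n$ cases just spells out what the paper compresses into the observation that $Z$ is undecidable.
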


\begin{proof}
Firstly, note that for any c.e. set $X$, $R_X\oplus \Id_1$ is equivalent to
$R_Z$ for $Z=\{x+1\mid x\in X\}$. By iteration, for every $n$, $R_X\oplus
\Id_n$ is equivalent to $R_Z$, for some c.e. set $Z$. Suppose $E$ is a least
upper bound in the $\equiv_\I$-degrees of $R_{X}$ and $R_{Y}$. Note that
$R_{X\oplus Y}$ is an upper bound for both $R_X$ and $R_Y$, so $E \leq
R_{X\oplus Y}\oplus \Id_n$ for some $n$, so $E\leq R_Z$ for some $Z$. Since
the degrees of the ceers of the form $R_Z$ form an interval in $(\Ceers,
\leq)$ (Fact~\ref{fact:fund-ce-sets}), we know that up to equivalence any
least upper bound must be a ceer of this form form as well.  Now, suppose
$R_Z$ were such a least upper bound in the $\equiv_\I$-degrees. Then
$R_X\oplus R_Y$ is also an upper bound. So, $R_Z\leq_\I R_X\oplus R_Y$. So
$R_Z\leq R_X\oplus R_Y \oplus \Id_n$ for some $n$ and let $f$ be a computable
function witnessing the reduction. But then (as $Z$ is not decidable) either
the infinite class in $R_Z$ is sent to the infinite class representing $X$ or
the infinite class representing $Y$. We either get  by
Lemma~\ref{coproduct3}(1) (taking $W=\{3x\mid x \in \omega\}$ and taking $U$
to be the intersection of the complement of $W$ with the image of the
reduction) that $R_Z\leq R_X\oplus \Id$ or $R_Z\leq R_Y\oplus \Id$. In the
first case we get that $R_Y\leq R_Z\oplus \Id_k\leq R_X\oplus \Id$ for some
$k$, and in the second case, we get that $R_X\leq R_Z\oplus \Id_k\leq R_Y
\oplus \Id$ for some $k$. Either way contradicts the assumptions as
$R_X\oplus \Id= R_{X\oplus \emptyset}$ and $R_Y\oplus \Id= R_{Y \oplus
\emptyset}$.
\end{proof}

We know already that if $R_X$ and $R_Y$ are dark and $\leq$-incomparable then
they do not have any least upper bound in $\Ceers$. The next corollary shows
that this is the case even in the quotient structure modulo $\equiv_\I$.
(Notice that Corollaries \ref{cor2bis}, \ref{cor3bis}, \ref{cor4bis} imply
respectively via Lemma~\ref{JoinsPreservedByI} the analogous Corollaries
\ref{darkRNoJoin}, \ref{cor:there-are-pairs}, \ref{JoinIrreducible1Degrees}
proved for $\leq$.)

\begin{cory}\label{cor2bis}
If $R_X$ and $R_Y$ are dark and $\leq_{\I}$-incomparable, then they have no
least upper bound in the $\equiv_\I$-degrees.
\end{cory}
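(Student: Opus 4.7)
The plan is to deduce the corollary directly from Theorem~\ref{thm:nojoinI} by verifying its $1$-reducibility hypotheses under the assumption that $R_X,R_Y$ are dark and $\leq_\I$-incomparable. That is, I aim to show $X\not\leq_1 Y\oplus \emptyset$ and, symmetrically, $Y\not\leq_1 X\oplus \emptyset$; the theorem then delivers the nonexistence of a least upper bound in $\Ceers_\I$ with no further work.

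For the first half of the verification, I would argue contrapositively: suppose $X\leq_1 Y\oplus \emptyset$, and derive $R_X\leq_\I R_Y$, which contradicts $\leq_\I$-incomparability. By Fact~\ref{fact:fund-ce-sets}(1), the $1$-reduction gives $R_X \leq R_{Y\oplus \emptyset}\equiv R_Y\oplus \Id$; let $f$ witness this reduction. The decidable set $V=f^{-1}[\{2n+1\mid n\in \omega\}]$ consists of elements sitting in pairwise distinct $R_X$-classes, because in $R_Y\oplus \Id$ the odd numbers form pairwise non-equivalent singleton classes and $f$ is a reduction. Hence $V$ is an effective transversal of $R_X$. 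Darkness of $R_X$ forces $V$ to be finite, so $f$ hits only a finite set $U$ of odd numbers. Now $U$ is a (strong) effective transversal of $R_Y\oplus \Id$ with $[U]\cap [W]=\emptyset$ and $\im(f)\subseteq [W]\cup [U]$, for $W$ the set of even numbers; applying Lemma~\ref{coproduct3}(1) (with the second summand of $R_Y\oplus \Id$ playing the role of the avoided part) produces a ceer $E_0\leq R_Y$ and an $n\le |U|$ such that $R_X \leq E_0\oplus \Id_n\leq R_Y\oplus \Id_n$, i.e. $R_X\leq_\I R_Y$, contradicting incomparability.

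The symmetric argument, replacing the roles of $R_X$ and $R_Y$ and using darkness of $R_Y$, rules out $Y\leq_1 X\oplus \emptyset$. With both hypotheses of Theorem~\ref{thm:nojoinI} now verified, that theorem applies to give that $R_X$ and $R_Y$ have no least upper bound in the $\equiv_\I$-degrees. The only nontrivial ingredient is the darkness-plus-Lemma~\ref{coproduct3}(1) step, which encodes the intuition that a dark ceer cannot nontrivially absorb an $\Id$-summand past finitely many classes; everything else is bookkeeping reducing the problem to the previously established Theorem~\ref{thm:nojoinI}.
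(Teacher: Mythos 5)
Your proof is correct, but it takes a genuinely different route from the paper's. The paper argues directly about a hypothetical least upper bound: it must, up to equivalence, have the form $R_Z$ with $R_Z\leq_\I R_X\oplus R_Y$; without loss of generality the infinite class of $R_Z$ lands in the $R_X$-side, darkness of $R_Y$ forces the image on the $R_Y$-side to meet only finitely many classes, and Lemma~\ref{coproduct3}(1) then yields $R_Z\leq R_X\oplus \Id_m$, whence $R_Y\leq_\I R_Z\equiv_\I R_X$, contradicting incomparability. You instead reduce the corollary to Theorem~\ref{thm:nojoinI} by checking its two $1$-reducibility hypotheses, using for each the absorption principle that a dark $R_X$ reducible to $R_Y\oplus\Id$ can hit only finitely many $\Id$-classes and hence satisfies $R_X\leq_\I R_Y$ (this is essentially the (2)$\Rightarrow$(1) step in the proof of Theorem~\ref{thm:characterization-of-equiI}). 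Your version buys modularity and makes the symmetry between the two sides transparent; the paper's version avoids any detour through the $1$-degrees and uses darkness of only one of the two ceers.

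One small imprecision worth repairing: the set $V=f^{-1}[\{2n+1\mid n\in\omega\}]$ need not consist of pairwise $R_X$-inequivalent elements, since $f$ may send two $R_X$-equivalent inputs to the same odd number; in particular $V$ itself need not be finite. What darkness actually forces to be finite is $U=f[V]$, the set of odd numbers hit: if $U$ were infinite, then selecting one preimage for each of its elements (equivalently, enumerating $V$ and retaining only those elements whose image under $f$ has not appeared before) produces an infinite effective transversal of $R_X$. Since only the finiteness of $U$ is used in the application of Lemma~\ref{coproduct3}(1), this is a one-line fix and the rest of your argument goes through unchanged.
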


\begin{proof}
As above, if they have an upper bound in the $\equiv_\I$-degrees, it has up
to equivalence the form $R_Z$. Suppose $R_Z$ is such a least upper bound.
Then $R_Z\leq_\I R_X\oplus R_Y$, so $R_Z\leq R_X\oplus R_Y\oplus \Id_n$.
Without loss of generality, we assume the infinite class in $R_Z$ is sent to
the infinite class in $R_X$. Then the image of the reduction in $R_Y$ must be
finite since $R_Y$ is dark. Thus by Lemma~\ref{coproduct3}(1) (taking
$W=\{3x\mid x \in \omega\}$ and $U$ the intersection of the complement of $W$
with the image of the reduction) we have that $R_Z\leq R_X\oplus \Id_m$ for
some $m$, so $R_Z\equiv_\I R_X$. That is, $R_Y\leq_\I R_X$, which contradicts
$\leq_{\I}$-incomparability of $R_X$ and $R_Y$.
\end{proof}

\begin{cory}\label{cor3bis}
There are pairs $X,Y$ so that both are dark, so that both are light, and so
that exactly one is dark, which do not have least upper bounds in the
$\equiv_\I$-degrees.
\end{cory}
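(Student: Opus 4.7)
The plan is to reuse the same witnesses as in Corollary~\ref{cor:there-are-pairs}, only replacing appeals to Theorem~\ref{NoJoinOf1Degrees} with appeals to its $\equiv_\I$-version Theorem~\ref{thm:nojoinI}, and replacing the appeal to Corollary~\ref{darkRNoJoin} by an appeal to Corollary~\ref{cor2bis}. By Lemma~\ref{lem:Turing-simple} fix simple sets $U, V$ with $U \nleq_T V$ and $V \nleq_T U$. We will argue that the three pairs $(R_U, R_V)$, $(R_{2U}, R_{2V})$, and $(R_U, R_{2V})$ realize the three desired cases (dark-dark, light-light, mixed).

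For the dark-dark case, $R_U$ and $R_V$ are dark (their complements contain no infinite c.e.\ set, as $U, V$ are simple). To apply Corollary~\ref{cor2bis} it suffices to check $\leq_\I$-incomparability: if $R_U \leq_\I R_V$, then $R_U \leq R_V \oplus \Id_n$ for some $n$, and by Lemma~\ref{coproduct3}(2) there is a ceer $E_0 \leq R_V$ with $R_U \equiv E_0 \oplus \Id_k$ for some $k \leq n$. Since $R_U$ is dark and has infinitely many classes, we must have $k=0$ (otherwise $E_0$ would have to be dark with infinitely many classes while also satisfying $\Id \leq E_0 \oplus \Id_k$ for the collapsed odd part, contradicting darkness). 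Hence $R_U \leq R_V$, and Fact~\ref{fact:fund-ce-sets} yields $U \leq_1 V$, contradicting $U \nleq_T V$. The symmetric argument gives $\leq_\I$-incomparability, so Corollary~\ref{cor2bis} supplies the absence of a least upper bound in $\Ceers_\I$.

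For the light-light case, the sets $2U = \{2x : x \in U\}$ and $2V = \{2x : x \in V\}$ are c.e.\ and have the infinite computable subset of odd numbers in their complement, which is a strong effective transversal of $R_{2U}$ (resp.\ $R_{2V}$), so both ceers are light. We verify the hypotheses of Theorem~\ref{thm:nojoinI}: if $2U \leq_1 2V \oplus \emptyset$ via $f$, then using that the complement of $2V \oplus \emptyset$ contains all odd numbers, we can compute membership in $U$ from $V$ (map $x$ to the second coordinate of $f(2x)$ when $f(2x)$ is even, and answer ``no'' otherwise), giving $U \leq_T V$, contradiction. The symmetric reduction is ruled out the same way, and Theorem~\ref{thm:nojoinI} applies. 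The mixed case $(R_U, R_{2V})$ works identically, with $R_U$ dark and $R_{2V}$ light, and the two required $\nleq_1$-statements again following from Turing-incomparability of $U$ and $V$. The only nontrivial step is the darkness-plus-Lemma~\ref{coproduct3}(2) argument reducing $\leq_\I$ to $\leq$ in the first case; once that is in place everything reduces mechanically to the proof of Corollary~\ref{cor:there-are-pairs} via Lemma~\ref{JoinsPreservedByI}'s $\equiv_\I$-analogue Theorem~\ref{thm:nojoinI} and Corollary~\ref{cor2bis}.
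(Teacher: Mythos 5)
Your overall plan is exactly the paper's: reuse the simple, Turing-incomparable sets $U,V$ and the three pairs $(R_U,R_V)$, $(R_{2U},R_{2V})$, $(R_U,R_{2V})$ from Corollary~\ref{cor:there-are-pairs}, and invoke Theorem~\ref{thm:nojoinI} (with Corollary~\ref{cor2bis} as an alternative for the dark--dark pair). The light--light and mixed cases are fine as you argue them: a $1$-reduction $2U\leq_1 2V\oplus\emptyset$ would give a many-one, hence Turing, reduction of $U$ to $V$, so the hypotheses of Theorem~\ref{thm:nojoinI} are met.

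The one step that does not work as written is your justification that $k=0$ in the dark--dark case. From $R_U\leq R_V\oplus\Id_n$ and Lemma~\ref{coproduct3}(2) you obtain $R_U\equiv E_0\oplus\Id_k$ with $E_0\leq R_V$, and you claim that $k\geq 1$ would force $\Id\leq E_0\oplus\Id_k$, ``contradicting darkness.'' But $\Id_k$ is a \emph{finite} ceer: attaching it to a dark ceer produces another dark ceer (only finitely many members of a transversal can land in the $\Id_k$-part), and nothing in the hypotheses forces the infinite ceer $\Id$ to reduce to $E_0\oplus\Id_k$. So darkness yields no contradiction and $k=0$ is not established by this argument. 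The repair is the one already contained in the paper's proof of Theorem~\ref{thm:nojoinI}: $R_V\oplus\Id_n\equiv R_{V_n}$ where $V_n=\{x+n\mid x\in V\}$, so $R_U\leq_\I R_V$ gives $U\leq_1 V_n$ by Fact~\ref{fact:fund-ce-sets}, and $V_n\leq_T V$ then contradicts $U\nleq_T V$. Better still, you can drop the detour through Corollary~\ref{cor2bis} altogether: Theorem~\ref{thm:nojoinI} applies verbatim to the dark pair $(R_U,R_V)$, since $U\leq_1 V\oplus\emptyset$ would already give $U\leq_T V$; this is how the paper treats all three cases uniformly.
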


\begin{proof}
By Theorem~\ref{thm:nojoinI}, using the argument in the proof of
Corollary~\ref{cor:there-are-pairs}. For the existence of a pair (dark,dark)
the claim follows also from Corollary~\ref{cor2bis}.
\end{proof}

\subsection{Join-irreducibility in $\Ceers_\I$}
In this section we single out some classes of ceers that are join-irreducible
in $\Ceers_{\I}$.

\begin{cory}\label{cor4bis}
Every dark $R_{X}$ is join-irreducible in the $\I$ degrees.
\end{cory}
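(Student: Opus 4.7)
The plan is to mimic the argument of Corollary~\ref{JoinIrreducible1Degrees}, but working over $\mathcal{I}$ and using Corollary~\ref{cor2bis} in place of Corollary~\ref{darkRNoJoin}. Suppose toward a contradiction that $R_X$ is a least upper bound in $\Ceers_\mathcal{I}$ of some $A,B$ with $A,B <_\mathcal{I} R_X$. I will first argue that $A$ (and symmetrically $B$) is equivalent to $R_W$ for some c.e.\ set $W$, and is either a dark ceer or has finitely many classes.

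To see this, note that since $A \leq_\mathcal{I} R_X$, we have $A \leq R_X \oplus \Id_n$ for some $n$. As observed at the start of the proof of Theorem~\ref{thm:nojoinI}, $R_X \oplus \Id_n \equiv R_Z$ for some c.e.\ set $Z$, so $A \leq R_Z$ and hence by Fact~\ref{fact:fund-ce-sets}(2) we have $A \equiv R_{W_A}$ for some c.e.\ $W_A$. Moreover, $R_X \oplus \Id_n$ is dark: any infinite effective transversal would contain infinitely many even numbers (since the $\Id_n$-part has only $n$ classes), producing an infinite effective transversal of $R_X$, contradicting darkness of $R_X$. Hence by Observation~\ref{obs:dark-closure}, either $A$ is dark, or $A$ has only finitely many classes and is therefore equivalent to some $\Id_k$.

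Now split into cases. If $A$ (or $B$) has finitely many classes, say $A \equiv \Id_k$, then $A \oplus B \equiv \Id_k \oplus B \equiv_\mathcal{I} B$, and thus $R_X \equiv_\mathcal{I} A \oplus B \equiv_\mathcal{I} B$, contradicting $B <_\mathcal{I} R_X$. So both $A$ and $B$ must be dark, hence equivalent to $R_{W_A}$ and $R_{W_B}$ with both dark. They must also be $\leq_\mathcal{I}$-incomparable: if, say, $A \leq_\mathcal{I} B$, then $A \oplus B \equiv_\mathcal{I} B$, again forcing $R_X \equiv_\mathcal{I} B$ and contradicting $B <_\mathcal{I} R_X$. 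But then Corollary~\ref{cor2bis} applies to the $\leq_\mathcal{I}$-incomparable dark pair $R_{W_A}, R_{W_B}$, saying that they have no least upper bound in $\Ceers_\mathcal{I}$, contradicting the assumption that $R_X$ is one. The only step requiring any thought is verifying that the reduction of $A$ to $R_Z$ lets us invoke Fact~\ref{fact:fund-ce-sets}(2) and that $R_X \oplus \Id_n$ inherits darkness from $R_X$; both are routine.
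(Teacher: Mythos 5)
Your proposal takes exactly the route of the paper's own proof: reduce to the situation of two $\leq_\I$-incomparable dark ceers of the form $R_W$ (via Fact~\ref{fact:fund-ce-sets}(2) and downward closure of darkness) and then invoke Corollary~\ref{cor2bis}. The substantive steps you fill in --- that $R_X\oplus\Id_n\equiv R_Z$ for a c.e.\ set $Z$, that $R_X\oplus\Id_n$ inherits darkness, and the disposal of the case where a summand is finite --- are all correct.

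One intermediate claim, however, is false as written: that $A\leq_\I B$ implies $A\oplus B\equiv_\I B$ for the uniform join. This fails already when $A=B$ is dark. Indeed every dark ceer is self-full (Lemma~\ref{darksAreSelfFull}); if $B\oplus B\leq B\oplus\Id_n$ via $f$, then $g(x)=f(2x)$ and $h(x)=f(2x+1)$ are reductions of $B$ to $B\oplus\Id_n$ whose images meet disjoint sets of classes, so choosing $n+1$ pairwise $B$-inequivalent numbers $y_0,\dots,y_n$ makes $\{h(y_0),\dots,h(y_n)\}$ an effective transversal avoiding $[\im(g)]$; Lemma~\ref{obs:plus-span} then gives $B\oplus\Id_{n+1}\leq B\oplus\Id_n$, and cancelling via Lemma~\ref{SuccessorIsInjective} yields $B\oplus\Id_1\leq B$, contradicting self-fullness by Observation~\ref{obs:nonselffull}. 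Fortunately the conclusion you extract from this claim --- that $A$ and $B$ must be $\leq_\I$-incomparable --- needs no such detour: if $A\leq_\I B$ then $B$ itself is an $\I$-upper bound of the pair, so the least upper bound $R_X$ satisfies $R_X\leq_\I B$, contradicting $B<_\I R_X$. (Similarly, in the finite case you only need $R_X\leq_\I A\oplus B\equiv_\I B$, which holds because $A\oplus B$ is an upper bound and $R_X$ is the least one; the full equivalence $R_X\equiv_\I A\oplus B$ is not needed.) With that one-line repair your argument is correct and coincides with the paper's.
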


\begin{proof}
Let $R_X$ be dark. Every ceer $\leq_\I R$ is also dark and up to equivalence
of the form $R_Z$. Since no two incomparable such degrees have a least upper
bound in the $\I$-degrees by Corollary~\ref{cor2bis}, $R$ cannot be a join of
two smaller $\equiv_\I$ degrees.
\end{proof}

We note a second example (in addition to that in Corollaries
\ref{JoinIrreducible1Degrees} and \ref{cor4bis}) of
degrees which are join-irreducible:

\begin{thm}\label{thm:join-irreducible-in-I}
Let $E$ be any ceer. Then $E'$ is light and join-irreducible in both the
ceers and in the $\equiv_\I$ degrees.
\end{thm}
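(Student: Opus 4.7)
The plan is to prove the three assertions separately. The lightness of $E'$ rests on the padding lemma, while both join-irreducibility claims reduce quickly to Fact~\ref{JumpsUniformJoinIrreducible} (uniform join-irreducibility of jumps). None of the three steps looks hard; the only content is in organizing the reductions.

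First, for lightness, I will produce a computable reduction $\Id\leq E'$. By the padding lemma, fix a computable injection $g\colon \omega\to\omega$ whose range consists of indices of the nowhere-defined partial function. Then for every $i$, $\phi_{g(i)}(g(i))$ diverges, so by the definition of the jump, $g(i)\nrel{E'} g(j)$ whenever $i\ne j$. Thus $(g(i))_{i\in\omega}$ is an infinite computable listing of pairwise $E'$-inequivalent numbers, which witnesses that $E'$ is light.

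Next, I handle join-irreducibility in $\Ceers$. Suppose $E'$ is the join of $X$ and $Y$ in $\Ceers$, so that $X,Y\leq E'$ and $E'$ is bounded above by every upper bound of $X,Y$. Since $X\oplus Y$ is such an upper bound, $E'\leq X\oplus Y$. By Fact~\ref{JumpsUniformJoinIrreducible}, the jump $E'$ is uniform join-irreducible, so either $E'\leq X$ or $E'\leq Y$; together with $X,Y\leq E'$ this gives $E'\equiv X$ or $E'\equiv Y$.

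Finally, for join-irreducibility in $\Ceers_\I$, suppose $E'$ is the join of $X$ and $Y$ in $\Ceers_\I$, so that $X,Y\leq_\I E'$ and $E'\leq_\I X\oplus Y$. By definition of $\leq_\I$, there is $n$ with $E'\leq X\oplus Y\oplus \Id_n$. Using associativity of $\oplus$ modulo $\equiv$ (recorded in Section~\ref{ssct:uniform-joins}), we have $E'\leq X\oplus(Y\oplus \Id_n)$. Applying Fact~\ref{JumpsUniformJoinIrreducible} again, either $E'\leq X$ (hence $E'\leq_\I X$) or $E'\leq Y\oplus\Id_n$ (hence $E'\leq_\I Y$); combined with $X,Y\leq_\I E'$ this yields $E'\equiv_\I X$ or $E'\equiv_\I Y$, as required. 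The main ``obstacle'' is really just noticing that absorbing the $\Id_n$ into one side is all that is needed to invoke uniform join-irreducibility in the quotient setting.
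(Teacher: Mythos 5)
Your proof is correct and rests on the same key fact as the paper's, namely the uniform join-irreducibility of jumps (Fact~\ref{JumpsUniformJoinIrreducible}). The only differences are organizational and minor: the paper derives the $\Ceers$ case from the $\Ceers_\I$ case via Lemma~\ref{JoinsPreservedByI} and, in the $\I$-case, bounds $\Id_n$ by $\Id$ and then rules out $E'\leq\Id$ using that $E'$ has non-computable classes, whereas you treat the two cases independently and absorb the $\Id_n$ into one summand (a slight simplification, since it avoids the $E'\not\leq\Id$ step); you also make explicit the padding-lemma argument for lightness, which the paper's proof leaves unstated.
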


\begin{proof}
By Lemma \ref{JoinsPreservedByI}, we only need to show that $E'$ is
join-irreducible in the $\equiv_\I$-degrees. If $E'$ were the $\equiv_I$-join
of $\leq_{\I}$-incomparable $X$ and $Y$, then $E'\leq X\oplus Y\oplus \Id$.
But then $E'\leq X$, $E'\leq Y$ or $E'\leq \Id$ by the fact that jumps are
uniform join-irreducible (see Fact~\ref{JumpsUniformJoinIrreducible}). But no
jump is $\leq \Id$ because $E'$ has non-computable classes, so again we
contradict $X$ and $Y$ being incomparable.
\end{proof}

\begin{cory}\label{cor:joinR-irred}
There exist light ceers of the form $R_{X}$ which are join-irreducible both in
ceers and in the $\equiv_\I$ degrees.
\end{cory}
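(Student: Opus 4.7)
The plan is to apply Theorem~\ref{thm:join-irreducible-in-I}, which already hands us, for every ceer $E$, a jump $E'$ which is simultaneously light and join-irreducible in $\Ceers$ and in $\Ceers_{\I}$. Consequently the only thing left to do is to exhibit an $E$ for which the jump happens to be (literally, or at least up to $\equiv$) a ceer of the form $R_X$ for some c.e.\ set $X$.

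The cleanest candidate is $E=\Id_1$, the trivial ceer whose unique equivalence class is all of $\omega$. Unpacking the definition of the jump, $x\,\Id_1'\,y$ holds iff $x=y$ or both $\phi_x(x)$ and $\phi_y(y)$ converge, since the extra requirement $\phi_x(x)\,\Id_1\,\phi_y(y)$ is automatic for the one-class ceer. But this is precisely the relation $R_K$, where $K=\{x\mid \phi_x(x)\downarrow\}$ is the halting set. Thus $\Id_1' = R_K$ on the nose, and Theorem~\ref{thm:join-irreducible-in-I} applied to $\Id_1$ immediately gives that $R_K$ is a light ceer, of the form $R_X$, join-irreducible both in $\Ceers$ and in the $\I$-degrees.

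There is essentially no obstacle: once Theorem~\ref{thm:join-irreducible-in-I} is available, the only substantive verification is the transparent identification $\Id_1' = R_K$, and lightness of $R_K$ is already guaranteed by that theorem (although one can see it directly, since the infinite computable set of pad-indices of any nowhere-defined program sits in $\overline{K}$ and forms an effective transversal). Thus the corollary follows by simply instantiating the previous theorem at $E=\Id_1$.
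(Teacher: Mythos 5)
Your proposal is correct and is essentially the paper's own argument: the paper likewise derives the corollary from the identity $R_{K}=(\Id_{1})'$ together with Theorem~\ref{thm:join-irreducible-in-I} (it also mentions $\Id=R_{\emptyset}$ as an alternative witness). The identification $\Id_1'=R_K$ and the lightness check are carried out correctly.
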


\begin{proof}
Consider for instance $\Id=R_{\emptyset}$, or $R_{K}$: this latter example
follows from the fact that $R_{K}=(\Id_{1})'$.
\end{proof}

\subsection{When least upper bounds exist in $\Ceers$ and in $\Ceers_\I$}
Contrary to the structure of ceers where two incomparable dark degrees do not
have a least upper bound (Theorem~\ref{NoJoinOfDark}), in
Corollary~\ref{cor:dark-with-supI} below we give an example that shows that
this is not so when working in the $\I$-degrees. We first prove the following
theorem.

\begin{thm}\label{DarkCeersJoinInI}
There are dark ceers $E_1$, $E_2$ so that $E_{1} \nleq E_{2} \oplus \Id$,
$E_{2} \nleq E_{1} \oplus \Id$, and all classes of each $E_i$ are finite and
so that for any infinite c.e. set $W\subseteq \omega^2$, there are
$(x,y),(x',y')\in W$ so that
\[
x \rel{E_1} x' \cancel{\Leftrightarrow} y \rel{E_2} y'.
\]
\end{thm}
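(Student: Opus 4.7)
The plan is to carry out a finite-injury priority construction in the style of Theorem~\ref{FiniteClassesSelfFulMinimalCoversInI}, starting with $E_{l,0} = \Id$ for $l \in \{1,2\}$. The requirements, placed in some computable order of type $\omega$, are: for each $l \in \{1,2\}$ and each $i, j, n$, the \emph{darkness} requirement $D^l_i$ (if $W_i$ is infinite then it contains two distinct $E_l$-equivalent elements); the \emph{non-reducibility} requirement $N^l_n$ ($\phi_n$ is not a reduction from $E_l$ to $E_{3-l} \oplus \Id$); the \emph{finiteness} requirement $F^l_j$ (the $E_l$-class of $j$ is finite); and for each $i$ the \emph{non-uniformity} requirement $S_i$ (if $W_i \subseteq \omega^2$ is infinite then there are $(x,y),(x',y') \in W_i$ with $x \rel{E_1} x' \not\Leftrightarrow y \rel{E_2} y'$). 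Each requirement will be allowed to impose finitely many restraints on $E_l$-collapses by lower-priority requirements, and each will act only finitely often, so the standard finite-injury machinery will apply.

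The strategies are as follows. $D^l_i$ waits for $W_i$ to enumerate distinct $x,y$ whose $E_l$-collapse respects higher-priority restraint, then performs the collapse; infinitude of $W_i$ together with finiteness of the inherited restraint guarantees eventual action. The $N^l_n$ strategy is a direct diagonalization on a fresh pair $x,y$: place the restraint $(x,y,l)$ and wait for $\phi_n(x),\phi_n(y)$ to converge; if they converge to $(E_{3-l}\oplus \Id)$-equivalent values, keep the restraint (so $x \nrel{E_l} y$ while $\phi_n(x)\sim \phi_n(y)$), and otherwise $E_l$-collapse $x$ and $y$ and, when both $\phi_n(x),\phi_n(y)$ are even, add a single-pair restraint on their halves in $E_{3-l}$ to preserve non-equivalence. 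For $S_i$ we analyze the shape of $W_i$: if some first coordinate $x^*$ appears in infinitely many pairs of $W_i$, wait for two pairs $(x^*,y_0),(x^*,y_1)\in W_i$ with $y_0\ne y_1$ both fresh and restrain $(y_0,y_1,2)$ (symmetrically if some second coordinate repeats); otherwise each coordinate appears only finitely often in $W_i$, so $W_i$ contains infinitely many pairs with pairwise distinct coordinates, and we wait for two such pairs with all four components fresh, then $E_1$-collapse $x_0$ with $x_1$ and restrain $(y_0,y_1,2)$. Finally $F^l_j$ merely places a permanent restraint forbidding lower-priority requirements from enlarging $[j]_{E_l}$, and never acts.

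The verification is the standard priority induction: each requirement is re-initialized only finitely often by higher-priority action, and thereafter acts at most once (for $D^l_i$, $N^l_n$, $S_i$) or not at all ($F^l_j$). Darkness, the direct diagonalization, and non-uniformity are then immediate from the strategy descriptions, and each $[j]_{E_l}$ is finite because only the finitely many requirements strictly above $F^l_j$ can enlarge it, and each does so at most once. The main obstacle is the non-uniformity strategy $S_i$: one must handle an arbitrary infinite c.e.\ relation $W_i\subseteq\omega^2$, and it is not automatic that two pairs with all fresh components eventually appear (consider $W_i=\{x^*\}\times\omega$). The case analysis on repetition of coordinates in $W_i$ is what makes this manageable: in every case we can extract from $W_i$ two enumerated pairs whose crucial components are fresh and appropriately inequivalent, and then a single finitary restrain-and-collapse action suffices, consistent with all higher-priority restraints.
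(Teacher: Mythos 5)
Your overall architecture (finite injury with darkness, incomparability, finiteness, and non-uniformity requirements, each placing finite restraint) matches the paper's, and your $D$, $N$, $F$ strategies are essentially the ones used there. The gap is in the non-uniformity strategy $S_i$, in two places. First, the case split ``if some first coordinate $x^*$ appears in infinitely many pairs of $W_i$ \dots\ otherwise \dots'' is a $\Sigma^0_2$ question about $W_i$ that the construction cannot decide at any stage, so the strategy as written is not effective. Second, and more seriously, in the main case you wait for two pairs of $W_i$ ``with all four components fresh.'' But $W_i$ is an adversarially enumerated c.e.\ set: its elements are ordinary natural numbers that will in general have been mentioned (used as parameters, collapsed, restrained) long before they appear in $W_i$. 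For instance, on $W_i=\{(n,n)\mid n\in\omega\}$ no pair with fresh components ever arrives, your strategy waits forever, and nothing forces a disagreement $n \rel{E_1} m \cancel{\Leftrightarrow} n \rel{E_2} m$ --- which is exactly the content of the requirement on that $W_i$. Freshness is a property you can impose on witnesses you choose, not on data handed to you.

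The repair, which is what the paper does, is to demand much less before acting and to branch on the \emph{current} state of the construction rather than on the shape of $W_i$: wait only for two pairs $(x,y),(x',y')\in W_i$ with $x \nrel{E_1} x'$ at the current stage and with $x,x'$ not pinned down by higher-priority restraint. If the second coordinates already satisfy $y \rel{E_2} y'$, restrain $x,x'$ from ever $E_1$-collapsing; otherwise $E_1$-collapse $x,x'$ and restrain $y,y'$ in $E_2$. Either action produces the required disagreement with a single finitary move. The verification then splits into the cases you were trying to build into the strategy: if the set of first coordinates of $W_i$ meets infinitely many $E_1$-classes, finiteness of classes and finiteness of the inherited restraint guarantee the wait ends; if it meets only finitely many, then (classes being finite) some fiber $\{y\mid (x^*,y)\in W_i\}$ is infinite, hence meets at least two $E_2$-classes, and the requirement holds through the waiting outcome since $x^* \rel{E_1} x^*$ while $y \nrel{E_2} y'$. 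With that replacement your proof goes through.
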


\begin{proof}
We build ceers $E_1, E_2$ satisfying the following requirements, where $i,m,k
\in \omega$ and $j\in \{1,2\}$:

\begin{itemize}
\item[$P_i$:] If $W_i\subseteq \omega^2$ is infinite, then there are
    $(x,y),(x',y')\in W$ so that
    \[x \rel{E_1} x' \cancel{\Leftrightarrow}
    y \rel{E_2} y'.
    \]

\item[$D^j_m$:] If $W_m$ is an infinite c.e. set, then there are $x,y\in
    W_m$ so that $x \rel{E_j} y$.

\item[$I^{1,2}_{e}$:] $\phi_{e}$ is not a reduction from $E_{1}$ to
    $E_{2}\oplus \Id$.

\item[$I^{2,1}_{e}$:] $\phi_{e}$ is not a reduction from $E_{2}$ to
    $E_{1}\oplus \Id$.

\item[$F^j_k$:] The $E_j$-class of $k$ is finite.
\end{itemize}

\medskip
We describe the strategies to meet the requirements, on which we have fixed
some computable priority ordering of order type $\omega$.

\medskip

\emph{Strategy for $P_i$:} With finitely many restrained numbers (giving only
finitely many restrained equivalence classes), $P_i$ waits for $W_i$ to
enumerate two pairs $(x,y),(x',y')$ so that currently $x\cancel{\rel{E_1}}x'$
and the pair $x,x'$ is not $E_{1}$-equivalent to a pair of restrained
numbers. At this point $P_i$ is \emph{ready to act} and then
(\emph{$P_i$-action}), if $y \rel{E_2} y'$, then we restrain $x$ and $x'$;
otherwise we $E_1$-collapse $x$ and $x'$ and $E_2$-restrain $y$ and $y'$.

\medskip

\emph{Strategy for $D^j_m$:} If there is no pair of distinct numbers of
$W_{m}$ which are already $E_{j}$-collapsed, then wait until $W_m$ enumerates
an unrestrained pair of elements $x,y$: when this happens $D^j_m$ is
\emph{ready to act} and (\emph{$D^j_m$-action}) it $E_j$-collapses them.

\medskip

\emph{Strategies for $I^{1,2}_{e}$ and $I^{2,1}_{e}$:} We only look at
$I^{1,2}_{e}$, as the strategy for $I^{2,1}_{e}$ is completely similar. This
is the usual diagonalization strategy for incomparability: $I^{1,2}_{e}$
appoints two new witness $x,y$, and restrains them from being
$E_{1}$-collapsed until both $\phi_{e}(x), \phi_{e}(y)$ converge. When this
happens if already $\phi_{e}(x) \rel{E_{2} \oplus \Id} \phi_{e}(y)$ then we
keep the restraint in $E_{1}$ for $x,y$; otherwise if still $\phi_{e}(x)
\nrel{E_{2} \oplus \Id} \phi_{e}(y)$ then \emph{$I^{1,2}_{e}$ is ready to
act}, and (\emph{$I^{1,2}_{e}$-action}) it $E_{1}$-collapses $x,y$ and we
restrain the pair $\phi_{e}(x), \phi_{e}(y)$ from being $E_{2}  \oplus
\Id$-collapsed.

\medskip

\emph{Strategy for $F^j_k$:} Do not allow any element to collapse with $k$ in
$E_j$. Higher priority requirements will ignore this, but this strategy will
succeed in guaranteeing that the $E_{j}$-class of $k$ is finite.

\medskip

\medskip
\emph{The construction (sketch):} A formal construction follows along the
usual lines of a finite injury argument, as for instance in the proof of
Theorem~\ref{MinimalDark}. In particular when a requirements acts, if allowed
by restraints imposed by higher priority strategies, it \emph{initializes}
all lower-priority $P$- and $I$-requirements (the only requirements that may
be injured by the actions of higher priority requirements). At any given
stage we take the least requirement $R$ that \emph{requires attention} (which
is defined in the usual way, i.e. the requirement is initialized, or it is
ready to take action): if $R$ is an $I$-requirement and is initialized then
it chooses new parameters $x^R, y^R$ and ceases to be initialized; if $R$ is
a $P$-requirement and it is initialized, then it simply ceases to be
initialized; otherwise we let $R$ act. After this, we go on to next stage. At
each stage $s$ we define in the usual way the restraints sets $\rho^{R}_{s}$,
and $E_{l,s}$ starting with $E_{l,0}=\Id$, and stipulating at stage $0$ the
once and for all restraint $(k,j)^F$ so that always $(k,j)^F \in \rho^{R'}$
for all $R'$ having lower priority than $F^{j}_{k}$, and thus $R'$ cannot
$E_j$-collapse any number to $k$: since $F$-requirements are never
re-initialized, their restraints will never change.

\medskip

\emph{The verification:} An easy inductive argument on the priority ordering
of requirements shows that each requirement is initialized at most finitely
often. After any initialization, each $I$-requirement after choosing its
parameters acts at most once, each $P$-requirement acts at most once, and in
each case they set only a finite restraint. The other requirements act at
most once; the $D$-requirements set no restraint; the $F$-requirements set a
once and for all finite restraint. Satisfaction of the $F$-requirements gives
that each $E_j$ has finite classes, and thus infinitely many equivalence
classes (this is also ensured by satisfaction of all incomparability
requirements). Together with their having infinitely many classes,
satisfaction of all $D^j_m$ guarantees that the ceers are dark: if $W_m$ is
infinite then eventually a pair of unrestrained $x,y$ will appear as $D^j_m$
has to cope with only a finite set of equivalence classes that are restrained
by higher priority requirements. A similar observation applies to the
$P$-requirements, although we need here a different argument to show that the
requirement eventually stops waiting if $W_i$ is infinite: indeed, we use
here that the equivalence classes are finite, and thus eventually a pair
$(x,y), (x',y')$ with still $x \nrel{E_1} x'$ will appear. The $P$- and the
$D$-requirements may be met also by the waiting outcomes of their strategies
(no suitable numbers in the corresponding c.e. sets ever appear, so these
sets are finite), in which case they may never act but are satisfied. It is
clear that after their last initialization the $I$-requirements are
eventually met: for instance $I^{1,2}_{e}$ is met either by waiting forever
for both computations $\phi_{e}(x), \phi_{e}(y)$ to converge on the final
values $x,y$ of the witnesses appointed after last initialization, or
otherwise by keeping the restraint $x \nrel{E}_{1} y$ when both computations
show up and already $\phi_{e}(x) \rel{E_{2}\oplus \Id} \phi_{e}(y)$, or by
$E_1$-collapsing $x,y$ and keeping the restraint $\phi_{e}(x)
\nrel{E_{2}\oplus \Id} \phi_{e}(y)$. As to the finiteness requirement $F^j_k$
we further observe that higher priority requirements will ignore $F^j_k$'s
restraint, but since they are finitary it follows that $[k]_{E_j}$ is finite.
\end{proof}

\begin{cory}\label{cor:dark-with-supI}
There are two
$\I$-incomparable  dark ceers $E_{1}, E_{2}$ so that $E_1\oplus E_2$ is a
least upper bound of $E_1$ and $E_2$ in the $\equiv_\I$-degrees.
\end{cory}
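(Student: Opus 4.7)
The plan is to take the dark ceers $E_1, E_2$ produced by Theorem~\ref{DarkCeersJoinInI} and verify that they satisfy the corollary. First I would establish $\I$-incomparability: the property $E_1 \nleq E_2\oplus \Id$ immediately implies $E_1 \nleq E_2 \oplus \Id_n$ for every $n$, hence $E_1 \not\leq_\I E_2$, and symmetrically $E_2 \not\leq_\I E_1$. Since both $E_i$ are dark, $E_1 \oplus E_2$ is dark by Observation~\ref{obs:dark-closure}, and trivially $E_1, E_2 \leq E_1 \oplus E_2$, so $E_1 \oplus E_2$ is an $\I$-upper bound.

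The substantive step is to show $E_1 \oplus E_2$ is least. Let $Z$ be any $\I$-upper bound; pick $n$ together with reductions $f_1$ of $E_1$ to $Z \oplus \Id_n$ and $f_2$ of $E_2$ to $Z\oplus \Id_n$. Consider the c.e. set
\[
W_0 = \{(x,y)\mid f_1(x) \rel{Z\oplus\Id_n} f_2(y)\}.
\]
The key, and I expect the main obstacle, is to argue that $W_0$ is finite. This is exactly where property~(4) in the statement of Theorem~\ref{DarkCeersJoinInI} comes in: for any $(x,y),(x',y')\in W_0$, the fact that $f_1$ and $f_2$ are reductions together with $f_1(x) \rel{Z\oplus\Id_n} f_2(y)$ and $f_1(x') \rel{Z\oplus\Id_n} f_2(y')$ forces $x \rel{E_1} x' \Leftrightarrow y \rel{E_2} y'$ (since $f_2(y) \rel{Z\oplus\Id_n} f_2(y')$ iff $f_1(x) \rel{Z\oplus\Id_n} f_1(x')$, by transitivity through the chain). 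If $W_0$ were infinite, property~(4) would furnish $(x,y),(x',y')\in W_0$ with the opposite behaviour, a contradiction. So $W_0$ is finite.

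Once this is in hand, Lemma~\ref{obs:coproduct} applied to $f_1,f_2$ with $R = Z\oplus\Id_n$ and $W = W_0$ yields $(E_1\oplus E_2)_{/W_0} \leq Z\oplus \Id_n$. Since both $E_1, E_2$ have only finite (hence computable) classes, every $E_1 \oplus E_2$-class named in $W_0$ is computable, and no collapse is trivial because each one identifies an even element with an odd one. Lemma~\ref{ComputableCollapses} then supplies some $k' \leq |W_0|$ with $E_1 \oplus E_2 \equiv (E_1\oplus E_2)_{/W_0}\oplus \Id_{k'}$, whence $E_1 \oplus E_2 \leq Z\oplus \Id_{n+k'}$, i.e.\ $E_1 \oplus E_2 \leq_\I Z$. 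This completes the verification, with the finiteness of $W_0$ being the only non-routine step.
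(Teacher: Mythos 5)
Your proposal is correct and follows essentially the same route as the paper's proof: take the pair from Theorem~\ref{DarkCeersJoinInI}, use the $P$-requirements to conclude that the set $W_0$ of cross-collapses is finite, apply Lemma~\ref{obs:coproduct} to reduce $(E_1\oplus E_2)_{/W_0}$ to the upper bound, and recover $E_1\oplus E_2$ up to $\equiv_\I$ via Lemma~\ref{ComputableCollapses} using the finiteness (hence computability) of the classes. Your explicit transitivity argument for why every pair in $W_0$ satisfies the biconditional is a nice spelling-out of a step the paper leaves implicit, but the proof is the same.
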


\begin{proof}
Let $E_1$ and $E_2$ be as constructed in Theorem \ref{DarkCeersJoinInI}. Let
$X$ be any ceer so $E_1\leq_\I X$ and $E_2\leq_\I X$. Then for some $n$,
$E_1,E_2\leq X\oplus \Id_n$. Let $Y=X\oplus \Id_n$, so $Y\equiv_\I X$. It now
suffices to show that $E_1\oplus E_2\leq_\I Y$.

We fix $f_1,f_2$ witnessing that $E_1,E_2\leq Y$. Let $W$ be the c.e. set
\[
W=\{(x,y)\mid f_1(x) \rel{Y} f_2(y)\}.
\]
By Lemma~\ref{obs:coproduct} we have that $(E_1\oplus {E_2})_{/W} \leq Y$. By
the property of $E_1$ and $E_2$ from Theorem \ref{DarkCeersJoinInI} (namely
by the fact that $E_1$ and $E_2$ satisfy the requirement $P_i$ with $W_i=W$),
we see that $W$ is finite. Further, using that the equivalence classes of
$E_{1}\oplus E_{2}$ are finite and thus computable, we have that $E_1\oplus
{E_2}_{/W}\equiv_I E_1\oplus E_2$ by Lemma \ref{ComputableCollapses}. Finally
we get $E_1\oplus E_2 \leq_I Y$.

$\I$-incomparability of $E_{1}, E_{2}$ is guaranteed by satisfaction of the
incomparability $I$-requirements in the proof of Theorem
\ref{DarkCeersJoinInI}.
\end{proof}

\begin{cory}\label{DarkLightJoin}\label{LightDarkJoin}
There are pairs of incomparable ceers $E,R$ so that $E$ is dark, $R$ is light
and not $\equiv \Id$, and $E\oplus R$ is a least upper bound of $E$ and $R$.
\end{cory}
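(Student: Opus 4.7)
The plan is to exhibit a specific pair: take $E = E_1$ and $R = E_2 \oplus \Id$, where $E_1, E_2$ are the dark ceers with finite classes produced by Theorem~\ref{DarkCeersJoinInI}. The basic properties are quick to verify: $R$ is light because $\Id \le R$, and $R \not\equiv \Id$ since $E_2 \le R$ and $E_2$ is dark with infinitely many classes (so by Corollary~\ref{cor:belowId}, $E_2 \not\le \Id$); also $E \not\le R$ is exactly the conclusion $E_1 \not\le E_2 \oplus \Id$ of Theorem~\ref{DarkCeersJoinInI}, while $R \not\le E$ fails because $\Id \le R$ would then force $\Id \le E$, contradicting darkness of $E$.

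For the LUB claim, I will let $Y$ be any upper bound with reductions $f_1 \colon E_1 \to Y$ and $h \colon R \to Y$, and set $f_2(k) = h(2k)$ and $g(k) = h(2k+1)$, so $f_2$ reduces $E_2$ to $Y$ and $g$ gives an effective transversal of $Y$ disjoint in $Y$ from the $f_2$-image. Lemma~\ref{obs:coproduct} then yields $(E \oplus R)_{/W} \le Y$, where $W = \{(x,y): f_1(x) \rel{Y} h(y)\}$.

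The key step, which I expect to be the main obstacle, is showing $W$ is finite; I will split $W$ by parity of the second coordinate. For the even part, $P_i$ from Theorem~\ref{DarkCeersJoinInI} forces finiteness: if $\{(x,k): f_1(x) \rel{Y} f_2(k)\}$ were infinite, then $f_1, f_2$ being reductions would make every pair in it respect the equivalence-preserving biconditional that $P_i$ forbids. For the odd part, I will use darkness of $E_1$: since $g$ is injective on $Y$-classes, each $E_1$-class meets at most one $g$-image, and enumerating one representative per newly discovered meeting would produce an infinite effective transversal of $E_1$ if there were infinitely many meetings; combining this with finiteness of $E_1$-classes makes the odd part of $W$ finite too.

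Once $W$ is finite, the fact that $E \oplus R$ has only finite (hence computable) classes lets me apply Lemma~\ref{ComputableCollapses} to get $(E \oplus R)_{/W} \oplus \Id_k \equiv E \oplus R$ for some $k$, so $E \oplus R \le Y \oplus \Id_k$. To absorb the $\Id_k$, I will observe that the $\Id$-part of $R$ sits inside $E \oplus R$ as the computable set $\{\langle 1, 2k+1\rangle : k \in \omega\}$, a strong effective transversal; Lemma~\ref{obs:coproduct2} then gives $(E \oplus R) \oplus \Id \le E \oplus R$, so $(E \oplus R) \oplus \Id_k \equiv E \oplus R$. Therefore $(E \oplus R) \oplus \Id_k \le Y \oplus \Id_k$, and $k$-fold application of Lemma~\ref{SuccessorIsInjective} delivers $E \oplus R \le Y$, as required.
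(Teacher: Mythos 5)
Your proposal is correct and follows essentially the same route as the paper: the same witnesses $E=E_1$, $R=E_2\oplus\Id$ from Theorem~\ref{DarkCeersJoinInI}, the same set $W$ with finiteness established by splitting on the parity of the second coordinate (the $P_i$-requirement for the even part, darkness of $E_1$ for the odd part), and the same use of Lemmata~\ref{obs:coproduct}, \ref{ComputableCollapses} and~\ref{obs:coproduct2}. The only (harmless) deviation is at the very end, where you absorb the $\Id_k$ into $E\oplus R$ and then cancel with Lemma~\ref{SuccessorIsInjective}, whereas the paper absorbs it into $(E\oplus R)_{/W}$ directly before comparing with the upper bound.
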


\begin{proof}
Let $E_1$ and $E_2$ be as constructed in Theorem \ref{DarkCeersJoinInI}. Let
$E=E_1$ and $R=E_2\oplus \Id$. Clearly $R \nequiv \Id$ as otherwise it would
be $E_2 \leq \Id$ contradicting that $E_2$ is dark. To show that $E$ and $R$
have the desired properties, we first show the following sublemma.

\medskip

\noindent \textbf{Sublemma} \emph{If $W \subseteq \omega^{2}$ is an infinite
c.e.\ set, then there are $(x,y),(x',y')\in W$ so that $x \rel{E} x'
\cancel{\Leftrightarrow} y \rel{R} y'$.}

\begin{proof}[Proof of Sublemma]
If $W$ has infinitely many pairs $(x,y)$ where $y$ is odd, then there are
$(x,y),(x',y')\in W$ so that $x \rel{E_1} x' \cancel{\Leftrightarrow} y
\rel{R} y'$ by the darkness of $E_1$, as otherwise the c.e. set $\{x\mid
(\exists y)[(x,y) \in W \,\&\, \textrm{ $y$ odd}]\}$ would be an infinite
c.e. set consisting of non-$E_1$-equivalent numbers. Now, suppose $W$ has
infinitely many pairs $(x,y)$ where $y$ is even. Then let $W'=\{(x,y)\mid
(x,2y)\in W\}$. Then since $W'$ is infinite, there is $(x,y),(x',y')\in W'$
so that $x \rel{E_1} x' \cancel{\Leftrightarrow} y \rel {E_2} y'$. But then
$(x,2y),(x',2y')$ witness the result for $E,R$.
\end{proof}

Now, suppose $E,R\leq X$. We want to show that $E\oplus R\leq X$. Again, we
fix two reductions $f_E$ and $f_R$ of $E,R$ to $X$. Let $W=\{(x,y)\mid f_E(x)
\rel{X} f_R(y)\}$.  By Lemma~\ref{obs:coproduct} we can reduce $(E\oplus
R)_{/W}$ to $X$. By the above Sublemma, $W$ is necessarily finite, and thus
as the equivalence classes of $E \oplus R$ are computable, by Lemma
\ref{ComputableCollapses} we have that $E\oplus R \equiv (E\oplus R)_{/W}
\oplus \Id_n$ for some $n$. We claim that $(E \oplus R)_{/W} \oplus \Id \le
(E \oplus R)_{/W}$: if we show this then we are done. But $R=E_2\oplus \Id$,
so by separating out the set of odd numbers which are not in the second
coordinate of an element of $W$, we can find an infinite strong effective
transversal $U$ of $(E \oplus R)_{/W}$, and thus Lemma~\ref{obs:coproduct2}
can be applied, giving $(E \oplus R)_{/W} \oplus \Id \le (E \oplus R)_{/W}$
as desired.

Incomparability (in fact $\I$-incomparability) of $E, R$ is guaranteed by
satisfaction of the incomparability $I$-requirements for $E_1$ and $E_2$ in
the proof of Theorem \ref{DarkCeersJoinInI}.
\end{proof}

We note that the above construction yields a more flexible method, producing
more examples, though the result of the existence of pairs consisting of one
dark and one light ceer having a join could be taken from Observation
\ref{joinIdDark}.

\begin{cory}\label{SometimesJoinOfLight}
There are two
incomparable light ceers $E,R$ for which $E\oplus R$ is a least upper bound
for $E,R$.
\end{cory}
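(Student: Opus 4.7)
The plan is to leverage the dark ceers $E_1,E_2$ furnished by Theorem \ref{DarkCeersJoinInI} (the same ceers already used in Corollary \ref{LightDarkJoin}) together with Observation \ref{joinIdDark}. Define $E:=E_1\oplus \Id$ and $R:=E_2\oplus \Id$. Both $E$ and $R$ are light, as each bounds $\Id$. Incomparability is immediate: if $E\leq R$ then $E_1\leq E\leq R=E_2\oplus\Id$, contradicting the property $E_1\not\leq E_2\oplus\Id$ guaranteed by Theorem \ref{DarkCeersJoinInI}, and the other direction is symmetric.

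To check that $E\oplus R$ is a least upper bound of $E$ and $R$, let $X$ be any ceer with $E,R\leq X$. Then $X$ bounds each of $E_1$, $E_2$, and $\Id$. Since the uniform join is always an upper bound, we obtain $E_1\oplus E_2\leq X$. By Observation \ref{obs:dark-closure} the ceer $E_1\oplus E_2$ is dark, and $X$ bounds both this dark ceer and $\Id$. Observation \ref{joinIdDark} then yields $(E_1\oplus E_2)\oplus\Id\leq X$. Finally, by associativity and commutativity of $\oplus$ modulo $\equiv$, together with $\Id\oplus\Id\equiv\Id$, we have
\[
(E_1\oplus E_2)\oplus \Id\equiv (E_1\oplus\Id)\oplus(E_2\oplus\Id)=E\oplus R,
\]
so $E\oplus R\leq X$, as required.

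There is no real obstacle in this proof: the argument is a short reduction to Observation \ref{joinIdDark}, with the key insight being that any upper bound of the two light ceers $E_1\oplus\Id$ and $E_2\oplus\Id$ must simultaneously bound the dark ceer $E_1\oplus E_2$ and $\Id$, and is therefore forced to bound their (already known to exist) light join $(E_1\oplus E_2)\oplus\Id$, which is precisely $E\oplus R$ up to $\equiv$. This makes the proof notably shorter than that of Corollary \ref{LightDarkJoin} while staying within the same flexible framework built around the ceers produced in Theorem \ref{DarkCeersJoinInI}.
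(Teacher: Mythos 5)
Your setup ($E=E_1\oplus\Id$, $R=E_2\oplus\Id$ with $E_1,E_2$ from Theorem~\ref{DarkCeersJoinInI}), the incomparability argument, and the final identification $(E_1\oplus E_2)\oplus\Id\equiv E\oplus R$ are all fine and match the paper's choice of witnesses. But the proof has a genuine gap at its central step: from ``$X$ bounds each of $E_1$, $E_2$'' you conclude ``$E_1\oplus E_2\leq X$'' on the grounds that ``the uniform join is always an upper bound.'' That justifies only $E_1,E_2\leq E_1\oplus E_2$; it says nothing about $E_1\oplus E_2$ lying \emph{below} an arbitrary upper bound $X$. The implication $A,B\leq X\Rightarrow A\oplus B\leq X$ is exactly the assertion that $A\oplus B$ is a \emph{least} upper bound, which is false in general --- and conspicuously false here: by Theorem~\ref{NoJoinOfDark} the incomparable dark ceers $E_1,E_2$ have no least upper bound at all, so in particular there exist upper bounds of $\{E_1,E_2\}$ that do not bound $E_1\oplus E_2$. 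The best one gets for free from Theorem~\ref{DarkCeersJoinInI} is Corollary~\ref{cor:dark-with-supI}, i.e.\ $E_1\oplus E_2\leq_\I X$, which is strictly weaker than what you need.

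The missing content is precisely what the paper's proof supplies. Given reductions of $E_1$ and of $E_2\oplus\Id$ to $X$, one forms $W=\{(x,y)\mid f_{E_1}(x)\rel{X} f_{E_2\oplus\Id}(y)\}$ and applies Lemma~\ref{obs:coproduct} to get $(E_1\oplus E_2\oplus\Id)_{/W}\leq X$; the $P$-requirements of Theorem~\ref{DarkCeersJoinInI} together with darkness of $E_1$ force $W$ to be finite; Lemma~\ref{ComputableCollapses} then gives $E_1\oplus E_2\oplus\Id\equiv (E_1\oplus E_2\oplus\Id)_{/W}\oplus\Id_n$ for some $n$, and the surviving $\Id$-part furnishes an infinite strong effective transversal so that Lemma~\ref{obs:coproduct2} absorbs the $\Id_n$, yielding $E_1\oplus E_2\oplus\Id\leq X$. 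Only at that point does your appeal to Observation~\ref{joinIdDark} become unnecessary (the paper does not even need it). As written, your argument omits the entire quotient-and-absorption mechanism, which is the heart of the corollary, so it does not constitute a proof.
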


\begin{proof}
Let $E_1$ and $E_2$ be as constructed in Theorem \ref{DarkCeersJoinInI}. Let
$E=E_1\oplus \Id$ and $R=E_2\oplus \Id$. Note that $E\oplus R\equiv E_1\oplus
E_2\oplus \Id$. Since any upper bound $X$ of $E,R$ is also an upper bound of
$E_1, R$, by the same argument as in the proof of
Corollary~\ref{LightDarkJoin} we have that $(E_1 \oplus R)_{/W} \leq X$. (We
view here $E_1\oplus E_2\oplus \Id$ as $E_1\oplus (E_2\oplus \Id)$ and $W$ is
as in the proof of the previous corollary.) Since $E_1$ is dark, there can
only be finitely many odd $z$ so that $f_{E_1}(x)=f_{E_2 \oplus \Id}(z)$. By
discarding these finitely many elements, we see that there is an infinite
strong effective transversal $U$ of $(E_1\oplus E_2 \oplus \Id)_{/W}$ coming
from the non-collapsed elements in the $\Id$-part of the ceer, so that
Lemma~\ref{obs:coproduct2} can be applied, giving $(E_1\oplus E_2 \oplus
\Id)_{/W} \oplus \Id\leq (E_1\oplus E_2 \oplus \Id)_{/W}$. On the other hand,
by Lemma~\ref{ComputableCollapses}, $E_1 \oplus E_2 \oplus \Id \equiv
(E_1\oplus E_2 \oplus \Id)_{/W} \oplus \Id_n$ for some $n$, thus
\[
E_1 \oplus E_2 \oplus \Id \equiv (E_1\oplus E_2 \oplus \Id)_{/W}
       \oplus \Id_n \leq
(E_1\oplus E_2 \oplus \Id)_{/W} \oplus \Id \leq (E_1\oplus
E_2 \oplus \Id)_{/W} \leq X,
\]
as desired.

Once again, incomparability (in fact, $\I$-incomparability) of $E, R$ is
guaranteed by satisfaction of the incomparability $I$-requirements for $E_1$
and $E_2$ in the proof of Theorem \ref{DarkCeersJoinInI}.
\end{proof}

\begin{cory}\label{cor:have-join-I-oplus}
There are pairs of incomparable $\I$-degrees, both dark, exactly one light
and one dark, and both light which have a join given by $\oplus$ in the
$\equiv_\I$-degrees.
\end{cory}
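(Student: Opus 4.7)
The plan is to observe that all three cases of this corollary follow directly from the preceding three results together with Lemma \ref{JoinsPreservedByI}. The dark-dark case is literally the statement of Corollary \ref{cor:dark-with-supI}, which is already formulated in the $\equiv_\I$-degrees, so nothing new is needed there: the ceers $E_1, E_2$ supplied by Theorem \ref{DarkCeersJoinInI} are $\I$-incomparable and admit $E_1 \oplus E_2$ as their join modulo $\equiv_\I$.

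For the mixed dark-light case, I would take $E = E_1$ and $R = E_2 \oplus \Id$, the pair used in Corollary \ref{DarkLightJoin}. That corollary already shows $E \oplus R$ is a least upper bound in $\Ceers$, and Lemma \ref{JoinsPreservedByI} immediately lifts the conclusion to $\Ceers_\I$. For the light-light case, I would proceed identically with $E = E_1 \oplus \Id$ and $R = E_2 \oplus \Id$ as in Corollary \ref{SometimesJoinOfLight}, again using Lemma \ref{JoinsPreservedByI} to pass to the quotient.

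The only point that requires any verification is that these pairs are not merely incomparable but actually $\I$-incomparable. In each of the three cases this reduces to the two non-reductions $E_1 \nleq E_2 \oplus \Id$ and $E_2 \nleq E_1 \oplus \Id$, which are guaranteed by the $I^{1,2}_e$ and $I^{2,1}_e$ requirements in Theorem \ref{DarkCeersJoinInI}. For instance, an assumption $E_1 \leq_\I E_2 \oplus \Id$ unfolds to $E_1 \leq E_2 \oplus \Id \oplus \Id_n$ for some $n$, and since $\Id \oplus \Id_n \equiv \Id$ this contradicts the $I^{1,2}_e$ requirement directly. The symmetric check and the analogous checks for the light-light pair are identical because every finite $\Id_n$ summand gets absorbed by the $\Id$ already present on the right-hand side. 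Since no new construction is required, the proof is essentially this bookkeeping, and no step is substantially harder than any other.
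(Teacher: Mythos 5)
Your proposal is correct and follows exactly the paper's own route: the three cases are obtained from Corollary~\ref{cor:dark-with-supI}, Corollary~\ref{DarkLightJoin}, and Corollary~\ref{SometimesJoinOfLight} respectively, lifted to $\Ceers_\I$ via Lemma~\ref{JoinsPreservedByI}, with $\I$-incomparability coming from the $I$-requirements of Theorem~\ref{DarkCeersJoinInI}. Your extra remark on why the $\Id_n$ summands are absorbed by the $\Id$ already present is a correct and slightly more explicit version of the bookkeeping the paper leaves implicit.
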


\begin{proof}
The claims follow from Corollary~\ref{cor:dark-with-supI},
Corollary~\ref{LightDarkJoin} and Corollary~\ref{SometimesJoinOfLight},
respectively, together with Lemma~\ref{JoinsPreservedByI}.
$\I$-incomparability of the pair is guaranteed by satisfaction of the
incomparability $I$-requirements for $E_1$ and $E_2$ in the proof of Theorem
\ref{DarkCeersJoinInI}.
\end{proof}

Lastly, we give examples where $E_1$ and $E_2$ have a join, but not
equivalent to $E_1\oplus E_2$.

\begin{thm}\label{thm:sup-no-oplus}
There are two incomparable ceers $E_1, E_2$ (so that $E_1$ is dark and $E_2$
is light) with a least upper bound $R$ so that $R<E_1\oplus E_2$.
\end{thm}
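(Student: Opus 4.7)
I plan to construct $E_{1}$, $E_{2}$ and $R$ simultaneously via a finite-injury priority argument that extends the blueprint of Theorem~\ref{DarkCeersJoinInI} and Corollary~\ref{LightDarkJoin}. I shall maintain computable reductions $g_{1}: E_{1} \to R$ and $g_{2}: E_{2} \to R$, and enumerate infinitely many ``cross-collapses'' of the form $g_{1}(x)\sim_{R} g_{2}(y)$, so as to force $R < E_{1}\oplus E_{2}$, while arranging that every such cross-collapse is respected by every potential reduction pair, so that $R$ becomes the least upper bound.

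First I reserve an infinite computable set $C$ on which $E_{2}$ will never be collapsed, guaranteeing that $E_{2}$ is light via the strong effective transversal $C$. I then adopt the darkness requirements for $E_{1}$ and the incomparability requirements for the pair $E_{1}, E_{2}$ from the constructions of Theorem~\ref{DarkCeersJoinInI} and Corollary~\ref{LightDarkJoin}. In particular I include the $P_{i}$-style requirement that for every infinite c.e.\ $W \subseteq \omega^{2}$ there exist $(x, y), (x', y') \in W$ with $x \rel{E_{1}} x' \cancel{\Leftrightarrow} y \rel{E_{2}} y'$; this provides the structural dissonance between $E_{1}$ and $E_{2}$ needed below.

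Next I add two further families of requirements. The first diagonalizes against each $\phi_{e}$ being a reduction of $E_{1} \oplus E_{2}$ to $R$, via a standard follower strategy using fresh elements of the $E_{1}$- and $E_{2}$-columns together with our control over cross-collapses in $R$. The second handles the LUB property: for each triple $(e_{1}, e_{2}, z)$ I monitor whether $\phi_{e_{1}}$ and $\phi_{e_{2}}$ behave as reductions of $E_{1}, E_{2}$ into $R_{z}$, and if so I plan to define $h_{e_{1},e_{2},z}: R \to R_{z}$ on generators by $h(g_{1}(x))=\phi_{e_{1}}(x)$ and $h(g_{2}(y))=\phi_{e_{2}}(y)$. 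For $h$ to be a reduction, every enumerated cross-collapse $g_{1}(x)\sim_{R} g_{2}(y)$ must satisfy $\phi_{e_{1}}(x) \rel{R_{z}} \phi_{e_{2}}(y)$; hence I enumerate a new cross-collapse for a pair $(x, y)$ only after verifying this compatibility with all currently active higher-priority triples, relying on the $P_{i}$-style requirement to guarantee that enough fresh compatible pairs remain available.

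\textbf{Main obstacle.} The central difficulty is to show that the two positive demands---enumerating enough cross-collapses to force $R < E_{1}\oplus E_{2}$, and respecting compatibility with every active reduction triple so that $R$ is the LUB---can be simultaneously met. The $P_{i}$-style requirement is the key: it ensures that any infinite c.e.\ set of pairs whose identification would be forced by an upper bound has built-in dissonance with both $E_{1}$ and $E_{2}$, giving the priority construction enough slack to pick fresh compatible cross-collapses while still diagonalizing against every $\phi_{e}: E_{1}\oplus E_{2} \to R$. Verifying that this balance can be maintained throughout the construction, and in particular that the reductions $g_{1}, g_{2}$ never need to retroactively identify classes of $E_{1}$ or $E_{2}$ in order to stay consistent with the cross-collapses of $R$, is the technical crux of the argument.
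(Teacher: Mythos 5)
There is a genuine gap, and it is visible already at the level of your plan. You import the $P_i$-requirement of Theorem~\ref{DarkCeersJoinInI} verbatim: for \emph{every} infinite c.e.\ $W\subseteq\omega^2$ there are $(x,y),(x',y')\in W$ with $x \rel{E_1} x' \cancel{\Leftrightarrow} y \rel{E_2} y'$. But the set of cross-collapses you enumerate into $R$ is itself an infinite c.e.\ set of pairs, so this requirement applies to it and yields a dissonant pair; quotienting $E_1\oplus E_2$ by a dissonant set identifies two distinct $E_1$-classes or two distinct $E_2$-classes (if $x\rel{E_1}x'$ but $y\cancel{\rel{E_2}}y'$, then $g_2(y)\sim_R g_1(x)\sim_R g_1(x')\sim_R g_2(y')$), so one of $g_1,g_2$ ceases to be a reduction and $R$ is not even an upper bound. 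Thus ``infinitely many cross-collapses'' and ``dissonance for all infinite c.e.\ sets'' cannot coexist; and finitely many cross-collapses will not do either, since by the argument of Corollary~\ref{LightDarkJoin} (using the $\Id$-part of the light ceer and Lemmas~\ref{ComputableCollapses} and~\ref{obs:coproduct2}) a quotient by finitely many class-identifications remains equivalent to $E_1\oplus E_2$. The paper's resolution is structural rather than dynamic: it builds $X,Y,Z$ with $X,Z$ dark and $Y=Y_0\oplus\Id$, sets $E_1=X\oplus Z$, $E_2=Y\oplus Z$ and $R=X\oplus Y\oplus Z$, so that the infinitely many cross-identifications are exactly the gluing of the two copies of the shared component $Z$, and the dissonance requirement is weakened to apply only to $W_i\smallsetminus\{(x,y)\mid x,y\text{ odd}\}$ infinite, i.e.\ off the shared part. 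Without designating such a shared component and restricting the dissonance requirement accordingly, your construction cannot get off the ground.

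The two remaining demands are also not secured. For the least-upper-bound property you propose to verify, during the construction, the compatibility of each new cross-collapse with all ``currently active'' triples $(e_1,e_2,z)$; but whether $\phi_{e_1},\phi_{e_2}$ really form a reduction pair into $R_z$ is a $\Pi^0_2$ matter, a triple can look active for arbitrarily long before failing, and you give no strategy for managing the resulting infinitary constraints. The paper avoids this entirely: nothing is verified during the construction, and the l.u.b.\ property is proved afterwards by applying the weakened $P$-requirement to the c.e.\ set $C=\{(x,y)\mid f(x)\rel{A} g(y)\}$ induced by an arbitrary upper bound $A$, concluding $C$ is finite off the shared part, whence $(X\oplus Y\oplus Z)_{/D}\leq A$ for a finite $D$ and then $R\leq A$. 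Finally, $R<E_1\oplus E_2$ requires a concrete diagonalization target: the paper's $Q_j$-requirements defeat every $\phi_j:X\oplus Y\oplus Z\oplus Z\to X\oplus Y\oplus Z$, and the argument essentially uses that the doubled component $Z$ is dark, so $Z\oplus Z\nleq\Id$ and the images cannot all escape into the $\Id$-part of $Y$. Your ``standard follower strategy'' against $\phi_e:E_1\oplus E_2\to R$ does not address the fact that every collapse you perform in $E_1$ or $E_2$ propagates automatically into $R$ through $g_1,g_2$, which is precisely what makes diagonalizing against a quotient of the very relation you are building delicate.
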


\begin{proof}
We build ceers $X,Y,Z$ and will set $E_1=X\oplus Z$ and $E_2= Y\oplus Z$. Our
$R$ will be $X\oplus Y\oplus Z$. We will ensure that $R<X\oplus Z\oplus
Y\oplus Z$. Further, we construct $Y$ as $Y=Y_0\oplus \Id$, which gives $E_2$
light; to make $E_1$ dark, by Observation~\ref{obs:dark-closure} it is enough
to ensure that $X$ and $Z$ are both dark. Once again we build the two ceers
so that $E_1 \nleq E_2 \oplus \Id$ and $E_2 \nleq E_1 \oplus \Id$: this
stronger incomparability property will be used to have incomparability in
Corollaries~\ref{cor:light-sup-less-than-o1}~and~\ref{cor:light-sup-less-than-o2}
below.

We construct the ceers $X,Y,Z$ with the following requirements:
\begin{itemize}
\item[$F^X_k$:] The $X$-class of $k$ is finite.
\item[$F^Z_k$:] The $Z$-class of $k$ is finite.
\item[$P_i$:] If $W_i\subseteq\omega^2$ is a c.e. set so that
    $W_i\smallsetminus \{(x,y)\mid x,y\text{ odd}\}$ is infinite, then
    there are $(x,y),(x',y')\in W_i$ so that
    \[
    x \rel{X\oplus Z} x'\cancel{\Leftrightarrow} y \rel{Y\oplus Z} y'.
    \]

\item[$Q_j$:] $\phi_j$ is not a reduction of $X\oplus Y\oplus Z\oplus Z$ to
    $X\oplus Y\oplus Z$.
\item[$DX^j_m$:] If $W_m$ is an infinite c.e. set, then there are $x,y\in
    W_m$ so that $x \rel{X} y$.
 \item[$DZ^j_m$:] If $W_m$ is an infinite c.e. set, then there are $x,y\in
     W_m$ so that $x \rel{Z} y$.
 \item[$I^{1,2}_{e}$:] $\phi_{e}$ is not a reduction from $X\oplus Y$ to $X
     \oplus Z \oplus \Id$.
 \item[$I^{2,1}_{e}$:] $\phi_{e}$ is not a reduction from $Y\oplus Z$ to $X
     \oplus Z \oplus \Id$.
\end{itemize}

The stronger form taken by the incomparability $I$-requirements (yielding
that $X\oplus Y \nleq X \oplus Z \oplus \Id$ and $Y\oplus Z \nleq X \oplus Z
\oplus \Id$, and not just $X\oplus Y \nleq X \oplus Z$ and $Y\oplus Z \nleq X
\oplus Z$) is in view of later applications in
Corollaries~\ref{cor:light-sup-less-than-o0},
\ref{cor:light-sup-less-than-o1}, \ref{cor:light-sup-less-than-o2}.

\medskip We describe the strategies:

\medskip

\emph{Strategy for $F^X_k$ or $F^Z_k$:} The strategy for $F^X_k$ simply
restrains against any $X$-collapse to $k$. Higher priority requirements will
ignore this, but this strategy will succeed in guaranteeing that the
equivalence class of $k$ is finite. The strategy for $F^Z_k$ is similar.

\medskip

\emph{Strategy for $P_i$:} Given finite restraint (and $Y$'s restraint on the
odd numbers: no strategy can $Y$-collapse odd numbers, as we want $Y$ of the
form $Y=Y_0 \oplus \Id$), we wait until we see $(x,y),(x',y')$ enter $W_i$
with $x=x' \;\textrm{mod}_2$ (i.e. of the same parity), and $y= y'
\;\textrm{mod}_2$ and at least one pair among $(x,x'), (y,y')$ consists of
even numbers, and if $(x,x')$ consists of even numbers then still
$\frac{x}{2} \cancel{X} \frac{x'}{2}$, whereas if $y,y'$ both even then still
$\frac{y}{2} \cancel{X} \frac{y'}{2}$. When this happens, $P_i$ is
\emph{ready to act}. Without loss of generality, suppose $x,x'$ are even.
Then (\emph{$P_i$ action}) we $X$-collapse $\frac{x}{2}$ and $\frac{x'}{2}$
and $Y$-restrain $\frac{y}{2}, \frac{y'}{2}$ if $y,y'$ are even and
$Z$-restrain $\frac{y-1}{2}, \frac{y'-1}{2}$ if $y,y'$ are odd: all this
unless we have already collapsed on the other side, in which case we just
restrain in $X$. The case when $y,y'$ are both even is similar. Notice that
if $W_i\smallsetminus \{(x,y)\mid x,y\text{ odd}\}$ is infinite then the wait
is eventually over, since either $W_i$ contains infinitely many pairs with
first coordinate an even number, but then eventually our wait hits a pair as
desired with still $\frac{x}{2} \cancel{X} \frac{x'}{2}$ as $X$ has no
infinite equivalence class; or $W_i$ contains infinitely many pairs with
second coordinate an even number, but then eventually our wait hits a pair as
desired with still $\frac{y}{2} \cancel{X} \frac{y'}{2}$ as $X$ has no
infinite equivalence class.

\medskip

\emph{Strategy for $Q_j$:} Wait for $a,b$ from the different copies of $Z$ in
$Z\oplus Z$ (from $X\oplus Y \oplus Z \oplus Z$) so that:
\begin{enumerate}
  \item $\phi_j(a),\phi_j(b)$ lie both either in the $X$-part, or in the
      $Y$-part or in the $Z$-part.
  \item $\phi_j(a),\phi_j(b)$ are unrestrained in the $X$-part or in the
      $Z$-part, or in the $Y_0$-part of the $Y$-part.
\end{enumerate}
When this happens, $Q_j$ is \emph{ready to act}. Then (\emph{$Q_j$-action})
collapse $\phi_j(a),\phi_j(b)$ in the $X$- or $Y$- or $Z$-part accordingly,
making $\phi_j(a) \rel{X\oplus Y \oplus Z} \phi_j(b)$: this action satisfies
the requirement since $a, b$ remain $X\oplus Y \oplus Z \oplus
Z$-inequivalent.

\medskip

\emph{Strategy for $DX^j_m$ and $DZ^j_m$:} The strategies to meet these
requirements are exactly as the strategies for the darkness requirements in
the proof of Theorem~\ref{DarkCeersJoinInI}. For $DX^j_m$: If there is no
pair of distinct numbers of $W_{m}$ which are already $X$-collapsed, then
wait until $W_m$ enumerates an unrestrained pair of elements $x,y$. When this
happens, the requirement is \emph{ready to act}, and (\emph{$D^j_m$-action})
we $E_j$-collapse them. Same for $DZ^j_m$. If all these requirements are met,
then clearly $X, Z$ are dark as by the $F$-requirements they have infinitely
many equivalence classes.

\medskip

\emph{Strategies for $I^{1,2}_{e}$ and $I^{2,1}_{e}$:}  Consider first
$I^{1,2}_{e}$. We want to make sure in this case that $\phi_{e}$ is not a
reduction from $X\oplus Z$ to $Y \oplus Z \oplus \Id$. We appoint two new
witness $2x,2y$ and restrain the pair $x,y$ from being $X$-collapsed until
both $\phi_{e}(2x), \phi_{e}(2y)$ converge. When this happens if already
$\phi_{e}(2x) \rel{Y \oplus Z \oplus \Id} \phi_{e}(2y)$ then we keep the
restraint in $X$ for $x,y$; otherwise, if still $\phi_{e}(2x) \nrel{Y \oplus
Z \oplus \Id} \phi_{e}(2y)$ then $I^{1,2}_{e}$ is \emph{ready to act} and
(\emph{$I^{1,2}_{e}$-action}) it $X$-collapses $x,y$ and restrains in $Y
\oplus Z \oplus \Id$ the pair $\phi_{e}(2x), \phi_{e}(2y)$. The case of
$I^{2,1}_{e}$ is similar with $Y$ in place of $X$ except that we must now
choose the witnesses $2x,2y$ so that $x,y$ are in the $Y_{0}$-part of $Y$ and
thus we can $Y$-collapse them if needed.

\medskip

We now argue that if $\phi_j$ is a reduction of $X\oplus Y\oplus Z\oplus Z$
to $X\oplus Y\oplus Z$ and $Z \nleq \Id$ then the $Q_j$-strategy eventually
stops waiting and $Q_j$ acts. Notice that in this case since the image of
$\phi_{j}$ has to avoid only a finite number of equivalence classes
restrained by higher-priority requirements, and since $Z$ has finite
equivalence classes (and thus $Z \oplus Z$ has infinitely many distinct
equivalence classes), then for cofinitely many $a$ in the first $Z$-part of
the $Z\oplus Z$-part of $X\oplus Y \oplus Z \oplus Z$, and cofinitely many
$b$ in the second $Z$-part of the $Z\oplus Z$-part of $X\oplus Y \oplus Z
\oplus Z$, we have that $\phi_j(a),\phi_j(b)$ are unrestrained as in (2). So
by the pigeon-hole principle there are pairs $a,b$, where
$\phi_j(a),\phi_j(b)$ either both lie in the $X$-part, or in the $Y$-part or
in the $Z$-part. Thus $Q_j$-action is prevented only if for cofinitely many
such pairs we have that both $\phi_j(a),\phi_j(b)$ land in the $\Id$-part of
the $Y$-part of $X\oplus Y\oplus Z$. Using this we can thus easily see that
there is a reduction from $Z\oplus Z$ to $X\oplus Y\oplus Z$ which all lands
into the $\Id$-part of $Y$ except for finitely many classes, so by
Lemma~\ref{coproduct3}(1) we would have $Z \oplus Z \leq \Id\oplus \Id_n$ for
some $n$, hence  $Z\oplus Z \leq \Id$, contrary to the fact that $Z \nleq
\Id$.

\medskip

Once we satisfy these requirements, we claim that $R$ is a least upper
bound of $E_1$ and $E_2$. Certainly $X\oplus Z,Y\oplus Z\leq R$. Suppose
$X\oplus Z,Y\oplus Z\leq A$ via reductions $f,g$. Then let $C=\{(x,y)\mid
f(x) \rel{A} g(y)\}$. It follows from the $P$-requirements that
$E=C\smallsetminus \{(x,y)\mid x,y\text{ odd}\}$ is finite. The following is
very similar to what is done in Lemma~\ref{obs:coproduct}:
define a computable function $h$ by
\[
h(b)=
\left\{
  \begin{array}{ll}
    f(2a),  & \hbox{if $b=3a$;} \\
    g(2a), & \hbox{if $b=3a+1$;} \\
    g(2a+1),& \hbox{if $b=3a+2$.}
  \end{array}
\right.
\]
Then $h$ is a reduction from $(X\oplus Y\oplus Z)_{/D}$ to $A$, where $D$ is
a finite set of pairs which identifies the $X\oplus Y\oplus Z$-collapses
corresponding to the collapses between $X\oplus Z$- and $Y\oplus Z$-classes
described by $E$. Since the equivalence classes of $X$ and $Z$ are finite,
hence computable, by Lemma~\ref{ComputableCollapses} $(X\oplus Y\oplus
Z)_{/D} \oplus \Id_n \equiv X\oplus Y\oplus Z$ for some $n$. But since
$Y=Y_0\oplus \Id$ and only finitely many numbers in the $\Id$-part of $Y$ are
coordinates in $D$, we can find an infinite strong effective transversal $U$
for $(X\oplus Y\oplus Z)_{/D}$ coming from the still non-collapsed $\Id$-part
of the $Y$-part in the relation, so that Lemma~\ref{obs:coproduct2} can be
applied obtaining $(X\oplus Y\oplus Z)_{/D}\oplus \Id \leq (X\oplus Y\oplus
Z)_{/D}$ and thus
\[
X\oplus Y\oplus Z \equiv (X\oplus Y \oplus Z)_{/D} \oplus \Id_n  \leq
(X\oplus Y \oplus Z)_{/D} \oplus \Id \leq (X\oplus Y\oplus Z)_{/D} \leq A.
\]

\medskip
\emph{Outcomes of the strategies:} The $F$-strategies clearly produce winning
outcomes. $P$-strategies may have a waiting outcome if the corresponding c.e.
set $W$ is such that $W \smallsetminus \{(x,y)\mid x,y \textrm{ odd}\}$ is
finite, in which case the requirement is met without acting; or $W
\smallsetminus \{(x,y)\mid x,y \textrm{ odd}\}$ is infinite: then as
explained in the description of the strategy, the fact that the equivalence
classes of $X$ are finite implies that the wait eventually ends, and the
action that takes place gives a winning outcome. A $Q$-strategy has a waiting
outcome, and an outcome coming from acting: both outcomes fulfill the
requirement. The darkness $D$-requirements have a waiting outcome, and an
outcome coming from acting: both outcomes fulfill each requirement. The
$D$-requirements guarantee that $X$ and $Z$ are dark, as they both have
infinitely many equivalence classes, since each class is finite. The outcomes
of the incomparability $I$-requirements are as in the proof of
Theorem~\ref{DarkCeersJoinInI}, but having now $E_{1}=X\oplus Z$ and $E_{2}=Y
\oplus Z$.

\medskip

\emph{The construction:} We fix a computable priority ordering on the
requirements having order type $\omega$. The construction follows the usual
pattern of a finite injury argument. When a requirement acts, it initializes
all lower-priority  $P$- and $I$-requirements, which are the only
requirements whose action may be injured by higher priority requirements. A
requirement $R$ \emph{requires attention} at stage $s$ if $R$ is either
initialized, or it is now ready to act; or $R$ is a $Q$- or $D$-requirement
and it has never acted but it is now ready to act as described in the
corresponding strategy.

\medskip
At stage $0$, all $P$-
and $I$-requirements are initialized, and for every $F_k^X$, and $F_k^Z$ we
establish the once and for all restraint $(k,X)^F$, or $(k,Z)^F$ so that
lower priority requirements are not allowed to $X$-collapse or to
$Z$-collapse to $k$ respectively. We also define $E_{0}=\Id$ for $E\in
\{X,Y,Z\}$

\medskip

At stage $s+1$ we consider the least requirement $R$ which requires
attention, and we act correspondingly. If $E\in \{X,Y,Z\}$ then $E_{s+1}$ is
defined as the ceer generated by $E_s$ plus the pairs which have been
$E$-collapsed at $s+1$.

\medskip

\emph{The verification:} By induction on the priority ordering of the
requirements, one sees that every requirement $R$ is re-initialized only
finitely many times and after its last re-initialization acts at most once,
sets a finite restraint, and is eventually satisfied. Indeed suppose that
this is true of every $R'$ having higher priority than $R$. So starting from
some stage $t_0$, no higher priority $R'$ acts any more so if $R$ is a $P$-
or an $I$-requirement then $R$ after $t_0$ is not re-initialized any more,
and after its last re-initialization acts at most once, sets a finite
restraint, and is satisfied, as is clear by the above comments on the
outcomes of its strategy. On the other hand the $Q$- and $D$-requirements act
at most once, the $F$-requirements never act; as to satisfaction of these
requirements, it is clear by the above comments on the outcomes of their
strategies that the $F$-requirements are satisfied, and the $D$-requirements
are satisfied; but then the argument attached to the description of the
strategies for $Q$-requirements shows that each $Q$-requirement $Q_j$ is
satisfied as well, in that either $\phi_j$ is not a reduction, or our wait
for $a,b$ eventually ends as otherwise there would be a reduction from $Z$ to
$\Id$ which is excluded by the fact that $Z$ is dark.
\end{proof}

\begin{cory}\label{cory}\label{cor:light-sup-less-than-o0}
If $E_1, E_2$ are the ceers built in Theorem~\ref{thm:sup-no-oplus} then
they are $\I$-incomparable and any least upper bound (which by
Lemma~\ref{JoinsPreservedByI} is also a least upper bound in the $\I$-degrees)
is $<_\I E_1 \oplus E_2$.
\end{cory}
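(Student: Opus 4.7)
The plan is to split the argument into two claims: first that $E_1$ and $E_2$ are $\I$-incomparable, and second that $E_1\oplus E_2\not\leq_\I R$, where $R=X\oplus Y\oplus Z$ is the least upper bound produced by Theorem~\ref{thm:sup-no-oplus}. Note that $R\leq E_1\oplus E_2$ already gives $R\leq_\I E_1\oplus E_2$, and by Lemma~\ref{JoinsPreservedByI} the same $R$ is a least upper bound in $\Ceers_\I$; by Lemma~\ref{JoinsPreservedByI} any other least upper bound in $\Ceers_\I$ is $\equiv_\I R$, so it suffices to verify strictness and incomparability for this particular $R$.

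For $\I$-incomparability I will quote the strengthened incomparability ensured by the $I^{1,2}_e$ and $I^{2,1}_e$ requirements in the proof of Theorem~\ref{thm:sup-no-oplus}, which give $E_1\not\leq E_2\oplus\Id$ and $E_2\not\leq E_1\oplus\Id$. Since for every $n\in\omega$ one has $E_i\oplus\Id_n\leq E_i\oplus\Id$, these non-reductions upgrade immediately to $E_1\not\leq E_2\oplus\Id_n$ and $E_2\not\leq E_1\oplus\Id_n$ for every $n$, i.e.\ $E_1$ and $E_2$ are $\I$-incomparable.

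The main step is the strict inequality $R<_\I E_1\oplus E_2$. The key observation is that $R=X\oplus Y\oplus Z$ with $Y=Y_0\oplus\Id$, so the elements of $R$ that sit in the $\Id$-summand of the $Y$-part form a decidable infinite set $U$ of pairwise non-$R$-equivalent singletons, with $[U]_R=U$. Thus $U$ is an infinite strong effective transversal of $R$, and Lemma~\ref{obs:coproduct2} (applied to the identity reduction $R\leq R$) yields $R\oplus\Id\leq R$, so $R$ is non-self-full. By iteration (or equivalently by Observation~\ref{obs:Icontiguous}) $R\oplus\Id_n\equiv R$ for every $n$.

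The payoff is immediate: if $E_1\oplus E_2\leq_\I R$, then $E_1\oplus E_2\leq R\oplus\Id_n$ for some $n$, and the previous paragraph gives $E_1\oplus E_2\leq R$, i.e.\ $X\oplus Y\oplus Z\oplus Z\leq X\oplus Y\oplus Z$. This is exactly what the $Q_j$ requirements in the construction of Theorem~\ref{thm:sup-no-oplus} rule out, so we reach a contradiction. The only subtle point to check is that the $Q$-strategy as written does refute \emph{every} partial computable $\phi_j$, which is exactly what the verification there establishes. Combined with $R\leq_\I E_1\oplus E_2$, this yields $R<_\I E_1\oplus E_2$ and completes the argument.
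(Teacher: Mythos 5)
Your proposal is correct and follows essentially the same route as the paper: $\I$-incomparability from the strengthened $I$-requirements, and strictness by using the $\Id$-part of $Y$ as an infinite strong effective transversal of $X\oplus Y\oplus Z$ so that Lemma~\ref{obs:coproduct2} absorbs the $\Id_n$, reducing the claim to the non-reduction $X\oplus Y\oplus Z\oplus Z\not\leq X\oplus Y\oplus Z$ guaranteed by the $Q$-requirements. The only cosmetic difference is that you phrase the absorption as non-self-fullness of $R$ (so $R\oplus\Id_n\equiv R$), whereas the paper applies the transversal lemma directly to the hypothesized reduction into $R\oplus\Id_n$; the two are interchangeable.
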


\begin{proof}
$\I$-incomparability follows from satisfaction of the incomparability
requirements in the proof of Theorem~\ref{thm:sup-no-oplus}. Suppose that $X
\oplus Y \oplus Z \oplus Z\le X \oplus Y \oplus Z \oplus \Id_n$ for some $n$.
Since there is an infinite strong effective transversal
$U$ for $X \oplus Y \oplus Z$ (given
by the $\Id$-part of $Y$), by
Lemma~\ref{obs:coproduct2} we have that $X \oplus Y \oplus Z \oplus \Id \le X
\oplus Y \oplus Z$ giving that $X \oplus Y \oplus Z \oplus Z\le X \oplus Y
\oplus Z$, contradicting the conclusion of Theorem~\ref{thm:sup-no-oplus}.
\end{proof}

Of the two ceers built in the proof of Theorem~\ref{thm:sup-no-oplus} one of
them is light as $E_2=Y \oplus Z$, and $Y=Y_0 \oplus \Id$, and it must be so
by Theorem~\ref{NoJoinOfDark}. An interesting question is whether we can make
both ceers light. The following observation shows that this is so, and
completes the picture of the possible cases when a join can be smaller than
the uniform join.

\begin{cory}\label{cor:light-sup-less-than-o1}
There are two incomparable light ceers $R_1, R_2$ with a least upper bound
$R$ so that $R<R_1\oplus R_2$.
\end{cory}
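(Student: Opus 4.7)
The plan is to light up the one dark component from Theorem~\ref{thm:sup-no-oplus} while leaving the whole configuration essentially unchanged. Let $E_1=X\oplus Z$ and $E_2=Y\oplus Z$ be the ceers constructed there (where $Y=Y_0\oplus \Id$), and set
\[
R_1 = E_1\oplus \Id = X\oplus Z\oplus \Id, \qquad R_2 = E_2 = Y\oplus Z.
\]
Both are light: $R_1$ contains $\Id$ as a summand directly, and $R_2$ inherits the $\Id$-summand from $Y$. Incomparability of $R_1,R_2$ follows at once from the strong form of the $I$-requirements enforced in Theorem~\ref{thm:sup-no-oplus} (namely $E_1\nleq E_2\oplus \Id$ and $E_2\nleq E_1\oplus \Id$): for instance, $R_1\leq R_2$ would give $E_1\leq R_1\leq R_2= E_2\leq E_2\oplus \Id$, a contradiction, and symmetrically for the other direction.

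For the least upper bound I propose $R = X\oplus Y\oplus Z$, the same witness that worked in Theorem~\ref{thm:sup-no-oplus}. The key observation is that $R$ has an infinite strong effective transversal, namely the $\Id$-part of $Y$ embedded inside the $Y$-component; by Lemma~\ref{obs:coproduct2} this gives $R\oplus \Id\leq R$, hence $R\oplus \Id\equiv R$. Therefore $R_1= X\oplus Z\oplus \Id\leq R\oplus \Id\equiv R$ and $R_2\leq R$ trivially, so $R$ is an upper bound. If $A$ is any upper bound of $R_1,R_2$, then in particular $E_1\leq R_1\leq A$ and $E_2=R_2\leq A$, so Theorem~\ref{thm:sup-no-oplus} directly gives $R\leq A$. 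Hence $R$ is the least upper bound of $R_1,R_2$.

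It remains to establish the strict inequality $R<R_1\oplus R_2$. We compute
\[
R_1\oplus R_2\equiv X\oplus Z\oplus \Id\oplus Y\oplus Z\equiv X\oplus Y\oplus Z\oplus Z\oplus \Id.
\]
Applying Lemma~\ref{obs:coproduct2} once more, now with the $\Id$-part of $Y$ viewed as a strong effective transversal inside $X\oplus Y\oplus Z\oplus Z$, we conclude $X\oplus Y\oplus Z\oplus Z\oplus \Id\equiv X\oplus Y\oplus Z\oplus Z$. Thus $R_1\oplus R_2\equiv X\oplus Y\oplus Z\oplus Z$, and the strict inequality $R=X\oplus Y\oplus Z< X\oplus Y\oplus Z\oplus Z\equiv R_1\oplus R_2$ is exactly the conclusion of Theorem~\ref{thm:sup-no-oplus} (and is reinforced to $<_\I$ by Corollary~\ref{cor:light-sup-less-than-o0}).

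No genuine obstacle arises: the construction of Theorem~\ref{thm:sup-no-oplus} was already strong enough (both in its incomparability requirements and in the strict inequality it produces) to absorb the extra copy of $\Id$ needed to light up $E_1$, because $\Id$ is absorbed harmlessly by any ceer that already contains an infinite strong effective transversal. The only point requiring care is to make sure the strict inequality $R<R_1\oplus R_2$ survives the absorption, and this is precisely what the ``$\oplus\Id$'' strengthenings in the $I$-requirements of Theorem~\ref{thm:sup-no-oplus} were designed for.
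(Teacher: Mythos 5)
Your proposal is correct and follows essentially the same route as the paper: the same witnesses $R_1=E_1\oplus\Id$, $R_2=E_2$, the same least upper bound $R=X\oplus Y\oplus Z$ justified by absorbing $\Id$ via the strong effective transversal coming from the $\Id$-part of $Y$ (Lemma~\ref{obs:coproduct2}), and strictness reduced to the conclusion of Theorem~\ref{thm:sup-no-oplus}. The only cosmetic difference is that you explicitly compute $R_1\oplus R_2\equiv E_1\oplus E_2$, whereas the paper argues $R<E_1\oplus E_2\leq R_1\oplus R_2$; these are interchangeable.
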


\begin{proof}
Suppose we have $E_1=X\oplus Z$ and $E_2=Y_0\oplus \Id\oplus Z$ as in
Theorem~\ref{thm:sup-no-oplus} where $E_1$ is dark. Consider $R_1= E_1\oplus
\Id$ and $R_2=E_2$. It is easy to see that $E_1\oplus \Id \leq X\oplus
Y\oplus Z$ as $Y$ is of the form $Y=Y_0\oplus \Id$, thus there is an infinite
strong effective transversal for the reduction $E_1 \leq X\oplus Y\oplus Z$
(coming from the $\Id$-part), and thus Lemma~\ref{obs:coproduct2} can be
applied (or, alternatively, $E_1\oplus \Id$ is a least upper bound of $E_1$
and $\Id$ by Observation~\ref{joinIdDark}, so since both reduce to $X\oplus Y
\oplus Z$, we have that $X\oplus Y \oplus Z$ must bound any least upper
bound). Similarly, $E_2 \leq X\oplus Y \oplus Z$. On the other hand, $X\oplus
Y \oplus Z$ must also be a least upper bound because for any ceer $R$, if $R$
is above $E_1\oplus \Id$ and above $E_2$, then $R$ is above $E_1$ and $E_2$,
thus above any least upper bound of those, hence above $X\oplus Y \oplus Z$
which is such a least upper bound by the proof of
Theorem~\ref{thm:sup-no-oplus}. Thus we have two ceers, $E_1\oplus \Id$ and
$E_2$, with a least upper bound which is strictly below $E_1\oplus \Id \oplus
E_2$, because it is strictly below $E_1\oplus E_2$, which is itself $\leq
E_1\oplus \Id \oplus E_2$.

Incomparability of $R_{1}$ and $R_{2}$ follows by the fact that $E_{1},
E_{2}$ satisfy the incomparability requirements in the proof of
Theorem~\ref{thm:sup-no-oplus}.
\end{proof}

\begin{cory}\label{cor:light-sup-less-than-o2}
There are two $\I$-incomparable light ceers $R_1, R_2$ with a least upper
bound $R$ so that $R$ is strictly less than $R_1\oplus R_2$ in the
$\I$-degrees.
\end{cory}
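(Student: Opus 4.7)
My plan is to reuse the ceers from Corollary~\ref{cor:light-sup-less-than-o1} with only a small additional verification. Specifically, let $E_1 = X \oplus Z$ and $E_2 = Y \oplus Z$ (with $Y = Y_0 \oplus \Id$) be the ceers built in the proof of Theorem~\ref{thm:sup-no-oplus}, and set $R_1 = E_1 \oplus \Id$, $R_2 = E_2$, and $R = X \oplus Y \oplus Z$. Both $R_1$ and $R_2$ are light by construction (each contains a visible $\Id$-summand). By Corollary~\ref{cor:light-sup-less-than-o1}, $R$ is a least upper bound of $R_1$ and $R_2$ in $\Ceers$; then Lemma~\ref{JoinsPreservedByI} lifts this to $\Ceers_\I$, giving that $R$ is also a least upper bound of $R_1, R_2$ in the $\I$-degrees.

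For $\I$-incomparability of $R_1, R_2$, I would use that the incomparability requirements of Theorem~\ref{thm:sup-no-oplus} already ensure $E_1 \nleq E_2 \oplus \Id$ and $E_2 \nleq E_1 \oplus \Id$, and that $\Id_n \leq \Id$ for every $n$. Thus if $R_2 \leq R_1 \oplus \Id_n$ for some $n$, we would obtain $E_2 \leq E_1 \oplus \Id \oplus \Id_n \equiv E_1 \oplus \Id$, a contradiction; symmetrically, if $R_1 \leq R_2 \oplus \Id_n$, then $E_1 \leq R_1 \leq E_2 \oplus \Id_n \leq E_2 \oplus \Id$, again a contradiction. Hence $R_1 \mid_\I R_2$.

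The substantive step is verifying the strict inequality $R <_\I R_1 \oplus R_2$. The inequality $R \leq_\I R_1 \oplus R_2$ is immediate from $R \leq R_1 \oplus R_2$. For strictness, suppose toward a contradiction that $R_1 \oplus R_2 \leq_\I R$, i.e.\ $R_1 \oplus R_2 \leq R \oplus \Id_n$ for some $n$. The key observation is that $R = X \oplus Y \oplus Z$ contains the $\Id$-part of $Y$ as an infinite strong effective transversal, so Lemma~\ref{obs:coproduct2} gives $R \oplus \Id \leq R$, hence $R \oplus \Id_n \equiv R$. Therefore $R_1 \oplus R_2 \leq R$, directly contradicting the strict separation $R < R_1 \oplus R_2$ established in Corollary~\ref{cor:light-sup-less-than-o1}.

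I do not expect any genuine obstacle; the proof is essentially a transfer of Corollary~\ref{cor:light-sup-less-than-o1} through Lemma~\ref{JoinsPreservedByI}, with Lemma~\ref{obs:coproduct2} absorbing the extra $\Id_n$ that passing to $\I$-degrees allows. The only point demanding care is making sure that the \emph{stronger} form of incomparability baked into Theorem~\ref{thm:sup-no-oplus} (namely $E_i \nleq E_j \oplus \Id$) is genuinely used, since ordinary $\leq$-incomparability of $E_1, E_2$ would not suffice to rule out $R_1 \leq_\I R_2$ and $R_2 \leq_\I R_1$.
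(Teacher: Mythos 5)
Your proposal is correct and follows essentially the same route as the paper: both arguments take $R_1, R_2, R$ from Corollary~\ref{cor:light-sup-less-than-o1}, use the $\Id$-part of $Y$ as an infinite strong effective transversal so that Lemma~\ref{obs:coproduct2} gives $R\oplus\Id\leq R$ (absorbing the extra $\Id_n$), and then contradict $R_1\oplus R_2\nleq R$ from the previous corollary, with $\I$-incomparability coming from the strengthened $E_i\nleq E_j\oplus\Id$ requirements of Theorem~\ref{thm:sup-no-oplus}. Your write-up merely makes explicit the incomparability computation and the appeal to Lemma~\ref{JoinsPreservedByI}, which the paper leaves implicit.
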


\begin{proof}
Consider the ceers $R_1$ and $R_2$ given by the previous corollary. It
suffices to show that there is no $n \in \omega$ such that $X \oplus Z \oplus
Y_0 \oplus \Id \oplus Z \leq X \oplus Y \oplus Z \oplus \Id_n$. Since there
is an infinite strong effective transversal of $X \oplus Y \oplus Z$ (coming
from the $\Id$-part of the $Y$-part in the equivalence relation), by
Lemma~\ref{obs:coproduct2}  we have that $X \oplus Y \oplus Z \oplus \Id \leq
X \oplus Y \oplus Z$. But then, assuming such an $n$ as before, this implies
$X \oplus Z \oplus Y_0 \oplus \Id \oplus Z \leq X \oplus Y \oplus Z$,
contradicting the conclusion of the previous corollary.

Once more incomparability of $R_{1}$ and $R_{2}$ follows by the fact that
$E_{1}, E_{2}$ satisfy the incomparability requirements in the proof of
Theorem~\ref{thm:sup-no-oplus}.
\end{proof}

\begin{thm}\label{thm:dark-I-less-than-oplus}
There are two $\I$-incomparable dark ceers $E_{1},
E_{2}$ which have a  least upper bound $R$ in the $\I$-degrees so that
$R<_{\I} E_{1} \oplus E_{2}$.
\end{thm}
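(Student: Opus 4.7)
The plan is to adapt Theorem~\ref{thm:sup-no-oplus} so that both constructed ceers are dark, rather than having one of the form $Y_0\oplus \Id$: I build three dark ceers $X, Y, Z$ with only finite classes and set $E_1=X\oplus Z$, $E_2=Y\oplus Z$, $R=X\oplus Y\oplus Z$. Then $E_1, E_2$ are automatically dark by Observation~\ref{obs:dark-closure}, $R$ is trivially an upper bound of $E_1, E_2$, and the work is to make $R$ an $\I$-least upper bound while ensuring $R<_\I E_1\oplus E_2$.

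The requirements consist of the finiteness requirements $F^E_k$ and darkness requirements $D^E_m$ for each $E\in\{X,Y,Z\}$; the incomparability requirements $I^{1,2}_e, I^{2,1}_e$ of the form ``$\phi_e$ is not a reduction from $E_i$ to $E_{3-i}\oplus \Id$'' for $i=1,2$ (which, combined with darkness, yield $\I$-incomparability of $E_1, E_2$ as in Theorem~\ref{DarkCeersJoinInI} via the argument in Theorem~\ref{thm:characterization-of-equiI}); the $P_i$-requirements of Theorem~\ref{thm:sup-no-oplus} (same statement); the diagonalization requirements $Q_{j,n}$ asserting that $\phi_j$ is not a reduction from $X\oplus Y\oplus Z\oplus Z$ to $X\oplus Y\oplus Z\oplus \Id_n$, one for each $j, n\in\omega$; and an auxiliary family $J_e$ stating that $\phi_e$ is not a reduction from $Z$ to $X\oplus Y\oplus \Id$, whose sole purpose is to make the $Q_{j,n}$-strategy succeed.

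The $F, D, P, I, J$-strategies are all of the standard finitary restraint or direct diagonalization kind, following the patterns already in Theorems~\ref{DarkCeersJoinInI} and~\ref{thm:sup-no-oplus}. The $Q_{j,n}$-strategy merely waits for an unrestrained pair $(a,b)$ with $a, b$ from the two distinct $Z$-copies of the source, such that $\phi_j(a), \phi_j(b)$ have converged and lie in the same one of the $X$-, $Y$-, or $Z$-parts of the target; when such a pair is found, it collapses the images in the corresponding underlying ceer, which makes $\phi_j$ fail as a reduction since $a, b$ remain inequivalent in the source. The main obstacle is showing that the wait must terminate whenever $\phi_j$ really is a reduction. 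This follows from a pigeonhole analysis: if no matching pair ever exists, then letting $C_i\subseteq\{X,Y,Z\}$ be the set of parts hit by $\phi_j$ restricted to copy $i$ of $Z$, we have $C_1\cap C_2=\emptyset$ and neither $C_i$ is empty (since $Z$ has infinitely many classes while $\Id_n$ has only $n$). Hence at least one copy misses the $Z$-part entirely, and the restriction of $\phi_j$ to that copy, post-composed with the natural embedding of $\Id_n$ into $\Id$, yields a reduction of $Z$ to $X\oplus Y\oplus \Id$, contradicting the appropriate $J_e$-requirement (for an index $e$ obtained via the s-m-n theorem). The $\omega$-priority on the requirements is chosen so that each such $J_e$ is above the corresponding $Q_{j,n}$.

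With these strategies in place, the fact that $R$ is the $\I$-least upper bound of $E_1, E_2$ is proved exactly as in Theorem~\ref{thm:sup-no-oplus}: given reductions $f, g$ witnessing $E_1, E_2\leq A\oplus \Id_n$, the function $h(3a)=f(2a)$, $h(3a+1)=g(2a)$, $h(3a+2)=g(2a+1)$ gives a reduction $(X\oplus Y\oplus Z)_{/D}\leq A\oplus \Id_n$, with $D$ finite by $P_i$ applied to $W_i=\{(x,y)\mid f(x) \rel{A\oplus \Id_n} g(y)\}$. The essential simplification over Theorem~\ref{thm:sup-no-oplus} is that no infinite strong effective transversal is required: since all classes of $X, Y, Z$ are finite hence computable, Lemma~\ref{ComputableCollapses} directly yields $(X\oplus Y\oplus Z)_{/D}\oplus \Id_k\equiv X\oplus Y\oplus Z$ for some $k$, so $R\equiv_\I (X\oplus Y\oplus Z)_{/D}\leq A\oplus \Id_n$, and therefore $R\leq_\I A$. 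The strictness $R<_\I E_1\oplus E_2$ is immediate from the $Q_{j,n}$-requirements, and $\I$-incomparability of $E_1, E_2$ follows from the $I$-requirements.
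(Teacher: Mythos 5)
Your construction is correct and shares the paper's architecture exactly: the same decomposition $E_1=X\oplus Z$, $E_2=Y\oplus Z$, $R=X\oplus Y\oplus Z$ with all three ceers dark with finite classes, the same $F$-, $D$-, $P$- and $I$-requirements, and the same least-upper-bound verification via finiteness of the collapse set and Lemma~\ref{ComputableCollapses}. Where you genuinely diverge is in the verification that $R<_\I E_1\oplus E_2$. The paper uses a single family $Q_j$ diagonalizing against reductions of $X\oplus Y\oplus Z\oplus Z$ into $X\oplus Y\oplus Z\oplus\Id$, and argues that a permanently waiting $Q_j$ would force cofinitely many image pairs into the $\Id$-part, yielding $Z\oplus Z\leq\Id$ and contradicting darkness of $Z$. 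You instead run a pigeonhole on which of the $X$-, $Y$-, $Z$-parts each copy of $Z$ hits, conclude that one copy must avoid the $Z$-part, and derive $Z\leq X\oplus Y\oplus\Id$, which you kill with the auxiliary requirements $J_e$. This costs an extra (easily satisfied, finitary) family of requirements, but it buys a cleaner case analysis: your argument explicitly disposes of the configuration in which the two copies of $Z$ land in disjoint nonempty subsets of $\{X,Y,Z\}$ (say, copy one into the $X$-part and copy two into the $Y$-part), a case in which neither copy is forced into the $\Id$-part and which the paper's pigeonhole passes over without comment. Two small points of care: your sets $C_i$ should be defined as the parts receiving \emph{infinitely many} classes of copy $i$, so that the finitely many restrained classes and the finitely many stray images elsewhere can be discarded --- the conclusion then reads ``copy $i$ hits the $Z$-part only finitely often,'' which still gives $Z\leq X\oplus Y\oplus\Id$ via Lemma~\ref{coproduct3}(1); and the priority placement of the $J_e$'s relative to the $Q_{j,n}$'s is immaterial, since the contradiction is drawn from the eventual satisfaction of \emph{all} $J_e$, not from any interaction during the construction.
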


\begin{proof}
(Sketch.)  We sketch the proof which is a combination of ideas in the proofs
of Theorems~\ref{thm:sup-no-oplus}~and~\ref{DarkCeersJoinInI}. We build
$X,Y,Z$ to be dark ceers so that $E_{1}=X\oplus Z$ and $E_{2}=Y \oplus Z$
have the desired properties. Notice that darkness of $E_{1}, E_{2}$ follows
from darkness of $X,Y,Z$. Suitable requirements are:

\begin{itemize}
\item[$F^X_k$:] The $X$-class of $k$ is finite.
\item[$F^Y_k$:] The $Y$-class of $k$ is finite.
\item[$F^Z_k$:] The $Z$-class of $k$ is finite.
\item[$P_i$:] If $W_i\subseteq\omega^2$ is a c.e. set so that
    $W_i\smallsetminus \{(x,y)\mid x,y\text{ odd}\}$ is infinite, then
    there are $(x,y),(x',y')\in W_i$ so that
    \[
    x \rel{X\oplus Z} x'\cancel{\Leftrightarrow} y \rel{Y\oplus Z} y'.
    \]
\item[$Q_j$:] $\phi_j$ is not a reduction of $X\oplus Y\oplus Z\oplus Z$ to
    $X\oplus Y\oplus Z \oplus \Id$.
\item[$DX^j_m$:] If $W_m$ is an infinite c.e. set, then there are $x,y\in
    W_m$ so that $x \rel{X} y$.
 \item[$DY^j_m$:] If $W_m$ is an infinite c.e. set, then there are $x,y\in
    W_m$ so that $x \rel{Y} y$.
    \item[$DZ^j_m$:] If $W_m$ is an infinite c.e. set, then there are
        $x,y\in W_m$ so that $x \rel{Z} y$.
 \item[$I^{1,2}_{e}$:] $\phi_{e}$ is not a reduction from $X\oplus Y$ to $X
     \oplus Z \oplus \Id$.
 \item[$I^{2,1}_{e}$:] $\phi_{e}$ is not a reduction from $Y\oplus Z$ to $X
     \oplus Z \oplus \Id$.
\end{itemize}

\smallskip

As will be clear from the proof there would be in fact no need to make $Z$
with finite equivalence classes: what we need for darkness in addition to
satisfaction of the $D$-requirements, is that $Z$ has infinitely many
equivalence classes, and this would be  achieved  by ad-hoc requirements
guaranteeing that there are infinitely many equivalence classes, or by
requirements guaranteeing that there is no reduction of $Z$ to $\Id$. We have
chosen the $F^{Z}$-requirements just to follow an already known and familiar
path.

\smallskip

One of the major differences with Theorem~\ref{thm:sup-no-oplus} is that we
are requesting $Y$ to be dark, and this has the advantage that there will be
no $\Id$-part of $Y$ (and thus no $Y_{0}$-part either) to worry about, whose
only purpose was to make $Y$ light. This simplifies the \emph{strategy for
$I^{2,1}$} as now we do not have the restriction of having to choose new
witness in the $Y_{0}$-part of $Y$.

\medskip

Moreover a $Q_j$ requirement is also now slightly different, as we want that
$\phi_{j}$ not to be a reduction of $X\oplus Y \oplus Z \oplus Z$ to $X\oplus
Y \oplus Z \oplus \Id$, instead of just to  $X\oplus Y \oplus Z$. The new
corresponding strategy is now:

\emph{Strategy for $Q_j$:} Wait for $a,b$ from the different copies of $Z$ in
$Z\oplus Z$ (from $X\oplus Y \oplus Z \oplus Z$) so that:
\begin{enumerate}
  \item $\phi_j(a),\phi_j(b)$ lie both either in the $X$-part, or in the
      $Y$-part or in the $Z$-part.
  \item $\phi_j(a),\phi_j(b)$ are unrestrained in the $X$-part or in the
      $Y$-part, or in the $Z$-part.
\end{enumerate}
When this happens, $Q_j$ is \emph{ready to act}. Then (\emph{$Q_j$-action})
it collapses $\phi_j(a)$, $\phi_j(b)$ in the $X$- or $Y$- or $Z$-part
accordingly, making $\phi_j(a) \rel{X\oplus Y \oplus Z\oplus \Id} \phi_j(b)$.
As in Theorem~\ref{thm:sup-no-oplus}  this action satisfies the requirement
since $a, b$ remain $X\oplus Y \oplus Z \oplus Z$-inequivalent.

\medskip

\emph{The construction:} The construction is as in the proof of
Theorem~\ref{thm:sup-no-oplus}, modulo the obvious modifications deriving
from the above described changes in the requirements and in the strategies.

\medskip

\emph{The verification:} The verification is also as in the proof of
Theorem~\ref{thm:sup-no-oplus}, modulo the obvious modifications, and using
the fact that all strategies are finitary and all together may be combined as
a straightforward finite injury construction. From this, we can see that all
requirements which are not $Q_j$-requirements are satisfied. We now argue
that each $Q_j$-requirement is satisfied as well. If $\phi_j$ is a reduction
of $X\oplus Y\oplus Z\oplus Z$ to $X\oplus Y\oplus Z \oplus \Id$ and $Z \nleq
\Id$ then the $Q_j$-strategy eventually stops waiting and $Q_j$ acts. Indeed,
arguing as in the proof of Theorem~\ref{thm:sup-no-oplus} $Q_j$-action is
prevented to act only if for cofinitely many pairs $a,b$ we have that both
$\phi_j(a),\phi_j(b)$ land in the $\Id$-part of $X\oplus Y\oplus Z \oplus
\Id$. But this would give a reduction of $Z\oplus Z$ to $\Id$, contrary to
the fact that $Z \nleq \Id$ because $Z$ is dark (from $F_k^Z$ and
$DZ^j_m$-requirements, which we already know are satisfied). Satisfaction of
the $Q$-requirements shows that
\[
(X\oplus Z) \oplus (Y\oplus Z) \equiv X\oplus Y \oplus Z\oplus
Z\nleq_{\I} X\oplus Y \oplus Z.
\]

$\I$-incomparability of $E_{1}$ and $E_{2}$ is provided by satisfaction of
the $I$-requirements.

It remains to show that $X\oplus Y \oplus Z$ is a least upper bound of
$X\oplus Z$ and $Y\oplus Z$. To see this, we argue as in the proof of
Theorem~\ref{DarkCeersJoinInI}. So assume that $U$ is a ceer such that
$X\oplus Z \leq U\oplus \Id_{n}$ and $Y\oplus Z\leq U\oplus \Id_n$ for some
$n$, via reductions $f,g$ respectively, and let $V=U\oplus \Id_{n}$. Then let
$C=\{(x,y)\mid f(x) \rel{A} g(y)\}$ and $E=C\smallsetminus \{(x,y)\mid
x,y\text{ odd}\}$. It follows that $E$ is finite. Thus, as in the proof of
Theorem~\ref{thm:sup-no-oplus} there is a finite set $D$ such that $(X\oplus
Y\oplus Z)_{/D} \leq V$. But  since $X,Y$ have finite and hence computable
equivalence classes, by Lemma~\ref{ComputableCollapses} we have that
$(X\oplus Y\oplus Z)_{/D} \oplus \Id_k \equiv X\oplus Y\oplus Z$ for some
$k$, and thus $X\oplus Y\oplus Z \leq_{\I} V$. It follows that $X\oplus
Y\oplus Z$ is $\leq_{\I}$ all $\I$-upper bounds of $X\oplus Z$ and $Y\oplus
Z$, and thus is a least $\I$-upper bound.
\end{proof}

\subsection{Summary tables}
Tables~\ref{table:1},~\ref{table:2},and~\ref{table:3} summarize the various
cases when $X \lor Y$ exists, and $X$ is join-irreducible, in the structures
$\Ceers$ and $\Ceers_\I$, as $X, Y$ vary in $\Dark$ and $\Light$. The
differences between $\Ceers$ and $\Ceers_\I$ are highlighted in boldface in
the columns relative to $\Ceers_\I$.

\begin{table}[H]
\begin{minipage}{6cm}
\begin{center}
$\Ceers$
\end{center}
\begin{tabular}{c|c|c}
$R_{X}$ & $R_{Y}$  & $R_{X }\lor R_{Y}$? \\\hline
light & light & NO \\ \hline
dark&dark & NO \\ \hline
&&--Sometimes NO\\
light & dark & --Sometimes Yes\\
\hline
\end{tabular}
\end{minipage}
\begin{minipage}{6cm}
\begin{center}
$\Ceers_\I$
\end{center}
\begin{tabular}{c|c|c}
$R_{X}$ & $R_{Y}$  & $R_{X }\lor R_{Y}$? \\\hline
light & light & NO\\ \hline
dark&dark & NO \\ \hline
&&--Sometimes NO\\
light & dark & --Sometimes Yes\\
\hline
\end{tabular}
\end{minipage}\caption{The problem of the existence of joins in $\Ceers$
and $\Ceers_\I$ for incomparable ceers of the form $R_X$.
}\label{table:1}

\smallskip

\begin{minipage}{6cm}
\begin{center}
$\Ceers$
\end{center}
\begin{tabular}{c|c|c}
$X$ & $Y$  & $X \lor Y$? \\\hline
light &light &--Sometimes NO\\
& &--Sometimes YES\\
&&with $\sup$ $X\oplus Y$\\
& &--Sometimes YES\\
&&with $\sup$ $<X\oplus Y$\\
\\\hline
dark & dark & NO\\
\hline
light &dark &--Sometimes NO\\
& &--Sometimes YES\\
&&with $\sup$ $X\oplus Y$\\
& &--Sometimes YES\\
&&with $\sup$ $<X\oplus Y$\\
\hline
\end{tabular}
\end{minipage}
\begin{minipage}{6cm}
\begin{center}
$\Ceers_\I$
\end{center}
\begin{tabular}{c|c|c}
$X$ & $Y$  & $X \lor Y$? \\\hline
light &light &--Sometimes NO\\
& &--Sometimes YES\\
&&with $\sup$ $X\oplus Y$\\
& &--Sometimes YES\\
& &with $\sup$ $<_{\I}X\oplus Y$\\
\hline
dark & dark & --Sometimes NO\\
& &--Sometimes \textbf{YES}\\
&&with $\sup$ $X\oplus Y$\\
& &--Sometimes \textbf{YES}\\
& &with $\sup$ $<_{\I}X\oplus Y$\\
\hline
light &dark &--Sometimes NO\\
& &--Sometimes YES\\
&&with $\sup$ $X\oplus Y$\\
& &--Sometimes YES\\
&&with $\sup$ $<_{\I} X\oplus Y$\\
\hline
\end{tabular}
\end{minipage}\caption{The problem of the existence of joins in $\Ceers$
and $\Ceers_\I$ for incomparable general ceers.}\label{table:2}

\smallskip

\begin{minipage}{6cm}
\begin{center}
Join-irreducible in $\Ceers$
\end{center}
\begin{tabular}{c|c|c}
 &   & join-irreducible? \\\hline
$R_X$ & dark &YES\\
\hline
$R_X$ & light & - Sometimes YES\\
&& - Sometimes NO\\
\hline
$X$ &dark &YES\\
\hline
$X$ &light &--Sometimes YES\\
& & --Sometimes NO\\
\end{tabular}
\end{minipage}
\begin{minipage}{6cm}
\begin{center}
Join-irreducible in $\Ceers_I$
\end{center}
\begin{tabular}{c|c|c}
 &   & join-irreducible? \\\hline
$R_X$ & dark &YES\\
\hline
$R_X$ & light & - Sometimes YES\\
&& - Sometimes NO\\
\hline
$X$ &dark  &--Sometimes YES\\
&& --Sometimes \textbf{NO}\\
\hline
$X$ &light &--Sometimes YES\\
& & --Sometimes NO\\
\end{tabular}
\end{minipage}\caption{Join-irreducible elements in $\Ceers$
and $\Ceers_\I$.}\label{table:3}
\end{table}

\section{Greatest Lower Bounds}\label{sct_glb}
We now turn our attention to greatest lower bounds. It is known (see e.g.
\cite{ceers}) that the poset $\Ceers$ is not a lower semilattice. An easy way
to witness this fact is by looking at dark ceers, as follows already from
results in earlier sections. For instance:

\begin{obs}\label{obs:nodarkmeet-easy}
There are dark ceers $E_1,E_2$ so that $R\leq E_1,E_2$ implies that $R=\Id_n$
for some $n$.
\end{obs}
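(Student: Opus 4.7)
The plan is to invoke Theorem~\ref{MinimalDark} directly, choosing the starting non-universal ceer $R$ in that theorem to be, say, $\Id$ (any non-universal ceer will do). That theorem hands us infinitely many pairwise incomparable dark ceers $(E_l)_{l\in\omega}$ with the additional property that, for every ceer $X$,
\[
X<E_l \;\Longrightarrow\; (\exists n)[X\leq \Id_n].
\]
I would simply take $E_1,E_2$ to be any two distinct members of this family.

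The verification would go as follows. Suppose $R\leq E_1$ and $R\leq E_2$. If $R\equiv E_1$, then from $R\leq E_2$ we would get $E_1\leq E_2$, contradicting the $\leq$-incomparability of $E_1$ and $E_2$ guaranteed by the $Q$-requirements in the construction of Theorem~\ref{MinimalDark}. Hence $R<E_1$, and the displayed implication above gives an $n$ with $R\leq \Id_n$. Since $\Id_n$ has exactly $n$ classes, this forces $R$ to have at most $n$ classes, i.e.\ $R\equiv \Id_m$ for some $m\leq n$, as desired.

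So there is nothing genuinely new to prove here: the heavy lifting, namely the simultaneous construction of incomparable dark ceers with the lower-cone-avoidance property $X<E_l \Rightarrow X\leq \Id_n$, is already packaged in Theorem~\ref{MinimalDark}. The observation is essentially an immediate corollary, and the only point to highlight in writing it up is the short case analysis ruling out $R\equiv E_1$.
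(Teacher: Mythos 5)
Your proposal is correct and matches the paper's own proof, which likewise simply takes $E_1,E_2$ to be two distinct minimal dark ceers from Theorem~\ref{MinimalDark}. The short case analysis you add (ruling out $R\equiv E_1$ via incomparability, then applying the lower-cone property) is exactly the intended verification.
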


\begin{proof}
Let $E_1$ and $E_2$ be two distinct minimal dark ceers: see
Theorem~\ref{MinimalDark}.
\end{proof}

The rest of this section is devoted to studying meets, meet-reducible
elements, and strong minimal covers in the structures of ceers, and ceers
modulo $\equiv_{I}$.

\subsection{The Exact Pair Theorem}
The following theorem provides a useful tool to deal with meets in $\Ceers$
and $\Ceers_{\I}$.

\begin{thm}[Exact Pair Theorem]\label{thm:EPT}
Let $(A_i)_{i \in \omega}$ be a uniformly c.e. sequence of ceers. Then there
exist two ceers $X$ and $Y$ above every $\bigoplus_{i\leq n} A_i$ so that any
ceer $Z$ below both $X$ and $Y$ is below $\bigoplus_{i\leq n} A_i$ for some
$n$. Further, if each of the $A_i$ is dark or in $\I$, then $X$ and $Y$ can
be chosen to be dark as well.
\end{thm}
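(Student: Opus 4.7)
The plan is to perform a finite-injury priority construction building $X$ and $Y$ simultaneously on a universe $\omega$ partitioned into columns $\omega^{[i]}$, where column $i$ encodes $A_i$. I set $X_0 = Y_0 = \bigoplus_i A_i$ with $A_i$ coded on $\omega^{[i]}$; this immediately gives $\bigoplus_{i \leq n} A_i \leq X, Y$ for every $n$ via the natural inclusion. At later stages I add finitely many cross-column collapses to $X$ or to $Y$ (never within a single column, so the coding of each $A_i$ is preserved) to satisfy further requirements. I arrange the priority ordering so that a requirement of rank $p$ acts only on columns strictly above some recursive threshold $N(p)$, with $N$ sufficiently fast-growing; this ensures that for every fixed $n$, only finitely many requirements ever touch columns $\leq n$, so the reduction $\bigoplus_{i \leq n} A_i \leq X$ (and similarly $\leq Y$) can be preserved, if necessary by rerouting through fresh classes in higher columns via Lemma~\ref{obs:plus-span}.

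The central requirements are exact-pair requirements $P_{e,f}$: if $\phi_e$ reduces some ceer $Z$ to $X$ and $\phi_f$ reduces the same $Z$ to $Y$, then $Z \leq \bigoplus_{i \leq n} A_i$ for some $n$. Writing $E_e$ for the c.e.\ equivalence relation defined by $x \rel{E_e} y \iff \phi_e(x) \rel{X} \phi_e(y)$, and $F_f$ analogously via $Y$, this amounts to requiring that either $E_e \neq F_f$ or $E_e \leq \bigoplus_{i \leq n} A_i$ for some $n$. Given the current threshold $k$, the strategy searches for $z_1, z_2$ such that $\phi_e(z_1), \phi_e(z_2)$ currently lie in distinct columns of $X$ above $k$, $\phi_f(z_1), \phi_f(z_2)$ lie in distinct columns of $Y$ above $k$, and none of these four columns has been touched by any higher-priority action. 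When found, the strategy $X$-collapses $[\phi_e(z_1)]_X$ with $[\phi_e(z_2)]_X$ and simultaneously imposes a permanent restraint forbidding any future $Y$-collapse linking $[\phi_f(z_1)]_Y$ with $[\phi_f(z_2)]_Y$. This forces $z_1 \rel{E_e} z_2$ while $z_1 \nrel{F_f} z_2$, so $E_e \neq F_f$ and the requirement is vacuously met. If the search never succeeds, then either the image of $\phi_e$ modulo $X$ or the image of $\phi_f$ modulo $Y$ is confined to columns $\leq k'$ for some $k' \geq k$; since these columns have been preserved intact, I extract a computable reduction of $E_e = F_f$ into $\bigoplus_{i \leq k'} A_i$ (absorbing finitely many extra classes via Lemma~\ref{obs:plus-span}).

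For the darkness extension, when each $A_i$ is dark or in $\I$, I add standard darkness requirements $D^X_m, D^Y_m$: for every infinite c.e.\ set $W_m$, find $a, b \in W_m$ with $a \rel{X} b$ (respectively $a \rel{Y} b$). The strategy waits for $W_m$ to enumerate two elements in distinct fresh columns above the current threshold and free of higher-priority restraint, then $X$-collapses (or $Y$-collapses) them. If $W_m$ meets only finitely many columns, it must meet some column $\omega^{[i]}$ infinitely often, and a collapse is forced automatically by the darkness of $A_i$ (or by the finiteness of its class count). The main obstacle will be the failure-case analysis for $P_{e,f}$: verifying that when the search never succeeds, one can effectively locate a bound $k'$ and extract a computable reduction of $E_e$ to $\bigoplus_{i \leq k'} A_i$. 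This relies on all cross-column collapses involving columns $\leq k'$ stabilizing after finitely many stages, which is exactly what the threshold scheme combined with a standard finite-injury verification delivers.
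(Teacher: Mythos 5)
Your overall architecture (columns coding the $A_i$, exact-pair requirements that collapse on one side and restrain on the other, darkness requirements) matches the paper's, but the decision to fix the coding statically --- $A_i$ on column $i$ from stage $0$, with $X_0=Y_0=\bigoplus_i A_i$ --- creates a tension the proposal does not resolve. On one hand, if no cross-column collapse ever touches columns $\leq n$ of $X$ and of $Y$ for any $n$, then the single computable map $\langle i,x\rangle\mapsto\langle i,x\rangle$ reduces the \emph{infinite} join $\bigoplus_i A_i$ to both $X$ and $Y$; for many sequences (e.g.\ $A_i=A$ for a fixed dark $A$ with infinitely many classes, where $\bigoplus_i A_i$ is light while every $\bigoplus_{i\leq n}A_i$ is dark) this $Z$ is below no finite join, so the exact-pair property fails. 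On the other hand, the moment some requirement cross-collapses a class of column $i$ with a class of column $j$ in $X$, the natural reduction of $\bigoplus_{m\leq\max(i,j)}A_m$ into $X$ is destroyed, and the proposed repair --- ``rerouting through fresh classes in higher columns via Lemma~\ref{obs:plus-span}'' --- does not go through: the damaged class would have to be rerouted to a disjoint $X$-class, which requires deciding membership in that class (the hypothesis of Lemma~\ref{ComputableCollapses}), and the $A_i$ need not have computable classes; nor do the higher columns contain another copy of $A_i$ to fall back on. This is exactly why the paper codes $A_n$ on a \emph{moving} column $c(n)$ that is re-chosen fresh and permanently protected whenever a higher-priority diagonalization acts nearby, and collapses every abandoned or non-coding column down to finitely many classes: each finite join then embeds via the non-uniformly known final columns $c(0),\dots,c(n)$, while the infinite join does not embed because $n\mapsto c(n)$ is not computable. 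That non-uniformity is the key idea missing from your proposal.

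Separately, your failure-case analysis for $P_{e,f}$ is not correct as stated. The search demands a single pair $z_1,z_2$ whose $\phi_e$-images lie in distinct high untouched columns of $X$ \emph{and} whose $\phi_f$-images lie in distinct high untouched columns of $Y$; the negation of this for all pairs does not imply that either image alone is confined to columns $\leq k'$. (For instance, every $z$ whose $\phi_e$-image is high could have its $\phi_f$-image in a single --- possibly very high --- column of $Y$, and vice versa, with neither image confined to an initial segment.) One can still salvage a reduction of $Z$ to a finite join by decomposing $Z$ according to where each side lands and running a cross-intersecting argument on the doubly-high part, but that is a different and more delicate argument than the one you sketch. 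The paper instead uses an asymmetric waiting condition --- act as soon as some pair has not both images restrained on each side and not both image-pairs currently equivalent --- whose permanent failure immediately confines the image of one of the two reductions to the finitely many restrained columns plus one extra class. Finally, you should check \emph{current} $Y$-inequivalence of $\phi_f(z_1),\phi_f(z_2)$ before acting: ``distinct columns untouched by higher priority'' does not rule out a link already created by a since-initialized lower-priority requirement, and a restraint cannot undo a collapse that has already occurred.
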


\begin{proof}
Given a  uniformly c.e. sequence $(A_i)_{i \in \omega}$  of ceers, with
uniformly computable approximations $\{A_{i,s}\mid i, s \in \omega\}$ as in
Section~\ref{ssct:background}, we construct two ceers $X$ and $Y$, with the
desired properties.

\medskip

\emph{The requirements:}  We have the following requirements (the
requirements $D_{m}$ are omitted if not every $A_i$ is dark or in $\I$):
\begin{itemize}
\item[$Q_n$:] There is some column of $X$ and $Y$ which contains $A_n$
    (where, given equivalence relations $S,T$ we say that \emph{the $k$-th
    column of $S$ contains $T$}, if  $\langle k, u \rangle \rel{S} \langle
    k, v \rangle$ if and only if  $u \rel{T} v$, for all $u,v$).
\item[$P_{j,k}$:] If $Z$ is a ceer such that $\phi_j$ witnesses $Z\leq X$ and
    $\phi_k$ witnesses $Z\leq Y$, then for some $n$, $Z\leq
    \bigoplus_{i \leq n}A_i$.
\item[$D_{m}$:] If $W_m$ is an infinite c.e. set, then $x \rel{X} y$ for
    some $x,y\in W_m$ and $x' \rel{Y} y'$ for some $x',y'\in W_m$.
\end{itemize}
Notice that we write $P_{j,k}$, and not $P_{j,k,Z}$, since $Z$ is determined
by $j,k$, as $P_{j,k}$ in fact states that if $\phi_{j}$ and $\phi_{k}$ are
total, and for every $x,y$, $\phi_{j}(x) \rel{X} \phi_{j}(y)$ if and only if
$\phi_{k}(x) \rel{Y} \phi_{k}(y)$, then the ceer $Z$ given by $x \rel{Z} y$
if and only if $\phi_{j}(x) \rel{X} \phi_{j}(y)$, is reducible to some finite
uniform join of the $A_{n}$'s.

We fix the following priority ordering on requirements:
\[
Q_0 < P_0<D_0<Q_1 < P_1< D_1 <\cdots < Q_n < P_n <D_n< \cdots
\]
where we write $P_{\langle j, k \rangle} = P_{j,k}$. The $m$-th requirement
in this ordering will be denoted by $R_{m}$.

Given a requirement $R_{m}$, $\rho_X(m)$ and $\rho_Y(m)$ will denote the
restraint sets for the requirement, and will be comprised of a finite set of
columns of $\omega$, so that $R_{m}$ can neither $X$-collapse nor
$Y$-collapse elements lying in different columns of $\rho_X(m)$ or
$\rho_Y(m)$, or  (unless $R_{m}$ is a $Q$-requirement)
elements in a same
column of $\rho_X(m)$ or $\rho_Y(m)$.  Eventually, each column will contain
$A_n$ for some $n$, or a ceer equivalent to $\Id_n$ for some $n$.

\medskip
\emph{Strategy for $Q_n$:} Actions for requirement $Q_n$  are as follows:
Pick a column $\omega^{[k]}$ in $X$ and $Y$ so that the elements of
$\omega^{[k]}$ are larger than any number yet mentioned. At all future
stages, $Q_n$ restrains (either $X$-restrain or $Y$-restrain according to
whether we code $A_n$ in $X$ or $Y$) the elements of this column, so that no
lower priority requirement can either $X$- or $Y$-collapse distinct elements
in this column. Then $Q_n$ causes collapse of the $i$th and $j$th elements on
this columns if and only if $i\mathrel{A}_n j$.

\medskip

\emph{Strategy for $P_{j,k}$:} Actions for requirement $P_{j,k}$ are as
follows. Denote $\rho_{X}=\rho_{X}(m)$ and $\rho_{Y}=\rho_{X}(m)$, where
$R_{m}=P_{j,k}$. Then  $P_{j,k}$ waits for a pair of numbers $x,y$ so that
the following currently hold:

\begin{enumerate}
\item $\phi_j(x),\phi_j(y),\phi_k(x), \phi_k(y)$ have all converged.
\item Not both of $\phi_j(x)$ and $\phi_j(y)$ are $X$-restrained (i.e.
    $X$-equivalent to elements of $\rho_X$).
\item Not both of $\phi_k(x)$ and $\phi_k(y)$ are $Y$-restrained (i.e.
    $Y$-equivalent to elements of $\rho_Y$).
\item Not both $\phi_j(x)\mathrel{X} \phi_j(y)$ and $\phi_k(x)\mathrel{Y}
    \phi_k(y)$.
\end{enumerate}

Having found such a pair, if still $\phi_k(x) \cancel{\mathrel{Y}} \phi_k(y)$
(if already $\phi_k(x) \mathrel{Y} \phi_k(y)$, but still $\phi_j(x)
\cancel{\mathrel{X}} \phi_j(y)$, then we act symmetrically) then
(\emph{$P_{j,k}$-action}) $P_{j,k}$ $X$-collapses $\phi_j(x)$ and
$\phi_j(y)$, restrains $\phi_k(x)$ and $\phi_k(y)$ in $Y$ (by adding suitable
columns to the restraint set that the requirement passes on to lower-priority
requirements), and initializes all lower priority requirements. If $P_{j,k}$
acts and its restraint is not injured by higher priority requirements then
the action corresponds to a winning outcome. The verification will show that
the other outcome, in which we wait forever for suitable $x,y$, is
comprehensive of the case when not both $\phi_j$ and $\phi_k$ are total (we
wait forever for some computation to end), or otherwise $\phi_j$ and $\phi_k$
induce morphisms into some finite uniform join of the $A_n$. Thus, if they
are both reductions from a same ceer $Z$ we have that $Z$ is reducible to
some finite uniform join of the $A_n$, as desired, so this waiting outcome
fulfills the requirement as well.

\medskip

\emph{Strategy for $D_m$:} Actions for the $D_m$ requirements are as follows.
If $W_{m}$ does not contain already distinct elements $x,x'$ that are already
$X$-equivalent, then $D_m$ waits for elements $x,x'$ to be enumerated into
$W_m$ which are not both not $X$-restrained (i.e. $X$-equivalent to elements
of $\rho_X$) and not as yet $X$-equivalent. If these are found, then
(\emph{$D_m$-action for $X$}) $D_m$ $X$-collapses $x$ and $x'$. Similarly, if
$W_{m}$ does not contain already distinct elements $y,y'$ that are already
$Y$-equivalent, then $D_m$ waits for elements $y,y'$ to be enumerated into
$W_m$ which are not both $Y$-restrained (i.e. $Y$-equivalent to elements of
$\rho_Y$), and not as yet $Y$-equivalent. If these are found, then
(\emph{$D_m$-action for $Y$}) $D_m$ $Y$-collapses $y$ and $y'$. In the
verification, we will use the fact these $\rho_{X}$ and $\rho_{Y}$ are
comprised of finitely many columns that are either finite or dark, and thus
correspond to a finite uniform join of ceers each of which is dark or in
$\I$, so if $W_{m}$ is infinite then it can not be the case that $W_{m}$
contains only pairs which are $X$-non-equivalent and all of its elements lie
in $\rho_{X}$, or $W_{m}$ contains only pairs which are $Y$-non-equivalent
and all of its elements lie in $\rho_{Y}$.

\medskip

\emph{The construction:} At stage $s$, we build, through a computable
procedure, approximations $X_s$ and $Y_s$ to the desired $X, Y$ respectively.
We use the following parameters: $c(n,s)$ approximates the column of $X$ and
$Y$ where $Q_{n}$ tries to code $A_{n}$ at stage $s$, so as to get in the
limit that the $c(n)$-th column of $X$ and $Y$ contains $A_n$. The parameters
$\rho_X(m,s)$ and $\rho_Y(m,s)$ denote the columns in $X$ and $Y$ that are
restrained at $s$ by requirements of higher priority than $R_m$, that is
$R_m$ can not collapse equivalence classes of numbers in these columns.

At stage $s+1$, we say that $P_{j,k}$ \emph{requires attention} if $P_{j,k}$
has not as yet acted after its last initialization, but now $P_{j,k}$ is
ready to act i.e. there exist $x,y$ so that (1) through (4) as in the
description of the $P_{j,k}$-strategy hold, for the current approximations to
the restraints $\rho_X$ and $\rho_Y$ relative to $P_{j,k}$. We say that a
requirement $D_m$ \emph{requires attention} if: $W_{m}$ has not as yet
enumerated elements $x,x',y,y'$ such that already $x \rel{X} x'$ or $y
\rel{Y} y'$ and it is now ready (through suitable $x,x'$) to act for $X$, or
(through suitable $x,x'$) to act for $Y$. After acting at a stage $s$, a
requirement \emph{initializes} all lower priority $P$- and $Q$-requirements
$R$: if we initialize a $Q$-requirement $Q_n$ then we set $c(n,s)$ undefined.
At the end of stage $s+1$, every parameter retains the same value as at the
previous stage, unless explicitly redefined during stage $s+1$.

\medskip

\emph{Stage $0$} Let $X_{0}=Y_{0}=\Id$. Initialize all $P$- and
$Q$-requirements. Go to Stage $1$.

\medskip

\emph{Stage $s+1$} We act in two steps:
\begin{enumerate}
  \item If there is no $P_{j,k}$ with $j, k \le s$ such that $P_{j,k}$
      requires attention, or no $D_m$ with $m \le s$ that requires
      attention, then go directly to step (2). Otherwise, pick the highest
      priority requirement $R_{m}$ that requires attention.

      If $R_{m}=P_{j,k}$ then pick the least pair (under code) $x,y$ as in
      the description of the $P_{j,k}$-strategy; if currently $\phi_k(x)
      \cancel{\mathrel{Y}} \phi_k(y)$ then $X$-collapse $\phi_j(x) \rel{X}
      \phi_j(y)$;  if $\phi_k(x) \mathrel{Y} \phi_k(y)$ but still
      $\phi_j(x) \cancel{\mathrel{X}} \phi_j(y)$ then do nothing; in either
      case, $P_{j,k}$ sets restraints $\rho_X(m+1)$, $\rho_Y(m+1)$
      extending the restraints of higher priority requirements plus the
      columns up to the least column $r$ so that $\phi_{j}(x), \phi_{j}(y),
      \phi_{k}(x), \phi_{k}(y) \in \bigcup_{i \le r} \omega^{[i]}$.

      If $R=D_m$ then pick a suitable pair $x,x'$ or $y, y'$ as in the
      description of the strategy and $X$-collapse $x,x'$ or $Y$-collapse
      $y,y'$ according to which case applies.

      Initialize all $P$- and $Q$-requirements of priority lower than that
      of $R$. Notice that if $R$ is a $P$-requirement, then initialization
      implies also that the unrestrained column (or columns) of which $R$
      has collapsed some class to another class will no longer be used for
      coding by $Q$-requirements.

      Go to step (2).

  \item  Let $n_0$ be the least number such that $c(n_0)$ is currently
      undefined; define $c(n_0)$ to be a fresh number (i.e. all numbers so
      far mentioned in the construction lie in some column $r$ with
      $r<c(n_0)$. Furthermore for each column smaller than this one which
      is not currently chosen for a coding $Q$-requirement, $X$-collapse
      and $Y$-collapse all non-restrained elements within this column, thus
      making the column have only finitely many classes in both $X$ and
      $Y$: we refer to this move as \emph{auxiliary $D$-collapsing}. Next,
      for all $n<n_0$, \emph{code} $A_n$ into the $c(n)$-column of $X$ and
      $Y$,  by collapsing $\langle c(n), u \rangle \rel{X} \langle
      c(n),v\rangle $ and $\langle c(n), u \rangle \rel{Y} \langle c(n),
      v\rangle$ for all $u,v \le s$ such that $u \rel{A_{n,s}} v$. (If one
      sees these collapses as actions of $Q_{n}$ then, strictly speaking, a
      $Q$-requirement acts infinitely often, but it never directly
      collapses numbers from different
       columns.)
\end{enumerate}

After completing (2), we close the stage by performing the following actions:
define $X_{s+1}$ and $Y_{s+1}$ to be the equivalence relations generated by
$X_{s}$, and $Y_{s}$, plus the pairs obtained by the above described
$X$-collapsing and $Y$-collapsing actions, respectively. Go to stage $s+2$.

Notice that contrary to other proofs in the paper, $X_{s+1}$ and $Y_{s+1}$
are not finite extensions of $\Id$, because of the infinite collapses
deriving from the auxiliary $D$-collapsing, which however produces computable
equivalence classes, so that $X_{s+1}$ and $Y_{s+1}$ are computable
equivalence relations, and at the next stage it will be decidable to see
whether two numbers are $X_{s+1}$-equivalent or $Y_{s+1}$-equivalent, if and
when this may be required by the construction.

\medskip

\emph{Verification}: The verification rests on the following lemmata. $X$- or
$Y$- collapses performed by the construction between pairs of numbers from
different columns of $\omega$ will be called \emph{horizontal} collapses
(made by $P$- and $D$-requirements); collapses within a same column will be
called \emph{vertical} collapses (made by $Q$-requirements
or by auxiliary $D$-collapsing).

\begin{lemma}
Each requirement can be initialized only finitely often and (if not a
$Q$-requirement) acts only finitely many times.
\end{lemma}

\begin{proof}
This is a straightforward inductive argument, since a requirement can be
initialized only if some higher priority $P$-requirement or $D$-requirement
acts, and, on the other hand, a $P$-requirement can act only one more time or
a $D$-requirement can act at most two more times, after their last
initializations.
\end{proof}

\begin{lemma}
For each requirement $R_{m}$ there is a least stage $t$ such that at no $s
\ge t$ does either $\rho_{X}(m+1)$ or $\rho_{Y}(m+1)$ change.
\end{lemma}

\begin{proof}
Obvious by induction on $m$.
\end{proof}

\begin{lem}\label{DarkColumns}
If each $A_{i}$ is dark or in $\I$ then there is no infinite c.e. set $W_n$
contained in the $X$-closure or the $Y$-closure of a single column, so that
if $W_n$ is infinite then $W_n$ gives pairwise $X$-inequivalent or pairwise
$Y$-inequivalent elements which avoid the restraints imposed by higher
priority requirements.
\end{lem}

\begin{proof}
If the $j$-th column of $X$ is encoding a ceer $A_n$, then this follows from
the assumption that each $A_n$ is dark. Otherwise, the first time that a
coding column $>j$ is chosen, $X$ is collapsed to have only finitely many
classes in the $j$-th column. Thus $W_n$ cannot be an infinite set of
pairwise $X$-inequivalent elements on the $j$-th column of $X$. The same
argument holds for $Y$.
\end{proof}

\begin{lemma}
Each requirement is satisfied.
\end{lemma}

\begin{proof}
We first consider requirement $Q_n$. Let $s$ be the stage at which we choose
the final value $c(n,s)$: then for every $t\ge s$, $c(n,t)=c(n,s) =c(n)$.
Since $c(n)$ is chosen larger than any number mentioned so far, this column
has no higher priority restraint either by $X$ or $Y$. Since no lower
priority requirement can cause $X$-collapse or $Y$-collapse \emph{on} this
column (although it could collapse numbers from bigger columns to this
column), and $Q_n$ causes collapse on this column exactly to correspond to
collapse in $A_n$, we see that $A_n\leq X$ and $A_n\leq Y$ via the computable
mapping $m\mapsto\langle c(n) ,m\rangle$.

\medskip

We now consider requirement $P_{j,k}$. Let $s$ be the least stage after which
$P_{j,k}$ is never re-initialized. By a previous lemma, all restraints
imposed by higher priority requirements have  stabilized, giving sets
$\rho_X$ for $X$, and $\rho_Y$ for $Y$, say $\rho_X= \bigcup_{i \le n}
\omega^{[i]}$, for some $n$ and $\rho_Y= \bigcup_{i \le m} \omega^{[i]}$, for
some $m$.

We first consider the case where $P_{j,k}$ acts at some stage $t\geq s$,
i.e.~successfully finds a pair $x,y$ satisfying conditions (1)-(4): suppose
at stage $t$ we have that $\phi_{k}(x) \cancel{\rel{Y}} \phi_{k}(y)$, the other case being
similar. Then by placing the $Y$-restraint on $\phi_k(x)$ and $\phi_k(y)$,
and by initialization, we will have $\phi_k(x)\cancel{\mathrel{Y}}\phi_k(y)$,
but $\phi_j(x)\mathrel{X} \phi_k(y)$. This satisfies the requirement.

Now, suppose at no stage $t\geq s$ do we find a pair $x,y$ satisfying
conditions (1)-(4). We may also assume that $\phi_j$ and $\phi_k$ are total,
and $\phi_{j}, \phi_{k}$ are reductions of the same ceer $Z$ (i.e. for all
$x,y$, $x \rel{Z} y$ if and only if $\phi_{j}(x) \rel{X} \phi_{j}(y)$ if and
only if $\phi_{k}(x) \rel{Y} \phi_{k}(y)$): otherwise, $P_{j,k}$ is
satisfied. If, for every $x$, $\phi_j(x)$ is in $[\rho_X]_{X}$, then we claim
that $P_{j,k}$ is satisfied. To see this,  let $I=\{i\le n\mid \textrm{the
$i$-th column of $X$ codes some $A_{r}$}\}$: in other words $I$ is comprised
of the numbers $c(r) \le n$. Recall that if $i, j\in I$ are distinct then (by
initialization: see remark at the end of part (1) of Stage $s+1$) the
$X$-equivalence classes of elements in $\omega^{[i]}$ are disjoint from the
$X$-equivalence classes of elements in $\omega^{[j]}$.
By our coding, if $r \in I$,
we have that $\langle r, x \rangle \rel{X} \langle r, y\rangle$
if and only if $x \mathrel{A_{n_r}} y$ where
$c(n_{r})=r$.

After $s$ no requirement (which is not a $Q$-requirement) of higher priority
than $P_{j,k}$ acts, and thus no pair of $X$-classes of elements in
$\rho_{X}$ coming from distinct columns are $X$-collapsed by actions of
requirements of higher priority than $P_{j,k}$ (no more horizontal collapses,
as the higher-priority requirements no longer act, and the lower-priority
requirements do not break the restraint $\rho_X$), as the $Q$-requirements do
not collapse classes of numbers from different columns (only vertical
collapses in this case).  Let $\beta_{1}, \ldots, \beta_{h}$ be the distinct equivalence classes
at stage $s$ of elements lying in $\bigcup_{i \le n}\omega^{[i]}$, not
containing elements in any column $\omega^{[r]}$ with $r \in I$: restricted
to $\bigcup_{i \le n}\omega^{[i]}$ these equivalence classes, and their
number, will no longer change.

Consider the following algorithm to compute a function $f$. On input $x$
search for the first $\langle r, u\rangle$ with $r\le n$ such that
$\phi_{j}(x) \rel{X} \langle r, u \rangle$: if $\langle r, u \rangle \in
\beta_i$, for some $i$, then map $x$ to $\langle n+i, 0\rangle$;  otherwise
search for $\langle r', u'\rangle$ with $r' \in I$ such that $\langle r,u
\rangle \rel{X} \langle r', u' \rangle$ and map $x$ to $\langle n_{r'},
 u' \rangle$.

Clearly, for every $x$, $f(x)$ is defined and codes a pair whose first
projection is a number $i\le n+h$. It is not difficult to see (using that
each $n_{r} \le c_{n_{r}} \le n$ and that there is no more horizontal
collapse within $\rho_{X}$ after $s$) that for all $x,y$, $\phi_{j}(x)
\rel{X} \phi_{j}(x)$ if and only if there are $i\le n+h$  and $u,v$ such that
$f(x)= \langle i, u\rangle$, $f(y)= \langle i, v\rangle$ and $u \rel{A_{i}}
v$. From this it easily follows that $Z\leq \bigoplus_{i \le n+h} A_i$.

Now, suppose $x$ is some number so that $\phi_j(x)$ is not $X$-equivalent to
any element of $\rho_X$. Then, for every $y$ and every stage $t \geq s$ where
$\phi_k(y)\downarrow $, we must have $\phi_k(y)\in [\rho_Y]_{Y}$ or, at $t$,
$\phi_k(x)\mathrel{Y} \phi_k(y)$. But then for every $y$, $\phi_k(y) \in
[\bigcup_{i\le p} \omega^{[i]}]_{Y}$, for the least $p$ such that $\rho_Y
\subseteq \bigcup_{i\le p} \omega^{[i]}$ and $\phi_k(x) \in [\bigcup_{i\le p}
\omega^{[i]}]_{Y}$. An argument similar to the previous one shows now that
$Z$ is reducible to a finite join of the $A_i$.

\medskip

We finally consider requirement $D_m$. It is subject to restraint of only
finitely many columns: if $W_m$ enumerates an infinite set so that $a,b\in
W_m$ implies $a\cancel{\rel{X}}b$, then it follows that $W_m$ is not
contained in any finite number of columns (as each column does not have such
a set by Lemma \ref{DarkColumns}). Thus, eventually $D_m$ will find $x,x'$
and $y,y'$ as needed, and an $X$-collapse of $x,x'$ and a $Y$-collapse of
$y,y'$ will permanently satisfy $D_m$.

\end{proof}
This completes the proof of the Exact Pair Theorem.
\end{proof}

\begin{cory}\label{cor:to-Exact}
If the uniformly c.e. sequence $(A_i)_{i \in \omega}$ consists of finite
ceers and dark ceers, then we can get $X,Y$ dark.
\end{cory}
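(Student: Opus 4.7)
The plan is to observe that this corollary merely records the final clause of Theorem~\ref{thm:EPT}, whose proof was already written to incorporate the hypothesis and produce the darkness. So nothing new needs to be constructed; what I would do is verify the two conditions for darkness of the resulting $X$ and $Y$: that each has infinitely many classes, and that neither admits an infinite effective transversal.

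First I would note that infinitely many classes of $X$ and $Y$ come from the $Q_n$-requirements: each $Q_n$ eventually stabilizes on a permanent coding column $c(n)$, the coding columns are kept $X$- and $Y$-disjoint from each other by the initialization rule at the end of part~(1) of Stage $s+1$, and there are infinitely many $Q_n$. This alone (whether each $A_n$ is finite or dark) forces infinitely many equivalence classes in both $X$ and $Y$.

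Next I would show no infinite c.e. effective transversal exists. Suppose toward a contradiction that $W = W_m$ is an infinite c.e. set of pairwise $X$-inequivalent elements (the $Y$-case is symmetric). The strategy $D_m$ inherits from higher priority requirements only the finite restraint $\rho_X$, consisting of finitely many columns. By Lemma~\ref{DarkColumns}, which uses precisely the hypothesis that each $A_i$ is dark or in $\I$ together with the auxiliary $D$-collapsing that makes every non-coding column carry only finitely many classes, no infinite c.e. set of pairwise $X$-inequivalent elements can be confined to a finite union of columns. Hence $W$ must eventually enumerate two distinct elements $x,x'$ lying outside $[\rho_X]_X$ and not yet $X$-equivalent, whereupon $D_m$ will $X$-collapse them, contradicting the assumption. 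The argument for $Y$ is the same, using the $D_m$-action for $Y$.

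The only real content beyond Theorem~\ref{thm:EPT} itself is packaged inside Lemma~\ref{DarkColumns}, so there is no additional obstacle here; the step that carries the load is the auxiliary $D$-collapsing that renders every transient coding column finite in both $X$ and $Y$, without which an infinite $W_m$ could conceivably sit inside a column that was only briefly used for coding and thereby defeat $D_m$.
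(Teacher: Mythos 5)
Your proposal is correct and matches the paper, whose proof is simply the observation that the $D_m$-requirements (included precisely under the hypothesis that each $A_i$ is dark or in $\I$) together with Lemma~\ref{DarkColumns} and the infinitely many pairwise-disjoint coding columns already make $X$ and $Y$ dark in the construction of Theorem~\ref{thm:EPT}. You have merely unpacked the same argument in more detail; nothing differs in substance.
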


\begin{proof}
This follows from the proof of the Exact Pair Theorem.
\end{proof}

In the rest of the paper when we apply the Exact Pair Theorem to get dark
ceers $X,Y$ then we will say that we appeal to the ``dark Exact Pair
Theorem''.

\subsection{Meet-irreducibility and self-fullness}
Our first application of the Exact Pair Theorem is to show that the
meet-irreducible ceers coincide with the self-full ceers.

\begin{thm}\label{LightMeets}
Let $E$ be any ceer. Then $E$ is non-self-full if and only if there exists
incomparable $X,Y$ so that $E$ is a greatest lower bound of $X$ and $Y$.
\end{thm}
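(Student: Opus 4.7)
For the implication $(\Leftarrow)$, I would argue by contrapositive: suppose $E$ is self-full and is a greatest lower bound of some incomparable $X,Y$. Since $X$ and $Y$ are incomparable, $E<X$ and $E<Y$ strictly (otherwise, say, $E\equiv X$ together with $E\leq Y$ would give $X\leq Y$). Lemma~\ref{StrongMinimalCoversOfSF} applied to self-full $E$ then forces $X,Y\geq E\oplus\Id_1$, so $E\oplus\Id_1$ is a common lower bound of $X,Y$ and must satisfy $E\oplus\Id_1\leq E$. But Observation~\ref{obs:nonselffull} says that $E\oplus\Id_1\leq E$ holds exactly when $E$ is non-self-full, a contradiction.

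For the implication $(\Rightarrow)$, the plan is to apply the Exact Pair Theorem (Theorem~\ref{thm:EPT}) to the uniformly c.e.\ sequence $(A_i)_{i\in\omega}$ with $A_0=E$ and $A_i=\Id_1$ for $i\geq 1$. By non-self-fullness, $E\oplus\Id_1\leq E$ (Observation~\ref{obs:nonselffull}), and iterating gives $\bigoplus_{i\leq n}A_i\equiv E\oplus\Id_n\equiv E$ for every $n$. The Exact Pair Theorem then yields ceers $X,Y$ with $A_i\leq X,Y$ for every $i$ (so in particular $E=A_0\leq X$ and $E\leq Y$), and such that every $Z\leq X,Y$ satisfies $Z\leq\bigoplus_{i\leq n}A_i\equiv E$ for some $n$. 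Hence $E$ is a greatest lower bound of $X$ and $Y$.

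The remaining task---and the main obstacle---is to ensure that $X$ and $Y$ are incomparable, a property not guaranteed by the bare statement of the Exact Pair Theorem (indeed, with our choice of $(A_i)$, the trivial outcome $X\equiv Y\equiv E$ would formally satisfy its conclusion). I would therefore augment the construction underlying the Exact Pair Theorem with standard incomparability requirements $I_e^{X,Y}$: $\phi_e$ is not a reduction from $X$ to $Y$, and $I_e^{Y,X}$: $\phi_e$ is not a reduction from $Y$ to $X$. The strategy for each such requirement is the familiar direct diagonalization on a fresh pair of witnesses placed in a column reserved for this purpose (thereby shielded from both the $Q_n$-coding and the auxiliary $D$-collapsing of the Exact Pair construction): wait for $\phi_e$ to converge on the pair, and then either preserve the $X$-restraint (if the images are already $Y$-equivalent) or else $X$-collapse the witnesses while placing a $Y$-restraint on the images. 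Since each such strategy is finitary---it appoints a single pair of witnesses and acts at most once---it interleaves with the $Q_n$-, $P_{j,k}$-, and $D_m$-strategies of the Exact Pair Theorem via the same finite-injury priority framework used repeatedly in the paper (cf.\ Theorem~\ref{MinimalDark} and Theorem~\ref{SelfFullMinimalCoversInI}).
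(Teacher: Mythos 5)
Your proof is correct and follows the paper's argument in both directions: the backward direction via Lemma~\ref{StrongMinimalCoversOfSF} together with Observation~\ref{obs:nonselffull}, and the forward direction by applying the Exact Pair Theorem to the sequence $A_0=E$, $A_i=\Id_1$ ($i\geq 1$). The genuine difference is that you explicitly address incomparability of $X$ and $Y$, which the paper's proof does not: it simply takes $X,Y$ ``as guaranteed by the Exact Pair Theorem.'' Your diagnosis of the issue is accurate --- if, say, $X\leq Y$, then $X$ is itself a common lower bound, so the exact-pair property only yields $X\leq\bigoplus_{i\leq n}A_i\equiv E\oplus\Id_n\equiv E$ and hence $X\equiv E$, which the bare statement of Theorem~\ref{thm:EPT} does not exclude --- and your remedy of interleaving finitary diagonalization requirements $I_e^{X,Y}$, $I_e^{Y,X}$ on reserved columns is sound and compatible with the $Q$-, $P$- and $D$-strategies. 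Note, though, that the paper already contains a route that gets incomparability for free: by Theorem~\ref{SelfFullMinimalCoversInI} (see Corollary~\ref{cor:non-sf-inf-min-covers}), a non-universal $E$ has incomparable self-full ceers $E_1,E_2\geq E$ such that $Z<E_l$ implies $Z\leq E\oplus\Id_k$ for some $k$; any common lower bound $Z$ of $E_1,E_2$ cannot be $\equiv E_1$ (else $E_1\leq E_2$), so $Z<E_1$ and hence $Z\leq E\oplus\Id_k\equiv E$ by non-self-fullness. Finally, one caveat applies to your proof and the paper's alike: a universal $E$ is non-self-full yet cannot be the greatest lower bound of an incomparable pair (everything above it is equivalent to it), so the forward direction tacitly needs $E$ non-universal; in your version this surfaces as the added incomparability requirements being unsatisfiable when $A_0$ is universal.
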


\begin{proof}
If $E$ is a self-full degree, then $E$ cannot be a greatest lower bound of
any pair of ceers: Given any $E_1,E_2$ both $>E$, then by
Lemma~\ref{StrongMinimalCoversOfSF} $E\oplus \Id_1$ is a ceer strictly above
$E$ and still below $E_1$ and $E_2$.

For the converse, suppose $E$ is non-self-full. Let $(A_i)_{i\in \omega}$ be
the uniform sequence with $A_0=E$ and $A_j=\Id_1$ for every $j>0$. Let $X, Y$
be as guaranteed by the Exact Pair Theorem. Then $E\leq X,Y$, and $Z\leq X,Y$
implies $Z\leq E\oplus \Id_n$ for some $n$. But since $E$ is non-self-full,
$E\oplus \Id_n\equiv E$.
\end{proof}

\subsection{Strong minimal covers}
In this section we study minimal covers and strong minimal covers. Recall
that in $\Ceers$ every self-full ceer $E$ has exactly one strong minimal cover,
which is the least degree of the set of degrees strictly above $E$. The next
result follows immediately from Theorem \ref{SelfFullMinimalCoversInI}. We
repeat it here to record the consequence in the context of our discussion of
the $\equiv_\I$-degrees.

\begin{thm}\label{StrongMinimalCoversNSF}
Every non-universal $\equiv_\I$-degree $R$ has infinitely many incomparable
self-full strong minimal covers. Moreover if $R$ is dark, then it has
infinitely many incomparable dark strong minimal covers.
\end{thm}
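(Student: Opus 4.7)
The plan is to derive this directly from Theorem~\ref{SelfFullMinimalCoversInI}, as the authors suggest. Given a non-universal $\I$-degree $R$, I would pick any representative ceer $A$ of $R$: this $A$ is necessarily non-universal, because by Observation~\ref{obs:Icontiguous} the universal $\equiv_\I$-class coincides with the $\equiv$-class of the universal ceers. Applying Theorem~\ref{SelfFullMinimalCoversInI} to this $A$ gives infinitely many pairwise $\leq$-incomparable self-full ceers $(E_l)_{l\in\omega}$ with $A\oplus\Id_n \leq E_l$ for every $n,l$, and satisfying the minimality property $X < E_l \Rightarrow (\exists k)[X\leq A\oplus\Id_k]$. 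If $R$ is dark, Corollary~\ref{cory:selfdarkminimalcovers} guarantees that the $E_l$ can additionally be taken dark.

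The main verification is that each $E_l$ is a strong minimal cover of $R$ in $\Ceers_\I$. For strictness $R<_\I E_l$: plainly $R\leq_\I E_l$ from $A\leq E_l$; if instead $E_l \leq_\I A$, then $E_l \leq A\oplus\Id_n$ for some $n$, and combining this with $A\oplus\Id_{n+1}\leq E_l$ and iterating Lemma~\ref{SuccessorIsInjective} would force $A\oplus\Id_1\leq A$, i.e.\ $A$ non-self-full (Observation~\ref{obs:nonselffull}). But then $A\oplus\Id_n\equiv A$ would force $E_l\equiv A$, contradicting that $E_l$ is self-full (self-fullness is $\equiv$-invariant). For the strong minimality, suppose $Z<_\I E_l$; then $Z\leq E_l\oplus\Id_n$ for some $n$, and Lemma~\ref{coproduct3}(2) furnishes a ceer $Z_0 \leq E_l$ with $Z\equiv Z_0\oplus\Id_k$ for some $k\leq n$. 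The case $Z_0\equiv E_l$ would give $Z\equiv E_l\oplus\Id_k\equiv_\I E_l$, contradicting $Z<_\I E_l$; so $Z_0< E_l$, and the minimality property of $E_l$ yields $Z_0\leq A\oplus\Id_j$ for some $j$, whence $Z\leq A\oplus\Id_{j+k}$, i.e.\ $Z\leq_\I R$.

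Finally, $\I$-incomparability of the $E_l$'s is handled by the same technique: if $E_l\leq_\I E_{l'}$ for distinct $l,l'$, Lemma~\ref{coproduct3}(2) produces $E_0\leq E_{l'}$ with $E_l\equiv E_0\oplus\Id_k$; the subcase $E_0\equiv E_{l'}$ contradicts the $\leq$-incomparability of $E_l,E_{l'}$ given by Theorem~\ref{SelfFullMinimalCoversInI}, while $E_0<E_{l'}$ would give $E_l\leq_\I A\equiv_\I R$, which has already been ruled out. The only mildly delicate step is the two-sided use of Lemma~\ref{coproduct3}(2) to translate between $\leq$ and $\leq_\I$; once this is set up, the theorem falls out, and appending Corollary~\ref{cory:selfdarkminimalcovers} in the dark case completes the proof.
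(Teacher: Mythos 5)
Your proof is correct and follows essentially the same route as the paper: invoke Theorem~\ref{SelfFullMinimalCoversInI} (plus Corollary~\ref{cory:selfdarkminimalcovers} for the dark case), and establish the strong-minimal-cover property via Lemma~\ref{coproduct3}(2) together with the minimality clause $X<E_l\Rightarrow X\leq A\oplus\Id_k$. The only divergence is minor: for $\I$-incomparability of the $E_l$'s the paper uses the fact that their classes are non-computable (so a reduction $E_l\leq E_{l'}\oplus\Id_n$ cannot hit the $\Id_n$-part), whereas you rerun the Lemma~\ref{coproduct3}(2) argument; both are valid, and your explicit check of strictness $R<_\I E_l$ fills in a step the paper leaves implicit.
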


\begin{proof}
Let $R$ be a non-universal ceer: notice that by
Observation~\ref{obs:Icontiguous} being non-universal is the same as being
$\I$-non-universal. Then by Theorem~\ref{SelfFullMinimalCoversInI} there are
infinitely many incomparable self-full ceers $E_l$ above $R$ satisfying the
properties stated in Theorem~\ref{SelfFullMinimalCoversInI}, and they are
built so that no $E_l$-equivalence class is computable. Thus the $E_l$ are
$\leq_{\mathcal{I}}$-incomparable: indeed if $E_l \leq_{\mathcal{I}} E_{l'}$
then $E_l \leq E_{l'} \oplus \Id_n$ for some $n$ but, being undecidable, no
$E_l$ class can be mapped to a class in the $\Id_n$-part; so, in fact $E_l
\leq E_{l'}$ which is a contradiction. As to show that each of the $E_l$ is a
strong minimal cover of $R$ in the $\mathcal{I}$-degrees, suppose that $X
\leq_{\mathcal{I}} E_l$, hence $X\le E_l \oplus \Id_n$, for some $n$, via a
reduction $f$ such that all classes in the $\Id_n$-part are in the range of
$f$. By Lemma~\ref{coproduct3}(2) let $X_0$ be such that $X \equiv X_{0}
\oplus \Id_{n}$ and $X_0 \le E_l$. If $X_0 \equiv E_l$ then $X\equiv E_{l}
\oplus \Id_{n}$, giving $X\equiv_{\I} E_{l}$. Otherwise, $X_0 < E_l$, but
then (by the properties of the ceers $E_{l'}$ provided by
Theorem~\ref{SelfFullMinimalCoversInI}) $X_0 \le R \oplus \Id_k$ for some
$k$, giving $X\equiv X_0 \oplus \Id_n \le R \oplus \Id_k \oplus \Id_n$,
giving $X \leq_{\mathcal{I}} R$.

The latter part of the statement about dark strong minimal covers comes by
the same argument using Corollary~\ref{cory:selfdarkminimalcovers}.
\end{proof}

\begin{cory}
Suppose $Y>X$ has the property that $Z>X$ implies $Z\geq Y$. Then $Y$ is a
strong minimal cover of $X$.
\end{cory}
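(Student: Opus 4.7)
The plan is to show that the hypothesis forces $X$ to be self-full, after which the corollary will follow directly from Lemma~\ref{StrongMinimalCoversOfSF}. First I note that $Y>X$ forces $X$ to be non-universal.

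The crux of the argument will be the claim that $X$ is self-full. I would argue this by contradiction: assume $X$ is non-self-full, so $X\oplus\Id_k\equiv X$ for every $k$. I would then apply Theorem~\ref{SelfFullMinimalCoversInI} with $A=X$ to produce pairwise incomparable self-full ceers $(E_l)_{l\in\omega}$ above $X$ with the property that for every ceer $Z$, if $Z<E_l$ then $Z\leq X\oplus\Id_k$ for some $k$. The key observation will be that, under non-self-fullness, $X\oplus\Id_k\equiv X$, so this implication collapses to $Z<E_l\Rightarrow Z\leq X$; in other words, each $E_l$ is a strong minimal cover of $X$ in the strict sense of the definition. (One also needs $X<E_l$ strictly, which holds because equivalence preserves self-fullness while $E_l$ is self-full and $X$ is not.)

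The hard part will then be to squeeze a contradiction out of the coexistence of infinitely many incomparable strong minimal covers of $X$ with the assumed least cover $Y$. I would apply the hypothesis to each $E_l>X$ to obtain $Y\leq E_l$; since $Y>X$ and $E_l$ is a strong minimal cover of $X$, the possibility $Y<E_l$ would force $Y\leq X$, so $Y\equiv E_l$ for every $l$, contradicting the pairwise incomparability of the $E_l$'s. Hence $X$ is self-full, and Lemma~\ref{StrongMinimalCoversOfSF} tells us that $X\oplus\Id_1$ is both a strong minimal cover of $X$ and a lower bound for every ceer strictly above $X$. Applying the hypothesis to $X\oplus\Id_1>X$ then yields $Y\equiv X\oplus\Id_1$, and so $Y$ inherits the property of being a strong minimal cover of $X$.
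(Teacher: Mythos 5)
Your proposal is correct, and it reaches the key intermediate fact --- that the hypothesis forces $X$ to be self-full --- by a genuinely different route from the paper. The paper's proof is a two-line reduction: the hypothesis immediately implies that $X$ cannot be a greatest lower bound of two incomparable degrees (any such pair would both bound $Y$, forcing $Y\leq X$), so $X$ is meet-irreducible, hence self-full by Theorem~\ref{LightMeets}; the rest is Lemma~\ref{StrongMinimalCoversOfSF}, exactly as in your final paragraph. You instead assume $X$ is non-self-full and invoke Theorem~\ref{SelfFullMinimalCoversInI} with $A=X$ to manufacture infinitely many pairwise incomparable self-full strong minimal covers of $X$ (using $X\oplus\Id_k\equiv X$ to collapse the $X\oplus\Id_k$ bound, and preservation of self-fullness under $\equiv$ to get strictness), and then note that the hypothesis would force $Y$ to be equivalent to all of them at once --- in effect you re-derive Corollary~\ref{cor:non-sf-inf-min-covers} and use it as the contradiction. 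Both arguments are sound and both lean on heavy machinery already established in the paper: the paper's route runs through the Exact Pair Theorem (hidden inside the non-self-full direction of Theorem~\ref{LightMeets}), while yours runs through the finite-injury construction of Theorem~\ref{SelfFullMinimalCoversInI}. Yours is slightly longer but proves more along the way (the existence of infinitely many incomparable strong minimal covers over a non-self-full $X$, which directly contradicts uniqueness of the cover $Y$); the paper's is the more economical reduction to the meet-irreducibility characterization.
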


\begin{proof}
If $X$ and $Y$ have this property then $X$ is not branching, so is self-full
by Theorem~\ref{LightMeets}. Then up to $\equiv$ there is a unique such $Y$,
namely $X\oplus \Id_1$. We know that $X\oplus \Id_1$ is a strong minimal
cover by Lemma~\ref{StrongMinimalCoversOfSF}.
\end{proof}

Non-self-full non-universal ceers are not only meet-reducible, but in fact we
can prove the following stronger fact.

\begin{cory}\label{cor:non-sf-inf-min-covers}
If $X$ is non-self-full and non-universal, then there are infinitely many
incomparable self-full strong minimal covers of $X$.
\end{cory}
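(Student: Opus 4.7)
The plan is to invoke Theorem~\ref{SelfFullMinimalCoversInI} directly, taking $A = X$. Since $X$ is non-universal, the hypothesis of that theorem is satisfied, so we obtain infinitely many pairwise incomparable self-full ceers $(E_l)_{l\in\omega}$ such that $X\oplus \Id_n\leq E_l$ for every $n,l$, and for every ceer $Y$,
\[
Y<E_l \;\Rightarrow\; (\exists k)[Y\leq X\oplus \Id_k].
\]

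Next I would use the non-self-fullness of $X$ to collapse all the $\Id_n$-summands. By Observation~\ref{obs:nonselffull}, $X\oplus \Id_1\leq X$, and hence by an easy induction $X\oplus \Id_n\equiv X$ for every $n\in\omega$. Consequently each $E_l$ lies strictly above $X$: we have $X\equiv X\oplus \Id_n\leq E_l$, and if $E_l\equiv X$ then $E_l$ would be non-self-full by the corollary asserting that self-fullness is preserved under $\equiv$, contradicting the self-fullness of $E_l$ given by Theorem~\ref{SelfFullMinimalCoversInI}. Thus $X<E_l$.

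To see that each $E_l$ is a strong minimal cover of $X$, suppose $Y<E_l$. Then by the property of $E_l$ recalled above, $Y\leq X\oplus \Id_k$ for some $k$, and since $X\oplus \Id_k\equiv X$ we conclude $Y\leq X$, as required. The incomparability of the $E_l$'s is already asserted by Theorem~\ref{SelfFullMinimalCoversInI}, so the resulting family provides infinitely many incomparable self-full strong minimal covers of $X$. The only subtlety is ensuring that $X < E_l$ rather than $X\equiv E_l$, which is the single point at which non-self-fullness of $X$ is essential; everything else is an immediate application of the earlier theorem.
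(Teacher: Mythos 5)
Your proposal is correct and follows essentially the same route as the paper: apply Theorem~\ref{SelfFullMinimalCoversInI} with $A=X$, then use non-self-fullness to absorb the $\Id_k$-summands via $X\oplus\Id_k\equiv X$, so that $Y<E_l$ forces $Y\leq X$. Your extra remark verifying that $X<E_l$ strictly (via preservation of self-fullness under $\equiv$) is a small point the paper leaves implicit, but the argument is the same.
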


\begin{proof}
If $X$ is non-self-full and non-universal, then by Theorem
\ref{SelfFullMinimalCoversInI} $X$ has infinitely many incomparable self-full
ceers $E_l$ above it. Now, if $Y<E_l$ then by properties of $E_l$ established
in that theorem we have that $Y\leq X \oplus \Id_n$ for some $n$. But by
self-fullness we have (Observation~\ref{obs:nonselffull}) $X \equiv X\oplus
\Id_n$, thus $Y\leq X$.
\end{proof}

Notice that, for $X$ non-self-full every strong minimal cover $Y$ of $X$ does
not have the property that $Z>X$ implies $Z\geq Y$.

\begin{q}
Does every non-self-full ceer have a non-self-full strong minimal cover? Does
every non-self-full ceer have infinitely many incomparable non-self-full
strong minimal covers?
\end{q}

Note that a positive answer to the second form of this question would give an
embedding $F$ of $\omega^{<\omega}$ into the ceers where $F(\sigma i)$ is a
strong minimal cover over $F(\sigma)$.

\subsection{Branching and non-branching}
Theorem~\ref{LightMeets} shows that we have meet-irreducible
(also called non-branching) elements in $\Ceers$, by showing that they
coincide with the self-full-ceers. Since every dark ceer is self-full we have
that there exist meet-irreducible dark ceers. But also:

\begin{cory}\label{cor:light-meet-irr}
There is a light degree $E$ which is not a greatest lower bound of any
incomparable degrees.
\end{cory}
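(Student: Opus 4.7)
The plan is to invoke the equivalence between self-fullness and meet-irreducibility already established in Theorem~\ref{LightMeets}, and then combine it with an existence result for light self-full ceers. By Theorem~\ref{LightMeets}, a ceer $E$ fails to be a greatest lower bound of any incomparable pair if and only if $E$ is self-full. So the statement reduces to producing a \emph{light} self-full ceer.

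Such a ceer is supplied by Corollary~\ref{LightSelfFulls}, which in turn is an immediate instance of Theorem~\ref{FiniteClassesSelfFulMinimalCoversInI} with $A=\Id$: that theorem yields infinitely many pairwise incomparable self-full ceers $E_l$ above $\Id$, and since $\Id\leq E_l$ each $E_l$ is light. Picking any one of these $E_l$ then finishes the proof.

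I do not expect any real obstacle here; the whole argument is a two-line citation of earlier results. The only thing worth pointing out for the reader is the direction of Theorem~\ref{LightMeets} being used (self-full $\Rightarrow$ not a meet of incomparables), and the fact that Corollary~\ref{LightSelfFulls} genuinely produces a \emph{light} witness, so that the corollary gives more than what could be obtained merely from Lemma~\ref{darksAreSelfFull} (which would supply only dark self-full examples, already noted above). Thus the proof can simply read: \emph{By Corollary~\ref{LightSelfFulls}, fix a light self-full ceer $E$. By Theorem~\ref{LightMeets}, $E$ is not the greatest lower bound of any incomparable pair of degrees.}
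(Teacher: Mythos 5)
Your proposal is correct and is essentially identical to the paper's own proof: cite Corollary~\ref{LightSelfFulls} for the existence of a light self-full ceer, then apply Theorem~\ref{LightMeets} to conclude it is not a meet of any incomparable pair. No gaps.
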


\begin{proof}
Theorem \ref{LightSelfFulls} shows that there are light self-full degrees
$E$. It follows from Theorem~\ref{LightMeets} that $E$ cannot be a
meet of any incomparable pair of degrees.
\end{proof}

Contrary to this, Theorem~\ref{StrongMinimalCoversNSF} shows that in
$\Ceers_\I$ every non-universal element is meet-reducible (also called
branching).

\begin{cory}\label{DarkBranching}
Every non-universal $\equiv_\I$-degree $E$ is branching. Moreover if $E$ is
dark then in the $\I$-degrees $E$ is a meet of two incomparable dark
$\I$-degrees.
\end{cory}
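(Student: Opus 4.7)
The plan is to deduce this corollary directly from Theorem~\ref{StrongMinimalCoversNSF}, which provides an abundant supply of strong minimal covers in $\Ceers_\I$. Given a non-universal $\equiv_\I$-degree $E$, Theorem~\ref{StrongMinimalCoversNSF} hands us infinitely many pairwise $\leq_\I$-incomparable self-full strong minimal covers of $E$; I would choose any two of them, say $X_1$ and $X_2$. Then $E <_\I X_1$, $E <_\I X_2$, and $X_1, X_2$ are $\leq_\I$-incomparable, so both strictly dominate $E$ in $\Ceers_\I$.

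The next step is to verify that $E$ is in fact the greatest lower bound of $X_1$ and $X_2$ in $\Ceers_\I$. Clearly $E \leq_\I X_1$ and $E \leq_\I X_2$, so $E$ is a common lower bound. Suppose now that $Z \leq_\I X_1$ and $Z \leq_\I X_2$. Because $X_1$ is a strong minimal cover of $E$ in $\Ceers_\I$, either $Z \equiv_\I X_1$ or $Z \leq_\I E$. The first option is ruled out at once: from $Z \equiv_\I X_1$ and $Z \leq_\I X_2$ we would get $X_1 \leq_\I X_2$, contradicting the $\leq_\I$-incomparability of $X_1$ and $X_2$. Hence $Z \leq_\I E$, which shows that $E$ is the greatest lower bound of $X_1,X_2$ in $\Ceers_\I$, and hence $E$ is branching.

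For the moreover clause, assume $E$ is dark. Then the second sentence of Theorem~\ref{StrongMinimalCoversNSF} guarantees that $E$ has infinitely many pairwise $\leq_\I$-incomparable \emph{dark} strong minimal covers; picking two such covers $X_1, X_2$, the argument of the previous paragraph applies verbatim, and the two incomparable $\I$-degrees witnessing that $E$ is a meet are now both dark.

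There is no serious obstacle to overcome: every ingredient has already been assembled. The only thing worth checking carefully is the standard fact that two strong minimal covers $X_1, X_2$ of $E$ automatically meet to $E$ whenever they are incomparable, which is handled by the short case analysis above using the definition of strong minimal cover together with incomparability of $X_1$ and $X_2$.
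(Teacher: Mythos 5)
Your proof is correct and follows exactly the route the paper intends: the paper's own proof is just the single line ``Directly from Theorem~\ref{StrongMinimalCoversNSF},'' and your argument supplies the short verification (two incomparable strong minimal covers of $E$ in $\Ceers_\I$ have $E$ as their meet, since any common lower bound $Z$ cannot be $\equiv_\I$ either cover by incomparability and hence drops to $\leq_\I E$) that the paper leaves implicit. The dark case is handled the same way via the second clause of that theorem, so nothing further is needed.
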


\begin{proof}
Directly from Theorem~\ref{StrongMinimalCoversNSF}.
\end{proof}

The following observation gives an alternative proof of branching of
self-full ceers in $\Ceers_\I$, using the Exact Pair Theorem.

\begin{obs}\label{DarkBranching-bis}
For any self-full ceer $E$, there are ceers $X,Y\geq E$ and  $>_\I E$ so that
$R\leq_\I X$ and $R\leq_\I Y$ if and only if $R\leq_\I E$. Further, if $E$ is
dark, we can choose $X$ and $Y$ to be dark as well.
\end{obs}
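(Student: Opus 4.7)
The plan is to apply the Exact Pair Theorem (Theorem~\ref{thm:EPT}) to the uniformly c.e.\ sequence $A_0 = E$ and $A_i = \Id_1$ for $i \geq 1$; this sequence consists of $E$ together with finite ceers, so in the dark case (where $E$ is dark) we may additionally invoke the dark Exact Pair Theorem to obtain $X, Y$ dark. Noting that $\bigoplus_{i \leq n} A_i \equiv E \oplus \Id_n$, the theorem produces ceers $X, Y$ with $X, Y \geq E \oplus \Id_n$ for every $n$, and such that every $Z \leq X, Y$ satisfies $Z \leq E \oplus \Id_n$ for some $n$.

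The inequalities $X, Y \geq_\I E$ are immediate from $X, Y \geq E$. For the strict inequality, suppose $X \leq_\I E$, so $X \leq E \oplus \Id_k$ for some $k$. Combining this with $X \geq E \oplus \Id_{k+1}$ yields $E \oplus \Id_{k+1} \leq E \oplus \Id_k$, and iterated applications of Lemma~\ref{SuccessorIsInjective} give $E \oplus \Id_1 \leq E$, contradicting self-fullness of $E$ by Observation~\ref{obs:nonselffull}. Hence $X >_\I E$, and the same argument shows $Y >_\I E$.

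It remains to verify the meet condition in $\Ceers_\I$. If $R \leq_\I E$, say $R \leq E \oplus \Id_n$, then $R \leq X$ and $R \leq Y$ (since $X, Y \geq E \oplus \Id_n$), so $R \leq_\I X, Y$. Conversely, if $R \leq_\I X$ and $R \leq_\I Y$, pick a common $n$ with $R \leq X \oplus \Id_n$ and $R \leq Y \oplus \Id_n$. Lemma~\ref{coproduct3}(3) produces a ceer $R_0$ with $R_0 \leq X, Y$ and $R \equiv R_0 \oplus \Id_k$ for some $k \leq 2n$. The Exact Pair Theorem then yields some $j$ with $R_0 \leq E \oplus \Id_j$, whence $R \leq E \oplus \Id_{j+k}$, i.e., $R \leq_\I E$.

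The main conceptual point is the packaging of the sequence so as to push $E$ strictly below $X$ and $Y$ in the $\I$-degrees while keeping the meet exactly at $E$; the self-fullness hypothesis is used precisely once, to rule out a collapse $E \oplus \Id_{k+1} \leq E \oplus \Id_k$, and Lemma~\ref{coproduct3}(3) is the bridge that converts the $\I$-meet hypothesis into the literal meet hypothesis required by the Exact Pair Theorem.
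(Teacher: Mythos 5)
Your proposal is correct and follows essentially the same route as the paper: apply the Exact Pair Theorem to the sequence $A_0=E$, $A_i=\Id_1$ for $i>0$, and use Lemma~\ref{coproduct3}(3) to convert the $\I$-lower-bound hypothesis into a literal lower bound handled by the exact pair property. The only difference is that you spell out the strictness $X,Y>_\I E$ via iterated applications of Lemma~\ref{SuccessorIsInjective} and self-fullness, a step the paper dismisses as clear.
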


\begin{proof}
Apply the Exact Pair Theorem to the sequence $(A_i)_{i \in \omega}$ where
$A_0=E$ and $A_i=\Id_1$ for $i>0$. Let $X,Y$ be as produced by the Exact Pair
Theorem. Clearly $X,Y>_\I E$. Then $R\leq_{\I} X,Y$ implies that for some $n$
there are reductions $f$ and $g$ witnessing that $R\leq X\oplus \Id_n$ and
$R\leq Y\oplus \Id_n$. By Lemma~\ref{coproduct3}(3) there exists a ceer $R_0$
so that $R_0\leq X,Y$ and $R\leq R_0\oplus \Id_{2n}$. Then $R_0\leq
\bigoplus_{i\leq m} A_i$ for some $m$, from which $R_0 \leq E \oplus
\Id_{m-1}$, so $R\leq E\oplus \Id_{m-1+2n}$, showing that $R\leq_\I E$. The
claim about dark ceers follows by Corollary~\ref{cor:to-Exact}.
\end{proof}

Corollary~\ref{cor:dark-light-for-darks} below shows that for dark ceers in
$\Ceers_\I$ we can have also dark-light branching. We first need the
following theorem.

\begin{thm}\label{LightDarkMeets}
If $X,Y$ are dark ceers with a greatest lower bound $Z$ in the
$\equiv_\I$-degrees, then $X,Y\oplus \Id$ also have a greatest lower bound
$Z$ in the $\equiv_\I$ degrees.

Conversely, if $X,Y$ are dark and $Z$ is a greatest lower bound of $X,Y\oplus
\Id$ in the $\I$-degrees, then $Z$ is a greatest lower bound of $X,Y$ in the
$\I$-degrees.
\end{thm}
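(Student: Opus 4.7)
The plan is to prove a single bridging lemma from which both directions of the theorem follow at once: for dark ceers $X,Y$ and any ceer $W$, one has
\[
W\leq_\I X \text{ and } W\leq_\I Y\oplus\Id \quad\Longleftrightarrow\quad W\leq_\I X \text{ and } W\leq_\I Y.
\]
In other words, the $\I$-lower cone of $\{X,Y\oplus\Id\}$ equals that of $\{X,Y\}$; since greatest lower bounds are determined by the lower cone, both assertions of the theorem are then immediate. The $(\Leftarrow)$ direction of the bridging equivalence is trivial because $Y\leq Y\oplus\Id$.

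For the nontrivial direction $(\Rightarrow)$, I would start from $W\leq X\oplus\Id_n$ for some $n$ and apply Lemma~\ref{coproduct3}(2) to extract a ceer $W_0$ with $W_0\leq X$ and $W\equiv W_0\oplus\Id_k$ for some $k\le n$. Since $X$ is dark, Observation~\ref{obs:dark-closure} gives that $W_0$ is either dark or has only finitely many classes. Now combining $W_0\leq W$ with $W\leq_\I Y\oplus\Id$ and absorbing the extra $\Id_m$ into the $\Id$-summand (since $Y\oplus\Id\oplus\Id_m\equiv Y\oplus\Id$), we obtain a reduction $f\colon W_0 \leq Y\oplus\Id$. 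The key observation is that the image of $f$ can hit only finitely many equivalence classes of the $\Id$-part: otherwise pulling back a computable listing of infinitely many distinct such classes would yield an infinite effective transversal of $W_0$, contradicting that $W_0$ is dark (and the finite-class case is trivial since $Y$ is infinite). Then Lemma~\ref{coproduct3}(1), applied with the ``$W$''-set there taken to be the even numbers and the effective transversal $U$ taken to be representatives of the finitely many $\Id$-classes in the range of $f$, delivers $W_0\leq Y\oplus\Id_j$ for some $j$. Hence $W\equiv W_0\oplus\Id_k\leq Y\oplus\Id_{j+k}$, so $W\leq_\I Y$, as required.

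Given the bridging lemma, the first direction is immediate: if $Z=\mathrm{glb}_\I(X,Y)$ then $Z\leq_\I Y\leq_\I Y\oplus\Id$, so $Z$ is still a lower bound of $\{X,Y\oplus\Id\}$, and any $W\leq_\I X,Y\oplus\Id$ satisfies $W\leq_\I X,Y$ by the lemma, whence $W\leq_\I Z$. Symmetrically, for the converse, if $Z=\mathrm{glb}_\I(X,Y\oplus\Id)$ then the lemma (applied with $W=Z$) yields $Z\leq_\I Y$, so $Z$ is a lower bound of $\{X,Y\}$, and any $W\leq_\I X,Y$ satisfies $W\leq_\I Y\oplus\Id$ trivially, hence $W\leq_\I Z$.

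The only nontrivial step is the bridging lemma, and within it the main subtlety is the passage from $W$ to the ``dark part'' $W_0\leq X$ via Lemma~\ref{coproduct3}(2): once we have that $W_0$ inherits darkness from $X$, the restriction of the reduction $W_0\leq Y\oplus\Id$ to a reduction $W_0\leq Y\oplus\Id_j$ is forced by darkness and is handled cleanly by Lemma~\ref{coproduct3}(1). No construction or injury argument is needed.
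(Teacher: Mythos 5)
Your proposal is correct and follows essentially the same route as the paper: both arguments reduce to showing that the $\leq_\I$-lower cones of $\{X,Y\}$ and $\{X,Y\oplus\Id\}$ coincide, by observing that anything $\I$-below the dark ceer $X$ is dark (up to a finite part), and that a dark ceer reducing to $Y\oplus\Id$ can hit only finitely many classes of the $\Id$-summand and hence reduces to $Y\oplus\Id_k$. The only cosmetic difference is your detour through the decomposition $W\equiv W_0\oplus\Id_k$ via Lemma~\ref{coproduct3}(2), where the paper works directly with $E\leq X\oplus\Id_n$ and notes $E$ is dark.
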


\begin{proof}
Let $X,Y$ be dark ceers. Assume first that $Z$ is a greatest lower bound of
$X,Y$ in the $\equiv_\I$ degrees, and let $E\leq_I X,Y\oplus \Id$. Then
$E\leq X\oplus \Id_n, Y\oplus \Id \oplus \Id_n$ for some $n$. But $E$ is dark
since $E\leq X\oplus \Id_n$, so at most finitely many elements of $\Id$ can
be in the range of $E$ in the reduction to $Y\oplus \Id \oplus \Id_n$. Thus
$E\leq X\oplus \Id_k, Y\oplus \Id_k$ for some $k$. Thus $E\leq_\I X,Y$. Thus
$E\leq_\I Z$.

Now suppose $Z$ is a greatest lower bound of $X,Y\oplus \Id$ in the
$\I$-degrees. Since $Z$ is dark, as above $Z\leq_\I Y$, so $Z\leq_\I X,Y$.
Given any $R\leq_\I X,Y$, we have that $R\leq_\I X,Y\oplus \Id$, so $R\leq_\I
Z$.
\end{proof}

\begin{cory}\label{cor:dark-light-for-darks}
We have dark-light branching in the $\equiv_\I$-degrees of dark ceers: For
any dark ceer $X$, there is a dark ceer $A$ and a light ceer $B$, both
$\geq_{\mathcal{I}} X$, so that $R\leq_\I A$ and $R\leq_\I B$ if and only if
$R\leq_\I X$. In particular, many pairs of light and dark ceers have meets.
\end{cory}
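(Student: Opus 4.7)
The plan is to combine Observation~\ref{DarkBranching-bis} (which gives dark-dark branching over any dark self-full ceer in the $\I$-degrees) with Theorem~\ref{LightDarkMeets} (which transfers a meet between two dark ceers to a meet between a dark ceer and a corresponding light ceer obtained by joining with $\Id$).

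First I would note that the given dark ceer $X$ is self-full by Lemma~\ref{darksAreSelfFull}. Then I would apply the dark version of Observation~\ref{DarkBranching-bis} to $X$: this produces two dark ceers $A, A'$, each $\ge X$ and each $>_{\I} X$, such that
\[
R\leq_{\I} A \text{ and } R\leq_{\I} A' \iff R\leq_{\I} X,
\]
i.e.\ $X$ is a greatest lower bound of $A$ and $A'$ in $\Ceers_{\I}$.

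Next, I would set $B = A'\oplus \Id$. By construction $B$ contains $\Id$ as a subrelation (in the odd column), so $B$ is light; and $B\ge A'\ge X$, so $B\ge_{\I} X$. Similarly $A\ge_{\I} X$. Now I would invoke the first direction of Theorem~\ref{LightDarkMeets}: since $A$ and $A'$ are both dark and have $X$ as a greatest lower bound in $\Ceers_{\I}$, the pair $A, A'\oplus \Id = A, B$ also has $X$ as a greatest lower bound in $\Ceers_{\I}$. That is precisely the required biconditional, and the pair $(A,B)$ witnesses dark-light $\I$-branching over $X$.

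There is no real obstacle here, since both ingredients are already proved; the only small point to verify is that $A$ and $B$ are not $\I$-comparable with each other (so that the ``branching'' is nontrivial). This is immediate: if $A\le_{\I} B = A'\oplus \Id$ then, as $A$ is dark, Lemma~\ref{coproduct3}(1) (with $U$ the $\Id$-part, noting that only finitely many odd classes can lie in the image of any reduction from a dark ceer) gives $A\le_{\I} A'$, contradicting $A\wedge_{\I} A' \equiv_{\I} X <_{\I} A$; and $B\le_{\I} A$ would give $A'\le_{\I} A$, again a contradiction. The ``in particular'' clause follows by varying $X$ over all dark ceers, which yields many dark-light pairs with meets in the $\I$-degrees.
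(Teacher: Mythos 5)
Your proof is correct and follows essentially the same route as the paper: the paper derives the corollary directly from Theorem~\ref{LightDarkMeets} together with dark--dark branching over $X$ in the $\I$-degrees, exactly as you do. The only (immaterial) difference is that you source the dark--dark branching from Observation~\ref{DarkBranching-bis} (the Exact Pair argument, applicable since dark ceers are self-full by Lemma~\ref{darksAreSelfFull}), whereas the paper cites Corollary~\ref{DarkBranching}, which rests on the strong minimal covers of Theorem~\ref{StrongMinimalCoversNSF}; the paper itself presents these as interchangeable, and your explicit incomparability check, while not strictly needed (it already follows from $A,B>_\I X$ and the meet property), is also correct.
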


\begin{proof}
This follows directly from Theorem \ref{LightDarkMeets} and Corollary
\ref{DarkBranching}.
\end{proof}

\subsection{Meets in $\Ceers$ and $\Ceers_\I$}
We complete our study of meets (i.e. pairs with a meet, pairs without a meet,
etc.) comparing the two structures $\Ceers$ and $\Ceers_\I$. First of all
notice that meets in $\Ceers$ become meets in $\Ceers_{\I}$ as well. Indeed,
the following lemma parallels the analogous result for joins, see
Lemma~\ref{JoinsPreservedByI}.

\begin{lem}\label{IPreservesMeets}
If $B,C$ are ceers with a greatest lower bound $E$, then $E$ is also a
greatest lower bound for $B,C$ in the $\equiv_\I$-degrees.
\end{lem}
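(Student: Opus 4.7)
The plan is to mimic the argument for Lemma~\ref{JoinsPreservedByI}, using the dual observation coming from Lemma~\ref{coproduct3}(3). We need to check two things: first, that $E$ remains a lower bound of $B,C$ in $\Ceers_\I$, which is trivial because $E\leq B,C$ implies $E\leq_\I B, C$; and second, that $E$ is the greatest such lower bound with respect to $\leq_\I$.

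For the second step, I would let $D$ be any ceer with $D\leq_\I B$ and $D\leq_\I C$. By definition there are $n_1,n_2$ with $D\leq B\oplus \Id_{n_1}$ and $D\leq C\oplus \Id_{n_2}$; taking $n=\max(n_1,n_2)$, and using the obvious monotonicity of $\oplus \Id_n$ with respect to $n$, we may assume $D\leq B\oplus \Id_n$ and $D\leq C\oplus \Id_n$ for the same $n$.

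Now Lemma~\ref{coproduct3}(3) applies: it furnishes a ceer $D_0$ with $D_0\leq B$, $D_0\leq C$, and $D\equiv D_0\oplus \Id_k$ for some $k\leq 2n$. Since $D_0$ is a lower bound of $B$ and $C$ in $\Ceers$ and $E$ is by hypothesis the greatest lower bound of $B,C$ in $\Ceers$, we get $D_0\leq E$. Hence
\[
D\;\equiv\; D_0\oplus \Id_k\;\leq\; E\oplus \Id_k,
\]
which yields $D\leq_\I E$, as required. The only real content is the invocation of Lemma~\ref{coproduct3}(3) to extract a common lower bound in $\Ceers$ from a common $\leq_\I$-upper bound, and there is no obstacle since that lemma has already been established.
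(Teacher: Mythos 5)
Your proof is correct and follows essentially the same route as the paper's: pass to a common $n$ with $D\leq B\oplus\Id_n$ and $D\leq C\oplus\Id_n$, invoke Lemma~\ref{coproduct3}(3) to extract a common lower bound $D_0\leq B,C$ with $D\equiv_\I D_0$, and conclude $D_0\leq E$, hence $D\leq_\I E$. (The only cosmetic point is that Lemma~\ref{coproduct3}(3) is stated for $n\geq 1$, which your monotonicity remark already handles.)
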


\begin{proof}
Let $Z\leq_\I B,C$. Then $Z\leq B\oplus \Id_n, C\oplus \Id_n$ for some $n$.
By Lemma~\ref{coproduct3}(3) there exists some $Z_0$ such that
$Z_0\equiv_\I Z$ and $Z_0\leq B,C$. Thus $Z_0\leq E$, showing that $Z\leq_\I
E$.
\end{proof}

\subsubsection{Dark ceers and infima in $\Ceers$ and $\Ceers_{\I}$}
As an easy consequence of Theorem~\ref{LightMeets}, we can dualize
Theorem~\ref{NoJoinOfDark}: in fact, a stronger result holds.

\begin{thm}\label{thm:nodarkmeet}
If $E_1$ and $E_2$ are incomparable and at least one is dark, there is no
greatest lower bound of $E_1$ and $E_2$.
\end{thm}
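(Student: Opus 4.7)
The plan is to derive a contradiction from the existence of a greatest lower bound $Z$ of $E_1$ and $E_2$ by producing a strictly larger common lower bound, namely $Z \oplus \Id_1$. The whole argument reduces to verifying that $Z$ must be self-full, and then applying Lemma~\ref{StrongMinimalCoversOfSF} to $Z$.

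First I would check that $Z$ is self-full. Without loss of generality $E_1$ is dark, so $Z \leq E_1$ together with Observation~\ref{obs:dark-closure} gives that either $Z$ has finitely many classes or $Z$ is itself dark. If $Z$ is dark, it is self-full by Lemma~\ref{darksAreSelfFull}. If instead $Z \equiv \Id_n$ for some $n \in \omega$, then $Z \oplus \Id_1 \equiv \Id_{n+1} \not\leq \Id_n \equiv Z$, so by Observation~\ref{obs:nonselffull} $Z$ is self-full in this case as well.

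Next I would observe that the reductions $Z \leq E_1$ and $Z \leq E_2$ are strict. Indeed, if $Z \equiv E_1$ then $E_1 \equiv Z \leq E_2$, contradicting incomparability of $E_1$ and $E_2$; similarly $Z \not\equiv E_2$. Hence $Z < E_1$ and $Z < E_2$.

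Finally, invoking the second half of Lemma~\ref{StrongMinimalCoversOfSF}, since $Z$ is self-full and $Z < E_i$ for $i=1,2$, we obtain $Z \oplus \Id_1 \leq E_1$ and $Z \oplus \Id_1 \leq E_2$. Thus $Z \oplus \Id_1$ is a common lower bound of $E_1$ and $E_2$. But the first half of the same lemma says $Z \oplus \Id_1$ is a strong minimal cover of $Z$, so $Z < Z \oplus \Id_1$, contradicting that $Z$ was supposed to be a greatest lower bound. There is no real obstacle here: the work has already been done by Lemmata~\ref{darksAreSelfFull} and~\ref{StrongMinimalCoversOfSF}, and the only subtlety is remembering to handle the case of $Z$ with finitely many classes separately, which is immediate from Observation~\ref{obs:nonselffull}.
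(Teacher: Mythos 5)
Your proof is correct and follows essentially the same route as the paper, which argues in one line that any greatest lower bound would be dark (hence self-full) and then invokes the meet-irreducibility of self-full ceers (Theorem~\ref{LightMeets}, whose relevant direction is exactly your application of Lemma~\ref{StrongMinimalCoversOfSF}). The only difference is that you inline that direction of Theorem~\ref{LightMeets} and explicitly dispatch the case where $Z$ has finitely many classes, which the paper glosses over.
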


\begin{proof}
Any potential greatest lower bound would have to be $<$ a dark degree, thus
dark, so self-full, contradicting Theorem~\ref{LightMeets}.
\end{proof}

However, contrary to what happens in $\Ceers$ as highlighted by
Theorem~\ref{thm:nodarkmeet}, we may have infima of incomparable dark ceers
in $\Ceers_{\I}$:

\begin{cory}\label{cor:some-dark-with-inf-I}
There are pairs of dark degrees which have a greatest lower bounds in the
$\equiv_\I$-degrees.
\end{cory}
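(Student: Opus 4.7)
The plan is to deduce this corollary directly from the branching result for dark $\I$-degrees established earlier. The point is that the machinery needed—namely, that every non-universal dark $\I$-degree is the meet in $\Ceers_\I$ of two incomparable dark $\I$-degrees—has already been set up, so nothing new needs to be constructed.

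Concretely, I would first observe that dark non-universal ceers exist in abundance (e.g.\ by Theorem~\ref{MinimalDark}, which produces infinitely many pairwise incomparable dark ceers, each of which is in particular non-universal). Fix any such dark ceer $E$. Since every dark ceer is self-full by Lemma~\ref{darksAreSelfFull}, and since $E$ is non-universal, one can apply Corollary~\ref{DarkBranching} (or, equivalently, Observation~\ref{DarkBranching-bis}) to $E$: this yields two dark ceers $X,Y$ with $X,Y \geq_\I E$ and with the property that for every ceer $R$, $R \leq_\I X$ and $R \leq_\I Y$ if and only if $R \leq_\I E$. In particular $X$ and $Y$ are $\leq_\I$-incomparable (otherwise one of them would be $\leq_\I E$, forcing $E \equiv_\I X \equiv_\I Y$, contradicting $X,Y >_\I E$), and $E$ is a greatest lower bound of $X$ and $Y$ in $\Ceers_\I$. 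This gives the desired pair of dark $\I$-degrees with an $\I$-meet.

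The only thing to double-check is that the $X,Y$ produced are genuinely dark and not merely $\I$-equivalent to something dark, but this is exactly what Corollary~\ref{DarkBranching} (resp.\ the ``dark'' clause of Observation~\ref{DarkBranching-bis}, which relies on Corollary~\ref{cor:to-Exact}) guarantees. There is no real obstacle here—the whole content of the corollary is recording, for the summary tables, a fact already proven; the work was done in Theorem~\ref{StrongMinimalCoversNSF} and the Exact Pair Theorem~\ref{thm:EPT}, and this corollary is essentially a one-line consequence of them.
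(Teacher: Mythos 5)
Your proof is correct and follows essentially the same route as the paper: the paper's own proof of this corollary is a one-line appeal to Corollary~\ref{DarkBranching} (equivalently, to Theorem~\ref{StrongMinimalCoversNSF} / Observation~\ref{DarkBranching-bis} via the dark Exact Pair Theorem), applied to any dark ceer $E$, exactly as you do. Your extra check that the resulting $X,Y$ are $\leq_\I$-incomparable is a correct and welcome detail that the paper leaves implicit in the statement of Corollary~\ref{DarkBranching}.
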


\begin{proof}
Theorem~\ref{DarkBranching} allows us to take $X,Y$ dark so that $E$ is a
greatest lower bound of $X,Y$ in the $\I$-degrees, if we start with $E$ dark.
\end{proof}

On the other hand there are cases of incomparable dark ceers with no infimum
in $\Ceers_{\I}$. To see this, we first need the following lemma.

\begin{lemma}\label{ProperlyIncomparable}
There is an infinite uniformly c.e. family of dark ceers $(A_i)_{i\in
\omega}$ so that each $A_j$ is not $\leq_\I\bigoplus_{i<j} A_i$.
\end{lemma}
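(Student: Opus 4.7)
The plan is to construct the sequence $(A_j)_{j\in \omega}$ simultaneously by a finite-injury priority argument, generalizing the construction of Theorem~\ref{MinimalDark} to handle infinitely many ceers and to use $\I$-non-reducibility targets $\bigoplus_{i<j}A_i\oplus \Id_n$ in place of a single non-universal $R$. The requirements, for all $i,j,k,e,n\in \omega$, are darkness-type requirements $D^j_{i,k}$ saying that if $W_i$ intersects infinitely many $A_j$-classes then $W_i\cap [k]_{A_j}\ne \emptyset$, and $\I$-diagonalization requirements $N^j_{e,n}$ saying that $\phi_e$ is not a reduction of $A_j$ to $\bigoplus_{i<j}A_i\oplus \Id_n$. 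The $D$-requirements guarantee darkness of each $A_j$ via the arguments of Lemmas~\ref{lemPdark} and~\ref{lemSdark} (with $E_l$ replaced by $A_j$), provided each $A_j$ has infinitely many classes; this last condition is automatic from the family $\{N^j_{e,n}\mid e,n\in \omega\}$, since a ceer equivalent to $\Id_m$ would reduce to every $\bigoplus_{i<j}A_i\oplus \Id_{m+\ell}$.

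For the $N^j_{e,n}$-requirements I would employ the \basta strategy of Theorem~\ref{MinimalDark}, applied with target $B:=\bigoplus_{i<j}A_i\oplus \Id_n$. The strategy works provided $B$ is non-universal, and this is the case here: $\bigoplus_{i<j}A_i$ is a finite uniform join of dark ceers, hence dark (by Observation~\ref{obs:dark-closure}) and hence non-universal, and by uniform join-irreducibility of universal ceers (Corollary~\ref{cor:universal-join-irreducible}) adjoining $\Id_n$ cannot produce a universal ceer. Thus, running the \basta procedure against a fixed universal ceer $T$ must eventually stabilize at some finite sub-outcome witnessing that $\phi_e$ fails to reduce $A_j$ to $B$, since otherwise we would obtain a reduction $T\leq B$ in the limit. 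The $D$-strategy is exactly that of the $P$-requirements in Theorem~\ref{MinimalDark}.

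The construction proceeds as a standard finite-injury argument against a computable priority order of order type $\omega$ on all requirements, with the usual initialization mechanism re-initializing lower-priority $N$-requirements whenever a higher-priority requirement acts; uniform enumerability of $(A_j)_{j\in \omega}$ is immediate from the uniformity of the construction. The main obstacle is that the target $B=\bigoplus_{i<j}A_i\oplus \Id_n$ of the strategy for $N^j_{e,n}$ is itself being built while the strategy runs, so one must check that the \basta argument of Lemma~\ref{lemTdark} still applies. But since the approximations to $B$, $A_j$, and $T$ are all monotonic, the same limit-stage analysis goes through: if the strategy were to act infinitely often, then in the limit we would have $a_i\rel{A_j}a_k \Leftrightarrow i\rel{T}k \Leftrightarrow \phi_e(a_i)\rel{B}\phi_e(a_k)$, yielding $T\leq B$, contradiction. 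Hence every requirement is eventually satisfied, as shown by the usual induction on priority rank.
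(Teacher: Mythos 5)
Your overall architecture is sound in spirit, but as written the verification has a genuine circularity that the ``usual induction on priority rank'' does not break. To show that $N^j_{e,n}$ acts only finitely often you need its target $B=\bigoplus_{i<j}A_i\oplus \Id_n$ to be non-universal, and your justification for that is the darkness of each $A_i$ with $i<j$. But darkness of $A_i$ is a property of the completed limit object: via Lemma~\ref{lemSdark} it requires the satisfaction of \emph{all} the requirements $D^i_{\ast,\ast}$, including ones of arbitrarily low global priority. Satisfaction of those low-priority $D^i$-requirements in turn requires that every requirement of higher global priority than they are --- which includes $N^j_{e,n}$ itself and other $N^{j'}$-requirements with $j'>i$ --- acts only finitely often, so that restraints stabilize, scheduling is fair, and initialization eventually stops. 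At the inductive step for $N^j_{e,n}$ you only have finiteness of action for the finitely many requirements of \emph{higher} priority, which is not enough to certify that any $A_i$ is dark. So the step ``$T\leq B$ is a contradiction'' is not available where you need it. The gap is closable, but only by exploiting an idea you do not state: since $j$-local requirements collapse and restrain only inside $A_j$, initialization and restraint can be made local to each $j$, the construction then decomposes into independent sub-constructions, and one verifies by an outer induction on $j$ (each $A_i$, $i<j$, dark or finite, hence $B$ dark or finite, hence non-universal) rather than by induction on the global priority rank. Your remark that the only issue with a moving target is monotonicity of approximations addresses a different, and much less serious, concern.

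The paper avoids all of this by not running a simultaneous construction at all: it simply iterates Theorem~\ref{MinimalDark}, taking $A_{n+1}$ to be a ceer produced by that theorem with lower-cone avoidance of $R=\bigoplus_{i\leq n}A_i$ (non-universal since a finite uniform join of dark ceers is dark by Observation~\ref{obs:dark-closure}); uniformity of Theorem~\ref{MinimalDark} yields the uniform enumerability of the family. It also gets the upgrade from $A_{n+1}\nleq R$ to $A_{n+1}\nleq_\I R$ for free, without your extra $N^j_{e,n}$-requirements for every $n$: the ceers of Theorem~\ref{MinimalDark} have no computable classes, so a reduction of $A_{n+1}$ to $R\oplus\Id_k$ could not send any class into the $\Id_k$-part and would therefore already be a reduction to $R$. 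If you want to keep your simultaneous formulation, you must either make the decoupling argument above explicit or replace it by this iteration.
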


\begin{proof}
Let $A_0$ be any dark ceer. Let $A_{n+1}$ be one ceer constructed by Theorem
\ref{MinimalDark} from $R=\bigoplus_{i\leq n} A_i$. Hence  $A_{n+1} \nleq R$:
on the other hand, it can not be $A_{n+1} \leq_{\mathcal{I}} R \oplus \Id_n$
for any $n$ since the equivalence classes in $A_{n+1}$ are not computable.

As Theorem \ref{MinimalDark} is uniform, this is a uniformly c.e. family of
dark ceers so that each $A_j$ is not $\leq_\I\bigoplus_{i<j} A_i$.
\end{proof}

\begin{thm}\label{DarkNonMeets}
Not all pairs of dark degrees have a greatest lower bound in the
$\equiv_\I$-degrees.
\end{thm}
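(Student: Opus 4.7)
The plan is to combine Lemma~\ref{ProperlyIncomparable} with the dark version of the Exact Pair Theorem (Corollary~\ref{cor:to-Exact}). Let $(A_i)_{i\in\omega}$ be the uniformly c.e.\ family of dark ceers given by Lemma~\ref{ProperlyIncomparable}, so that for every $j$, $A_j \not\leq_\I \bigoplus_{i<j} A_i$. Apply the dark Exact Pair Theorem to this family to obtain dark ceers $X,Y$ with the property that $\bigoplus_{i\leq n} A_i \leq X$ and $\bigoplus_{i\leq n} A_i \leq Y$ for every $n$, and such that any ceer $Z$ with $Z\leq X$ and $Z\leq Y$ satisfies $Z \leq \bigoplus_{i\leq n} A_i$ for some $n$. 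I claim that $X$ and $Y$ have no greatest lower bound in the $\equiv_\I$-degrees.

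Suppose, for contradiction, that $E$ is such a greatest lower bound. Since for every $j$ we have $A_j \leq X$ and $A_j \leq Y$, it follows that $A_j \leq_\I X$ and $A_j \leq_\I Y$, so by the defining property of $E$ we get $A_j \leq_\I E$ for every $j$.

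On the other hand, $E \leq_\I X$ and $E\leq_\I Y$, so there is $n$ with $E \leq X\oplus \Id_n$ and $E\leq Y\oplus\Id_n$. By Lemma~\ref{coproduct3}(3), there is a ceer $E_0$ with $E_0\leq X$, $E_0\leq Y$, and $E\equiv E_0\oplus\Id_k$ for some $k\leq 2n$. By the Exact Pair Theorem applied to $E_0$, we get $E_0 \leq \bigoplus_{i\leq m} A_i$ for some $m$, and hence $E \leq \bigoplus_{i\leq m} A_i \oplus \Id_k$, i.e.\ $E \leq_\I \bigoplus_{i\leq m} A_i$. Combining this with $A_{m+1}\leq_\I E$ gives $A_{m+1} \leq_\I \bigoplus_{i\leq m} A_i$, contradicting the defining property of the family $(A_i)_{i\in\omega}$.

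The main subtlety is the passage from $E \leq_\I X,Y$ back to an honest common lower bound $E_0 \leq X,Y$, which is precisely what Lemma~\ref{coproduct3}(3) supplies, at the cost of adjusting $E$ to $E_0\oplus \Id_k$; this cost is harmless modulo $\equiv_\I$ and is exactly what lets the Exact Pair Theorem interact with the $\I$-quotient. The rest is bookkeeping: the dark version of the Exact Pair Theorem ensures that $X$ and $Y$ are genuinely dark (so the statement is about a pair of dark degrees), and Lemma~\ref{ProperlyIncomparable} is what prevents the $\bigoplus_{i\leq n} A_i$ sequence from being eventually dominated by any single ceer in the $\equiv_\I$-degrees, which is what makes the contradiction bite.
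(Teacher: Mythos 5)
Your proof is correct and follows essentially the same route as the paper's: the same family from Lemma~\ref{ProperlyIncomparable}, the dark Exact Pair Theorem, and Lemma~\ref{coproduct3}(3) to pass from $E\leq_\I X,Y$ to an honest common lower bound $E_0\leq X,Y$. The only cosmetic difference is that you phrase the conclusion as $A_{m+1}\leq_\I\bigoplus_{i\leq m}A_i$ being absurd, while the paper exhibits $A_{n+1}$ directly as a common lower bound not below $E$; these are the same contradiction.
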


\begin{proof}
By Lemma~\ref{ProperlyIncomparable} take a uniformly c.e. family of dark
ceers $(A_i)_{i\in \omega}$ so that each $A_j$ is not $\leq_\I\bigoplus_{i<j}
A_i$. By the dark Exact Pair Theorem let $B,C$ (above all $A_i$) be dark
ceers so that $X\leq B,C$ if and only if $X\leq \bigoplus_{i\leq n} A_i$ for
some $n$. Given any ceer $E\leq_\I B,C$, by Lemma~\ref{coproduct3}(3) we
have that $E\equiv E_0\oplus \Id_k$ for some $k$ and $E_0$ such that $E_0\leq
B,C$. Then $E_0\leq \bigoplus_{i\leq n} A_i$ for some $n$. Therefore,
$A_{n+1}$ is a ceer which is below $B,C$ but is not $\leq_\I E_0$, so
$A_{n+1}\not\leq_\I E$. This shows that $E$ cannot be a greatest lower bound
of $B$ and $C$ in the $\equiv_{\mathcal{I}}$-degrees.
\end{proof}

\subsubsection{Light/dark pairs and infima in $\Ceers$ and $\Ceers_{\I}$}
In the structure of ceers we know already:

 \begin{cory}\label{cor:light-dark-meet}
There are pairs consisting of one light and one dark ceer which do not have
any meet in the degrees.
\end{cory}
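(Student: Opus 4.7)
The plan is to observe that this corollary is essentially immediate from Theorem~\ref{thm:nodarkmeet}, so the only real work is exhibiting a single incomparable light/dark pair.

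First I would take any dark ceer $E$ (these exist in abundance, e.g.\ by Theorem~\ref{MinimalDark}) and pair it with $\Id$, which is light. To verify incomparability, note that $\Id \not\leq E$ holds by definition of darkness (there is no infinite effective transversal of $E$). In the other direction, suppose for contradiction that $E \leq \Id$; then by Corollary~\ref{cor:belowId}, $E$ would be either a finite ceer or equivalent to $\Id$. Both possibilities are ruled out by $E$ being dark, since dark ceers have infinitely many classes and are not light. Hence $E$ and $\Id$ are incomparable.

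With incomparability established and $E$ dark, Theorem~\ref{thm:nodarkmeet} applies directly and gives that $E$ and $\Id$ have no greatest lower bound in $\Ceers$, proving the corollary. There is no obstacle here; the statement is stated as a corollary precisely to record this consequence of Theorem~\ref{thm:nodarkmeet}, and the only ingredient needed beyond that theorem is Corollary~\ref{cor:belowId} to rule out $E \leq \Id$. (If one wishes many such pairs rather than just one, Theorem~\ref{MinimalDark} supplies infinitely many pairwise incomparable dark ceers $E_l$, each of which gives rise to an incomparable light/dark pair $(E_l, \Id)$ with no meet by the same argument.)
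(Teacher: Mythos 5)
Your proof is correct and takes essentially the same route as the paper, whose entire proof is the citation of Theorem~\ref{thm:nodarkmeet}; you merely make explicit the witnessing incomparable pair $(E,\Id)$, with the incomparability argument (darkness for one direction, Corollary~\ref{cor:belowId} for the other) being exactly right.
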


\begin{proof}
By Theorem~\ref{thm:nodarkmeet}.
\end{proof}

This extends also to $\Ceers_{\I}$:

\begin{cory}\label{cor:light-dark-meetI}
There are pairs consisting of one light and one dark ceer which do not have
any meet in the $\equiv_\I$ degrees.
\end{cory}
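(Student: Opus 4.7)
The plan is to bootstrap Corollary~\ref{cor:light-dark-meet} and Theorem~\ref{DarkNonMeets} using the ``converse'' half of Theorem~\ref{LightDarkMeets}. That theorem tells us that for dark $X,Y$, a greatest lower bound of $X$ and $Y\oplus \Id$ in $\Ceers_\I$ is automatically a greatest lower bound of $X$ and $Y$ in $\Ceers_\I$. So if we turn one coordinate of a bad dark-dark pair into a light ceer by joining with $\Id$, any $\I$-meet of the new pair would yield one for the old pair.

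Concretely, first I would invoke Theorem~\ref{DarkNonMeets} to obtain a pair of dark ceers $B,C$ which have no greatest lower bound in the $\equiv_\I$-degrees. Then set $D = C \oplus \Id$; by construction $D$ is light (since $\Id\leq D$) and $B$ is dark, so the pair $(B,D)$ is of the desired mixed type. I would then verify that $(B,D)$ is $\leq_\I$-incomparable: if $D\leq_\I B$ then $\Id\leq_\I B$, contradicting that $B$ is dark (by Theorem~\ref{thm:characterization-of-equiI} or directly, since any reduction of $\Id$ into $B\oplus \Id_n$ would give an infinite effective transversal of $B$); conversely, if $B\leq_\I D = C\oplus\Id$, then $B\leq C\oplus \Id\oplus \Id_n$ for some $n$, and since $B$ is dark only finitely many $B$-classes can land in the $\Id$-part, so by Lemma~\ref{coproduct3}(1) $B\leq C\oplus \Id_m$ for some $m$, i.e.~$B\leq_\I C$, contradicting the $\I$-incomparability of $B$ and $C$ inherited from Theorem~\ref{DarkNonMeets}.

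Now suppose for contradiction that $Z$ were a greatest lower bound of $B$ and $D = C\oplus \Id$ in $\Ceers_\I$. Since $Z\leq_\I B$ and $B$ is dark, $Z$ is dark as well (Observation~\ref{obs:dark-closure} together with the fact that $\leq_\I$ among dark ceers preserves darkness). Apply the converse direction of Theorem~\ref{LightDarkMeets} with $X=B$ and $Y=C$: this yields that $Z$ is a greatest lower bound of $B$ and $C$ in $\Ceers_\I$, contradicting Theorem~\ref{DarkNonMeets}. Hence no such $Z$ exists, completing the proof.

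The only delicate point is the incomparability check, but it is essentially forced by the darkness of $B$ and $C$ together with the $\I$-incomparability guaranteed by Theorem~\ref{DarkNonMeets}; once this is in place, the rest is a direct appeal to Theorem~\ref{LightDarkMeets}.
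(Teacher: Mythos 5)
Your proposal is correct and matches the paper's proof, which likewise obtains the mixed pair by taking dark ceers $B,C$ with no $\I$-meet from Theorem~\ref{DarkNonMeets} and then applying the converse direction of Theorem~\ref{LightDarkMeets} to $B$ and $C\oplus\Id$. The extra incomparability verification you include is sound (and in any case implied by the non-existence of a meet).
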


\begin{proof}
This follows directly from Theorem \ref{LightDarkMeets} and Theorem
\ref{DarkNonMeets}
\end{proof}

\subsubsection{Light ceers and infima}
We now turn to considering meets of pairs of light degrees.

\begin{cory}\label{cor:light-light-meet}
Some pairs of light ceers have a greatest lower bound.
\end{cory}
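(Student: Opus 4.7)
The plan is to reduce this to Theorem~\ref{LightMeets} by noting that $\Id$ itself is a non-self-full ceer. First I would recall that $\Id \oplus \Id_1 \equiv \Id$ (identifying $\Id_1$ with, say, the equivalence class of $0$), so by Observation~\ref{obs:nonselffull} the ceer $\Id$ is non-self-full.

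Next I would apply the converse direction of Theorem~\ref{LightMeets} to $E = \Id$: this yields incomparable ceers $X, Y$ such that $\Id$ is a greatest lower bound of $X$ and $Y$ in $\Ceers$. Since $\Id \le X$ and $\Id \le Y$, both $X$ and $Y$ are light by the definition of lightness, so we have exhibited a pair of light ceers with a greatest lower bound, as required.

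Equivalently, one could appeal directly to the Exact Pair Theorem~\ref{thm:EPT} applied to the uniformly c.e. sequence with $A_0 = \Id$ and $A_j = \Id_1$ for $j > 0$: the resulting $X, Y$ are above $\Id$ (hence light), and any $Z$ below both satisfies $Z \leq \bigoplus_{i \leq n} A_i \equiv \Id$, identifying $\Id$ as the greatest lower bound. There is no real obstacle here; the only subtle point is the incomparability of $X, Y$, which is already packaged into the statement of Theorem~\ref{LightMeets}.
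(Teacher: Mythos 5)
Your proof is correct and takes essentially the same route as the paper: the paper also derives this directly from Theorem~\ref{LightMeets} applied to a non-self-full light ceer (it cites degrees of the form $X\oplus\Id$ as witnesses, of which $\Id$ itself is an instance). Your additional observation that the exact-pair ceers $X,Y$ lie above $\Id$ and are therefore light is the right way to see that the resulting pair is indeed a pair of \emph{light} ceers.
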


\begin{proof}
This is a consequence of Theorem~\ref{LightMeets} as many light degrees are
non-self-full, such as any degree of the form $X\oplus \Id$.
\end{proof}

\begin{cory}\label{cor:some-light-have-inf-I}
Some pairs of light ceers have greatest lower bounds in the
$\equiv_\I$-degrees.
\end{cory}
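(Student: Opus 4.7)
The plan is to piggyback on the previous corollary and the preservation lemma. By Corollary \ref{cor:light-light-meet} there exist incomparable light ceers $X,Y$ with a greatest lower bound $E$ in $\Ceers$; indeed, the proof of that corollary invokes Theorem \ref{LightMeets} applied to any non-self-full light ceer $E$ (for instance $E=\Id$, or any $E$ of the form $R\oplus \Id$), producing incomparable $X,Y$ with $E$ as meet. Since $E\leq X,Y$ and $E$ is light, both $X$ and $Y$ are light. Now Lemma \ref{IPreservesMeets} tells us that a meet in $\Ceers$ is automatically a meet in the $\equiv_\I$-degrees, so $E$ remains a greatest lower bound of $X$ and $Y$ in $\Ceers_\I$.

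It remains to verify that the witnessing pair is $\equiv_\I$-incomparable, so that the statement is not vacuous. Suppose toward a contradiction that $X\leq_\I Y$, say $X\leq Y\oplus \Id_n$. Then in $\Ceers_\I$ the $\I$-meet of $X$ and $Y$ would simply be $X$, so we would have $X\equiv_\I E$. But $E$ is non-self-full, and by Observation \ref{obs:nonselffull} this gives $E\oplus \Id_n\equiv E$ for every $n$; hence $X\leq_\I E$ translates to $X\leq E\oplus \Id_n\equiv E$, contradicting the strict inequality $E<X$ in $\Ceers$. The symmetric argument rules out $Y\leq_\I X$, so $X,Y$ are $\I$-incomparable light ceers with $E$ as their $\I$-meet.

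There is nothing delicate in this argument; everything has been prepared in the preceding sections, and the only point to check is that incomparability transfers to $\Ceers_\I$, which reduces to a short computation exploiting non-self-fullness of $E$. Consequently no new combinatorial construction is needed.
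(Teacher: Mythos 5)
Your proposal is correct and follows essentially the same route as the paper: combine Corollary~\ref{cor:light-light-meet} (a pair of incomparable light ceers with a meet in $\Ceers$, obtained from Theorem~\ref{LightMeets} applied to a non-self-full light ceer) with Lemma~\ref{IPreservesMeets}. Your additional verification that the pair remains incomparable in the $\equiv_\I$-degrees is sound and a useful supplement, but it does not change the argument.
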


\begin{proof}
This follows directly from Corollary~\ref{cor:light-light-meet} and Lemma
\ref{IPreservesMeets}.
\end{proof}

\begin{thm}\label{NoLightMeet}
There are pairs of light ceers with no greatest lower bound in $\equiv_\I$.
\end{thm}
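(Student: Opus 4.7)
The plan mirrors the proof of Theorem~\ref{DarkNonMeets}, but produces light witnesses by adding $\Id$ to a dark exact pair. Let $(A_i)_{i\in\omega}$ be the uniformly c.e. family of dark ceers supplied by Lemma~\ref{ProperlyIncomparable}, so that $A_j\not\leq_\I\bigoplus_{i<j}A_i$ for every $j$. Apply the dark Exact Pair Theorem (Theorem~\ref{thm:EPT} together with Corollary~\ref{cor:to-Exact}) to obtain dark ceers $B,C$ above every $\bigoplus_{i\leq n}A_i$ such that every $Z\leq B,C$ satisfies $Z\leq\bigoplus_{i\leq n}A_i$ for some $n$. Set $X=B\oplus\Id$ and $Y=C\oplus\Id$; both are light. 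I claim the pair $X,Y$ has no $\leq_\I$-greatest lower bound.

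Suppose, for contradiction, that $E$ is a meet of $X,Y$ in $\Ceers_\I$. Each $A_i\leq B\leq X$ and $A_i\leq C\leq Y$, and likewise $\Id\leq X,Y$, so every $A_i$ and also $\Id$ are $\leq_\I$-below $E$; in particular $E$ has infinitely many classes and is light. Consider any reduction $f\colon E\to B\oplus\Id$ witnessing $E\leq_\I X$. If $\im(f)$ meets only finitely many classes of the $\Id$-part, then by Lemma~\ref{coproduct3}(2), $E\equiv E_0\oplus\Id_k$ for some $E_0\leq B$ and some $k$; here $E_0$ must have infinitely many classes (since $E$ does), so $E_0$ is dark by Observation~\ref{obs:dark-closure}, whence $E_0\oplus\Id_k$ is itself dark (any effective transversal would yield, cofinitely, an infinite effective transversal of $E_0$). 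This contradicts $E$ being light. Therefore $\im(f)$ meets infinitely many $\Id$-classes, and by Lemma~\ref{coproduct3}(4), $E\equiv E_0\oplus\Id$ with $E_0\leq B$. Symmetrically $E\equiv E_1\oplus\Id$ with $E_1\leq C$; neither $E_0$ nor $E_1$ can be finite (otherwise $E\equiv\Id$, contradicting $A_0\leq_\I E$ with $A_0$ dark), so both are dark.

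From $E_0\oplus\Id\equiv E_1\oplus\Id$ and both $E_0,E_1$ dark, Theorem~\ref{thm:characterization-of-equiI} yields $E_0\equiv_\I E_1$. Hence $E_0\leq B$ and $E_0\leq_\I C$; choose $n$ with $E_0\leq B\oplus\Id_n$ and $E_0\leq C\oplus\Id_n$. Lemma~\ref{coproduct3}(3) supplies $E_0^{*}\leq B,C$ with $E_0\equiv E_0^{*}\oplus\Id_{k'}$ for some $k'$, and the Exact Pair property gives $E_0^{*}\leq\bigoplus_{i\leq m}A_i$ for some $m$, so $E_0\leq_\I\bigoplus_{i\leq m}A_i$. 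On the other hand $A_{m+1}\leq_\I E\equiv E_0\oplus\Id$, so $A_{m+1}\leq E_0\oplus\Id$; since $A_{m+1}$ is dark, the reduction can send only finitely many $A_{m+1}$-classes to the $\Id$-part (otherwise we would extract an infinite effective transversal of $A_{m+1}$), whence Lemma~\ref{coproduct3}(1) yields $A_{m+1}\leq_\I E_0$. Combining, $A_{m+1}\leq_\I\bigoplus_{i\leq m}A_i$, contradicting the defining property of $(A_i)_{i\in\omega}$.

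The main obstacle is the possibility that the two reductions $E\leq B\oplus\Id$ and $E\leq C\oplus\Id$ route infinitely many classes through the $\Id$-part, so that we cannot directly deduce $E\leq_\I B$ or $E\leq_\I C$ and invoke the dark exact pair. The resolution is the two-case split above: the light hypothesis on $E$ kills the ``finite $\Id$-hits'' case, forcing the representation $E\equiv E_0\oplus\Id\equiv E_1\oplus\Id$, and then Theorem~\ref{thm:characterization-of-equiI} produces a single dark $\I$-degree $E_0$ that inherits lower-bound information with respect to both $B$ and $C$ simultaneously, after which the exact-pair contradiction used in Theorem~\ref{DarkNonMeets} goes through.
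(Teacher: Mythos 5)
Your proof is correct, and it shares the paper's overall skeleton --- take the strictly $\leq_\I$-increasing uniformly c.e.\ sequence of dark ceers from Lemma~\ref{ProperlyIncomparable}, cap it with an exact pair, and trap any candidate meet below a finite initial segment that misses the next term --- but it implements the ``make the witnesses light'' step quite differently. The paper simply inserts $\Id$ as the zeroth term of the sequence fed to the Exact Pair Theorem; the resulting $X,Y$ bound $\Id$, hence are light, and their common lower cone is by construction exactly the ceers below some $\Id\oplus\bigoplus_{i<n}B_i$, so all that remains is the one-line absorption argument that $B_n\not\leq_\I\Id\oplus\bigoplus_{i<n}B_i$. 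You instead take a genuinely dark exact pair $B,C$ and lighten it externally, passing to $B\oplus\Id$ and $C\oplus\Id$; the price is that a hypothetical meet $E$ is only known to reduce to $B\oplus\Id$ and $C\oplus\Id$, and you must do real work to push it back into the dark exact pair: the light/dark case split (lightness of $E$, forced by $\Id\leq_\I E$, kills the ``finitely many $\Id$-classes hit'' case), then Lemma~\ref{coproduct3}(4) to write $E\equiv E_0\oplus\Id\equiv E_1\oplus\Id$, then Theorem~\ref{thm:characterization-of-equiI}((2)$\Rightarrow$(1)) to get $E_0\equiv_\I E_1$, and finally Lemma~\ref{coproduct3}(3) and the exact-pair property. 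All of these steps check out; the only citation quibble is that in the first case you should pass through Lemma~\ref{coproduct3}(1) (with $W$ the evens and $U$ the finitely many odd representatives hit) to turn the reduction into one to $B\oplus\Id_k$ before invoking part (2), exactly as the paper does elsewhere. What your route buys is a reusable intermediate fact in the spirit of Theorem~\ref{LightDarkMeets} --- any $\equiv_\I$-meet of $B\oplus\Id$ and $C\oplus\Id$ forces a common dark $\equiv_\I$-lower bound of $B$ and $C$ --- while the paper's route buys brevity by controlling the lower cone of the light witnesses directly inside the exact-pair construction.
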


\begin{proof}
Let $(B_i)_{i\in \omega}$ be a uniformly c.e. family of dark ceers so that
$B_j\not\leq_\I \bigoplus_{i<j} B_i$ as constructed in Theorem
\ref{ProperlyIncomparable}.

Then we apply the dark Exact Pair Theorem to the sequence $(A_i)_{i\in
\omega}$ defined by $A_0=\Id$ and $A_i=B_{i-1}$ for $i>0$. This gives a pair
of ceers $X,Y$ so that $Z\leq X,Y$ if and only if $Z\leq \Id\oplus
\bigoplus_{i<n} B_i$ for some $n$. Then we need to show that $B_n\not\leq_\I
\Id\oplus \bigoplus_{i<n} B_i$. Otherwise, we would have $B_n\leq
\Id\oplus\bigoplus_{i<n} B_i$ (as the $\Id$ absorbs the extra $\Id_k$ where
$k$ is such that $B_n \leq \Id\oplus \bigoplus_{i<n} B_i \oplus \Id_k$ if we
assume that $B_n \leq_\I \Id\oplus \bigoplus_{i<n} B_i$: this is a trivial
consequence of Lemma~\ref{obs:coproduct2} by taking an infinite strong
effective transversal for a reduction $\Id_k \leq \Id$, obtaining $\Id_k
\oplus \Id \leq \Id$), and thus, since $B_n$ is dark, we have $B_n\leq
\Id_k\oplus \bigoplus_{i<n} B_i$ for some $k$, showing $B_n\leq_\I
\bigoplus_{i<n} B_i$, but this is known to be false by choice of the $B_i$.
\end{proof}

\begin{cory}\label{cor:two-light-no-inf}
There are two light ceers with no greatest lower bound.
\end{cory}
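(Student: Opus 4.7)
The plan is to derive this as an immediate consequence of Theorem~\ref{NoLightMeet} together with Lemma~\ref{IPreservesMeets}. Recall that Theorem~\ref{NoLightMeet} produces a pair of light ceers $X, Y$ which have no greatest lower bound in the $\equiv_\I$-degrees (these arise by applying the dark Exact Pair Theorem to the sequence $A_0 = \Id$, $A_{i+1} = B_i$, where the $B_i$ form a uniformly c.e.\ family of dark ceers with $B_j \not\leq_\I \bigoplus_{i<j} B_i$).

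The key observation is that Lemma~\ref{IPreservesMeets} says that any greatest lower bound of a pair of ceers in $\Ceers$ is also a greatest lower bound of that pair in the quotient structure $\Ceers_\I$. Taking the contrapositive: if a pair has no greatest lower bound in $\Ceers_\I$, then it has no greatest lower bound in $\Ceers$ either.

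Applying this contrapositive to the pair $X, Y$ from Theorem~\ref{NoLightMeet}, we conclude that $X$ and $Y$ are two light ceers with no greatest lower bound in $\Ceers$, as desired. There is no real obstacle here; the work has already been done in establishing Theorem~\ref{NoLightMeet}, and Lemma~\ref{IPreservesMeets} transfers the non-existence of infima from $\Ceers_\I$ down to $\Ceers$.
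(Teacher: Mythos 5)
Your proof is correct and is essentially identical to the paper's: the paper also derives this corollary directly from Theorem~\ref{NoLightMeet} and Lemma~\ref{IPreservesMeets}, using exactly the contrapositive you describe. The pair produced in Theorem~\ref{NoLightMeet} is indeed light (both bound $A_0=\Id$), so nothing is missing.
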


\begin{proof}
This follows directly from Theorem \ref{NoLightMeet} and Lemma
\ref{IPreservesMeets}.
\end{proof}

\subsection{Summary tables}
Tables~\ref{table:4}~and~\ref{table:6} summarize the various cases of when $X
\wedge Y$ exists, and $X$ is meet-irreducible, as $X,Y$ vary in the classes
$\Dark$ and $\Light$. Table~\ref{table:5} summarizes some of the results
about strong minimal covers. The differences between $\Ceers$ and $\Ceers_\I$
are highlighted in boldface in the columns relative to $\Ceers_\I$

\begin{table}[H]
\begin{minipage}{6cm}
\begin{center}
$\Ceers$
\end{center}
\begin{tabular}{c|c|c}
$X$ & $Y$  & $X \wedge Y$? \\\hline
light &light &--Sometimes NO\\
& &--Sometimes YES\\
\hline
dark & dark & NO\\
\hline
light &dark &NO\\
\end{tabular}
\end{minipage}
\begin{minipage}{6cm}
\begin{center}
$\Ceers_\I$
\end{center}
\begin{tabular}{c|c|c}
$X$ & $Y$  & $X \wedge Y$? \\\hline
light &light &--Sometimes NO\\
& &--Sometimes YES\\
\hline
dark & dark &--Sometimes NO\\
& &--Sometimes \textbf{YES}\\
\hline
light &dark &--Sometimes NO\\
& &--Sometimes \textbf{YES}\\
\end{tabular}
\end{minipage}\caption{The problem of the existence of $\wedge$ in $\Ceers$
and $\Ceers_\I$ for incomparable general ceers.}\label{table:4}

\bigskip

\begin{minipage}{6cm}
\begin{center}
Strong minimal covers\\ in $\Ceers$\\
for non-universal ceers
\end{center}
\begin{tabular}{c|c|c}
 &   & infinitely\\
 && many? \\\hline
 $X$ &self-full & NO (only one)\\
\hline
$X$ &non-self-full &YES\\
\end{tabular}
\end{minipage}
\begin{minipage}{6cm}
\begin{center}
Strong minimal covers\\ in $\Ceers_\I$\\
for non-universal ceers
\end{center}
\begin{tabular}{c|c}
 &    infinitely\\
 & many?\\
\hline
$X$ & \textbf{YES}\\
\end{tabular}
\end{minipage}\caption{Strong minimal covers in $\Ceers$
and $\Ceers_{\I}$. In $\Ceers$ every self-full has exactly a strong minimal
cover, which is the least of all degrees strictly above it.}\label{table:5}

\bigskip

\begin{minipage}{6cm}
\begin{center}
Non-universal meet-irreducible ceers in $\Ceers$
\end{center}
\begin{tabular}{c|c|c}
 &   & meet-irreducible? \\\hline
$X$ &dark &YES\\
\hline
$X$ &light &--Sometimes NO\\
& &--Sometimes YES\\
\end{tabular}
\end{minipage}
\begin{minipage}{6cm}
\begin{center}
Non-universal meet-irreducible ceers in $\Ceers_I$
\end{center}
\begin{tabular}{c|c|c}
 &   & meet-irreducible? \\\hline
$X$ &dark &\textbf{NO}: meet of \\
&&(dark, light)\\
&&(dark, dark)\\
$X$ &light &\textbf{NO}\\
\end{tabular}
\end{minipage}\caption{Meet-irreducible elements in $\Ceers$
and $\Ceers_{\I}$. Notice that there are self-full light ceers (hence
meet-irreducible) and non-self-full light ceers (hence meet-reducible), as by
Theorem~\ref{LightMeets} the meet-irreducible ceers coincide with the
self-full ones.}\label{table:6}
\end{table}

\subsection{Minimal tuples}
For $n \ge 1$, a \emph{minimal dark $n$-tuple} is an $n$-tuple of dark ceers
so that every ceer $\leq$ every member of the tuple is in $\I$. Minimal dark
$n$-tuples trivially exist for every $n\ge 1$,  since by
Theorem~\ref{MinimalDark} there exist infinitely many minimal dark ceers, so
any $n$-tuple chosen from among these minimal dark ceers is a minimal
$n$-tuple, but in this case any sub-$k$-tuple, $1\le k <n$, of this $n$-tuple
is also minimal. The following theorem shows that this has not always to be
the case.

\begin{thm}
For every $n\geq 2$, there is a minimal dark $n$-tuple which does not contain
a minimal dark $(n-1)$-tuple.
\end{thm}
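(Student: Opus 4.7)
The plan is to adapt the Exact Pair Theorem (Theorem~\ref{thm:EPT}) to build $n$ dark ceers simultaneously. Pick pairwise incomparable minimal dark ceers $A_1,\ldots,A_n$, provided by several applications of Theorem~\ref{MinimalDark}. I will construct dark ceers $E_1,\ldots,E_n$ so that for each $i\neq j$, $A_j\le E_i$ (by coding $A_j$ into a dedicated column of $E_i$), and so that whenever $\phi_{k_1},\ldots,\phi_{k_n}$ are reductions witnessing $Z\le E_i$ for a common ceer $Z$ and every $i$, the construction forces $Z\le \bigoplus_{j\in S_i} A_j\oplus \Id_{m_i}$ for each $i$, for some finite $S_i\subseteq\{1,\ldots,n\}\setminus\{i\}$ and some $m_i\in\omega$. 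Granting this, the $(n-1)$-subtuple obtained by deleting $E_i$ admits the dark ceer $A_i$ as a common lower bound, so it is not a minimal dark $(n-1)$-tuple. The $n$-tuple itself is minimal dark: suppose $Z\le E_i$ for all $i$ with $Z$ infinite, so that $Z$ is dark by Observation~\ref{obs:dark-closure}. Consider the reduction $Z\le \bigoplus_{j\in S_1} A_j\oplus \Id_{m_1}$. For each $k\in S_1$, the restriction of $Z$ to the $A_k$-column preimage reduces to $A_k$, and by minimality of $A_k$ is either finite or $\equiv A_k$. If the latter occurs for some $k$, then $A_k\le Z\le E_k$, so applying the bound to $Z=A_k$ at $i=k$ forces $A_k\le\bigoplus_{j\in S_k}A_j\oplus \Id_{m_k}$ with $k\notin S_k$; but minimality of $A_k$ implies uniform join-irreducibility (any reduction of $A_k$ into $B\oplus C$ splits $A_k$ into two $A_k$-closed c.e.\ pieces, at least one of which has infinitely many classes and is thus $\equiv A_k$), so $A_k\le A_j$ for some $j\in S_k$ or $A_k\le\Id_{m_k}$, contradicting pairwise incomparability and darkness respectively. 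Hence each restriction is finite, the $\Id_{m_1}$-part contributes only finitely many classes, and $Z$ must have finitely many classes in total, contradicting $Z$ infinite.

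The construction is a standard finite-injury construction running coding requirements $Q_{i,j}$ (for each $i\neq j$), darkness requirements $D^i_m$, and meet-trivialization requirements $P_{\vec k}$ indexed by $\vec k=(k_1,\ldots,k_n)\in\omega^n$. The $Q_{i,j}$ and $D^i_m$ strategies are verbatim those of Theorem~\ref{thm:EPT}. The $P_{\vec k}$-strategy is the $n$-ary analogue of $P_{j,k}$ from Theorem~\ref{thm:EPT}: it waits for a pair $x,y$ and indices $i_1\neq i_2$ for which the four relevant computations have converged on $x,y$, their images are not both restrained in the respective $E_{i_s}$, and the $E_{i_1}$-status of $(\phi_{k_{i_1}}(x),\phi_{k_{i_1}}(y))$ disagrees with the $E_{i_2}$-status of $(\phi_{k_{i_2}}(x),\phi_{k_{i_2}}(y))$. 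Upon finding such a witness, the strategy freezes the disagreement by collapsing in one of $E_{i_1},E_{i_2}$ and restraining in the other (extending the restraint sets by a finite union of columns, exactly as in Theorem~\ref{thm:EPT}), and initializes lower-priority requirements.

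The verification mimics that of Theorem~\ref{thm:EPT}: if a $P_{\vec k}$-strategy never acts after its last initialization, then the reductions $\phi_{k_1},\ldots,\phi_{k_n}$ are globally consistent and define a common ceer $Z\le E_i$ for each $i$; the image of $\phi_{k_i}$ is confined to the finitely many columns restrained by higher-priority $Q$-strategies (plus finitely many junk columns auxiliary-$D$-collapsed to have only finitely many classes each), yielding the desired bound $Z\le\bigoplus_{j\in S_i} A_j\oplus \Id_{m_i}$ with $S_i\subseteq\{1,\ldots,n\}\setminus\{i\}$. The main obstacle is orchestrating the $n$-ary $P$-requirements alongside the coding and darkness ones; this is a direct, if notationally heavier, generalization of the two-ceer argument in Theorem~\ref{thm:EPT}, requiring no new conceptual ingredients beyond the observation that the disagreement search among the $\binom{n}{2}$ pairs of indices is enough to certify the structural bound needed for the meet-trivialization argument.
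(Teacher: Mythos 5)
Your overall architecture (code each $A_j$ into a column of $E_i$ for $j\neq i$, and use an Exact-Pair-style diagonalization to control common lower bounds) founders on one specific claim: that the $n$-ary $P_{\vec k}$-requirements force $Z\le\bigoplus_{j\in S_i}A_j\oplus\Id_{m_i}$ \emph{for each} $i$. The exact-pair diagonalization does not deliver this. Its action requires finding a pair of \emph{sides} $i_1,i_2$ on which you can install a permanent disagreement, and the verification in Theorem~\ref{thm:EPT} correspondingly splits into: either every image $\phi_{k_i}(x)$ stays inside the finitely many restrained columns of $E_i$ (confinement at side $i$), or some $\phi_{k_{i_1}}(x_0)$ escapes, in which case you get confinement only at the \emph{other} sides. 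So a non-acting $P_{\vec k}$-requirement certifies the bound for all but at most one index, and nothing controls which index is the exceptional one. In Theorem~\ref{thm:EPT} this is harmless because $X$ and $Y$ code the same sequence, but in your asymmetric setup it is fatal: the exceptional index can be precisely the index $k$ at which you need the bound. Concretely, after your restriction argument produces $A_k\le Z\le E_k$, the $P$-requirement applied to that tuple of reductions may have its escaping witness on side $k$ (e.g.\ the reduction $A_k\le E_k$ can land in a low-priority coding column of $E_k$ lying outside the requirement's finite restraint), and then all you learn is $A_k\le\bigoplus_{j\in S_l}A_j\oplus\Id$ for $l\neq k$ with $k$ allowed in $S_l$ --- which join-irreducibility reduces to the vacuous $A_k\le A_k$. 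Worse, the same failure already undermines minimality of the $n$-tuple itself: nothing in your requirement list prevents $A_i\le E_i$, and if that reduction exists then $A_i$ is a dark common lower bound of \emph{all} of $E_1,\dots,E_n$, so the tuple is not minimal. Blocking $A_i\le E_i$ is an upper-cone-avoidance requirement against a fixed dark ceer you do not control, which would need something like the \basta machinery interleaved with your coding; your proposal contains no such mechanism.

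The paper sidesteps all of this with a different decomposition: it builds one fresh dark ceer $E_X$ for each $(n-1)$-subset $X$ and sets $R_i=\bigoplus_{i\in X}E_X$, so that \emph{every} addendum of every $R_i$ is under the construction's control and is absent from some $R_j$. The diagonalization for $P_{\vec i}$ then never needs an ``escaping'' case: once $\phi_{i_1}(x),\phi_{i_1}(y)$ land in a common addendum $(1,E_X)$, one picks $j\notin X$ and plays $E_X$ against the addenda of $R_j$, which are disjoint from $E_X$, so a permanent disagreement can always be manufactured; and if the wait never ends the pigeonhole argument makes $Z$ finite outright. If you want to salvage your version, you would have to either add (and prove satisfiable) the requirements $A_i\not\le E_i$, or replace the fixed minimal ceers $A_j$ by constructed ones as the paper does.
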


\begin{proof}
(Sketch.) We construct dark ceers $R_1,\ldots R_n$ to be a minimal $n$-tuple.
In order to ensure that no $n-1$-sub-tuple is a minimal tuple, we construct
dark ceers $E_X$ for each $X\subseteq \{1,\ldots , n\}$ of size $n-1$ and we
let $R_i=\bigoplus_{i\in X} E_X$: as the $E_X$ are dark it follows that each
$R_i$ is dark as well by Observation~\ref{obs:dark-closure}. This ensures
that each $E_X$ is below $R_i$ if $i\in X$. Thus $\{R_i\mid i\in X\}$ does
not form a minimal $n-1$-tuple. Each $R_i$ is the $\oplus$-sum of $n-1$
addenda of the form $E_X$: we identify each $X$ such that $i \in X$ with its
canonical index, so that the $k$-th addendum in the $\oplus$-sum giving $R_i$
corresponds to the $k$-th canonical index in order of magnitude. Thus $u
\rel{R_i} v$ if and only if $u=v=k \textrm{ mod}_{n-1}$ for some $k<n-1$, and
$\hat{u} \rel{E_{X_k}} \hat v$, where for any $x$, $\hat{x}$ is the quotient
of $x$ by $n-1$ (i.e.\ $x=\hat{x}(n-1)+j$ for some $0\le j<n-1$): see
Section~\ref{ssct:uniform-joins} for the definition of a uniform join with
finitely many addenda. We will say that two numbers $u,v$ are in $(k,E_X)$ if
$u=v=j \textrm{ mod}_{n-1}$, for some $j<n-1$, and the $j$-th addendum of
$R_k$ is $E_X$ (hence $k \in X$).

Our requirements to build the desired ceers $E_X$ are as follows, where the
sequence $\vec{i}= (i_1, \ldots, i_n)$ varies on all sequences of $\omega$
having $n$ elements:

\begin{itemize}
\item[$P_{\vec{i}}$:] (where $\vec{i}= (i_1, \ldots, i_n)$) If the
    reduction $\phi_{i_k}$ to $R_k$ gives the same ceer $Z$ reduced to
    $R_k$ for each $k$, then $Z$ has only finitely many classes.
\item[$Q^k_X$:] $E_X$ has at least $k$ classes.
\item[$D^k_X$:] If $W_k$ is infinite, then there are $x,y\in W_k$ such that
    $x \rel{E_X} y$.
\end{itemize}

As usual the requirements are given a computable priority ordering of order
type $\omega$. We describe the strategies to meet the requirements. Each
requirement may restrain pairs from being $E_X$-collapsed for some $X$. We
will see that each requirement will place only a finite restraint.

\medskip

\emph{Strategy for $P_{\vec{i}}$:} Since for $P_{\vec{i}}$ only finitely many
classes in each $E_X$ are restrained, we wait for $\phi_{i_1}$  to converge,
to be in the same $(1, E_{X})$, and have $\phi_{i_1}(x),\phi_{i_1}(y)$ not as
yet $R_1$-equivalent (i.e. the pair $\widehat{\phi_{i_1}(x)}$,
$\widehat{\phi_{i_1}(y)}$ not yet $E_{X}$-equivalent), and
$\widehat{\phi_{i_1}(x)}$, $\widehat{\phi_{i_1}(y)}$ not restrained in $E_X$
by higher priority requirements. (Note that, since each $E_X$ has infinitely
many equivalence classes, if the wait never ends then either $\phi_{i_1}$ is
not total or the range of $\phi_{i_1}$ hits finitely many $E_X$-classes for
every $X$ with $1 \in X$ by the pigeonhole principle, so that if $\phi_{i_1}$
is a reduction then the image of this reduction in $R_1$ would be finite.)
Now we restrain $\widehat{\phi_{i_1}(x)}$, $\widehat{\phi_{i_1}(y)}$ in
$E_X$. Pick $j\notin X$. We now wait for
$\phi_{i_j}(x),\phi_{i_j}(y)\downarrow$. Note that $E_X$ does not appear in
the $\oplus$-sum which forms $R_j$. Once these computations converge (or if
they have already converged), we diagonalize: If $\phi_{i_j}(x) \rel{R_j}
\phi_{i_j}(y)$, then we simply maintain our restraint in $E_X$. If
$\phi_{i_j}(x) \cancel{\rel{R_j}} \phi_{i_j}(y)$, then we $R_j$-restrain
$\phi_{i_j}(x)$, $\phi_{i_j}(y)$ (no problem if $\phi_{i_j}(x)$,
$\phi_{i_j}(y)$ hits different addenda of $R_{j}$, as in this case they can
never become $R_{j}$-equivalent; otherwise, if $\phi_{i_j}(x), \phi_{i_j}(y)$
are in the same $(j, E_{Y})$ then we $E_Y$-restrain
$\widehat{\phi_{i_j}(x)}$, $\widehat{\phi_{i_j}(y)}$), and we $E_X$-collapse
$\widehat{\phi_{i_1}(x)}$, $\widehat{\phi_{i_1}(y)}$, so that $\phi_{i_1}(x)
\rel{R_1} \phi_{i_1}(y)$ and $\phi_{i_j}(x) \cancel{\rel{R_j}}
\phi_{i_j}(y)$. This ensures that there is no $Z$ such that $\phi_{i_1}$ and
$\phi_{i_j}$ reduce $Z$ to $R_1$ and $R_j$, respectively.

\medskip

\emph{Strategy for $Q^k_X$:} We take a new $k$-tuple and $E_X$-restrain this
tuple.

\medskip

\emph{Strategy for $D^k_X$:} Wait for two unrestrained elements to be
enumerated into $W_k$. Then $E_X$-collapse these two elements.

\medskip
We organize these requirements in a finite priority argument. It is clear
that after each initialization each requirement acts at most once, so each
requirement is eventually not re-initialized, thus its final action satisfies
the requirement, and sets only a finite restraint. In particular no $E_X$ is
finite so that our wait for $P_{\vec{i}}$ eventually stops if all
$\phi_{i_r}$ are total and the range of $\phi_{i_1}$ contains infinitely many
$R_1$-equivalence classes. Since $E_X$ has infinitely many equivalence
classes, each $D^k_X$ is satisfied since if $W_k$ is infinite then it
eventually will enumerate a pair of numbers not restrained by higher priority
requirements: thus each $E_X$ is dark.

It follows that the $R_i$ are dark, they form a minimal $n$-tuple, and no
$n-1$-sub-tuple is a minimal tuple.
\end{proof}

\begin{obs}
There is a pair of incomparable ceers $A,B$ which do not form a minimal pair
so that if $(A,B,C)$ form a minimal triple, then $(A,C)$ and $(B,C)$ form a
minimal pair.
\end{obs}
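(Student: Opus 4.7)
The plan is to take $A, B$ as two incomparable dark self-full ceers lying just above a fixed dark common lower bound $E_0$, and to exploit the fact that every \emph{proper} lower bound of $A$ or $B$ lives (up to absorbing finitely many extra classes) below $E_0$. Concretely, first fix any dark non-universal ceer $E_0$; such $E_0$ exist by Theorem~\ref{MinimalDark}. Then apply Theorem~\ref{SelfFullMinimalCoversInI} together with Corollary~\ref{cory:selfdarkminimalcovers} to $E_0$ to obtain two incomparable dark self-full ceers $F_1, F_2$ with $E_0 \leq F_1, F_2$ and with the key minimal-cover property that for $i = 1,2$, every ceer $X < F_i$ satisfies $X \leq E_0 \oplus \Id_k$ for some $k \in \omega$. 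Set $A := F_1$ and $B := F_2$.

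By construction $A$ and $B$ are incomparable; moreover $E_0$ is an infinite dark ceer with $E_0 \leq A$ and $E_0 \leq B$, so $(A, B)$ is not a minimal pair.

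Now suppose $(A, B, C)$ is a minimal triple and let $X$ be a common lower bound of $A$ and $C$; by symmetry it suffices to show $X \in \I$. If $X \equiv A$, then $A \leq C$, hence $E_0 \leq A \equiv X \leq C$; combined with $E_0 \leq A, B$, this makes $E_0$ an infinite common lower bound of the triple, contradicting its minimality. Otherwise $X < F_1$, so the minimal-cover property gives $X \leq E_0 \oplus \Id_k$ for some $k$, and Lemma~\ref{coproduct3}(2) produces a ceer $X_0 \leq E_0$ and some $j \leq k$ with $X \equiv X_0 \oplus \Id_j$. If $X$ were infinite, so would $X_0$ be (since $\Id_j$ is finite), and then $X_0 \leq E_0 \leq A, B$ together with $X_0 \leq X \leq C$ would again exhibit an infinite common lower bound of $A, B, C$, contradicting minimality. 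Hence $X \in \I$, proving $(A, C)$ a minimal pair; symmetrically $(B, C)$ is a minimal pair.

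The main obstacle is precisely the case analysis above, and it is resolved by the strong minimal-cover property built into Theorem~\ref{SelfFullMinimalCoversInI}: any lower bound of $A$ not equivalent to $A$ itself is forced into the cone below $E_0$ (up to absorbing a finite summand), which then serves as the bridge back to $B$. For non-vacuousness one may take $C$ to be any minimal dark ceer with $C \nleq E_0$, available by a further application of Theorem~\ref{MinimalDark} with $R := E_0$; a parallel case analysis using minimality and darkness of $C$ together with the self-fullness of $F_1, F_2$ shows that no infinite ceer is $\leq A, B, C$, so such $C$ indeed makes $(A, B, C)$ a minimal triple.
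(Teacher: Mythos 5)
Your proof is correct and follows essentially the same route as the paper's: the paper takes $A,B$ to be two incomparable strong minimal covers of a common infinite degree and observes that any proper lower bound of $A$ drops to that base and hence below $B$, while you take $\I$-strong-minimal covers of a dark base $E_0$ (via Theorem~\ref{SelfFullMinimalCoversInI} and Corollary~\ref{cory:selfdarkminimalcovers}) and absorb the residual $\Id_k$ summand with Lemma~\ref{coproduct3}(2) -- the underlying mechanism is identical. Your extra verification that a witness $C$ actually exists (so the implication is not vacuous) goes slightly beyond the paper's proof, which leaves that point unaddressed.
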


\begin{proof}
Let $A,B$ be two strongly minimal covers of the same degree.
Then if
$(A,B,C)$ form a minimal triple, then every $X\leq A$ is also $\leq B$, so
cannot be $\leq C$, unless it is finite. This shows that $(A,C)$ form a
minimal pair. The same argument gives that $(B,C)$ form a minimal pair.
\end{proof}

\section{Definable classes of degrees of ceers}
A class $\mathcal{A}$ of degrees of ceers is \emph{definable in $(\Ceers,
\leq)$} if there is a first order formula $\phi(v)$ in the language of posets
such that
\[
\mathcal{A}=\{\mathbf{a}\mid (\Ceers, \leq)\models \phi(\mathbf{a})\}.
\]

For instance, by Theorem~\ref{LightMeets} the self-full degrees are definable
as exactly those degrees that are meet-irreducible.

\begin{cory}\label{cor:definability}
The classes of degrees provided by $\I$, $\{\Id\}$, $\Dark$, and $\Light$ are
all definable in the poset $(\Ceers, \leq)$.
\end{cory}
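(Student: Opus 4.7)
The plan is to define the classes in the sequence $\I$, $\{\Id\}$, $\Light$, $\Dark$, each from the previous one using only first-order properties in the language of posets. The greatest element $\mathbf{1}$ is of course trivially definable as the top.

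First I would define $\I$ via the formula
\[
\psi_\I(v) \equiv v \ne \mathbf{1} \wedge \forall w\,(w \leq v \vee v \leq w),
\]
namely as the collection of non-universal degrees comparable with every other degree. Every $\Id_n$ sits below each ceer with at least $n$ classes and above each ceer with fewer than $n$ classes, so it is comparable with everything. Conversely, for an infinite non-universal $X$, I would apply Theorem~\ref{MinimalDark} with $R = X$ to obtain a minimal dark ceer $E$ with $E \not\leq X$; the minimality clause combined with Corollary~\ref{cor:belowId} and darkness of $E$ would rule out $X \leq E$ as well, showing that $E$ and $X$ are incomparable.

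Next I would define $\Id$ as the least meet-reducible degree. Meet-reducibility is directly first-order expressible, and by Theorem~\ref{LightMeets} it coincides with non-self-fullness, so the definition is
\[
\psi_\Id(v) \equiv \psi_{\text{red}}(v) \wedge \forall u\,(\psi_{\text{red}}(u) \to v \leq u).
\]
The justification has two halves. That $\Id$ is itself non-self-full follows from $\Id \oplus \Id_1 \equiv \Id$ together with Observation~\ref{obs:nonselffull}. That every non-self-full ceer $E$ satisfies $\Id \leq E$ I would verify by reusing the infinite-transversal argument from the proof of Lemma~\ref{darksAreSelfFull}: given $g$ reducing $E \oplus \Id_1$ to $E$, set $f(x) = g(2x)$ and $a = g(1)$; the orbit $\{f^{(m)}(a) : m \in \omega\}$ is an infinite effective transversal of $E$, so $\Id \leq E$. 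With $\Id$ in hand, set $\psi_\Light(v) \equiv \Id \leq v$ and $\psi_\Dark(v) \equiv \neg \psi_\I(v) \wedge \neg \psi_\Light(v)$, which matches the trichotomy.

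The main subtlety is the characterization of $\I$: showing that every infinite non-universal degree has some incomparable partner. This is where I would lean most heavily on Theorem~\ref{MinimalDark} and its minimality clause, and would have to argue carefully that the dark minimal ceer $E$ produced with $E \not\leq X$ cannot also lie above $X$. Indeed $X < E$ would by minimality force $X \leq \Id_n$ for some $n$, and hence $X \equiv \Id$ by Corollary~\ref{cor:belowId}; but then $\Id \leq E$ would contradict darkness of $E$. All other steps are essentially bookkeeping: the identification of non-self-full with meet-reducible via Theorem~\ref{LightMeets}, and the routine verification that finite and dark ceers are self-full, making $\Id$ visibly the least non-self-full degree.
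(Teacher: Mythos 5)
Your proposal is correct, and the definitions of $\I$, $\Light$, and $\Dark$ coincide with the paper's (non-universal and comparable with everything; above $\Id$; the remainder). Where you genuinely diverge is in pinning down $\{\Id\}$. The paper defines $\Id$ as the unique minimal degree above $\I$ that has a least upper bound with every other minimal degree above $\I$; this rests on Theorem~\ref{MinimalDark} (to get incomparable minimal dark degrees), Theorem~\ref{NoJoinOfDark} (no two incomparable dark ceers have a join, so no minimal dark degree can join with all the others), and Observation~\ref{joinIdDark} ($\Id$ does have a join with every dark degree). You instead define $\Id$ as the least meet-reducible degree, using Theorem~\ref{LightMeets} to translate meet-reducibility into non-self-fullness and the orbit argument of Lemma~\ref{darksAreSelfFull} to show that every non-self-full ceer admits an infinite effective transversal and hence bounds $\Id$; since $\Id\oplus\Id_1\equiv\Id$ makes $\Id$ itself non-self-full, it is indeed the minimum of that class. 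Both routes are sound. Yours is arguably the more economical given that the paper already invokes Theorem~\ref{LightMeets} in the same section to observe that the self-full degrees are definable, whereas the paper's route stays entirely within join-theoretic facts about minimal degrees and so does not need the meet-irreducibility characterization at all. Your justification of the $\I$ clause (using the minimality property from Theorem~\ref{MinimalDark} together with Corollary~\ref{cor:belowId} to rule out $X\leq E$) is also correct and is in fact more detailed than what the paper records.
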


\begin{proof}
$X\in \I$ if and only if every ceer is $\leq$-comparable with $X$ and $X$ is
not universal.

$\Id$ is definable as the unique minimal degree above $\I$ which has a least
upper bound with all the other minimal degrees above $\I$. Indeed by Theorem
\ref{MinimalDark}, for every dark degree which is minimal above $\I$ there is
a minimal dark degree above $\I$ which is incomparable with it: since no two
dark ceers have a least upper bound by Theorem \ref{NoJoinOfDark}, the only
minimal ceers which have a join with all of the minimal ceers over $\I$ must
be light.  By Observation~\ref{joinIdDark} $\Id$ has a least upper bound with
every dark ceer, and is the only minimal light ceer.

$\Dark$ is definable as the ceers not in $\I$ which are not above $\Id$, and
$\Light$ is defined as the set of ceers above $\Id$.
\end{proof}

Next we notice:

\begin{cory}
The map $S:X\mapsto X\oplus \Id_1$ is definable in the structure $(\Ceers,
\leq)$.
\end{cory}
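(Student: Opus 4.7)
The plan is to exhibit a first-order formula in the language of posets whose interpretation in $(\Ceers,\leq)$ gives exactly the graph of $S$, splitting on whether the input is self-full, which is first-order definable thanks to Theorem~\ref{LightMeets} (self-full $=$ meet-irreducible).

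First, I would handle the non-self-full case. By Observation~\ref{obs:nonselffull}, $X$ non-self-full means $X\oplus\Id_1\leq X$, and since $X\leq X\oplus\Id_1$ is trivial we get $X\oplus\Id_1\equiv X$; hence $S(X)=X$ in the degree structure, so the graph of $S$ restricted to the non-self-full degrees is just the diagonal, which is definable. Next I would handle the self-full case. Lemma~\ref{StrongMinimalCoversOfSF} tells us two things: $X\oplus\Id_1>X$, and every $Z>X$ satisfies $Z\geq X\oplus\Id_1$. These two conditions together characterize $X\oplus\Id_1$ as the unique $\leq$-least degree strictly above $X$, a property that is obviously first-order expressible from $\leq$ alone.

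Combining the two cases, the formula
\[
\psi(x,y)\;\equiv\;\bigl[\mathrm{NSF}(x)\wedge x=y\bigr]\;\vee\;\bigl[\mathrm{SF}(x)\wedge x<y\wedge(\forall z)(x<z\rightarrow y\leq z)\bigr],
\]
where $\mathrm{SF}(x)$ is the formula defining the self-full degrees from Theorem~\ref{LightMeets} (and $x=y$ abbreviates $x\leq y\wedge y\leq x$, and $x<y$ abbreviates $x\leq y\wedge\neg y\leq x$), defines the graph of $S$. The case split is exhaustive and the two clauses are mutually exclusive, so $\psi$ is functional; and by the discussion above, for every ceer $X$, $\psi(X,Y)$ holds iff $Y\equiv X\oplus\Id_1$.

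There is no serious obstacle: the whole argument is a direct assembly of earlier results, and the only thing to verify carefully is that Lemma~\ref{StrongMinimalCoversOfSF} really gives the $\leq$-least upper cover (not merely a strong minimal cover), which is exactly the content of its second sentence.
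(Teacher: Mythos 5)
Your proof is correct and follows essentially the same route as the paper: split on meet-irreducibility (which by Theorem~\ref{LightMeets} defines self-fullness), use Observation~\ref{obs:nonselffull} to get $S(X)=X$ in the non-self-full case, and use the second assertion of Lemma~\ref{StrongMinimalCoversOfSF} to characterize $S(X)$ as the least degree strictly above $X$ in the self-full case. Nothing is missing.
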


\begin{proof}
If $X$ is branching, then $X$ is not self-full by Theorem~\ref{LightMeets}
and thus $S(X)=X$ by Observation~\ref{obs:nonselffull}. Otherwise $X$ is
self-full (again by Theorem~\ref{LightMeets}) and thus by
Lemma~\ref{StrongMinimalCoversOfSF} $S(X)$ is the unique degree $Y$ so that
$Z>X$ implies $Z\geq Y$.
\end{proof}

As seen in Observation~\ref{joinIdDark}, for $X$ dark the ceer $X\oplus \Id$
is definable uniformly from $X$ as the smallest light ceer which bounds $X$.
No definability of $X\oplus \Id$  is known when $X$ is light, so we ask the
following question.

\begin{q}
Is the operation $X\mapsto X\oplus \Id$ definable?
\end{q}

\begin{q}
Is $R_K$ definable? Is $\Id'$ definable? Are there any definable degrees
other than those in $\I$ (notice that each $\Id_n$ is clearly definable, as
is the unique ceer with exactly $n-1$ ceers below it, if $n>0$, or is the
least ceer if $n=0$), that of $\Id$ or the universal degree?
\end{q}

\section{The poset of ceers modulo the dark ceers}
We define the relation $\leq_D$ on ceers, where we let $E\leq_D R $ if $E\leq
R\oplus X$, for some dark ceer $X$: this is clearly a pre-ordering relation,
hence a reducibility on ceers which originates the structure of the
\emph{$D$-degrees}. Note

\begin{obs}\label{obs:min-in-quotient-dark}
The $D$-degree of $\Id$ satisfies that it is nonzero in the structure
$\Ceers_{/\Dark}$ and every nonzero $D$-degree is $\ge_D$ it.
\end{obs}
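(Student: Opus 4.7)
The plan is to identify the zero $D$-degree of $\Ceers_{/\Dark}$ explicitly as the class $\Dark\cup\I$ (equivalently, the ceers bounded by some dark ceer), and then read off both halves of the statement from the trichotomy $\Ceers=\I\cup\Dark\cup\Light$.

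First I would verify that a ceer $E$ lies in the zero $D$-degree if and only if $E\leq X$ for some dark ceer $X$. The easy direction is that if $E\leq X$ with $X$ dark, then for every ceer $F$ we have $E\leq F\oplus X$, whence $E\leq_D F$. For the converse, suppose $E\leq_D F$ for every $F$; specializing to $F=\Id_1$ gives $E\leq\Id_1\oplus X$ for some dark $X$, and it suffices to observe that $\Id_1\oplus X$ is itself dark, since any infinite effective transversal of $\Id_1\oplus X$ would, after discarding the at most one element in the $\Id_1$-part, yield an infinite effective transversal of $X$, contradicting darkness of $X$. Moreover, this class of ceers coincides with $\Dark\cup\I$: every finite ceer embeds into any dark ceer (which has infinitely many classes), and by Observation \ref{obs:dark-closure} the dark ceers are downward closed among ceers with infinitely many classes.

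With this identification in hand, the two assertions are immediate. For the first, $\Id$ is not finite (it has infinitely many classes) and is light by definition (since $\Id\leq\Id$), so $\Id\notin\Dark\cup\I$, hence the $D$-degree of $\Id$ is nonzero. For the second, let $\mathbf{a}$ be a nonzero $D$-degree and pick any representative $E\in\mathbf{a}$; by the characterization, $E\notin\Dark\cup\I$, so $E\in\Light$, which by definition of lightness means $\Id\leq E$. Trivially then $\Id\leq_D E$, giving that the $D$-degree of $\Id$ is $\leq_D\mathbf{a}$.

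The argument is entirely routine and I do not anticipate a substantial obstacle; the only subtlety is the identification of the zero $D$-degree, where one needs that the uniform join of a finite ceer with a dark ceer remains dark, which follows directly from the definition of darkness.
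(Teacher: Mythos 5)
Your proposal is correct and follows essentially the same route as the paper: identify the zero $D$-degree as $\Dark\cup\I$ (equivalently, the ceers bounded by a dark ceer), note that $\Id$ is light and hence outside this class, and observe that any $E$ outside it is light, so $\Id\leq E$ and a fortiori $\Id\leq_D E$. Your extra verification that $\Id_1\oplus X$ remains dark is a correct (and worthwhile) filling-in of a detail the paper leaves implicit.
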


\begin{proof}
The least element in $\Ceers_{/\Dark}$ is the $D$-equivalence class comprised
of all dark ceers, plus the finite ceers. On the other hand it can not be
$\Id \le_{D} E$ for any dark $E$, so $\Id_{/D}$ is nonzero and $\Id\le X$ for
any light $X$.
\end{proof}

\begin{obs}
We always have $E<_D E'$ for any non-universal ceer $E$.
\end{obs}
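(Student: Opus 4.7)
The plan is to establish the two halves of the strict inequality separately. For $E \le_D E'$, observe that this is immediate: one always has $E \le E'$ (via, e.g., the standard reduction $x \mapsto$ an index for the constant function with value $x$), so in particular $E \le E' \oplus X$ for any dark $X$ (take $X$ to be, say, a minimal dark ceer from Theorem~\ref{MinimalDark}, or simply the least dark object needed to satisfy the definition vacuously---actually the definition allows any dark $X$, including finite dark ceers if we read ``dark'' generously, so one just uses $E \le E'$ directly).

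For the strict part, $E' \not\le_D E$: suppose toward contradiction that $E' \le E \oplus X$ for some dark ceer $X$. By Fact~\ref{JumpsUniformJoinIrreducible}, $E'$ is uniform join-irreducible, so either $E' \le E$ or $E' \le X$. The first alternative, combined with the reverse reduction $E \le E'$, gives $E \equiv E'$, which by the characterization of universal ceers (see \cite{ceers}) forces $E$ to be universal, contradicting the hypothesis. The second alternative is impossible: by Theorem~\ref{thm:join-irreducible-in-I}, $E'$ is light, while by Observation~\ref{obs:dark-closure} any infinite ceer reducible to a dark ceer must itself be dark, and $E'$ has infinitely many classes. Hence $E' \le X$ would make $E'$ both light and dark, absurd.

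Both cases yield a contradiction, so $E' \not\le_D E$, and combined with $E \le_D E'$ we obtain $E <_D E'$. The main conceptual ingredients are the uniform join-irreducibility of jumps (Fact~\ref{JumpsUniformJoinIrreducible}) and the lightness of jumps (Theorem~\ref{thm:join-irreducible-in-I}); there is no real obstacle here, as the proof is a direct assembly of these two previously established facts together with the universality characterization $E \equiv E' \iff E$ universal.
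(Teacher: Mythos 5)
Your proof is correct and follows essentially the same route as the paper: uniform join-irreducibility of $E'$ (Fact~\ref{JumpsUniformJoinIrreducible}) splits $E'\le E\oplus X$ into $E'\le E$ (impossible for non-universal $E$) and $E'\le X$ (impossible since $E'$ is light with infinitely many classes while $X$ is dark). The only blemish is the aside about ``finite dark ceers''---dark ceers have infinitely many classes by definition, so no such objects exist---but this does not affect the argument, since $E\le E'$ together with any dark $X$ already gives $E\le_D E'$.
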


\begin{proof}
Let $E$ be non-universal. If $E' \le E \oplus X$ then by  uniform-join
irreducibility of $E'$ established in Fact~\ref{JumpsUniformJoinIrreducible}
we have that either $E' \leq E$, which can not be since $E$ is non-universal,
or $E' \leq X$, which implies that $X$ is not dark as $E'$ is light.
\end{proof}

\begin{obs}
The universal $\equiv_D$ class is comprised of exactly the universal ceers.
\end{obs}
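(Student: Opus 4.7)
The plan is to reduce this to the uniform join-irreducibility of universal ceers (Corollary~\ref{cor:universal-join-irreducible}) together with the fact that universal ceers are light.

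First I would unpack the definitions. Let $U$ denote a universal ceer. Since $U$ is $\le$-above every ceer, we trivially have $E \le_D U$ for every ceer $E$, so the universal $\equiv_D$-class is precisely $\{E : U \le_D E\}$. It therefore suffices to show that $U \le_D E$ implies $E$ is universal in $\Ceers$.

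So assume $U \le_D E$, i.e., there is a dark ceer $X$ with $U \le E \oplus X$. Because $U$ is universal we also have $E \oplus X \le U$, hence $E \oplus X \equiv U$ is itself universal. Now Corollary~\ref{cor:universal-join-irreducible} says that every universal ceer is uniform join-irreducible, so from $U \le E \oplus X$ we conclude $U \le E$ or $U \le X$.

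The case $U \le X$ is ruled out because $X$ is dark: every universal ceer is light (it has infinitely many classes and $\Id \le U$), and by Observation~\ref{obs:dark-closure} any infinite ceer below a dark ceer is dark, so $U \le X$ would force $U$ to be dark, a contradiction. Hence $U \le E$, which makes $E$ universal. This gives the nontrivial inclusion; combined with the trivial observation that every universal ceer is clearly $\equiv_D$-universal, we obtain that the universal $\equiv_D$-class consists of exactly the universal ceers. There is no serious obstacle here; the only point to be a little careful about is citing the right closure property of $\Dark$ to exclude $U \le X$.
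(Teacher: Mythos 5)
Your proof is correct and follows essentially the same route as the paper: uniform join-irreducibility of universal ceers (Corollary~\ref{cor:universal-join-irreducible}) to split $U\le E\oplus X$ into $U\le E$ or $U\le X$, with the latter excluded because universal ceers are light while everything infinite below a dark ceer is dark. The extra remark that $E\oplus X\equiv U$ is not needed, but it does no harm.
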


\begin{proof}
If $E$ is $\leq_{D}$-universal, then for every universal $U$ we have $U \leq
E \oplus X$ for some dark $X$: but universal ceers are uniform-join
irreducible (each one being $\equiv$ to the jump of itself) and light, so
this gives $U \leq E$.
\end{proof}

\begin{thm}\label{thm:dark-two-elements}
Let $E$ be any ceer, and suppose $E\leq X< E\oplus \Id$. Then for some $n\in
\omega$, $X\leq E\oplus \Id_n$
\end{thm}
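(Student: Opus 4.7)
The plan is to take any reduction $f\colon X\to E\oplus\Id$ and split into cases according to whether $\im(f)$ meets only finitely many classes in the $\Id$-part or infinitely many. In the finite case the conclusion will follow immediately; in the infinite case I will derive a contradiction with the strict inequality $X<E\oplus\Id$.

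First I would handle the easy case. If $\im(f)$ intersects only finitely many classes of the $\Id$-part, then by Lemma~\ref{coproduct3}(2) there is a ceer $X_0$ with $X_0\leq E$ and $X\equiv X_0\oplus\Id_k$ for some $k$. Since $X_0\leq E$ gives $X_0\oplus\Id_k\leq E\oplus\Id_k$, we obtain $X\leq E\oplus\Id_k$, which is the desired conclusion with $n=k$.

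The real work is the other case, where $\im(f)$ intersects infinitely many classes in the $\Id$-part. Here Lemma~\ref{coproduct3}(4) produces a ceer $E_0$ with $E_0\leq E$ and $X\equiv E_0\oplus\Id$. Since by hypothesis $E\leq X$, composition gives a reduction $g\colon E\to E_0\oplus\Id$, to which I would apply the same dichotomy. If $g$'s image hits infinitely many $\Id$-classes, Lemma~\ref{coproduct3}(4) yields $E\equiv E_0'\oplus\Id$ for some $E_0'\leq E_0$, and then
\[
E\oplus\Id\equiv E_0'\oplus\Id\oplus\Id\equiv E_0'\oplus\Id\equiv E\leq X.
\]
If instead $g$'s image hits only finitely many $\Id$-classes, Lemma~\ref{coproduct3}(2) yields $E\equiv E_1\oplus\Id_m$ for some $E_1\leq E_0$ and some $m$, whence
\[
E\oplus\Id\equiv E_1\oplus\Id_m\oplus\Id\equiv E_1\oplus\Id\leq E_0\oplus\Id\equiv X.
\]
In either sub-case $E\oplus\Id\leq X$, contradicting the strict inequality $X<E\oplus\Id$. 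Hence this case cannot occur, and the finite-hit case already handled completes the proof.

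The main obstacle is precisely this nested use of the dichotomy provided by Lemma~\ref{coproduct3} parts (2) and (4): the first application on $X\leq E\oplus\Id$ could leave us with $X\equiv E_0\oplus\Id$ for a potentially strictly smaller $E_0$, and one must exploit the reverse inequality $E\leq X$ and apply the dichotomy a second time to collapse this ambiguity and force a contradiction with strictness. Once this bookkeeping is carried out, the statement falls out directly.
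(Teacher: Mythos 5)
Your proof is correct and follows essentially the same route as the paper's: both arguments turn on whether reductions into $E\oplus\Id$ meet finitely or infinitely many classes of the $\Id$-part, both invoke Lemma~\ref{coproduct3}(4) to extract an $\Id$-summand in the infinite case, and both reach the contradiction $E\oplus\Id\leq X$ by exploiting $E\leq X$ (the paper does this via an explicit construction with $g^{-1}$, you via absorption of $\Id_m$ into $\Id$). The only cosmetic point is that where you cite Lemma~\ref{coproduct3}(2) your reduction lands in $E\oplus\Id$ rather than $E\oplus\Id_n$; since its image meets only finitely many $\Id$-classes it factors through some $E\oplus\Id_k$ (or apply Lemma~\ref{coproduct3}(1) directly), so the citation is harmless.
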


\begin{proof}
Let $f$ give a reduction of $E$ to $X$ and $g$ give a reduction of $X$ to
$E\oplus \Id$. Suppose towards a contradiction that the range of $g$ contains
infinitely many odd elements. We can assume without loss of generality that
the range of $g$ is onto the $\Id$-portion of $E\oplus \Id$: indeed, it is an
infinite c.e. set so we may assume it is everything by composing with a
computable $1-1$ map between this set and $\omega$. We analyze the situation
by cases:

Case 1: The image of $g\circ f$ in the $\Id$ portion of $E\oplus \Id$ is
finite. Let $Y$ be the portion of $\Id$ not in the range of $g\circ f$. Since
$Y$ is co-finite in the odd numbers, it is computable, and so we may consider
a computable bijection $k$ from the odd numbers onto $Y$. Now we describe a
reduction of $E\oplus \Id$ to $X$, leading to a contradiction: define
$h(z)=f(\frac{z}{2})$ if $z$ is even and $h(z)$ to be $g^{-1}(k(z))$ for $z$
odd.

Case 2: The image of $g\circ f$ in the $\Id$ portion of $E\oplus \Id$ is
infinite: by Lemma~\ref{coproduct3}(4), there is a ceer $E_0$ such that
$E \equiv E_0\oplus \Id$. But then $E\oplus \Id \equiv E_0\oplus \Id \oplus
\Id\equiv E_0 \oplus \Id\equiv E$, a contradiction.
\end{proof}

Since under $\equiv_D$ all dark and finite ceers collapse to the zero
$D$-degree, how does the structure $\Ceers_{/\Dark}$ compare to $\Light$?
Differences between the two structures are shown in the following theorem.

\begin{thm}
$\Ceers_{/\Dark} \not \equiv_{\leq, \oplus} \Light \cup \{\mathbf{0}\}$
(i.e. the two structures
are not elementarily equivalent in the language with $\leq$ and $\oplus$).
\end{thm}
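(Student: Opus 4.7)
My plan is to exhibit a first-order sentence $\phi$ in the signature $\{\leq,\oplus\}$ that holds in $\Ceers_{/\Dark}$ but fails in $\Light\cup\{\mathbf{0}\}$. In both structures the degree of $\Id$ is first-order definable as the unique minimum nonzero element (trivial for $\Light\cup\{\mathbf{0}\}$; Observation~\ref{obs:min-in-quotient-dark} for $\Ceers_{/\Dark}$); writing $\iota(i)$ for this defining formula and $\mu(z)$ for ``$z$ is the $\leq$-minimum'', my candidate is
\[
\phi\ :=\ \forall i\,\forall x\,\forall y\,\bigl(\iota(i)\wedge x\leq y\leq x\oplus i\to y=x\vee y=x\oplus i\bigr),
\]
stating that every interval $[x,\,x\oplus\Id]$ contains at most two elements.

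To falsify $\phi$ in $\Light\cup\{\mathbf{0}\}$ I will invoke Theorem~\ref{SelfFullMinimalCoversInI} with $A=\Id$ to produce a self-full light ceer $E$ satisfying $X<E\Rightarrow X\leq\Id$. By Corollary~\ref{cor:from-sf-to-sf} each $E\oplus\Id_n$ is self-full; Lemma~\ref{StrongMinimalCoversOfSF} then yields the strict chain $E<E\oplus\Id_1<E\oplus\Id_2<\cdots$, and each $E\oplus\Id_n<E\oplus\Id$ is strict (else $E\oplus\Id_{n+1}\leq E\oplus\Id\equiv E\oplus\Id_n$ would contradict self-fullness of $E\oplus\Id_n$). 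Taking $x=E$ and $y=E\oplus\Id_1$ gives $E<E\oplus\Id_1<E\oplus\Id$ in $\Light$ with $y$ light and distinct from both endpoints, so $\phi$ fails.

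To verify $\phi$ in $\Ceers_{/\Dark}$ I will show that any ceer representatives $x,y$ with $x\leq_D y\leq_D x\oplus\Id$ satisfy $y\equiv_D x$ or $y\equiv_D x\oplus\Id$. If $y$ is dark or finite then $y\equiv_D\mathbf{0}$, forcing $x\equiv_D\mathbf{0}$, so $y\equiv_D x$; similarly if $x$ is dark or finite then $(x\oplus\Id)\equiv_D\Id$ and Corollary~\ref{cor:belowId} handles the residual interval $[\mathbf{0},\Id]_D$. Otherwise both $x$ and $y$ are light; fixing a dark $D_1$ with $y\leq x\oplus\Id\oplus D_1$, I will argue that only finitely many $y$-classes land in the $D_1$-part of the codomain, since otherwise representatives of these classes would yield an infinite c.e.\ effective transversal of $D_1$, contradicting darkness. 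Lemma~\ref{coproduct3}(1) then gives $y\equiv y_0\oplus\Id_{n_1}$ with $y_0\leq x\oplus\Id$, whence $y\leq_{\I} x\oplus\Id$; the symmetric argument applied to a reduction $x\leq y\oplus D_2$ yields $x\leq_{\I} y$.

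Setting $y':=y\oplus\Id_{n_2}$, where $n_2$ is the finite constant produced by the symmetric step, we obtain $y'\equiv_{\I} y$ together with $x\leq y'\leq x\oplus\Id$ in $\Ceers$ (using $\Id\oplus\Id_k\equiv\Id$ to simplify the upper bound). Theorem~\ref{thm:dark-two-elements} then forces either $y'\equiv x\oplus\Id$, giving $y\equiv_{\I} x\oplus\Id$, or $y'<x\oplus\Id$, in which case $y'\leq x\oplus\Id_m$ for some $m$, forcing $y'\equiv_{\I} x$ and hence $y\equiv_{\I} x$. Since $\equiv_{\I}$ refines $\equiv_D$ (each $\Id_n$ embeds into any sufficiently large dark ceer, which exists by Theorem~\ref{MinimalDark}), we obtain $y\equiv_D x$ or $y\equiv_D x\oplus\Id$ in every case, establishing $\phi$. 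The crux requiring the most care is the passage from the $D$-interval to the $\I$-interval via Lemma~\ref{coproduct3}(1); once this reduction is in hand, Theorem~\ref{thm:dark-two-elements} closes the verification immediately.
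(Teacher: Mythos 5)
Your choice of sentence is exactly the paper's (``every interval $[x,x\oplus\Id]$ has at most two elements''), and your refutation of it in $\Light\cup\{\mathbf{0}\}$ via a light self-full $E$ and the chain $E<E\oplus\Id_1<E\oplus\Id$ is correct and matches the paper. The gap is in your verification of the sentence in $\Ceers_{/\Dark}$, at the claim that for a reduction $f$ witnessing $y\leq x\oplus\Id\oplus D_1$ only finitely many $y$-classes can land in the $D_1$-part. The inference is invalid: the images of those classes form a c.e.\ subset of the $D_1$-part which \emph{meets} infinitely many $D_1$-classes, but that is not an effective transversal --- its elements need not be pairwise $D_1$-inequivalent, and you cannot effectively pick one representative per class (note that $\omega$ itself meets every class of every dark ceer). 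Darkness forbids only infinite c.e.\ sets of \emph{pairwise inequivalent} elements; the paper's analogous steps (e.g.\ in Corollary~\ref{cor2bis} or Theorem~\ref{LightDarkMeets}) always obtain pairwise inequivalence from the structure of the \emph{domain}, which in your case is light. Moreover the claim is simply false: take $x=\Id$ and $y=D_1\oplus\Id$. Both are light, $x\leq_D y\leq_D x\oplus\Id$, and the obvious reduction sends infinitely many $y$-classes into the $D_1$-part. In this example your intended conclusion $y\leq_\I x\oplus\Id$ also fails ($D_1\oplus\Id\not\leq \Id\oplus\Id_n$ since $D_1$ is dark), so the hand-off to Theorem~\ref{thm:dark-two-elements} cannot be rescued by tightening the transversal argument.

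The case split should be on the $\Id$-part instead. Writing the codomain as $(x\oplus D_1)\oplus\Id$: if only finitely many $y$-classes land in the $\Id$-part, Lemma~\ref{coproduct3}(1) gives $y\leq x\oplus D_1\oplus\Id_n$, and since $D_1\oplus\Id_n$ is dark this already says $y\leq_D x$. If infinitely many do, Lemma~\ref{coproduct3}(4) gives $y\equiv y_0\oplus\Id$; feeding a reduction $x\leq y\oplus D_2\equiv (y_0\oplus D_2)\oplus\Id$ through the same dichotomy shows $x\oplus\Id\leq y\oplus D_2$ (the extra $\Id_n$ or $\Id$ is absorbed by the $\Id$-summand of $y$), hence $x\oplus\Id\leq_D y$ and so $y\equiv_D x\oplus\Id$. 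With that replacement your argument closes --- and in fact no longer needs Theorem~\ref{thm:dark-two-elements} at all, whereas the paper's own (very terse) verification quotes that theorem directly.
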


\begin{proof}
By Theorem~\ref{thm:dark-two-elements} $\Ceers_{/\Dark}$ satisfies the
statement $(\forall X)(\vert [X,X\oplus \Id]\vert\leq 2$, as $E\leq_{\I} R$
implies $E \leq_{D} R$. On the other hand this statement is false in the
light degrees: if $X$ is light and self-full then by
Lemma~\ref{obs:nonselffull} and Lemma~\ref{StrongMinimalCoversOfSF} the
interval $[X, X\oplus \Id]$  contains the infinite chain
\[
X< X\oplus \Id_{1}<  X \oplus \Id_{2} <\cdots.
\]
Moreover, $\Id$ is definable in $\Light$ as the least element, and $\Id$ is
definable in $\Ceers_{/\Dark}$ by Observation~\ref{obs:min-in-quotient-dark}.
\end{proof}

\begin{thm}
The first order theory of the structure $(\Light, \leq)$ is undecidable.
\end{thm}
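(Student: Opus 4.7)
The plan is to interpret the first-order theory of $(\Ceers, \leq)$, known to be undecidable by \cite{ceers}, into the theory of $(\Light, \leq)$. The key tool is the order-preserving map $\Phi: X \mapsto X \oplus \Id$. By Observation~\ref{joinIdDark}, for dark $X$, $\Phi(X)$ is the least light ceer above $X$; and by Theorem~\ref{thm:characterization-of-equiI}, $\Phi$ restricted to the dark ceers factors through $\equiv_\I$. Moreover, $X \leq_\I Y$ if and only if $\Phi(X) \leq \Phi(Y)$ for dark $X, Y$: the forward direction is immediate since $\Id \oplus \Id_n \equiv \Id$; for the reverse, any reduction $X \to Y \oplus \Id$ can hit only finitely many $\Id$-classes (otherwise the preimages would form an infinite effective transversal of $X$, contradicting darkness), so by Lemma~\ref{coproduct3}(1) we conclude $X \leq Y \oplus \Id_m$ for some $m$. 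Thus $\Phi$ induces an order-embedding of $(\Dark_\I, \leq_\I)$ into $(\Light, \leq)$.

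The first and main step is to first-order define the image $\Phi(\Dark_\I)$ inside $(\Light, \leq)$. By Theorem~\ref{LightMeets}, members of the image are meet-irreducible (equivalently, self-full) in $\Light$. These must be distinguished from self-full light ceers \emph{not} of the form $X \oplus \Id$ (such as those from Corollary~\ref{LightSelfFulls}) by a further structural condition; a natural candidate is that $Y \in \Phi(\Dark)$ iff $Y$ is self-full and the cone $\{Z \in \Light : Z < Y\}$ has a greatest element among light ceers which lie below no other incomparable self-full element of $Y$'s type, reflecting that $X$ is the ``dark shadow'' below $X \oplus \Id$. Verifying such a characterization uses Theorem~\ref{thm:dark-two-elements} (structure of the interval $[X, X \oplus \Id]$), the strong minimal cover results for self-full degrees (Lemma~\ref{StrongMinimalCoversOfSF}), and the joins/meets machinery developed in Sections~\ref{sct:lubs}--\ref{sct_glb}.

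Once $\Phi(\Dark_\I)$ is first-order definable, the induced poset is definable too, and by combining with a first-order definition of $\I$ inside $\Light$—recoverable from chains $\Phi(X) < \Phi(X \oplus \Id_1) < \Phi(X \oplus \Id_2) < \cdots$ for self-full dark $X$ via Lemma~\ref{StrongMinimalCoversOfSF}—we obtain an interpretation of $(\Ceers_\I, \leq_\I)$ in $(\Light, \leq)$. Finally, undecidability transfers from $(\Ceers, \leq)$ to $(\Ceers_\I, \leq_\I)$ using the self-full/non-self-full dichotomy (Theorem~\ref{LightMeets} together with Observation~\ref{obs:Icontiguous}), and hence to $(\Light, \leq)$.

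The main obstacle is the precise first-order characterization of $\Phi(\Dark_\I)$ inside $\Light$, since the examples from Corollary~\ref{LightSelfFulls} show that meet-irreducibility alone is not enough. A possibly cleaner alternative is to adapt the undecidability proof of \cite{ceers} directly to $(\Light, \leq)$, using the rich structural tools developed here (exact pairs from Theorem~\ref{thm:EPT}, the abundance of incomparable self-full light strong minimal covers from Theorem~\ref{SelfFullMinimalCoversInI} and Corollary~\ref{cor:non-sf-inf-min-covers}) to encode an undecidable theory such as the theory of finite partial orders or a suitable fragment of arithmetic directly into the light degrees.
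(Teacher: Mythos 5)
There is a genuine gap here --- in fact two. First, the step you yourself flag as ``the main obstacle,'' namely a first-order definition of the image $\Phi(\Dark_\I)$ inside $(\Light,\leq)$, is never carried out: the ``natural candidate'' you float (self-full and lying below no other incomparable self-full element of $Y$'s ``type'') is not a precise first-order formula and is not verified, so what you have is a plan whose central step is missing. Second, and more fundamentally, the route is misdirected. Even granting an interpretation of $(\Dark_\I,\leq_\I)$, or of $(\Ceers_\I,\leq_\I)$, into $(\Light,\leq)$, you would still need the theory of the interpreted structure to be undecidable; but the undecidability of $\Th(\Ceers_\I,\leq_\I)$ is nowhere established (your claim that it ``transfers'' from $(\Ceers,\leq)$ is asserted, not proved), and the paper explicitly leaves open whether the theory of the dark ceers is decidable. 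So the detour through the dark degrees rests on facts that are not available.

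The paper's actual proof is a one-line observation that makes all of this unnecessary: the undecidability of $\Th(\Ceers,\leq)$ in \cite{ceers} is obtained by embedding the interval $[\mathbf{0}_1,\mathbf{0}_1']$ of c.e.\ $1$-degrees via $X\mapsto R_X$ (Fact~\ref{fact:fund-ce-sets}), and every c.e.\ set $X$ with $\mathbf{0}_1\leq_1 X$ has an infinite c.e.\ subset of its complement (the image of the complement of the decidable set under the $1$-reduction), so every ceer $R_X$ occurring in that interpretation is light; indeed the whole interval in question is $\{Z\mid \Id\leq Z\leq R_K\}$, which consists only of light degrees and is unchanged when quantifiers are relativized to $\Light$. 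Hence the known interpretation already lives entirely inside $(\Light,\leq)$, with no dark ceers, no passage to $\Ceers_\I$, and no new definability work. Your closing remark about ``adapting the undecidability proof of \cite{ceers} directly'' points in the right direction; the missing insight is that no adaptation is required at all.
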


\begin{proof}
The argument in \cite{ceers} showing that the theory of $\Ceers$ is
undecidable is based on the fact that one can embed into $\Ceers$ the
interval of c.e. $1$-degrees $[\mathbf{0}_1, \mathbf{0}_1']$ where
$\mathbf{0}_1$ is the $1$-degree of any infinite and coinfinite decidable set
and $\mathbf{0}_1'$ is the $1$-degree of the halting set $K$ (this embedding
is granted by Fact~\ref{fact:fund-ce-sets}).  We just note that this
embedding is all happening into the light ceers.
\end{proof}

\begin{obs}\label{obs:darkIindark}
For every dark ceer $X$, the $\equiv_\I$-class of $X$ is uniformly definable
in $(\Ceers, \leq)$, in the parameter $X$.
\end{obs}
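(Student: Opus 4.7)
The plan is to leverage Theorem~\ref{thm:characterization-of-equiI}, which already provides a slick characterization of $\equiv_\I$ on dark ceers (for dark $X,Y$, $X\equiv_\I Y$ iff $X\oplus\Id\equiv Y\oplus\Id$), combined with Observation~\ref{joinIdDark} (for a dark ceer $Z$, the degree $Z\oplus\Id$ is the least light upper bound of $Z$), and the definability of $\Dark$ and $\Light$ from Corollary~\ref{cor:definability}. Putting these pieces together, the candidate defining formula is
\[
\phi(Y, X)\;\equiv\;\Dark(Y)\;\wedge\;\forall Z\,\bigl(\Light(Z)\rightarrow (X\leq Z \leftrightarrow Y\leq Z)\bigr),
\]
which is uniformly first-order in the parameter $X$. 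I would then show that, whenever $X$ is dark, $\phi(Y, X)$ holds precisely when $Y\equiv_\I X$.

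For the ``only if'' direction, I would begin by arguing that $Y\equiv_\I X$ forces $Y$ to be dark as well: $Y$ cannot be in $\I$, since $X\leq_\I Y$ would then give $X\leq\Id_n$ for some $n$, contradicting $X$ being dark; and $Y$ cannot be light, since $\Id\leq Y\leq X\oplus\Id_m$ for some $m$ would force infinitely many $\Id$-classes to land in the $X$-side of $X\oplus\Id_m$ (the $\Id_m$-side has only $m$ classes), producing an infinite effective transversal of $X$ and again contradicting darkness. Both $X,Y$ being dark, Theorem~\ref{thm:characterization-of-equiI} gives $X\oplus\Id\equiv Y\oplus\Id$; then for any light $Z$ with $X\leq Z$, Observation~\ref{joinIdDark} forces $X\oplus\Id\leq Z$, whence $Y\leq Y\oplus\Id\equiv X\oplus\Id\leq Z$, and symmetrically.

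For the ``if'' direction, assume $\phi(Y,X)$. Since $X\leq X\oplus\Id$ and $X\oplus\Id$ is light, the second clause of $\phi$ applied with $Z=X\oplus\Id$ yields $Y\leq X\oplus\Id$. Because $Y$ is dark, Observation~\ref{joinIdDark} tells us that $Y\oplus\Id$ is the \emph{least} light upper bound of $Y$; since $X\oplus\Id$ is a light upper bound of $Y$, we obtain $Y\oplus\Id\leq X\oplus\Id$, and symmetrically $X\oplus\Id\leq Y\oplus\Id$. A final application of Theorem~\ref{thm:characterization-of-equiI} yields $X\equiv_\I Y$. There is no real technical obstacle: the statement is essentially an assembly of already-proved ingredients. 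The only minor subtlety worth highlighting is that the conjunct $\Dark(Y)$ is genuinely needed, since for instance $Y=X\oplus\Id$ itself has exactly the same set of light upper bounds as $X$ but certainly is not $\equiv_\I X$ (being light while $X$ is dark).
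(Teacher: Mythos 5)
Your proof is correct, but it takes a genuinely different route from the paper's. The paper keeps the defining formula purely order-theoretic around the pair $(X,Y)$: it uses Theorem~\ref{thm:characterization-of-equiI}(1)$\Leftrightarrow$(3) to reduce to defining ``$Y\equiv X\oplus\Id_k$ for some $k$'', which it captures by requiring $X\leq Y$, that the interval $[X,Y]$ be linearly ordered, and that every $Z\geq X$ be either $>Y$ or in $[X,Y]$; the left-to-right verification uses Lemma~\ref{StrongMinimalCoversOfSF}, and the right-to-left verification needs the incomparable dark self-full covers of Corollary~\ref{cory:selfdarkminimalcovers} (hence the full strength of Theorem~\ref{SelfFullMinimalCoversInI}). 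You instead use the equivalence (1)$\Leftrightarrow$(2) of Theorem~\ref{thm:characterization-of-equiI} together with Observation~\ref{joinIdDark}, defining the class by ``$Y$ is dark and has the same light upper bounds as $X$''. This is a cleaner and more economical verification --- it entirely avoids the minimal-cover machinery --- at the cost of invoking the definability of $\Dark$ and $\Light$ from Corollary~\ref{cor:definability} (which in turn rests on Theorem~\ref{MinimalDark} and Theorem~\ref{NoJoinOfDark}); since those predicates are first-order definable in $(\Ceers,\leq)$, this is perfectly legitimate. Your closing remark that the conjunct $\Dark(Y)$ cannot be dropped (witnessed by $Y=X\oplus\Id$) is a worthwhile observation, and your check that $Y\equiv_\I X$ forces $Y$ dark is exactly the point that makes the appeal to Theorem~\ref{thm:characterization-of-equiI}(2)$\Rightarrow$(1) legal, since that implication is the one place where darkness of both ceers is used.
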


\begin{proof}
Let $X$ be dark. By Theorem~\ref{thm:characterization-of-equiI} a ceer $Y$ is
$\equiv_\I X$ if and only if for some $k$, either $X\oplus \Id_k\equiv Y$ or
$Y\oplus \Id_k \equiv X$. Since the second condition is symmetric to the
first with the variables $X$ and $Y$ swapped (and $Y$ is dark), we need only
show that the first condition ``$X\oplus \Id_k\equiv Y$ for some $k$'' is
definable. We claim this holds if and only if
\[
X\leq Y \;\&\; [X,Y] \textrm{ is lineraly ordered }\;\&\;
(\forall Z) [X\leq Z \rightarrow [Z> Y \vee Z \in [X,Y]]].
\]
Indeed, if $X\oplus \Id_k\equiv Y$ for some $k$, then  $[X,Y]$ is comprised
of degrees containing ceers of the form $X\oplus \Id_l$; and by Lemma
\ref{StrongMinimalCoversOfSF}, every degree $\geq X$ is either equivalent to
$X\oplus \Id_l$ for some $l\leq k$ or is $\geq X\oplus \Id_{k+1}> X\oplus
\Id_{k}$.

The right-to-left direction comes from
Corollary~\ref{cory:selfdarkminimalcovers}. Let $Y \cancel{\equiv}_\I X$: we
claim that $Y$ does not satisfy the condition. So, assume that $X \leq Y$,
hence $X<Y$. Let $(A_i)_{i\in \omega}$ be the collection of incomparable dark
ceers above $X$ granted by Corollary~\ref{cory:selfdarkminimalcovers}: then
for every $i,n$, $X \oplus \Id_n \leq  A_i$ and if $Y< A_i$ then there exists
$m$ such that $Y \le X \oplus \Id_m$. This shows that for no $i$ can it be
$Y< A_i$ otherwise $Y \le X \oplus \Id_m$ for some $m$ contrary to
assumption. So, if one of these $A_i$ is not bounded by $Y$, then $Z=A_i$
shows that the condition does not hold. On the other hand, if $Y$ does bound
all of these $A_i$, then any two of them, by incomparability, show that the
condition does not hold since $[X,Y]$ is not linearly ordered.
\end{proof}

Despite the fact that the dark ceers form a discrete partial order under the
$\oplus \Id_1$-relation, this cannot help in showing that their first order
theory is decidable. This is because the structure of $\Dark_{\I}$ is
definable in the dark degrees as Observation~\ref{obs:darkIindark} shows.

\begin{q}
Is the theory of dark ceers decidable?
\end{q}

\section{$\mathbb{Z}$-chains}
$\mathbb{Z}$-chains will be useful when discussing automorphisms of ceers in
Section~\ref{sct:automorphisms}.

\begin{defn}\label{def:chains}
A \emph{$\mathbb{Z}$-chain under the map $X\mapsto X\oplus \Id_1$} (or simply
\emph{under $\oplus \Id_1$}) is a collection $\zeta$ of degrees such that
there is an order theoretic isomorphism with the structure $\langle
\mathbb{Z}, S \rangle$, with $\mathbb{Z}$ the integers, $S$ the successor in
$\mathbb{Z}$, and under the isomorphism $S$ corresponds to the map $X\mapsto
X\oplus \Id_1$.
\end{defn}

\begin{obs}
Let $R$ be a self-full ceer. Then the degree of $R$ is in the range of the
$\oplus \Id_1$ map if and only if some class in $R$ is computable.
\end{obs}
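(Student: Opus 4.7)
The plan is to prove the two directions separately, each being fairly direct given the tools already developed.

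For the forward direction, I would suppose that $\deg(R)$ lies in the image of the map $X \mapsto X \oplus \Id_1$, so $R \equiv E \oplus \Id_1$ for some ceer $E$, witnessed by reductions $f : E \oplus \Id_1 \to R$ and $g : R \to E \oplus \Id_1$. The key observation is that the $(E \oplus \Id_1)$-class corresponding to the $\Id_1$-summand, namely the set of odd numbers, is decidable by definition of $\oplus$. Pulling this back along $g$ gives a decidable subset of $\omega$, and a short calculation identifies it with an $R$-class: since $g \circ f$ reduces $E \oplus \Id_1$ to itself, the value $g(f(1))$ is $(E \oplus \Id_1)$-equivalent to $1$, hence odd, so for every $x$ one has $g(x)$ odd iff $g(x) \mathrel{E \oplus \Id_1} g(f(1))$ iff $x \mathrel{R} f(1)$. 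Therefore $g^{-1}[\{2n+1 \mid n \in \omega\}] = [f(1)]_R$, exhibiting a computable $R$-class.

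For the reverse direction, assume $R$ has a computable class $[a]_R$. In the non-degenerate case that $R$ has at least two classes, pick any $b$ with $b \not\rel{R} a$ and apply Lemma~\ref{ComputableCollapses} directly: it yields $R_{/(a,b)} \oplus \Id_1 \equiv R$, so $\deg(R)$ lies in the image of $\oplus \Id_1$ with witness $E = R_{/(a,b)}$. Note that self-fullness is not actually used in either direction; it is recorded here because the surrounding discussion is about self-full ceers and because for non-self-full ceers we already have $R \equiv R \oplus \Id_1$, making the statement less informative.

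There is no real obstacle to carrying out this plan: the forward direction is a transport of a decidable class along the equivalence $R \equiv E \oplus \Id_1$, and the reverse direction is a one-line application of the ComputableCollapses lemma. The only point requiring a moment's care is checking that the set identified in the forward direction is genuinely an equivalence class of $R$, which is what the calculation with $g \circ f$ accomplishes.
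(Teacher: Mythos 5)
Your reverse direction is correct and is exactly the paper's argument (a computable class plus Lemma~\ref{ComputableCollapses} gives $R_{/(a,b)}\oplus\Id_1\equiv R$). The forward direction, however, has a genuine gap at the step ``since $g\circ f$ reduces $E\oplus\Id_1$ to itself, the value $g(f(1))$ is $(E\oplus\Id_1)$-equivalent to $1$.'' A reduction of a ceer to itself is only required to preserve equivalence and inequivalence of pairs; it need not fix any class (it may permute classes or miss classes altogether), so nothing forces $g(f(1))$ to be odd, and with it the identification $g^{-1}[\{2n+1\mid n\in\omega\}]=[f(1)]_R$ collapses. The tell-tale symptom is your closing remark that self-fullness is not used: the forward implication is in fact \emph{false} without it. For example, a universal ceer $U$ whose distinct classes are pairwise effectively inseparable has no computable class, yet $U\equiv U\oplus\Id_1$ by universality, so its degree is in the range of the map.

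The repair is to use self-fullness to show that $g$ must hit the $\Id_1$-class. Since $g$ is injective on classes, $g^{-1}[\{2n+1\mid n\in\omega\}]$ is either empty or a single (automatically decidable) $R$-class, so only nonemptiness is at issue. If $g$ never hits the odd class, then $x\mapsto \frac{g(x)}{2}$ witnesses $R\leq E$, hence $R\oplus\Id_1\leq E\oplus\Id_1\equiv R$, contradicting self-fullness by Observation~\ref{obs:nonselffull}. (The paper argues the same point slightly differently: if $g$ misses any class $[c]$ of $E\oplus\Id_1$, then $f\circ g$ is a reduction of $R$ to itself missing $[f(c)]_R$, again contradicting self-fullness; either way one concludes that the $R$-class mapped by $g$ onto the odd numbers exists and is computable.) With that substitution for your ``$g(f(1))$ is odd'' step, the rest of your computation goes through.
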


\begin{proof}
If $R$ has a computable class $[x]_R$, then by Lemma
\ref{ComputableCollapses}, $R\equiv R/(x,y)\oplus \Id_1$, for $y$ any element
inequivalent to $x$. On the other hand, every ceer of the form $E\oplus
\Id_1$ has a computable class. Now suppose $R$ is equivalent to a ceer
$E\oplus \Id_1$. Then since $R$ is self-full and $R\leq E\oplus \Id_1\leq R$,
we see that every class of $E\oplus \Id_1$ must be in the range of this
reduction (otherwise $R$ properly reduces to itself contradicting
self-fullness). Then the class sent to the class $\Id_1$ is a computable
class in $R$.
\end{proof}

Notice that every non-self-full ceer $E$ is in the range of the $\oplus
\Id_1$ map, since $E\oplus \Id_1\equiv E$.

\begin{cory}
There are dark degrees which are not part of $\mathbb{Z}$-chains under
$\oplus \Id_1$. There are also dark degrees which are part of
$\mathbb{Z}$-chains under $\oplus \Id_1$.
\end{cory}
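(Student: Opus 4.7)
The plan is to build two examples, one for each half of the statement. Both follow almost immediately from the preceding observation, which characterises, among self-full ceers, those in the image of $\oplus\Id_1$ as exactly the ones containing a ceer with a computable class. Since by Lemma~\ref{darksAreSelfFull} every dark ceer is self-full, a dark degree lies in the image of $\oplus\Id_1$ iff it contains a ceer with a computable class; and it belongs to a $\mathbb{Z}$-chain under $\oplus\Id_1$ iff one can always ``go back one step'', i.e.\ all iterated preimages under $\oplus\Id_1$ still have a computable class.

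For the first assertion I would exhibit a dark ceer no class of which is computable. Such a ceer is already furnished by Theorem~\ref{SelfFullMinimalCoversInI} (via Corollary~\ref{cory:selfdarkminimalcovers}) starting from any dark $A$: the resulting $E_l$ are dark and yield partitions in computably inseparable classes, so in particular no individual class is computable. Then no ceer in $\deg(E_l)$ has a computable class, so $\deg(E_l)$ is not in the range of $\oplus\Id_1$ and hence cannot be part of any $\mathbb{Z}$-chain.

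For the second assertion I would start with a dark ceer $E$ all of whose classes are finite (hence computable); such a ceer is supplied for instance by Theorem~\ref{DarkCeersJoinInI} or by Theorem~\ref{FiniteClassesSelfFulMinimalCoversInI} applied to a dark $A$ via Corollary~\ref{cory:selfdarkminimalcovers}. I then define $(E_n)_{n\in\mathbb{Z}}$ by $E_0=E$, $E_{n+1}=E_n\oplus\Id_1$ for $n\geq 0$, and for $n<0$ set $E_n=(E_{n+1})_{/(x,y)}$ where $x,y$ are any two non-$E_{n+1}$-equivalent numbers (they exist because $E_{n+1}$ is dark and hence has infinitely many classes). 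The one thing to check is that the conjunction ``dark and all classes finite'' is preserved at every step: upwards it is trivial, and downwards the quotient by a single pair merges two finite classes into a finite class and leaves infinitely many classes, so Observation~\ref{obs:dark-closure} still gives darkness. Since each $E_n$ has only computable classes, Lemma~\ref{ComputableCollapses} yields $E_n\oplus\Id_1\equiv E_{n+1}$ for every $n\in\mathbb{Z}$; and since each $E_n$ is dark hence self-full, Lemma~\ref{StrongMinimalCoversOfSF} gives $E_n<E_{n+1}$ strictly, so all the degrees $\deg(E_n)$ are distinct. Thus $\{\deg(E_n):n\in\mathbb{Z}\}$ is a $\mathbb{Z}$-chain of dark degrees containing $\deg(E)$. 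No serious obstacle is expected; the only mild subtlety is the closure verification in the downward quotient step, which is elementary.
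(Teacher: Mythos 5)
Your proof is correct and follows essentially the same route as the paper: a self-full (hence every dark) ceer with no computable class cannot lie in the range of $\oplus\Id_1$ and so cannot sit inside a $\mathbb{Z}$-chain, while a dark ceer with finite classes generates a $\mathbb{Z}$-chain by iterating $\oplus\Id_1$ upward and collapsing pairs of computable classes downward via Lemma~\ref{ComputableCollapses} and Lemma~\ref{StrongMinimalCoversOfSF}. The only difference is cosmetic: for the first half the paper cites the dark weakly precomplete ceers of Badaev and Sorbi, whereas you extract a dark ceer with computably inseparable (hence non-computable) classes from Theorem~\ref{SelfFullMinimalCoversInI} via Corollary~\ref{cory:selfdarkminimalcovers}, which works equally well and keeps the argument internal to the paper.
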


\begin{proof}
There are dark ceers which have no computable classes. For example, Badaev
and Sorbi~\cite{Badaev-Sorbi} have constructed dark weakly precomplete ceers.

On the other hand, there are dark ceers with finite classes, such as those
constructed in Theorem~\ref{DarkCeersJoinInI}. Such degrees are part of
$\mathbb{Z}$-chains under $\oplus \Id_1$. Indeed, if $X$ is dark then the
successor  $X\oplus \Id_1$ (under the map $\oplus \Id_1$) by
Lemma~\ref{StrongMinimalCoversOfSF} is a strong minimal cover of $X$; on the
other hand, if $X$ is a dark ceer with finite or computable classes, then by
collapsing two of them as in Lemma~\ref{ComputableCollapses}, we still get a
dark ceer which is a predecessor of it under the map $X\mapsto X\oplus \Id_1$
and by Lemma~\ref{StrongMinimalCoversOfSF} $X$ is a strong minimal cover of
it.  The result then follows by repeatedly applying Lemma
\ref{ComputableCollapses}. This procedure is illustrated in
Figure~\ref{fig:chains3}.
\end{proof}

The proof of the previous corollary shows in fact how to build a
$\mathbb{Z}$-chain around  a self-full $X$ with infinitely many computable
classes. Let us start with $E_0=X$; having defined $E_n$ for $n\ge 0$ then
let $E_{n+1}=E_n \oplus \Id_1$; having defined $E_{-n}$ with $n \ge 0$, so
that $E_{-n}$ has computable classes (in fact $E_{-n}$ of the form
$E_{-n}=X_{/\{(x_i,y_i)\mid 1\le i\le n\}}$ so it has infinitely many
computable classes) then take an $X$-inequivalent pair $(x_{n+1}, y_{n+1})$
so that $[x_{n+1}]_X$ is computable, $x_{n+1}, y_{n+1}$ pairwise
$X$-inequivalent to all $x_i$ and $y_i$, and define
$E_{-(n+1)}=X_{/\{(x_i,y_i)\mid 1\le i\le
n+1\}}=(E_{-n})_{/(x_{n+1},y_{n+1})}$: by Lemma~\ref{ComputableCollapses}, we
have that $E_{-n}\equiv E_{-(n+1)} \oplus \Id_1$; moreover $E_{-(n+1)}$ has
infinitely many computable classes.

\begin{figure}[h!]
  \centering
  \includegraphics[scale=.8]{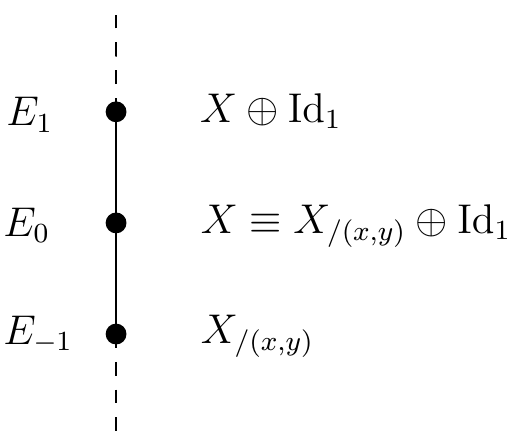}
  \caption{Building a $\mathbb{Z}$ chain around a self-full $X$ with
  infinitely many computable classes.}\label{fig:chains3}
\end{figure}

\section{Automorphisms of ceers}\label{sct:automorphisms}
The following observation is the key tool for constructing automorphisms of
$\Ceers$. $\mathbb{Z}$-chains under $\oplus \Id_1$ were defined in
Definition~\ref{def:chains}. In the following, when writing $F(X)$ for
an automorphism $F$ on $\Ceers$, we mean of course $F$ applied to
the degree of $X$.

\begin{thm}\label{AutsFromChains}
From countably many $\mathbb{Z}$-chains under $\oplus \Id_1$, one can build
continuum many automorphisms of the structure of ceers.
\end{thm}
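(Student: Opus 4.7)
The plan is to exploit the strong minimal cover structure granted by Lemma~\ref{StrongMinimalCoversOfSF} to show that each $\mathbb{Z}$-chain under $\oplus\Id_1$ is rigid in the following sense: every degree outside such a chain has the same relation (below all, above all, or incomparable with all) to every element of the chain. Once this is established, independent integer shifts on countably many such chains immediately yield the required $2^{\aleph_0}$ automorphisms, since $|\mathbb{Z}^\omega|=2^{\aleph_0}$.

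First I fix countably many $\mathbb{Z}$-chains $(\zeta_i)_{i\in\omega}$, writing $\zeta_i=\{X_i^n\mid n\in\mathbb{Z}\}$ with $X_i^{n+1}\equiv X_i^n\oplus\Id_1$. Since the predecessor in such a chain (when it exists) is uniquely determined up to $\equiv$ by Lemma~\ref{StrongMinimalCoversOfSF}, a chain is determined by any one of its members, so distinct chains are either identical or disjoint; we may assume them pairwise disjoint. The key lemma is: for any such $\zeta=\{X_n\}_{n\in\mathbb{Z}}$ and any degree $Z\notin\zeta$, exactly one of $Z<X_n$ for all $n$, $Z>X_n$ for all $n$, or $Z$ incomparable with every $X_n$ holds. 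Indeed, each $X_n$ is self-full (else $X_n\equiv X_n\oplus\Id_1=X_{n+1}$ by Observation~\ref{obs:nonselffull}), so by Lemma~\ref{StrongMinimalCoversOfSF}, $X_{n+1}$ is a strong minimal cover of $X_n$; hence $Z<X_n$ (strict since $Z\notin\zeta$) yields $Z\le X_{n-1}$ by iterated strong minimal cover going downward, while $Z>X_n$ yields $Z\ge X_{n+1}$ going upward. Combined with $X_m\le X_n$ for $m\le n$, any single comparability of $Z$ with some $X_n$ propagates to the whole chain.

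Given $\vec k=(k_i)_{i\in\omega}\in\mathbb{Z}^\omega$, define $F_{\vec k}$ on the degrees by $F_{\vec k}(X_i^n)=X_i^{n+k_i}$ on each chain, and $F_{\vec k}(Y)=Y$ otherwise. Bijectivity is immediate with inverse $F_{-\vec k}$. For order-preservation, I split on whether each argument lies in some chain. If neither does, $F_{\vec k}$ is the identity on both. If exactly one does, the key lemma implies that the relation with the other argument is unchanged by shifting within the chain. If both lie in the same chain, order is preserved since both are shifted by the same integer. If they lie in distinct chains $\zeta_i,\zeta_j$, apply the key lemma twice: once to $\zeta_j$ with the $\zeta_i$-element as an outsider (showing that the target position in $\zeta_j$ is irrelevant), and once to $\zeta_i$ with the shifted $\zeta_j$-element as an outsider (showing the same for $\zeta_i$).

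The principal obstacle is the key lemma, which however is a direct consequence of self-fullness plus Lemma~\ref{StrongMinimalCoversOfSF}; everything else is routine bookkeeping. To conclude, distinct sequences yield distinct automorphisms: if $k_i\ne k_i'$ for some $i$, then $F_{\vec k}(X_i^0)=X_i^{k_i}\not\equiv X_i^{k_i'}=F_{\vec{k'}}(X_i^0)$, using that the elements of $\zeta_i$ are pairwise inequivalent. The map $\vec k\mapsto F_{\vec k}$ is therefore injective, producing $2^{\aleph_0}$ automorphisms as claimed.
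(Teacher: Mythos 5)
Your proof is correct and follows essentially the same route as the paper: independent integer shifts along the chains, justified by the strong minimal cover structure coming from self-fullness (the paper compresses your key lemma into the single observation that $X\leq Y$ implies $X\oplus \Id_1\leq Y$ or $X\equiv Y$). Your write-up is more explicit about the order-preservation case analysis and about disjointness of distinct chains, but there is no substantive difference in approach.
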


\begin{proof}
Let $\zeta$ be any $\mathbb{Z}$-chain of ceers under the operation $\oplus
\Id_1$. For any $n\in \omega$, consider the function $F$ on degrees of ceers
which sends $X\notin \zeta$ to $X$ and sends $X\in \zeta$ to $X\oplus \Id_n$.
Then $F$ is an automorphism of $(\Ceers, \leq)$. This is because $X\leq Y$
implies $X\oplus \Id_1 \leq Y$ or $X\equiv Y$. The inverse of this map sends
$X$ to a ceer $Y$ so that $Y\oplus \Id_n\equiv X$. Now, for any collection of
countably many $\mathbb{Z}$-chains, this gives uncountably many
automorphisms, as we can move each of these chains up or down by various
$n$'s independently.
\end{proof}

\begin{cory}\label{AutOverLight}
There are continuum many automorphisms of the structure of ceers fixing the
light degrees.
\end{cory}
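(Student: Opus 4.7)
The plan is to invoke Theorem~\ref{AutsFromChains} applied to a family of countably many pairwise disjoint $\mathbb{Z}$-chains under $\oplus \Id_1$ consisting \emph{entirely of dark degrees}. Because the automorphisms produced by that theorem act as the identity off the chosen chains, any light (and any finite) degree will lie outside every chain and hence be fixed. Thus the content of the proof reduces to exhibiting such a family of chains.

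To produce a single $\mathbb{Z}$-chain of dark degrees I will pick a dark ceer $E$ with finite classes and then build the chain around it. Going upward I set $E^{(n)} = E \oplus \Id_n$ for $n \geq 0$; such a degree is dark since an infinite effective transversal for $E \oplus \Id_n$ would yield one for $E$, contradicting darkness. Going downward I iteratively apply Lemma~\ref{ComputableCollapses}: choosing a sequence of pairwise $E$-inequivalent numbers $(x_i, y_i)$ (possible as $E$ has finite, hence computable, classes and infinitely many of them), I set $E^{(-n)} := E_{/\{(x_i,y_i) : 1 \le i \le n\}}$, which still has finite classes, reduces to $E$, is therefore dark by Observation~\ref{obs:dark-closure}, and satisfies $E^{(-n)} \oplus \Id_1 \equiv E^{(-(n-1))}$. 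Since every $E^{(n)}$ is dark, it is self-full by Lemma~\ref{darksAreSelfFull}, so by Lemma~\ref{StrongMinimalCoversOfSF} the relation $E^{(n)} < E^{(n+1)}$ is strict for every $n \in \mathbb{Z}$; hence $\zeta := \{E^{(n)} : n \in \mathbb{Z}\}$ is a bona fide $\mathbb{Z}$-chain under $\oplus \Id_1$ consisting of dark degrees.

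To produce countably many such chains I first obtain a dark ceer $A$ with all finite classes from Theorem~\ref{DarkCeersJoinInI}. Applying Theorem~\ref{FiniteClassesSelfFulMinimalCoversInI} to this $A$ yields infinitely many pairwise incomparable self-full ceers $(E_l)_{l \in \omega}$, each with finite classes; by Corollary~\ref{cory:selfdarkminimalcovers} each $E_l$ is dark. Building the chain $\zeta_l$ around each $E_l$ as above gives countably many dark $\mathbb{Z}$-chains.

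It remains to verify pairwise disjointness of the $\zeta_l$. Lemma~\ref{SuccessorIsInjective} shows that in any $\mathbb{Z}$-chain the immediate predecessor under $\oplus \Id_1$ is unique up to $\equiv$, so each $\mathbb{Z}$-chain is determined by any one of its degrees; consequently two such chains either coincide or are disjoint. If $\zeta_l = \zeta_{l'}$ then $E_l \equiv E_{l'}^{(k)}$ for some $k \in \mathbb{Z}$, which means either $E_l \equiv E_{l'} \oplus \Id_k$ (if $k \ge 0$) or $E_{l'} \equiv E_l \oplus \Id_{-k}$ (if $k < 0$); in every case $E_l$ and $E_{l'}$ are comparable, contradicting incomparability. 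Therefore the chains $(\zeta_l)_{l \in \omega}$ are pairwise disjoint dark $\mathbb{Z}$-chains, and Theorem~\ref{AutsFromChains} produces continuum many automorphisms of $(\Ceers, \le)$, each of which acts as the identity off $\bigcup_l \zeta_l$ and so fixes every light degree. The only nontrivial point along the way is ensuring the existence of countably many pairwise incomparable dark ceers with finite classes, which is why Theorem~\ref{FiniteClassesSelfFulMinimalCoversInI} (coupled with Corollary~\ref{cory:selfdarkminimalcovers}) is invoked rather than, say, Theorem~\ref{MinimalDark}, whose minimal dark ceers are not guaranteed to have finite (or computable) classes and therefore may fail to lie in any $\mathbb{Z}$-chain under $\oplus \Id_1$.
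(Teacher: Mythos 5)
Your proposal is correct, and its overall strategy is exactly the paper's: exhibit countably many pairwise disjoint $\mathbb{Z}$-chains under $\oplus \Id_1$ consisting entirely of dark degrees, and feed them to Theorem~\ref{AutsFromChains}, whose automorphisms are the identity off the chains and hence fix every light degree. The only real divergence is in how the seed family of incomparable dark ceers with computable classes is obtained. The paper takes the lighter route: by Lemma~\ref{lem:Turing-simple} choose infinitely many pairwise $\le_1$-incomparable simple sets $X_i$ and use the ceers $R_{X_i}$, which are dark and have infinitely many computable (singleton) classes, then build the chains by the procedure of Figure~\ref{fig:chains3}. You instead invoke Theorem~\ref{DarkCeersJoinInI} to get a dark ceer with finite classes and then Theorem~\ref{FiniteClassesSelfFulMinimalCoversInI} together with Corollary~\ref{cory:selfdarkminimalcovers} to get infinitely many incomparable dark finite-class ceers above it; this works but uses considerably heavier machinery than needed for this corollary. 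On the other hand, you spell out two points the paper leaves implicit: that darkness (hence self-fullness) makes each step of the chain strict, and that Lemma~\ref{SuccessorIsInjective} forces distinct chains through incomparable seeds to be disjoint, which is what makes the independent shifts of Theorem~\ref{AutsFromChains} yield continuum many distinct automorphisms. Both of these checks are worth having on record.
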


\begin{proof}
Simply take countably many incomparable dark ceers with infinitely many
computable classes: for instance by Lemma~\ref{lem:Turing-simple} take
infinitely many $\le_1$-incomparable simple sets $(X_i)_{i \in \omega}$ and
take the corresponding $R_{X_i}$. This (by the procedure illustrated in
Figure~\ref{fig:chains3}) gives countably many $\mathbb{Z}$-chains to yield
uncountably many automorphisms where every light degree is fixed.
\end{proof}

\begin{cory}
There are continuum many automorphisms of the structure of ceers fixing
the dark degrees.
\end{cory}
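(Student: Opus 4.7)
The plan is to mirror the proof of Corollary~\ref{AutOverLight}, but with $\mathbb{Z}$-chains built around light self-full ceers instead of around dark ones. First I would invoke Corollary~\ref{LightSelfFulls} to get infinitely many pairwise incomparable light self-full ceers $(E_l)_{l\in\omega}$, each having only finite (hence computable) equivalence classes. Since $E_l$ has infinitely many computable classes, the procedure illustrated in Figure~\ref{fig:chains3} produces around $E_l$ a $\mathbb{Z}$-chain $\zeta_l$ under $\oplus\Id_1$: going up by iterating $\oplus\Id_1$, and going down by repeatedly applying Lemma~\ref{ComputableCollapses} to pairs $(x_i,y_i)$ chosen from distinct computable $E_l$-classes.

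Next I would check that every degree in $\zeta_l$ is light and self-full. Lightness of the upper part is immediate from $\Id\leq E_l$. For the lower part, if $F$ is a ceer in $\zeta_l$ below $E_l$, then by construction $F\oplus\Id_1\equiv F'$ for the next ceer $F'$ up in the chain, so inductively from $\Id\leq F'$ we have $\Id\oplus\Id_1\leq F\oplus\Id_1$, and since $\Id\oplus\Id_1\equiv\Id$, Lemma~\ref{SuccessorIsInjective} gives $\Id\leq F$. Self-fullness of the upper elements follows from Corollary~\ref{cor:from-sf-to-sf}. For the lower elements, self-fullness propagates downward in $\zeta_l$: if $F\oplus\Id_1$ is self-full, then $F$ is self-full too, since otherwise Observation~\ref{obs:nonselffull} would give $F\oplus\Id_1\leq F$, whence $(F\oplus\Id_1)\oplus\Id_1\leq F\oplus\Id_1$, contradicting self-fullness of $F\oplus\Id_1$. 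Hence each $\zeta_l$ is a genuine $\mathbb{Z}$-chain of light degrees, as required by Definition~\ref{def:chains}.

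Finally, I would ensure that distinct chains $\zeta_l,\zeta_{l'}$ are disjoint. The degrees in $\zeta_l$ are linearly ordered by $\leq$, so if a degree were shared between $\zeta_l$ and $\zeta_{l'}$ then $E_l$ and $E_{l'}$ would end up $\leq$-comparable (each equivalent to a successor or predecessor of the common degree under $\oplus\Id_1$), contradicting the incomparability provided by Corollary~\ref{LightSelfFulls}. The countable family $(\zeta_l)_{l\in\omega}$ therefore consists of pairwise disjoint $\mathbb{Z}$-chains under $\oplus\Id_1$. Applying Theorem~\ref{AutsFromChains} yields continuum many automorphisms of $(\Ceers,\leq)$, each of which shifts the degrees within each $\zeta_l$ independently and fixes every degree outside $\bigcup_l \zeta_l$. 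Since every $\zeta_l$ consists only of light degrees, every dark degree lies outside $\bigcup_l \zeta_l$ and is therefore fixed by all of these automorphisms. The only real point to verify carefully is the self-fullness and lightness of the lower parts of the chains; everything else is a direct translation of the argument for Corollary~\ref{AutOverLight}.
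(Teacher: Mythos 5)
Your proposal is correct and is essentially the paper's own argument: take the incomparable light self-full ceers with finite classes from Corollary~\ref{LightSelfFulls}, build a $\mathbb{Z}$-chain around each via the Figure~\ref{fig:chains3} procedure, and apply Theorem~\ref{AutsFromChains}. The extra checks you supply (lightness and self-fullness of the downward part of each chain, and disjointness of the chains via Lemma~\ref{SuccessorIsInjective}) are sound and merely make explicit what the paper leaves implicit.
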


\begin{proof}
Take infinitely many incomparable light self-full degrees as in
Corollary~\ref{LightSelfFulls}, which have finite classes. This (by the
procedure illustrated in Figure~\ref{fig:chains3}) gives countably many
$\mathbb{Z}$-chains to yield uncountably many automorphisms where every dark
degree is fixed.
\end{proof}

\begin{obs}
If $\tau$ is an automorphism of ceers fixing the light ceers, then $\tau(X)
\equiv_\I X$ for all $X$.
\end{obs}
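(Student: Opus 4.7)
The plan is to split into cases according to the partition $\Ceers = \I \cup \Dark \cup \Light$, exploiting the fact that all three classes are first-order definable in $(\Ceers,\leq)$ by Corollary~\ref{cor:definability} and hence individually preserved by any automorphism $\tau$. Only the dark case is substantive; the other two are immediate. The main obstacle is to extract, from the hypothesis that $\tau$ fixes \emph{every} light ceer up to $\equiv$, information about $\tau(X)$ when $X$ is dark. The crucial observation is that Observation~\ref{joinIdDark} identifies $X \oplus \Id$, for dark $X$, as a definable object in terms of $X$: namely, the least light upper bound of $X$. This turns the $\equiv_\I$ question into a question about joins with $\Id$.

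First I would dispose of the easy cases. Each $\Id_n$ is uniformly definable as the unique degree with exactly $n-1$ predecessors (or as the least degree if $n=0$), so $\tau$ fixes every finite ceer; hence $\tau(X) \equiv X$ whenever $X \in \I$. For light $X$ the conclusion $\tau(X) \equiv X$ is the hypothesis. Moreover, since $\Dark$ and $\Light$ are both definable, $\tau$ sends dark ceers to dark ceers and light ceers to light ceers, and in particular $\tau(\Id) \equiv \Id$.

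Now suppose $X$ is dark. By Observation~\ref{joinIdDark} the degree of $X \oplus \Id$ is the join $X \vee \Id$ in $(\Ceers,\leq)$. Since $\tau$ is an order automorphism, it preserves joins whenever they exist, so
\[
\tau(X \oplus \Id) \;\equiv\; \tau(X) \vee \tau(\Id) \;\equiv\; \tau(X) \vee \Id.
\]
Because $\tau(X)$ is dark (as $\tau$ preserves $\Dark$), Observation~\ref{joinIdDark} applied to $\tau(X)$ yields $\tau(X) \vee \Id \equiv \tau(X) \oplus \Id$. On the other hand $X \oplus \Id$ is light, so by hypothesis $\tau$ fixes it: $\tau(X \oplus \Id) \equiv X \oplus \Id$. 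Combining these identifications,
\[
\tau(X) \oplus \Id \;\equiv\; X \oplus \Id.
\]
Since both $\tau(X)$ and $X$ are dark, the equivalence (1)$\Leftrightarrow$(2) of Theorem~\ref{thm:characterization-of-equiI} gives $\tau(X) \equiv_\I X$, completing the dark case and the proof.

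The only delicate point is ensuring that $\tau(X)$ is dark so that Observation~\ref{joinIdDark} applies on both sides of the equation; this is precisely why we need definability of the three classes from Corollary~\ref{cor:definability}, and why we invoked the join-characterization of $X \oplus \Id$ (valid only in the dark regime) rather than attempting any direct manipulation of the operation $\oplus$, which is not definable a priori.
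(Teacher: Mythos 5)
Your proof is correct and follows essentially the same route as the paper's: both arguments hinge on the fact that $X\oplus\Id$ is light and hence fixed by $\tau$, and then descend from $\tau(X)\le X\oplus\Id$ back to an $\equiv_\I$-statement using darkness of $\tau(X)$. The paper runs this by contradiction with the darkness step done by hand (a dark ceer below $E\oplus\Id$ hits only finitely many $\Id$-classes), while you package the same content via join-preservation under automorphisms, Observation~\ref{joinIdDark}, and the equivalence (1)$\Leftrightarrow$(2) of Theorem~\ref{thm:characterization-of-equiI}; the two are interchangeable.
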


\begin{proof}
Suppose $\tau$ is an automorphism that fixes the light ceers, and suppose
that $\tau(X)\not\equiv_\I X$. We may assume $X\not\geq_\I \tau(X)$
(otherwise, we can consider $\tau^{-1}$). It follows that $X$ is dark, as
every automorphism clearly fixes the finite ceers. Then $X\oplus \Id$ is a
light ceer which bounds $X$ and therefore it bounds $\tau(X)$ as well. But
$\tau(X)$ is dark, and if $Y \le E \oplus \Id$ for a dark $Y$ then there
exists $k$ such that $Y\le E \oplus \Id_{k}$, otherwise the inverse image of
the odd numbers under a reduction would give an infinite c.e. set of
non-$Y$-equivalent numbers. Then $\tau(X) \leq_\I X$, contrary to the
assumptions.
\end{proof}

Thus any automorphism fixing the light ceers must send each $\equiv_\I$-class
to itself. The $\equiv_\I$-classes of ceers which form a $\mathbb{N}$-chain
under $<$ must be fixed, so if you consider all $\mathbb{Z}$-chains of dark
ceers, then the automorphisms built in Theorem \ref{AutsFromChains} (which
move each $\mathbb{Z}$-chain into itself) is the entire group
$\text{Aut}_{\Light}(\Ceers)$ of the automorphisms that fix $\Light$.

\begin{q}
If $\tau$ is an automorphism of ceers which fixes every dark ceer, then must
$\tau(X)\equiv_\I X$ for every $X$?
\end{q}

\begin{obs}
	The self-full ceers form an automorphism base for the structure of ceers.
\end{obs}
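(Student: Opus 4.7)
The plan is to show that if an automorphism $\tau$ of $(\Ceers,\leq)$ fixes every self-full ceer (as a degree), then $\tau$ is the identity; this is equivalent to $\mathcal{S}:=\{\text{self-full ceers}\}$ being an automorphism base. So let $\tau$ be such an automorphism and let $E$ be an arbitrary ceer. If $E$ is self-full we are done by assumption, so we may assume $E$ is non-self-full. The case when $E$ is universal is handled separately: the universal degree is the greatest element of $\Ceers$, hence fixed by any automorphism.

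For the remaining case, when $E$ is non-self-full and non-universal, the idea is to recover $E$ as a meet of two self-full ceers. By Corollary~\ref{cor:non-sf-inf-min-covers}, every non-self-full non-universal ceer $E$ has infinitely many pairwise incomparable self-full strong minimal covers; pick any two such covers, say $S_1$ and $S_2$. I claim that $E\equiv S_1\wedge S_2$. Indeed, $E\le S_1,S_2$ by construction. Conversely, if $Z\le S_1$ and $Z\le S_2$ then by the strong minimal cover property we have $Z\le E$ or $Z\equiv S_1$; but $Z\equiv S_1$ together with $Z\le S_2$ would contradict incomparability of $S_1,S_2$, so $Z\le E$.

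Now, since automorphisms preserve greatest lower bounds whenever they exist, and $\tau(S_i)\equiv S_i$ for $i=1,2$ by hypothesis, we conclude
\[
\tau(E)\equiv \tau(S_1)\wedge \tau(S_2)\equiv S_1\wedge S_2\equiv E,
\]
as desired. The potential obstacle is the verification that the pair $(S_1,S_2)$ we have selected actually admits a meet and that this meet equals $E$, but this follows directly from the strong minimal cover property combined with the incomparability of $S_1$ and $S_2$, so no further delicate analysis is required. The argument thus reduces entirely to invoking Corollary~\ref{cor:non-sf-inf-min-covers} in the key case and using the definability of the greatest element for universal ceers.
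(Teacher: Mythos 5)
Your proof is correct and follows essentially the same route as the paper: both hinge on Corollary~\ref{cor:non-sf-inf-min-covers} supplying self-full strong minimal covers of a non-self-full, non-universal degree, and then recover that degree order-theoretically from its covers. The only (harmless) variation is that the paper uses a single cover $Y$ and the fact that $E$ is the unique degree of which $Y$ is a strong minimal cover, whereas you use two incomparable covers and express $E$ as their meet; you are also slightly more careful in explicitly disposing of the universal case.
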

\begin{proof}
	By Corollary 7.11, every non-self-full ceer $X$ has a self-full strong minimal cover $Y$. Thus, if $\tau$ is an automorphism of the structure of ceers, $\tau(Y)$ is a strong minimal cover of $\tau(X)$, and thus $\tau(X)$ is determined by $\tau(Y)$.
\end{proof}

\subsection{A remark on automorphisms of the c.e. $1$-degrees}
The same argument yields that there are continuum many automorphisms of the
structure of c.e. sets under $1$-reduction which fix all non-simple sets. For
any simple set, we have that $X+1$ is a strong minimal cover of $X$ and
$Y>_1X$ implies $Y\geq_1 X+1$. The predecessor can be found here by
re-ordering the c.e. set so that $0\notin X$ then using $X-1$ as the
predecessor. So each simple set is part of a $\mathbb{Z}$-chain, and we can
shift these independently.

\subsection{The non-definability of the jump}
Fix any ceer $X$ which is part of a $\mathbb{Z}$-chain, and let $\sigma$ be a
non-trivial automorphism of $\Ceers$ (constructed as in the proof of
Theorem~\ref{AutsFromChains}) which fixes everything outside this
$\mathbb{Z}$-chain. Then since $X<_\I X'$ for every non-universal $X$ by
Observation \ref{jumpUpInI}, $X'$ is fixed by $\sigma$, though $X$ is moved.
But $Y<Z$ if and only if $Y'<Z'$ (\cite{Gao-Gerdes}), so $\sigma(X)'\neq
X'=\sigma(X')$. Thus the jump cannot be definable in the structure of
$\Ceers$.

\section{Index sets}
We conclude by computing the complexity of the index sets of the classes of
ceers studied in the previous sections. For more about index sets of classes
of ceers see \cite{Andrews-Sorbi:index-sets}.

\begin{obs}
The following hold:
\begin{enumerate}
  \item The index set $\{x: R_x \textrm{ light}\}$ is
      $\Sigma^0_3$-complete;
  \item the index set $\{x: R_x \textrm{ dark}\}$ is $\Pi^0_3$-complete;
  \item the index set of the ceers in $\mathcal{I}$ is
      $\Sigma^0_3$-complete.
\end{enumerate}
\end{obs}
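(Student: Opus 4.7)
My plan is to split each item into an upper bound, obtained by reading off the quantifier complexity of the defining predicate, and a lower bound, obtained by reducing a known complete index set of c.e.\ sets. A single uniformly computable reduction, namely $e \mapsto R_{W_e}$, will yield all three lower bounds.

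For the upper bounds: ``$R_x$ has at most $n$ classes'' is the $\Pi^0_2$ predicate asserting that every $(n+1)$-tuple contains an $R_x$-equivalent pair; existentially quantifying $n$ gives $\Sigma^0_3$ for ``$R_x$ has finitely many classes'', proving the $\Sigma^0_3$ upper bound in~(3). Dually, ``$R_x$ has infinitely many classes'' is $\Pi^0_3$. For~(1), ``$R_x$ is light'' asserts the existence of an index $e$ with $W_e$ infinite (which is $\Pi^0_2$) and consisting of pairwise $R_x$-inequivalent elements (which is $\Pi^0_1$), so $\Sigma^0_3$ overall. For~(2), ``$R_x$ is dark'' is the conjunction of ``infinitely many classes'' ($\Pi^0_3$) and ``not light'' ($\Pi^0_3$), hence $\Pi^0_3$.

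For the lower bounds, note that the equivalence classes of $R_{W_e}$ are precisely $W_e$ itself (as one class, when nonempty) together with a singleton $\{y\}$ for each $y \in \overline{W_e}$. One reads off directly that $R_{W_e} \in \mathcal{I}$ iff $W_e$ is cofinite; that $R_{W_e}$ is dark iff $W_e$ is simple in Post's sense; and that $R_{W_e}$ is light iff $\overline{W_e}$ contains an infinite c.e.\ subset (the forward direction follows because at most one element of a transversal can lie in the single class $W_e$, so discarding it from the range of a reduction $\Id \leq R_{W_e}$ produces such a subset). Since $\mathrm{Cof}$ is $\Sigma^0_3$-complete, the first equivalence proves~(3) via $e \mapsto R_{W_e}$. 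Since the index set of simple c.e.\ sets is classically $\Pi^0_3$-complete, the second proves~(2). For~(1), the predicate ``$\overline{W_e}$ has no infinite c.e.\ subset'' unfolds as $\forall i\,(W_i \cap W_e \neq \emptyset \vee W_i \text{ finite})$, a $\Pi^0_3$ predicate; this set coincides with $\{e : W_e \text{ is simple or cofinite}\}$ and is $\Pi^0_3$-complete, hardness being transferred from the simple indices via a uniform encoding that embeds $W_e$ into a c.e.\ set which is never cofinite while preserving simplicity. Its complement is therefore $\Sigma^0_3$-complete and reduces to the index set of light ceers via $e \mapsto R_{W_e}$.

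The principal obstacle is the classical $\Pi^0_3$-completeness of the index set of simple sets, a standard priority-argument result (cf.\ Soare's textbook) which we would cite rather than reprove; the ancillary padding used in~(1) to avoid cofinite images is a routine index-set technique. Once these are granted, each of the three items follows from the structural facts about $R_{W_e}$ and the routine quantifier counts above.
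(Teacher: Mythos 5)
Your upper bounds and items (2) and (3) are correct, and they take a genuinely different route from the paper: for the lower bounds the paper invokes its earlier result (from the index-sets companion paper) that for any ceer $R$ with infinitely many classes one has $(\Sigma^0_3,\Pi^0_3)\leq_1(\{i: R_i\equiv R\},\{i: R_i\not\leq R \ \&\ R_i\not\geq R\})$, applied with $R=\Id$, whereas you exploit the transparent class structure of $R_{W_e}$ together with the classical completeness of $\mathrm{Cof}$ and of the index set of simple sets. For (3) this is the same idea as the paper's (reduce $\mathrm{Cof}$ via a ceer whose number of classes tracks $|\overline{W_e}|$), and for (2) it works provided you cite the $\Pi^0_3$-completeness of $\mathrm{Simp}$, which does follow from the standard Yates construction since that construction's failure outcome is a cofinite, hence non-simple, set.

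The gap is in item (1). You reduce it to the claim that $\{e: W_e \text{ is simple or cofinite}\}$ (equivalently, $\{e: \overline{W_e}$ contains no infinite c.e.\ set$\}$) is $\Pi^0_3$-complete, and you propose to derive this from the completeness of $\mathrm{Simp}$ by post-composing with ``a uniform encoding that embeds $W_e$ into a c.e.\ set which is never cofinite while preserving simplicity.'' Such an encoding would have to send every cofinite set to a coinfinite non-simple set while sending every simple set to a simple set; no uniform operator of the usual kind ($X\mapsto X\oplus B$ for $B$ simple, $X\mapsto X\times\omega$, etc.) does this --- each either destroys simplicity or fails to destroy cofiniteness --- and telling ``cofinite'' from ``simple'' inputs is precisely a $\Sigma^0_3$-versus-$\Pi^0_3$ question that a uniform transformation cannot settle without its own priority construction. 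Nor can you extract the hardness of $\mathrm{Simp}\cup\mathrm{Cof}$ from that of $\mathrm{Simp}$ by Boolean manipulation, and the standard Yates reduction is useless here because its failure outcome (a cofinite set) lands back inside the target set. The fact you need is true, but it requires a dedicated finite-injury argument (build $A=W_{g(x)}$ so that $W_x$ coinfinite implies $A$ simple, while $W_x$ cofinite implies $\overline{A}$ contains an infinite c.e.\ set); this is essentially the content of the black-box result the paper imports to handle lightness. As written, item (1) is not proved.
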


\begin{proof}
It is straightforward that each of these sets are in the corresponding
complexity classes, so we will verify completeness. It is proved in
\cite{Andrews-Sorbi:index-sets} that if $R$ is any ceer with infinitely many
classes then $(\Sigma^0_3, \Pi^0_3)\leq_1 (\{i\mid R_i \equiv R\}, \{i\mid
R_i\not\leq R\, \& \, R_i\not \geq R\})$ (where $(\Sigma^0_3,\Pi^0_3)
\leq_{1} (A,B)$ means that for every $\Sigma^{0}_{3}$ set $C$, there is a
computable function which $1$-reduces $C$ to $A$, and the complement of $C$
to $B$.

Thus if we take $R=\Id$ we immediately get that $\{x: R_x \textrm{ light}\}$
is $\Sigma^0_3$-complete. On the other hand, every $\Pi^0_3$ set is
$1$-reducible to $\{i\mid R_i\not\leq \Id\, \& \, R_i\not \geq \Id\})$, which
is exactly the index set of the ceers in $\Dark$.

Finally, to show the claim about $\mathcal{I}$, for every $x$ let $E_x$ be
the ceer so that $u \rel{E_x} v$ if
\[
u=v \lor [u<v \,\&\,[u,v] \subseteq W_x]
\]
where $[u,v]=\{z\mid u\le z \le v\}$. By the $s$-$m$-$n$ Theorem let $f$ be a
$1$-$1$ computable function such that $E_x=R_{f(x)}$. Let
$\textrm{Cof}=\{x\mid W_x \textrm{ cofinite}\}$. Clearly $x \in \textrm{Cof}$
if and only if $R_{f(x)} \in \mathcal{I}$. The result then follows from the
fact that $\textrm{Cof}$ is $\Sigma^0_3$-complete.
\end{proof}

A slightly more complicated argument is needed to compute the complexity of
the index set of the self-full ceers.

\begin{thm}
The index set $\{i\mid R_i \text{ is self-full}\}$ is $\Pi^0_3$-complete.
\end{thm}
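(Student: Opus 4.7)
The plan is to bound the complexity both ways.

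For the upper bound, I would compute directly from the definition. Unwinding, $R_i$ is self-full iff
\[
(\forall e)\bigl[\phi_e \text{ is not a reduction of } R_i \text{ to } R_i \;\lor\; (\forall j)(\exists k)(\phi_e(k) \rel{R_i} j)\bigr].
\]
Since $R_i$ is c.e., ``$\phi_e$ is a reduction of $R_i$ to $R_i$'' is $\Pi^0_2$ (totality together with a $\Pi^0_1$ condition on matched equivalences), so its negation is $\Sigma^0_2$. The second disjunct is $\Pi^0_2$ because, assuming $\phi_e(k)$ converges, $\phi_e(k) \rel{R_i} j$ is c.e. Thus the bracketed disjunction lies in $\Sigma^0_3 \cap \Pi^0_3$, and the outer $\forall e$ puts self-fullness in $\Pi^0_3$ uniformly in $i$.

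For $\Pi^0_3$-hardness, I would reuse exactly the tool invoked in the preceding observation: the result from \cite{Andrews-Sorbi:index-sets} that for any ceer $R$ with infinitely many classes,
\[
(\Sigma^0_3,\Pi^0_3)\leq_1 \bigl(\{i\mid R_i \equiv R\},\ \{i\mid R_i\not\leq R\,\&\,R_i\not\geq R\}\bigr).
\]
Take $R=\Id$. For a given $\Pi^0_3$ set $P$, apply this to the $\Sigma^0_3$ set $\overline{P}$ to obtain a computable function $h$ such that $e\notin P \Rightarrow R_{h(e)}\equiv \Id$ and $e\in P \Rightarrow R_{h(e)}\not\leq \Id$ and $R_{h(e)}\not\geq \Id$. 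By Corollary~\ref{cor:belowId} the latter condition forces $R_{h(e)}$ to be dark.

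Finally, I would combine this with the structural facts already in the paper: dark ceers are self-full by Lemma~\ref{darksAreSelfFull}, whereas $\Id$ is non-self-full (indeed $\Id\oplus \Id_1\equiv \Id$), and self-fullness is an invariant of $\equiv$ by the corollary following Observation~\ref{obs:nonselffull}. Hence $e\in P$ iff $R_{h(e)}$ is self-full, giving a $1$-reduction of $P$ to $\{i\mid R_i \text{ is self-full}\}$. There is no real obstacle here beyond invoking the right black boxes; the main point is to notice that the very same reduction that witnesses $\Pi^0_3$-hardness of darkness already witnesses $\Pi^0_3$-hardness of self-fullness, because on the ``positive'' side the produced ceer is dark (hence self-full) and on the ``negative'' side it is $\equiv \Id$ (hence non-self-full).
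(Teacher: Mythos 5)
Your proof is correct, but the hardness direction takes a genuinely different (and shorter) route than the paper's. For the upper bound you unwind the definition of self-fullness directly, whereas the paper routes through Observation~\ref{obs:nonselffull} ($E$ is self-full iff $E\oplus\Id_1\not\leq E$); both computations land in $\Pi^0_3$ by the same arithmetic bookkeeping, so this difference is cosmetic. The real divergence is in the completeness proof: the paper carries out a fresh finite-injury construction, building from a c.e.\ set $W$ a ceer $E$ with competing $SF_{ij}$-requirements (forcing any self-reduction to be onto the classes) and $NSF_k$-requirements (building a non-surjective self-reduction whenever $W^{[k]}$ is infinite), so that $E$ is self-full iff every column of $W$ is finite. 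You instead observe that the reduction already used in the preceding observation to establish $\Pi^0_3$-hardness of darkness does double duty: on the positive side it outputs a ceer incomparable with $\Id$, which is forced to be dark (infinitely many classes since it is $\not\leq\Id$, and not light since it is $\not\geq\Id$) and hence self-full by Lemma~\ref{darksAreSelfFull}, while on the negative side it outputs a ceer $\equiv\Id$, which is non-self-full since $\Id\oplus\Id_1\equiv\Id$ and self-fullness is a degree invariant. This is a legitimate and economical argument; its only cost is that it leans on the black box from \cite{Andrews-Sorbi:index-sets} (which the paper invokes anyway for the darkness index set) together with the structural lemma that dark implies self-full, whereas the paper's construction is self-contained and exhibits explicit control over self-fullness independent of darkness. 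One small quibble: citing Corollary~\ref{cor:belowId} to conclude darkness is not quite the right reference — what you actually need is that every finite ceer reduces to $\Id$ (so $R_{h(e)}\not\leq\Id$ gives infinitely many classes) plus the definition of dark — but this does not affect the validity of the argument.
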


\begin{proof}
(Sketch) By Observation~\ref{obs:nonselffull}, self-fullness for a ceer $E$
is equivalent to the following $\Pi^0_3$-condition:
\[
(\forall i)[\phi_i \text{ non-total} \vee (\exists x,x') [x \rel{E\oplus \Id_1}
x' \not\Leftrightarrow \phi_i(x)\rel{E} \phi_i(x')]].
\]

To show $\Pi^0_3$-completeness, given a c.e. set $W$, we uniformly construct
a ceer $E$ so that $E$ is self-full if and only if every column of $W$ is
finite: we rely on the fact that every $\Pi^0_3$  set $A$ can be expressed by
a relation $(\forall k)R(x,k)$ where $R\in \Sigma^0_2$ and thus reducible to
$\{e\mid W_e \textrm{ finite}\}$, so that there is a computable function $g$
with $A=\{x\mid (\forall k)[W_{g(x,k)} \textrm{ finite}]\}= \{x\mid (\forall
k)[V_{x}^{[k]} \textrm{ finite}]\}$, where $V_{x}=\{\langle k, y\rangle \mid
y \in W_{g(x,k)}\}$ and an index for $V_{x}$ can be found uniformly from $x$.
If $E_{x}=R_{h(x)}$ is the self-full ceer constructed from $V_{x}$ then for
every $x$, we have that  $x \in A$ if and only if $R_{h(x)}$ is self-full.

\smallskip

To build $E$ given $W$ we have requirements:
\begin{itemize}
\item[$SF_{ij}$:] If $W_i$ intersects infinitely many classes, then it
    intersects $[j]_E$.
\item[$NSF_k$:] If $W^{[k]}$ is infinite, then $f$ is a reduction of $E$ to
    itself missing some class ($f$ is a computable function produced by the
    requirement).
\end{itemize}
These requirements are arranged in a computable priority ordering of order
type $\omega$. Each requirement may restrain numbers in $E$ to keep them
$E$-inequivalent.

\smallskip

\emph{Strategy for $SF_{ij}$:} If $W_i$ and $[j]_E$ are still disjoint, then
wait for $W_i$ to enumerate a number $x$ such that $x,j$ are not restrained
by higher priority requirements, and (\emph{action}) $E$-collapse $x$ to $j$.

\smallskip

\emph{Strategy for $NSF_k$:} To define $f$, code $E$ on a column
$\omega^{[n]}$ of $E$ having chosen a new $n$ so that when we choose it for
the first time every number in the column is new. Restrain all elements in
$\im(f)$ to protect the coding so that no lower priority requirement is
allowed to $E$-collapse equivalence classes of elements of $\im(f)$, also
guaranteeing that $f$ is not onto the classes of $E$ by appointing yet a new
number $a\notin \omega^{[n]}$ so that its equivalence class does not contain
any element equivalent to a number in the range of $f$: this can be achieved
by restraining $a$ and $\im(f)$, i.e. by requesting that no lower priority
requirement can $E$-collapse $a$ to any number in $\im(f)$. The numbers $n$
and $a$, and the coding function $f$ are the \emph{parameters of $NSF_k$},
which are started anew any time we re-initialize the requirement.

\smallskip Satisfaction of all these requirements give the desired $E$.
Self-fullness follows from satisfying $SF_{ij}$ for all $j$, where
$W_i=\range(f)$ if $f$ is any reduction of $E$ to $E$.

\smallskip

Again, we employ the priority machinery (initialization, requiring attention,
etc.) to build $E$ according to these strategies. $NSF_k$ \emph{requires
attention at stage $s$} if $W^{[k]}$ grows at that stage; if so, and is the
least such that this happens, $NSF_k$ builds $f$ as follows: firstly, it
considers all $x,y$ for which $f(x), f(y)$ have been already defined, and
$E$-collapses $f(x)$ and $f(y)$ if already $x \rel{E} y$; secondly, it takes
the first $z$ such that $f(z)$ is still undefined, and sets $f(z)=\langle n,
w\rangle$, where $[w]_E$ is still $\{w\}$.

$SF_{ij}$ \emph{requires attention} if it is ready to act as indicated (i.e.
$W_i$ and $[j]_E$ are still  disjointand $W_i$ has enumerated an $x$ so that
the pair $x,j$ is not restrained by higher priority requirements): if it acts
then it is forever satisfied and does not need to receive attention any more.

Every time a requirement acts, it must re-initialize all lower-priority
requirements. If $W$ has an infinite column and $W^{[k]}$ is the first column
of $W$ which is infinite, then $NSF_k$ can be injured and re-initialized due
to collapses deriving from finitely many higher priority
$SF_{ij}$-requirements, but after that, it will succeed in picking the final
columns $\omega^{[n]}$ and $a$, and build the reduction $f$, as it requires
attention infinitely often and is not re-initialized any more. Note that all
requirements having lower priority than $NSF_k$ are in this case
re-initialized infinitely often, but after all none of them needs to be
satisfied.

On the other hand, if every column $W^{[k]}$ is finite, then every $NSF_k$
places finite restraint, and after last re-initialization every $SF_{ij}$
will succeed by either waiting forever for a number $x$ so that $x,j$ are not
restrained (which can only be the case when $W_i$ hits at most finitely many
equivalence classes), or by acting once for all.

So, $E$ is self-full if and only if every column of $W$ is finite. Uniformity
clearly holds, as an index for $E$ can be effectively found starting from any
c.e. index of $W$.
\end{proof}


\end{document}